\numberwithin{equation}{section}
\def\sO{{\mathscr O}}
\newcommand{\CC}{\mathbb{C}}
\newcommand{\PP}{\mathbb{P}}
\newcommand{\QQ}{\mathbb{Q}}
\newcommand{\ZZ}{\mathbb{Z}}
\newcommand{\cal}{\mathcal}
\def\cE{{\cal E}}
\def\cH{{\cal H}}
\def\fC{\mathfrak{C}}
\def\loc{\mathrm{loc}}
\def\and{\quad{\rm and}\quad}
\def\lra{\longrightarrow }
\def\mapright#1{\,\smash{\mathop{\lra}\limits^{#1}}\,}
\def\mapleft#1{\,\smash{\mathop{\longleftarrow}\limits^{#1}}\,}
\DeclareMathOperator{\id}{id} 
\DeclareMathOperator{\tr}{tr} 
\DeclareMathOperator{\rank}{rank}
\DeclareMathOperator{\spec}{Spec}
\newtheorem{prop}{Proposition}[section]
\newtheorem{theo}[prop]{Theorem}
\newtheorem{lemm}[prop]{Lemma}
\newtheorem{coro}[prop]{Corollary}
\newtheorem{rema}[prop]{Remark}
\newtheorem{exam}[prop]{Example}
\newtheorem{ques}[prop]{Question}
\newtheorem{defi}[prop]{Definition}
\newtheorem{assu}[prop]{Assumption}
\def\beq{\begin{equation}}
\def\eeq{\end{equation}}
\def\dual{^{\vee}}
\def\virt{^{\mathrm{vir}} }
\def\virtloc{\virt_\loc}
\def\bbT{\mathbb{T} }
\def\bbL{\mathbb{L} }
\def\DM{Deligne-Mumford }
\def\bbA{\mathbb{A} }
\def\redd{{\mathrm{red}}}
\def\coker{\mathrm{coker} }
\def\hom{\cH om }
\def\tF{\widetilde{F} }
\def\tf{\widetilde{f} }
\def\tL{\widetilde{L} }
\def\tZ{\widetilde{Z}}
\def\tsig{\tilde{\sigma}}
\def\tY{{\widetilde{Y}}}
\def\DM{Deligne-Mumford }
\def\bk{\mathbf{k}}
\def\ldot{_{\bullet}}
\def\udot{^{\bullet}}
\def\Coh{\mathrm{Coh}\, }
\def\bsig{{\sigma\ldot} }
\def\sp{\mathrm{sp}}
\def\sqe{\sqrt{e}}
\def\sot{e^{\frac12}}
\def\tit{\tilde{t}}
\def\bbE{\mathbb{E}}
\def\bbF{\mathbb{F}}
\def\fe{\mathfrak{e}}
\def\im{\mathrm{im} }
\title[Localizing virtual cycles for DT4]{Localizing virtual cycles for Donaldson-Thomas invariants of Calabi-Yau 4-folds}
\date{}
\author{Young-Hoon Kiem}
\address{Department of Mathematical Sciences and Research Institute
of Mathematics, Seoul National University, Seoul 08826, Korea}
\email{kiem@snu.ac.kr}
\author{Hyeonjun Park}
\address{Department of Mathematical Sciences, Seoul National University, Seoul 08826, Korea}
\email{hyeonjun93@snu.ac.kr}
\thanks{Partially supported by Samsung Science and Technology Foundation SSTF-BA1601-01}
\keywords{Donaldson-Thomas invariant, Calabi-Yau fourfold, cosection localization, virtual cycle.}
\begin{document}
\begin{abstract} 
Recently Oh-Thomas in \cite{OhTh} constructed a virtual cycle $[X]\virt\in A_*(X)$ for a quasi-projective moduli space $X$ of stable sheaves or complexes over a Calabi-Yau 4-fold against which DT4 invariants \cite{BoJo,CaoLeung} may be defined as integrals of cohomology classes. In this paper, we 
prove that the virtual cycle localizes to the zero locus $X(\sigma)$ of an isotropic cosection $\sigma$ of the obstruction sheaf $Ob_X$ of $X$ and 
construct a localized virtual cycle $[X]\virtloc\in A_*(X(\sigma))$. This is achieved by further localizing the Oh-Thomas class which localizes Edidin-Graham's square root Euler class of a special orthogonal bundle. 
When the cosection $\sigma$ is surjective so that the virtual cycle vanishes, we construct a reduced virtual cycle $[X]\virt_\redd$. 
As an application, we prove DT4 vanishing results for hyperk\"ahler 4-folds. All these results hold for virtual structure sheaves and K-theoretic DT4 invariants. 
\end{abstract}
\maketitle

\section{Introduction}\label{S1}

\subsection{Main results} 
It is a common knowledge that vector bundles and coherent sheaves play a key role in algebraic geometry. 
A fundamental problem about vector bundles and sheaves is to enumerate those with desired properties. For instance, enumeration of ideal sheaves satisfying given certain  conditions constitutes classical enumerative geometry that includes the Schubert calculus. 
Since mid 1990s \cite{LiTi, BeFa, Tho}, counting sheaves on a smooth projective variety $W$ is usually handled by integration of cohomology classes against a special homology class, called the virtual cycle $[X]\virt$, on a moduli space $X$ of stable sheaves over $W$ if $X$ admits such a cycle. When $W$ is a Calabi-Yau 4-fold (CY4 for short), recently Oh-Thomas in \cite{OhTh} constructed a virtual cycle $[X]\virt$ for a projective moduli space $X$ which defines Donaldson-Thomas invariants for CY4 (DT4 invariants for short). At the core of the Oh-Thomas construction lies the localized square root Euler class $\sqe(F,s)$ of a special orthogonal bundle $F$ with an isotropic section $s$ that localizes Edidin-Graham's square root Euler class $\sqe(F)$ constructed in \cite{EdGr}. 

The purpose of this paper is to show that if there is an isotropic cosection of the obstruction theory for $X$, the Oh-Thomas virtual cycle $[X]\virt$ localizes to the zero locus $X(\sigma)$  of the cosection 
\beq\label{i9} \sigma:Ob_X\lra \sO_X\eeq
 of the obstruction sheaf $Ob_X$ of $X$. 
To this end, we first present an alternative construction of the Oh-Thomas class $\sqe(F,s)$ (cf. Definition \ref{n39}), with which 
many properties of $\sqe(F,s)$ are proved in a straightforward manner (cf. Theorem \ref{n51}). Moreover, it is easy to further localize when there is an additional isotropic section $t$ (cf. Theorem \ref{n74}). 
This then leads us to a cosection localized virtual cycle (cf. Theorem \ref{n88}) 
\beq\label{i5} [X]\virtloc\in A_{vd}(X(\sigma))\eeq 
where $vd$ denotes the expected dimension of the virtual cycle. 
If $\sigma$ is surjective so that $X(\sigma)=\emptyset$ and hence $[X]\virt=0$, we construct a reduced virtual cycle (cf. Definition \ref{131}) 
\beq\label{i6} [X]\virt_\redd\in A_{vd+1}(X).\eeq  
Moreover, when the square of the cosection is a nonzero constant, we prove that the virtual cycle always vanishes (cf. Theorem \ref{143}). 
As an application, we obtain vanishing results for DT4 invariants 
when $W$ is a hyperk\"ahler 4-fold (cf. \S\ref{S6}). 
All the results above lift to K-theory 
with appropriate modifications (cf. \S\ref{S8.2} and \S\ref{S8.4}) and virtual classes in intersection theories $\cH_*$ in the sense of \cite{KP} as long as Fulton's conjecture on square root Euler class holds for $\cH_*$ (cf. \S\ref{S8.3}). 


\subsection{Key ideas} For a quasi-projective \DM stack $X$, a symmetric obstruction theory $\phi:\bbE\udot\to \bbL_X$, perfect of amplitude $[-2,0]$, admits a 3-term locally free resolution
$$[B\mapright{d} F\cong F^\vee\mapright{d^\vee} B^\vee]$$
and a cosection $\bsig:\bbE\ldot\to \sO_X[-1]$ is given by a homomorphism $\tsig:F\to \sO_X$ which is a lift of 
$$\sigma=h^1(\bsig):Ob_X=h^1(\bbE\ldot)\lra \sO_X$$
where $\bbE\ldot=(\bbE\udot)^\vee$ is the dual of $\bbE\udot$ (cf. \S\ref{S5}).
By definition, $[F^\vee\mapright{d^\vee} B^\vee]$ is a perfect obstruction theory for $X$ and the intrinsic normal cone $\fC_X$ embeds into the vector bundle stack $[F/B]$ by \cite{BeFa}. 
Let 
\beq\label{i7}C=\fC_X\times_{[F/B]}F\subset F \and Y=C_\redd.\eeq 
Then the virtual cycle of $X$ is defined by Oh-Thomas \cite{OhTh} as 
\beq\label{i4} [X]\virt=\sqe(F|_Y,\tau)[C]\in A_{vd}(X)\eeq 
where $\tau$ is the tautological section and $\sqe(F|_Y,\tau)$ is the localized square root Euler class of the orthogonal bundle $F|_Y$ with an orientation induced from an orientation of $\bbE\udot$ (cf. \cite[\S4]{OhTh}). 
Hence to localize the virtual cycle $[X]\virt$, we have to localize $\sqe(F|_Y,\tau)$ further by the section $\tsig|^\vee_Y$ of $F|_Y$. 

Unfortunately, the construction in \cite[\S3]{OhTh} of localized square root Euler class $\sqe(F,s)$ for a special orthogonal bundle $F$ and an isotropic section $s$ over a \DM stack 
$Y$ is not suitable for a further localization because it already is rather complicated. 
So our first task in this paper is to provide an alternative construction of $\sqe(F,s)$ which allows a further localization by an extra section in a straightforward manner. 
This is achieved by a construction used in \cite{Kis, KLc, KLk, KLq}. 
Letting $\rho:\tY\to Y$ be the blowup along the zero locus $X$ of $s$, we can decompose any $\xi\in A_*(Y)$ into 
\beq\label{i2} \xi=\rho_*\alpha+\imath_*\beta, \quad \alpha\in A_*(\tY), \ \ \beta\in A_*(X)\eeq
where $\imath:X\to Y$ denotes the inclusion. 
For $\beta$, we may apply the ordinary square root Euler class $\sqe(F)$ of Edidin-Graham \cite{EdGr} and obtain $\sqe(F)\beta\in A_*(X)$. On $\tY$, the isotropic section $s$ induces an isotropic subbundle $L=\sO_{\tY}(D)$ of $F|_{\tY}$ where $D$ is the exceptional divisor of $\rho$. Then we have an induced special orthogonal bundle $\tF=L^\perp/L$ over $\tY$ and we may apply $\sqe(\tF)$ to $D\cdot \alpha$. We  let (cf. Definition \ref{n39})
\beq\label{i1}\sqe(F,s)(\rho_*\alpha+\imath_*\beta)=(\rho|_D)_*\Big(\sqe(\tF)D\cdot \alpha\Big)+\sqe(F)\beta \ \ \in\  A_*(X)\eeq
which defines a bivariant class $\sqe(F,s)\in A^n_X(Y)$ ($\rank F=2n$) satisfying $\imath_*\circ \sqe(F,s)=\sqe(F)$ (cf. Theorem \ref{n51}). 
We further prove that $\sqe(F,s)$ constructed in this way coincides with the Oh-Thomas class defined in \cite{OhTh} (cf. Theorem \ref{n55}). 

Now suppose we have two isotropic sections $s,t$ of a special orthogonal bundle $F$ with $s\cdot t=0$. An  obvious way to generalize \eqref{i1} is 
\beq\label{i3}\sqe(F,s;t)(\rho_*\alpha+\imath_*\beta)=(\rho|_{D\cap \tit^{-1}(0)})_*\Big(\sqe(\tF,\tit)D\cdot \alpha\Big)+\sqe(F,t)\beta \eeq
where $\tit$ is the isotropic section of $\tF$ induced by $t$ (cf. Definition \ref{n72}). 
When $s$ and $t$ are independent away from a closed substack $Z\subset Y$ (cf. Definition \ref{n73}), $\sqe(F,s;t)$ is a bivariant class in $A^n_{X\cap Z}(Y)$ (cf. Theorem \ref{n74}). 

When $X$ is a moduli space of stable sheaves or more generally a quasi-projective moduli space of simple perfect complexes on a Calabi-Yau 4-fold $W$ and the standard obstruction theory \eqref{n96} is equipped with an isotropic cosection $\bsig:\bbE\ldot\to \sO_X[-1]$, letting $Y$ denote the reduced stack of the cone $C\subset F$ in \eqref{i7}, the cone reduction lemma in \cite{KLc} tells us that the tautological section $\tau$ of $F|_Y$ and the isotropic section $\tsig|_Y^\vee$ are independent away from $X(\sigma)\times_XY$ and hence we can localize
\eqref{i4} by
\beq\label{i8} [X]\virtloc=\sqe(F|_Y,\tau; \tsig|_Y^\vee)[C]\in A_{vd}(X(\sigma))\eeq
which is well defined and deformation invariant (cf. Theorem \ref{n88}). 
In particular if the cosection \eqref{i9} is surjective so that $X(\sigma)=\emptyset$, we have the vanishing $[X]\virt=0$ (cf. Corollary \ref{126}) and we can define a reduced virtual cycle by 
\beq\label{i10} [X]\virt_\redd=\sqe(L_{\tsig}^\perp/L_{\tsig}|_Y,\tau)[C]\ \ \in\  A_{vd+1}(X)\eeq
where $L_{\tsig}\subset F$ is the trivial line bundle generated by $\tsig^\vee$ (cf. Definition \ref{131}). 
When there is a cosection $\bsig$ which is not isotropic but $\sigma\ldot^2$ is a nonzero constant, the virtual cycle $[X]\virt$ is zero (cf. Theorem \ref{143}). 

There are cosections $\bsig$ associated to holomorphic 2-forms, (3,1)-forms and (0,2)-forms on the Calabi-Yau 4-fold $W$ (cf. \S\ref{S6.1}). We thus obtain localized virtual cycles and vanishing results for DT4 invariants (cf. \S\ref{S6}).  
All the above results hold in K-theory (cf. \S\ref{S8}) and more generally in every intersection theory satisfying Assumption \ref{176}. 


\subsection{The layout} This paper is organized as follows. In \S\ref{Sorth}, we collect necessary facts on orthogonal bundles. In \S\ref{Sn3}, we recall the construction of square root Euler class by Edidin-Graham \cite{EdGr}. 
In \S\ref{Sn4}, we construct a localized square root Euler class by blowup and prove useful properties. In \S\ref{SOT}, we prove that the localized square root Euler class by the blowup construction coincides with the Oh-Thomas class in \cite{OhTh}. In \S\ref{Scos}, we construct square root Euler class localized by two isotropic sections. In \S\ref{S5}, we recall symmetric obstruction theory and 
necessary facts on cosections.  
In \S\ref{S5n}, we construct cosection localized virtual cycle and reduced virtual cycle. We also prove vanishing results. In \S\ref{S6}, cosections for moduli spaces of stable sheaves or perfect complexes on Calabi-Yau 4-folds are constructed and we obtain localization and vanishing results of DT4 invariants. In \S\ref{S8}, we generalize all these results to K-theory and more generally to intersection theories.

%
%
%


\subsection{Convention and notation}\label{S1.4}
All schemes and \DM stacks in this paper are \emph{quasi-projective} over $\CC$. 

Given a Cartesian diagram 
$$\xymatrix{
X\times_YZ\ar[d]\ar[r] & Z\ar[d]\\
X\ar[r]& Y},$$ 
the fiber product $X\times_Y Z$ may be denoted by $Z|_X$ or $X|_Z$ when the meaning is clear from the context. 

We use $\QQ$ as the coefficient ring of Chow groups and K-groups. 
For schemes, everything in this paper works with coefficients in $\ZZ[1/2]$.

We will use the following notation:
\begin{enumerate}
\item  $F$ will denote an orthogonal bundle, 
\item $X$ will often denote a quasi-projective moduli space of stable sheaves or perfect complexes on a Calabi-Yau 4-fold $W$, 
\item $Y$ will often denote an isotropic cone over $X$, contained in an orthogonal bundle over $X$,
\item the tautological section of $F|_Y\to Y$ for $Y\subset F$ will be often denoted by $\tau$.
\end{enumerate}

\bigskip

\noindent\textbf{Acknowledgement}. 
We thank Jeongseok Oh and Richard Thomas for sharing earlier drafts of \cite{OhTh} and illuminating discussions. 
YHK thanks Dennis Borisov, Yalong Cao, Amin Gholampour, Eduardo Gonzalez,  Martijn Kool, Feng Qu, Michail Savvas, Artan Sheshimani, Yukinobu Toda  and Jun Li for useful discussions.

\bigskip
\section{Special orthogonal bundles}\label{Sorth}
In this preliminary section, we collect definitions and facts on orthogonal bundles, orientations and isotropic subbundles that will be used throughout this paper. 

\medskip

Let $F$ be a vector bundle of rank $2n$ on a \DM stack $Y$, 
equipped with a symmetric nondegenerate bilinear pairing
$$q:F\otimes F\lra \sO_Y.$$
For local sections $v$ and $w$ of $F$, we denote the pairing by
$$q(v,w)=q(v\otimes w)=v\cdot w.$$

A subbundle $\Lambda$ of 
$F$ is called \emph{isotropic} if $q(\Lambda\otimes\Lambda)=0$. 
A section $s$ of $F$ is called \emph{isotropic} if $s^2=s\cdot s=0$. 
A closed substack $Z\subset F$ is called \emph{isotropic} if the tautological section 
$\tau$ of $F|_Z=F\times_Y Z$,
induced by the inclusion $Z\to F$ and the identity map $\id_Z$,
is isotropic.  

The pairing $q$ induces an isomorphism $$\hat{q}:F\mapright{\cong} F^\vee$$ of $F$ with its dual $F^\vee$.  The determinant $$\det \hat{q}:\det F\mapright{\cong} \det F^\vee=(\det F)^{-1}$$ 
induces an isomorphism
\beq\label{n1} \sO_Y \mapright{\cong} (\det F)^2.\eeq
An isomorphism 
\beq\label{n2} or:\sO_Y \mapright{\cong} \det F\eeq
whose square is \eqref{n1} is called an \emph{orientation} of $F$.
The triple $(F,q,or)$ is called an \emph{$SO(2n)$-bundle} or a \emph{special orthogonal bundle}.

We will often use the following simple fact.
\begin{lemm}\label{n0}
Let $F$ be a vector bundle over $Y$ with a nondegenerate symmetric bilinear form $q$. Let $s$ be an isotropic section of $F$ whose zero locus $D$ is a Cartier divisor of $Y$ so that we have an inclusion $$\jmath:L=\sO_Y(D)\hookrightarrow F$$ of vector bundles. 
Then the line bundle $L$ is an isotropic subbundle. 
\end{lemm}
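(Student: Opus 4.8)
The plan is to reduce the statement to a local computation: being an isotropic subbundle of $(F,q)$ is an \'etale-local condition on $Y$, so I would pass to an \'etale chart on which $F$ admits a trivializing frame and on which the effective Cartier divisor $D$ is cut out by a single equation $f$, a non-zero-divisor in the local ring $A$. The whole proof then amounts to pinning down the local shape of $s$ and cancelling powers of $f$.

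First, I would analyze $s$ locally. In the chosen frame write $s=(s_1,\dots,s_{2n})$; the hypothesis that the zero \emph{scheme} of $s$ is $D$ says exactly that the ideals $(s_1,\dots,s_{2n})$ and $(f)$ coincide in $A$. From $s_i\in(f)$ one gets $s_i=f\,t_i$, i.e.\ $s=f\cdot t$ for a local section $t=(t_1,\dots,t_{2n})$ of $F$; from $f\in(s_1,\dots,s_{2n})$, writing $f=\sum a_i s_i=f\sum a_i t_i$ and cancelling the non-zero-divisor $f$ gives $\sum a_i t_i=1$, so $t$ generates the unit ideal and is therefore a nowhere-vanishing section, spanning a line subbundle on this chart (consistently with the hypothesis that $\jmath$ is a subbundle inclusion). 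Matching with the given $\jmath\colon L=\sO_Y(D)\hookrightarrow F$, which by construction satisfies $s=\jmath\circ\sigma_D$ for the canonical section $\sigma_D$ of $\sO_Y(D)$ vanishing on $D$: under a local trivialization of $\sO_Y(D)$ in which $\sigma_D\leftrightarrow f$ one gets $s=f\cdot\jmath(1)$, and cancelling $f$ once more yields $\jmath(1)=t$, so $\im(\jmath)$ is precisely the line subbundle generated by $t$ on this chart.

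Second, I would run the isotropy computation. By $\sO_Y$-bilinearity of $q$ it suffices to evaluate on the local generator $t$ of $L$; since $s$ is isotropic, $0=q(s,s)=q(ft,ft)=f^2\,q(t,t)$, and as $f^2$ is a non-zero-divisor this forces $q(t,t)=0$. Hence $q$ vanishes on $\im(\jmath)\cong L$ over every chart, i.e.\ $q(L\otimes L)=0$, which is the assertion.

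The only step requiring any care — the ``main obstacle'' here being modest — is the translation of the scheme-theoretic equality $Z(s)=D$ into the factorization $s=f\cdot t$ with $t$ a frame vector; once that is established, the conclusion is a one-line cancellation of $f^2$, and the identification of the resulting line subbundle with $L=\sO_Y(D)$ is immediate from the defining property $s=\jmath\circ\sigma_D$ of $\jmath$.
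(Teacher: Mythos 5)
Your proof is correct and is essentially the paper's argument in local coordinates: the paper shows that the pairing $q|_{L\otimes L}$, viewed as a section of $\sO_Y(-2D)$, is killed by the injective map $H^0(\sO_Y(-2D))\to H^0(\sO_Y)$ because $s^2=0$, which is exactly your cancellation of the non-zero-divisor $f^2$ after writing $s=f\cdot t$ on a chart. The two proofs differ only in phrasing (global sheaf maps versus a local frame), not in substance.
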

\begin{proof}
By the isomorphism $\hat{q}:F\cong F^\vee$, $q(L,L)=0$ is equivalent to the vanishing of the homomorphism 
\[\sO_Y(D)\mapright{\jmath} F\mapright{\hat{q}} F^\vee \mapright{\jmath^\vee} \sO_Y(-D)\]
which becomes
$$s^2=0:\sO_Y\lra \sO_Y(D)\mapright{\jmath} F\mapright{\hat{q}} F^\vee \mapright{\jmath^\vee} \sO_Y(-D)\lra \sO_Y$$
when composed with the inclusions $s:\sO_Y\to \sO_Y(D)$ and $s:\sO_Y(-D)\to\sO_Y$. 
Since $H^0(\sO_Y(-2D))\lra H^0(\sO_Y)$ is injective, we have $q(L,L)=0$.  
\end{proof}

We say that an isotropic subbundle $\Lambda$ of an $SO(2n)$-bundle $F$ is \emph{maximal} if the rank of $\Lambda$ is $n$.    
For a basis $e_1, \cdots, e_n$ for a maximal isotropic subbundle $\Lambda$ over an open substack and its dual basis $f_1,\cdots, f_n$ for the dual $\Lambda^\vee$ satisfying $e_i\cdot f_j=\delta_{ij}$, 
we have 
\beq\label{n7} e_1\wedge f_1\wedge \cdots \wedge e_n\wedge f_n=\pm or.\eeq
We say that a maximal isotropic subbundle $\Lambda$ is \emph{positive} (resp. \emph{negative}) if the sign is positive (resp. negative). 

\medskip

Let $K$ be an isotropic subbundle of $F$ and let $K^\perp$ denote 
the orthogonal complement of $K$ in $F$ with respect to $q$. 
Then we have exact sequences 
\beq\label{n3} 0\lra K^\perp \lra F\lra K^\vee\lra 0,\eeq
\beq\label{n4} 0\lra K\lra K^\perp \lra K^\perp/K\lra 0.\eeq
The bilinear form $q$ on $F$ induces a nondegenerate bilinear form 
\beq\label{n5} \bar{q}:K^\perp/K\otimes K^\perp/K \lra \sO_Y\eeq
and the orientation \eqref{n2} induces an orientation
\beq\label{n6} \bar{or}: \sO_Y\mapright{\cong} \det (K^\perp/K)\eeq
for $K^\perp/K$ by the isomorphisms
$$\det F\cong \det K^\perp\otimes \det K^\vee\cong \det (K^\perp/K)\otimes \det K \otimes (\det K)^{-1} \cong \det (K^\perp/K)$$
from \eqref{n3} and \eqref{n4}. 
In case $K$ is maximal isotropic so that $K^\perp/K=0$ and $\det (K^\perp/K)\cong \sO_Y$, the induced orientation 
$$\bar{or}:\sO_Y\mapright{\cong} \det(K^\perp/K)=\sO_Y$$
is $1$ if $K$ is positive and $-1$ if $K$ is negative.

\bigskip

\section{Square root Euler class}\label{Sn3}

In this section, we recall the construction of the Gysin map $0^!_V$, the Euler class $e(V)$ and the localized Euler class $e(V,s)$ for a vector bundle $V$ and a section $s$ of $V$. 
We further recall the Edidin-Graham class which is a square root of the Euler class of an $SO(2n)$-bundle.

\subsection{Bivariant classes}\label{nS3.1}

We recall basics on bivariant classes from \cite[Chapter 17]{Ful} and \cite[\S5]{Vist}. 
For \DM stacks $X$ and $Y$, $A_*(X)$ and $A_*(Y)$ denote their Chow groups. 
\begin{defi} 
A \emph{bivariant class} $c\in A^p(X\mapright{f} Y)$ for a morphism $f:X\to Y$ is a collection of homomorphisms
$$c_g:A_*(Y')\lra A_{*-p}(X')$$
for any Cartesian square 
$$\xymatrix{
X'\ar[r]^{f'}\ar[d]_{g'} & Y'\ar[d]^{g}\\
X\ar[r]^f & Y
}$$
compatible with proper pushforward, flat pullback and refined intersection. 
We let 
$A^p(X)=A^p(X\mapright{\id_X} X)$. 
When $f$ is a closed immersion $\imath:X\to Y$, we let  
$$A^p_X(Y)=A^p(X\mapright{\imath} Y).$$
\end{defi}
The Chern classes $c_p(V)\in A^p(Y)$ of vector bundles $V$ on $Y$ are bivariant classes,
that commute with any bivariant classes and Gysin maps.

\medskip
\subsection{Localized Euler class and Gysin map}\label{Sn3.2}

Let $Y$ be a \DM stack and $\pi:V\to Y$ be a vector bundle of rank $r$ on $Y$. 
The flat pullback $\pi^*$ is an isomorphism by \cite[Theorem 3.3]{Ful} or \cite[Corollary 2.5.7]{Kresch} whose inverse is denoted by 
\beq\label{n11}
0_V^!=(\pi^*)^{-1}:A_*(V)\lra A_{*-r}(Y).
\eeq
By \cite[Example 3.3.2]{Ful}, the Euler class of $V$ is the composite
\beq\label{n12}
e(V)=c_r(V):A_*(Y)\mapright{(0_V)_*} A_*(V)\mapright{0^!_V} A_{*-r}(Y)
\eeq where $0_V:Y\to V$ denotes the inclusion of the zero section. 

\medskip

The Euler class $e(V)=c_r(V)\in A^r(Y)$ localizes to the zero locus of a section $s\in H^0(V)$. Let 
\beq\label{n13}
\imath:X=s^{-1}(0)\lra Y\eeq
denote the zero locus of $s$, i.e. the closed substack of $Y$ defined by the image of $s^\vee:V^\vee\to \sO_Y$. Let 
\beq\label{n53} \sp:A_*(Y)\lra A_*(C_{X/Y})\eeq
be the specialization homomorphism \cite[Proposition 5.2]{Ful}, where $C_{X/Y}$ denotes the normal cone of $X$ in $Y$. 
As $X=s^{-1}(0)$, $C_{X/Y}\subset V|_X$ and we have the localized Euler class \cite[Chapter 6]{Ful}
\beq\label{n15}
e(V,s):A_*(Y)\mapright{\sp} A_*(C_{X/Y})\lra A_*(V|_X)\mapright{0^!_{V|_X}} A_{*-r}(X).\eeq
Moreover, for any morphism $g:Y'\to Y$ and $X'=X\times_YY'$, we have homomorphisms
$$e(V,s):A_*(Y')\mapright{\sp} A_*(C_{X'/Y'})\lra A_*(C_{X/Y}|_{X'})\lra A_*(V|_{X'})\mapright{0^!_{V|_{X'}}} A_{*-r}(X')$$
using the natural embedding $C_{X'/Y'}\subset C_{X/Y}|_{X'}$. 
By \cite[Chapter 6]{Ful}, these homomorphisms define a bivariant class, called the \emph{localized Euler class} 
\beq\label{n14}
e(V,s)\in A^r(X\mapright{\imath} Y)= A^r_X(Y)\eeq 
whose pushforward by $\imath$ is the Euler class $e(V)=c_r(V)$. 
It is obvious from the definitions that the localized Euler class equals the refined Gysin map 
\beq\label{n33} 0_V^!=e(V,s):A_*(Y)\lra A_{*-r}(X)\eeq
by the Cartersian square 
$$\xymatrix{
X\ar[r]\ar[d] & Y\ar[d]^s\\
Y\ar[r]^{0_V} & V.}$$

\medskip

By \eqref{n12}, the Euler class $e(V)$ can be obtained from the Gysin map $0^!_V$.    
Conversely, the Gysin map $0^!_V$ is obtained from the localized Euler class by 
$$e(V|_V,\tau):A_*(V)\lra A_{*-r}(Y)$$
where $V|_V=\pi^*V=V\times_YV$ and $\tau$ is the tautological section of $V|_V\to V$
whose zero locus is $Y$. The identity
\beq\label{n14.1}
e(V|_V,\tau)=0_V^!\eeq
follows from the definition \eqref{n15} as $C_{Y/V}=V$ and $\sp=\id_{A_*(V)}$. 

By construction, for any section $s\in H^0(V)$ and $X=s^{-1}(0)$, we have the equality 
\beq\label{n47} e(V)=e(V,s)\circ \imath_*:A_*(X)\mapright{\imath_*}A_*(Y)\mapright{e(V,s)} A_{*-r}(X)\eeq
by \eqref{n12} since the composition $$A_*(X)\mapright{\imath_*} A_*(Y)\mapright{\sp} A_*(C_{X/Y})\lra A_*(V|_X)$$ equals $(0_V)_*$.  

\medskip

\subsection{The square root Euler class of Edidin-Graham}\label{S2.1} 

In this subsection, we recall the construction of square root Euler class by Edidin-Graham in \cite{EdGr}.

Let $Y$ be a \DM stack and $(F,q,or)$ be an $SO(2n)$-bundle on $Y$. 
Following \cite{EdGr}, let $Q_{n-1}\subset \PP F$ denote the quadric bundle defined by the vanishing $q(v,v)=0$. Let $$F_1=\sO_{\PP F}(-1)|_{Q_{n-1}}\subset F|_{Q_{n-1}}$$ 
be the tautological line bundle and let $F_1^\perp$ be its orthogonal complement with respect to $q$. 
Then $F_1\subset F_1^\perp$ and $F_1^\perp/F_1$ is an $SO(2n-2)$-bundle by \eqref{n5} and \eqref{n6}. 
Let $$Q_{n-2}\subset \PP (F_1^\perp/F_1)$$
denote the quadric bundle defined by the vanishing of the quadratic form on $F_1^\perp/F_1$. 
Let $$F_2=\sO_{\PP (F_1^\perp/F_1)}(-1)|_{Q_{n-2}}\times_{F_1^\perp/F_1} F_1^\perp \subset F|_{Q_{n-2}}$$ 
and let $F_2^\perp$ be the orthogonal complement of $F_2$ in $F|_{Q_{n-2}}$. 
Then $F_2\subset F_2^\perp$ and $F_2^\perp/F_2$ is an $SO(2n-4)$-bundle.  
Continuing this way, we obtain a tower
\beq\label{55}\xymatrix{
Q_1\ar[r]\ar[dr]_{p_1} & \PP(F_{n-2}^\perp/F_{n-2})\ar[d]\\ 
&Q_2\ar[r]& \cdots \\
&&\cdots & \ar[d] \\
&& & Q_{n-2} \ar[r]\ar[dr]_{p_{n-2}} & \PP (F_1^\perp/F_1)\ar[d]\\
&&&& Q_{n-1}\ar[r]\ar[dr]_{p_{n-1}} & \PP F\ar[d]\\
&&&&& Y
}\eeq
whose horizontal arrows are inclusions and vertical arrows are the bundle projection maps. 

Let $Q=Q_1$ and $p=p_{n-1}\circ p_{n-2}\circ\cdots \circ p_1$.
By abuse of notation, we denote by $F_i$ the pullback of the rank $i$ isotropic subbundle $F_i$ to $Q$ so that we have a flag
$$F_1\subset F_2\subset \cdots \subset F_n \subset F|_Q$$
of isotropic subbundles with $\rank F_i=i$ where $F_n$ is the unique positive maximal isotropic subbundle of $F|_Q$ containing $F_{n-1}$. Let $\Lambda=F_n$ so that we have an exact sequence
\beq\label{59}
0\lra \Lambda \lra F|_Q\lra \Lambda^\vee\lra 0.\eeq

Let $h_i=c_1(\sO_{Q_i}(1))\in A^1(Q_i)$ for $1\le i\le n-1$. Since $Q_i$ are quadric hypersurface bundles, we have
\beq\label{56}
(p_i)_*(h_i^{2i}/2\cap p_i^*(\xi))=\xi,\quad \forall \xi\in A_*(Q_{i+1}).
\eeq
where $Q_n=Y$. We thus have the identity 
\beq\label{57}
p_*(h\cap p^*(\xi))=\xi ,\quad \forall \xi\in A_*(Y). 
\eeq
where 
\beq\label{n19} h=h_1^2h_2^4\cdots h_{n-1}^{2n-2}/2^{n-1}.\eeq
Edidin and Graham in \cite{EdGr} defined the square root Euler class of $F$ as follows. 
\begin{defi} \cite{EdGr} For an $SO(2n)$-bundle $F$ on $Y$, we have a bivariant class
\beq\label{60}
\sqrt{e}(F):A_*(Y)\lra A_{*-n}(Y),\eeq
 in $A^n(Y)$ defined by 
$$\sqrt{e}(F)(\xi)=p_*\left(e(\Lambda) h\cap p^*(\xi)\right)
=p_*\left((-1)^n0^!_{\Lambda^\vee}(0_{\Lambda^\vee})_* (h\cap p^*(\xi))\right)$$
 which satisfies  
$$\sqrt{e}(F)^2=(-1)^ne(F).$$ 
\end{defi}
By \cite[Theorem 1 (c)]{EdGr}, if $F$ admits a positive maximal isotropic subbundle $V$ so that we have
an exact sequence 
\beq\label{63}
0\lra V\lra F\lra V^\vee\lra 0,\eeq
we have the equality
\beq\label{61}
\sqrt{e}(F)=e(V)=(-1)^n0_{V^\vee}^!(0_{V^\vee})_*\in A^n(Y).
\eeq
More generally, if $K$ is an isotropic subbundle of $F$, we have  
\beq\label{n42} \sqe(F)=e(K)\sqe(K^\perp/K)=\sqe(K^\perp/K) e(K).\eeq
Indeed, by lifting the identity to the maximal isotropic flag variety of $K^\perp/K$ (cf. \eqref{55}), we may assume that $F$ admits a positive maximal isotropic subbundle $V$ containing $K$ with $V/K$ maximal isotropic in $K^\perp/K$, so that 
$e(V)=e(K)e(V/K)$ which implies \eqref{n42}. 

\begin{rema}
In \cite{EdGr}, the square root Euler class $\sqe(F)$ was defined for schemes but everything in \cite{EdGr} works over quasi-projective \DM stacks \cite{KrDM} as follows. Recall that we consider quasi-projective \DM stacks only in this paper as we declared in \S\ref{S1.4}. 

Let $F$ be an $SO(2n)$-bundle over a quasi-projective \DM stack $Y$. Since $Y$ is a quotient stack, there exists a vector bundle $V$ of rank $r$ over $Y$ and an open subscheme $U \subseteq V$ such that the codimension of $V\setminus U$ in $V$ is large enough. 
Hence for fixed $i$, $A_i(Y) = A_{i+r}(V)=A_{i+r}(U)$ and we can define the square root Euler class $\sqrt{e}(F) : A_* (Y) \to A_{*-n} (Y)$ by $\sqrt{e}(F|_U)$. 
This construction is independent of the choice of $V$ and $U$. Indeed, if $U$ and $U'$ are two such choices, then considering the fiber product $U\times_Y U'$ proves the claim because $\sqrt{e}(F|_U)$ commutes with smooth pullbacks. 

With this definition, it is easy to show that the square root Euler class $\sqe(F)$ over a \DM stack $Y$ commutes with proper pushforwards, flat pullbacks, and lci pullbacks, and \eqref{n42} holds for any isotropic subbundle $K$ of $F$. 
\end{rema}

%

\bigskip

\section{Localized square root Euler class}\label{Sn4}

In this section, we construct a localized square root Euler class
$$\sqrt{e}(F,s)\in A^n_X(Y)$$ for an $SO(2n)$-bundle $F$ and an isotropic section $s\in H^0(F)$ with $X=s^{-1}(0)$ by using a method in \cite{Kis, KLc, KLk, KLq}. 
We will prove in \S\ref{SOT} that the square root Euler class $\sqrt{e}(F,s)$ defined in this section coincides with the Oh-Thomas class constructed in \cite{OhTh}. 

\medskip

\subsection{Localized Euler class by blowup}\label{Sn4.1}

To warm up, let us consider an alternative construction of the localized Euler class \eqref{n15} by a blowup. Let $V$ be a vector bundle of rank $r$ on $Y$ and $s$ be a section of $V$ with $X=s^{-1}(0)$. 

Let $\rho:\tY\to Y$ be the blowup of $Y$ along $X$ and let $D$ denote the exceptional divisor so that we have a Cartesian diagram
\beq\label{n30}\xymatrix{
D\ar[r]^{\jmath} \ar[d]_{\rho'} &\tY\ar[d]^\rho\\
X\ar[r]^{\imath}\ar[d]_\imath & Y\ar[d]^{s}\\
Y\ar[r]^{0_V} & V}.\eeq
By \cite[Proposition 2.3.6]{Kresch} (cf. \cite[Example 1.8.1]{Ful}), we have an exact sequence
\beq\label{n31}
A_*(D)\lra A_*(\tY)\oplus A_*(X)\lra A_*(Y)\lra 0
\eeq
whose arrows are $$\gamma\mapsto (\jmath_*\gamma, -\rho'_*\gamma) \and (\alpha,\beta)\mapsto \rho_*\alpha+\imath_*\beta.$$ 

The dual  $s^\vee:V^\vee\twoheadrightarrow I_X\subset \sO_Y$ pulls back to a surjective homomorphism 
$$V|_{\tY}^\vee\lra \sO_{\tY}(-D)$$
of locally free sheaves. Its dual fits into an exact sequence
\beq\label{n32} 0\lra \sO_{\tY}(D)\lra V|_{\tY}\lra \bar V\lra 0\eeq 
where $\bar V=V|_{\tY}/\sO_{\tY}(D)$. 

Using \eqref{n31}, we define a homomorphism $\mathbf{e}(V,s):A_*(Y)\lra A_{*-r}(X)$ by 
\beq\label{n36}\begin{aligned}\mathbf{e}(V,s)(\xi)&=\rho'_*e(\bar V) \jmath^*\alpha+e(V) \beta\\ 
&=\rho'_*\jmath^*e(\bar V) \alpha+e(V) \beta\end{aligned}\eeq 
for  $\xi=\rho_*\alpha+\imath_*\beta$ with $\alpha\in A_*(\tY)$, $\beta\in A_*(X)$. Here $\jmath^*\alpha=D\cdot\alpha$. 

From \eqref{n30} and \eqref{n32}, we have
$$0_V^!=e(\bar V) \jmath^*:A_*(\tY)\lra A_{*-r}(D)$$
by the excess intersection formula \cite[Theorem 6.3]{Ful}. 
For $\alpha\in A_*(\tY)$, 
\beq\label{n34}\rho'_*(e(\bar V) \jmath^*\alpha) = \rho'_*(0_V^! \alpha)=0_V^!\rho_*\alpha=e(V,s)\cap \rho_*\alpha.\eeq
For $\beta\in A_*(X)$, we have the equality
\beq\label{n35}
e(V) \beta=0_V^!\beta=0_V^!\imath_*\beta= e(V,s)\cap \imath_*\beta\eeq
from the Cartesian diagam 
$$\xymatrix{
X\ar[r]^\id \ar[d]_\id& X\ar[d]^{\imath}\\
X\ar[r]^\imath\ar[d]_\imath & Y\ar[d]^s\\
Y\ar[r]^{0_V} & V.
}$$
By \eqref{n34} and \eqref{n35}, we find that $\mathbf{e}(V,s)$ in \eqref{n36} equals the localized Euler  class $e(V,s)$ in \eqref{n15}. In particular, \eqref{n36} is independent of the choice of $\alpha$ and $\beta$ satisfying $\xi=\rho_*\alpha+\imath_*\beta$, because $e(V,s)(\xi)$ is.

\medskip

\subsection{Cosection localized Gysin map}\label{Sn4.2}

A similar blowup construction was used in \cite{KLc, KLk, KLq} for cosection localized Gysin maps, which we recall now. 

Let $V$ be a vector bundle of rank $r$ over $Y$ and $\sigma:V\to \sO_Y$ be a cosection. Let $Y(\sigma)$ be the zero locus of $\sigma^\vee\in H^0(V^\vee)$ and 
$$V(\sigma)=V|_{Y(\sigma)}\cup \ker(\sigma|_{Y-Y(\sigma)})\subset V.$$
Let $\tY$ be the blowup of $Y$ along $Y(\sigma)$ so that we have a Cartesian square 
\beq\label{n37}\xymatrix{
D\ar[r]^{\jmath} \ar[d]_{\rho'} &\tY\ar[d]^\rho\\
Y(\sigma)\ar[r]^{\imath} & Y}\eeq
and a surjection $\sigma|_{\tY}:V|_{\tY}\to \sO_{\tY}(-D)$ whose kernel is denoted by 
$V'$. Let $\tilde{\rho}:V'\to V(\sigma)$ denote the restriction of the natural morphism $V|_{\tY}\to V$ induced by $\rho$. Let $\tilde{\imath}:V|_{Y(\sigma)}\to V(\sigma)$ denote the inclusion map.

The cosection localized Gysin map is a homomorphism
$$0^!_{V,\sigma}:A_*(V(\sigma))\lra A_{*-r}(Y(\sigma))$$
defined by 
\beq\label{n38} 0^!_{V,\sigma}(\xi)=-\rho'_*(\jmath^*0^!_{V'}\alpha)+0_{V|_{Y(\sigma)}}^! \beta
=-\rho'_*(0_{V'}\circ \jmath)^!\alpha + 0_{V|_{Y(\sigma)}}^!\beta\eeq
where 
$$\xi=\tilde{\rho}_*\alpha+\tilde{\imath}_*\beta 
\quad \text{for } \alpha\in A_*(V'), \beta\in A_*(V|_{Y(\sigma)}).$$ 
It was shown in \cite{KLc, KLk, KLq} that $0^!_{V,\sigma}(\xi)$ is independent of the choices of $\alpha$ and $\beta$. 

It is straightforward to check that $0^!_{V,\sigma}$ commutes with proper pushforward and flat pullback. Indeed, if $g:Y_1\to Y$ is a proper morphism, letting $V_1$ and $\sigma_1$ denote the pullbacks of $V$ and $\sigma$ by $g$, we have a commutative diagram
$$\xymatrix{
A_*(V_1(\sigma_1))\ar[r]^{0^!_{V_1,\sigma_1}} \ar[d]_{(g_1)_*} & A_{*-r}(Y_1(\sigma_1))\ar[d]^{(g|_{Y_1(\sigma_1)})_*}\\
A_*(V(\sigma))\ar[r]^{0^!_{V,\sigma}} &A_{*-r}(Y(\sigma)).
}$$ where $g_1:V_1(\sigma_1)\to V(\sigma)$ is the induced morphism. 
If $g:Y_1\to Y$ is flat, we have the equality 
$$0^!_{V_1,\sigma_1}\circ g_1^*=(g|_{Y_1(\sigma_1)})^*\circ 0^!_{V,\sigma}.$$

From \eqref{n36} and \eqref{n38}, it is straightforward that if $V$ is a vector bundle of rank $r$ and $s$ is a section of $V$, then we have the equality 
\beq\label{n52}
e(V,s)=(-1)^r0^!_{V^\vee,s^\vee}(0_{V^\vee})_*.\eeq

\medskip

\subsection{Localized square root Euler class by blowup}\label{Sn4.3}

By the same method as we saw above in \S\ref{Sn4.1} and \S\ref{Sn4.2}, we now construct a square root Euler class of an $SO(2n)$-bundle localized by an isotropic section. 

Let $(F,q,or)$ be an $SO(2n)$-bundle over $Y$ and let $s$ be an isotropic section of $F$. 
Let $X=s^{-1}(0)$ be the zero locus of $s$ and $\rho:\tY\to Y$ be the blowup of $Y$ along $X$ so that we have a Cartesian square
\[\xymatrix{
D\ar[r]^\jmath \ar[d]_{\hat{\rho}} & \tY\ar[d]^\rho\\
X\ar[r]^\imath & Y.}\]
The dual $s^\vee$ of $s$ induces a surjection $F^\vee|_{\tY}\to \sO_{\tY}(-D)$ whose dual 
$$L:= \sO_{\tY}(D)\hookrightarrow F|_{\tY}$$ is the inclusion of an isotropic line subbundle by Lemma \ref{n0}. Let 
$$\tF=L^\perp/L$$ be the induced $SO(2n-2)$-bundle by \S\ref{Sorth}. 

Using \eqref{n31}, the localized square root Euler class $\sqe(F,s)$ can be defined by \eqref{n36} with $e$ replaced by $\sqe$ and $\bar V$ by the orthogonal bundle $\tF$. 
\begin{defi}\label{n39}
The \emph{localized square root Euler class} of the $SO(2n)$-bundle $(F,q,or)$ is defined by
\beq\label{n40} 
\sqrt{e}(F,s):A_*(Y)\lra A_{*-n}(X),\eeq
\beq\label{n41}
\sqrt{e}(F,s)(\xi)={\hat{\rho}}_*\jmath^*\sqe(\tF)\alpha+\sqe(F) \beta
\eeq
for $\xi=\rho_*\alpha+\imath_*\beta$ with 
$\alpha\in A_*(\tY)$, $\beta\in A_*(X)$ by \eqref{n31}.
\end{defi}

The following lemma shows that \eqref{n41} is well defined.
\begin{lemm}\label{n43}
The localized square root Euler class $\sqe(F,s)(\xi)$ is independent of the choices of $\alpha$ and $\beta$ satisfying $\xi=\rho_*\alpha+\imath_*\beta$.
\end{lemm}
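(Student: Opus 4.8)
The plan is to deduce the independence from the exact sequence \eqref{n31}, together with the self-intersection formula and the multiplicativity \eqref{n42} of the square root Euler class. Suppose $(\alpha,\beta)$ and $(\alpha',\beta')$ both satisfy $\xi=\rho_*\alpha+\imath_*\beta=\rho_*\alpha'+\imath_*\beta'$. By exactness of \eqref{n31} at the middle term, there is $\gamma\in A_*(D)$ with $(\alpha-\alpha',\beta-\beta')=(\jmath_*\gamma,-\hat{\rho}_*\gamma)$. Substituting into \eqref{n41}, the difference of the two candidate values of $\sqe(F,s)(\xi)$ equals $\hat{\rho}_*\jmath^*\sqe(\tF)\jmath_*\gamma-\sqe(F)\hat{\rho}_*\gamma$. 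Hence it suffices to prove the identity
\[\hat{\rho}_*\bigl(\jmath^*\sqe(\tF)\jmath_*\gamma\bigr)=\sqe(F)\hat{\rho}_*\gamma\qquad\text{for all }\gamma\in A_*(D).\]

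To establish this I would compute the left-hand side. Since $\sqe(\tF)$ is a bivariant class it commutes with the Gysin pullback $\jmath^*$, so $\jmath^*\bigl(\sqe(\tF)\cap\jmath_*\gamma\bigr)=\sqe(\tF|_D)\cap\jmath^*\jmath_*\gamma$. The self-intersection formula gives $\jmath^*\jmath_*\gamma=c_1(\sO_{\tY}(D)|_D)\cap\gamma=c_1(L|_D)\cap\gamma=e(L|_D)\cap\gamma$, because $L=\sO_{\tY}(D)$ restricts on $D$ to the normal bundle $N_{D/\tY}$. Thus the left-hand side becomes $\hat{\rho}_*\bigl(\sqe(\tF|_D)\,e(L|_D)\cap\gamma\bigr)$.

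Now $L|_D$ is an isotropic line subbundle of the $SO(2n)$-bundle $F|_D$, and, because forming the orthogonal complement, the quotient bilinear form \eqref{n5} and the induced orientation \eqref{n6} are all compatible with pullback, one has $(L|_D)^\perp/(L|_D)=\tF|_D$ as $SO(2n-2)$-bundles. Hence \eqref{n42} applied to the isotropic subbundle $L|_D\subset F|_D$ gives $\sqe(\tF|_D)\,e(L|_D)=\sqe(F|_D)$, so the left-hand side is $\hat{\rho}_*\bigl(\sqe(F|_D)\cap\gamma\bigr)$. Finally $F|_D=\hat{\rho}^*(F|_X)$, so the bivariance of $\sqe(F)$ and the projection formula yield $\hat{\rho}_*\bigl(\sqe(F|_D)\cap\gamma\bigr)=\sqe(F|_X)\cap\hat{\rho}_*\gamma=\sqe(F)\hat{\rho}_*\gamma$, the desired right-hand side. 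The step I expect to require the most care is the naturality assertion $(L|_D)^\perp/(L|_D)=\tF|_D$ \emph{with matching orientations} — i.e. that every piece of the construction in \S\ref{Sorth} commutes with restriction along $\jmath$ — together with the precise sign bookkeeping in \eqref{n42} and in the self-intersection formula; the rest is a formal manipulation of bivariant classes.
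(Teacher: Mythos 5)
Your argument is correct and is essentially the paper's own proof: both reduce to the identity $\hat{\rho}_*\jmath^*\sqe(\tF)\jmath_*\gamma=\sqe(F)\hat{\rho}_*\gamma$ via the exact sequence \eqref{n31}, and both establish it by combining bivariance of $\sqe(\tF)$, the self-intersection formula $\jmath^*\jmath_*=e(L)$, and the factorization $\sqe(F|_{\tY})=\sqe(\tF)e(L)$ from \eqref{n42}. The orientation-compatibility point you flag is indeed the content of \eqref{n5}--\eqref{n6} being natural under pullback, which the paper takes for granted.
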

\begin{proof}
Let $\rho_*\alpha+\imath_*\beta=\rho_*\alpha'+\imath_*\beta'$
so that
$\rho_*(\alpha-\alpha')=\imath_*(\beta'-\beta)$.
By \eqref{n31}, there exists $\gamma\in A_*(D)$ such that $\jmath_*\gamma=\alpha-\alpha'$ and ${\hat{\rho}}_*\gamma=\beta'-\beta$. 

Since $L=\sO_{\tY}(D)$ is isotropic in $F|_{\tY}$, by \eqref{n42}, we have
$$\sqe(F|_{\tY})=\sqe(\tF)e(L)$$
and hence 
$$\begin{aligned}
{\hat{\rho}}_*\jmath^*\sqe(\tF)(\alpha-\alpha')&={\hat{\rho}}_*\jmath^*\sqe(\tF)\jmath_*\gamma
={\hat{\rho}}_*\sqe(\tF)\jmath^*\jmath_*\gamma\\
&={\hat{\rho}}_*\sqe(\tF)e(L)\gamma ={\hat{\rho}}_*\sqe(F|_{\tY})\gamma\\
&=\sqe(F) {\hat{\rho}}_*\gamma=\sqe(F) (\beta'-\beta)\end{aligned}$$
which proves the lemma.
\end{proof}

The following lemma shows that $\sqe(F,s)$ is indeed a localization of the Edidin-Graham class $\sqe(F)$.
\begin{lemm}\label{n44}
$\imath_*\circ \sqe(F,s)=\sqe(F):A_*(Y)\to A_{*-n}(Y)$.
\end{lemm}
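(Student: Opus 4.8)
The plan is to compute $\imath_*\sqe(F,s)(\xi)$ directly from the defining formula \eqref{n41}, using the decomposition $\xi=\rho_*\alpha+\imath_*\beta$ supplied by \eqref{n31}, and to match the answer against $\sqe(F)(\xi)$ by exploiting the compatibility of the Edidin-Graham class with proper pushforward together with the splitting relation \eqref{n42} for the isotropic line subbundle $L=\sO_{\tY}(D)$.

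First I would apply $\imath_*$ to the right-hand side of \eqref{n41}, obtaining
$$\imath_*\sqe(F,s)(\xi)=\imath_*\hat\rho_*\jmath^*\sqe(\tF)\alpha+\imath_*\sqe(F)\beta.$$
For the second summand, since $\sqe(F)$ is a bivariant class it commutes with the proper pushforward $\imath_*$, so $\imath_*\sqe(F)\beta=\sqe(F)\imath_*\beta$. For the first summand, I would use $\imath\circ\hat\rho=\rho\circ\jmath$ from the Cartesian square to rewrite $\imath_*\hat\rho_*=\rho_*\jmath_*$, and then push the class $\sqe(\tF)$ past $\jmath_*$: by the projection formula $\jmath_*\jmath^*\sqe(\tF)\alpha=\jmath_*(\jmath^*\sqe(\tF)\cdot\jmath^!\alpha)$ — more precisely, using $\jmath^*\jmath_*(\,-\,)=c_1(\sO_{\tY}(D)|_D)\cap(\,-\,)$ only when needed — the cleanest route is to note $\jmath_*\jmath^*\sqe(\tF)\alpha=\sqe(\tF)\cdot(\jmath_*\jmath^*\alpha)$, and since $\jmath$ is a regular embedding of a Cartier divisor, $\jmath_*\jmath^*\alpha=e(\sO_{\tY}(D))\cap\alpha=e(L)\cap\alpha$ (here $L=\sO_{\tY}(D)$ is regarded on $\tY$). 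Hence $\rho_*\jmath_*\jmath^*\sqe(\tF)\alpha=\rho_*\bigl(\sqe(\tF)\,e(L)\cap\alpha\bigr)$. By \eqref{n42} applied to the isotropic line subbundle $L\subset F|_{\tY}$ with $\tF=L^\perp/L$, we have $\sqe(\tF)\,e(L)=\sqe(F|_{\tY})$, so this becomes $\rho_*\bigl(\sqe(F|_{\tY})\cap\alpha\bigr)=\sqe(F)\rho_*\alpha$, using again that $\sqe(F)$ commutes with the proper pushforward $\rho_*$.

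Putting the two pieces together gives $\imath_*\sqe(F,s)(\xi)=\sqe(F)\rho_*\alpha+\sqe(F)\imath_*\beta=\sqe(F)(\rho_*\alpha+\imath_*\beta)=\sqe(F)(\xi)$, which is the claim. Since this holds for all $\xi$, and all the identities used (bivariance of $\sqe$, the projection/excess formula for the Cartier divisor $\jmath$, and the splitting \eqref{n42}) are valid in the \DM-stack setting, we are done. The one point requiring a little care — and the only place I expect any friction — is the manipulation $\jmath_*\jmath^*\sqe(\tF)\alpha=\sqe(\tF)\jmath_*\jmath^*\alpha$ and the identification $\jmath_*\jmath^*\alpha=e(L)\cap\alpha$; this is exactly the computation already carried out inside the proof of Lemma \ref{n43} (where the chain ${\hat\rho}_*\jmath^*\sqe(\tF)\jmath_*\gamma=\hat\rho_*\sqe(\tF)e(L)\gamma=\hat\rho_*\sqe(F|_{\tY})\gamma=\sqe(F)\hat\rho_*\gamma$ appears), so in practice one can simply cite that computation rather than repeat it, making this proof quite short.
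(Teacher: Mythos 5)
Your proposal is correct and follows essentially the same route as the paper's proof: apply $\imath_*$ to the defining formula, use $\imath_*\hat\rho_*=\rho_*\jmath_*$ and the self-intersection identity $\jmath_*\jmath^*=e(L)$, then invoke \eqref{n42} in the form $\sqe(F|_{\tY})=e(L)\sqe(\tF)$ and the bivariance of $\sqe(F)$. The point you flag as needing care is handled in the paper exactly as you anticipate, so there is nothing to add.
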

\begin{proof}
By $\sqrt{e}(F)=e(L)\sqrt{e}(\tF)$ (cf. \eqref{n42}) and the identity $$\jmath_*\jmath^*=c_1(L)=e(L),$$ using the notation above, we have 
\begin{align*}
\imath_*\sqrt{e}(F,s)(\rho_*\alpha + \imath_* \beta)  
& = \imath_*{\hat{\rho}}_* \jmath^* \sqrt{e}(\tF) \alpha + \imath_*\sqrt{e}(F)\beta\\  
& = \rho_* \jmath_*\jmath^*\sqrt{e}(\tF) \alpha + \sqrt{e}(F)\imath_*\beta\\
& = \rho_* e(L)\sqrt{e}(\tF) \alpha + \sqrt{e}(F)\imath_*\beta\\
& = \sqrt{e}(F) (\rho_*  \alpha + \imath_*\beta).
\end{align*}
which completes the proof.
\end{proof}

Next we prove that $\sqe(F,s)$ is a bivariant class in $A^n_X(Y)$ (cf. \S\ref{nS3.1}).
\begin{lemm}\label{102}
Let $F$ be an $SO(2n)$-bundle on $Y$ with an isotropic section $s$ whose zero locus is $X$. 
Consider a Cartesian diagram of \DM stacks
$$\xymatrix{X' \ar[d]_{h} \ar[r]^{\imath'} & Y' \ar[d]^f \ar[r] & Z'\ar[d]^g \\
X \ar[r]^{\imath} & Y \ar[r] & Z.}$$
\begin{enumerate}
\item If $f:Y'\to Y$ is a proper morphism, then 
\begin{equation}\label{102.1}
\sqrt{e}(F,s) \circ f_* = h_* \circ \sqrt{e}(f^*F,f^*s).    
\end{equation}
\item If $f:Y'\to Y$ is an equi-dimensional flat morphism, then 
\begin{equation}\label{102.2}
h^* \circ \sqrt{e}(F,s) =  \sqrt{e}(f^*F,f^*s) \circ f^*.    
\end{equation}
\item If $g:Z'\to Z$ is a local complete intersection morphism, then 
\begin{equation}\label{102.3}
g^! \circ \sqrt{e}(F,s) =  \sqrt{e}(f^*F,f^*s)\circ g^!.
\end{equation}
\end{enumerate}
\end{lemm}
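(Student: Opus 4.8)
The plan is to verify each of the three compatibilities by tracking how the defining formula \eqref{n41} behaves under the relevant operation, reducing everything to the known functoriality of the Edidin-Graham class $\sqe(-)$ (which commutes with proper pushforward, flat pullback, and lci pullback by the Remark after \eqref{61}), the functoriality of ordinary Chern/Euler classes, and the compatibility of blowups with the three types of base change. The key structural observation is that the blowup $\rho:\tY\to Y$ of $Y$ along $X=s^{-1}(0)$ behaves well under pullback along $f$ (resp. $g$) in all three cases, because $X$ is the zero locus of a section and the three operations (proper pushforward along the source, flat pullback, lci pullback) all interact compatibly with the exact sequence \eqref{n31} and with the blowup. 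Throughout I will write $\rho:\tY\to Y$, $\hat\rho:D\to X$, $L=\sO_{\tY}(D)\hookrightarrow F|_{\tY}$, $\tF=L^\perp/L$, and similarly $\rho':\tY'\to Y'$, etc., for the pulled-back data on $Y'$.

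For (1), given $\xi'\in A_*(Y')$, decompose $\xi'=\rho'_*\alpha'+\imath'_*\beta'$ using \eqref{n31} on $Y'$. First I note that $f$ induces a proper morphism $\bar f:\tY'\to\tY$ (the blowup of $Y'$ along $X'$ maps to the blowup of $Y$ along $X$, since $X'=X\times_YY'$ and pullback of the ideal sheaf of $X$ is the ideal sheaf of $X'$ — here $X$ being a zero locus of a section is what makes this clean) together with $\bar f:D'\to D$ and $h:X'\to X$, and that $f_*\xi'=\rho_*(\bar f_*\alpha')+\imath_*(h_*\beta')$. Moreover $\bar f^*L\cong L'$, hence $\bar f^*\tF\cong\tF'$. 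Then I compute: $\sqe(F,s)(f_*\xi')=\hat\rho_*\jmath^*\sqe(\tF)(\bar f_*\alpha')+\sqe(F)(h_*\beta')$; using that $\jmath^*$ commutes with the proper pushforward $\bar f_*$ along the Cartesian square $D'\to D$, $\tY'\to\tY$, that $\sqe(\tF)$ commutes with $\bar f_*$ (pulling it back to $\tF'$), that $\hat\rho_*\bar f_*=h_*\hat\rho'_*$, and that $\sqe(F)$ commutes with $h_*$, this equals $h_*\big(\hat\rho'_*\jmath'^*\sqe(\tF')\alpha'+\sqe(F')\beta'\big)=h_*\sqe(f^*F,f^*s)(\xi')$, as desired. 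The independence of the decomposition (Lemma \ref{n43}) guarantees this is well-defined.

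Case (2) is entirely parallel: for $f$ equi-dimensional flat, $\tY'\to\tY$ is again flat, $f^*$ commutes with $\rho_*$ in the sense that $f^*(\rho_*\alpha+\imath_*\beta)=\rho'_*(\bar f^*\alpha)+\imath'_*(h^*\beta)$ (flat pullback of \eqref{n31}), and $\jmath^*$, $\sqe(\tF)$, $\sqe(F)$ all commute with flat pullback, so the two sides of \eqref{102.2} agree term by term. For case (3), lci pullback $g^!$ factors (after choosing a factorization of $g$ into a regular embedding followed by a smooth morphism) into operations each of which either is flat pullback — handled by (2) — or is a refined Gysin map along a regular embedding; for the latter one uses that $g^!$ commutes with $\hat\rho_*$ (compatibility of refined Gysin maps with proper pushforward, \cite[Theorem 6.2]{Ful}), with $\jmath^*$ (commutativity of Gysin maps), and with the bivariant classes $\sqe(\tF)$, $\sqe(F)$ (which commute with all Gysin maps, being bivariant — the Remark after \eqref{61}), plus the fact that the blowup of the pullback equals the pullback of the blowup in this setting. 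The main obstacle — and the only point requiring genuine care rather than bookkeeping — is establishing that the blowup commutes with the three base changes, i.e.\ that $\tY\times_YY'$ is the blowup of $Y'$ along $X'$, so that the decomposition \eqref{n31} on $Y'$ is the pullback of the one on $Y$; this is standard for flat base change and for the base changes appearing in an lci morphism (since the exceptional divisor is a Cartier divisor and pulls back to one), but one should state it explicitly and invoke \cite[Proposition 2.3.6]{Kresch} / \cite[Example 1.8.1]{Ful} for the resulting exact sequence compatibilities.
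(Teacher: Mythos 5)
Parts (1) and (2) of your proposal are essentially the paper's own argument: for proper $f$ one uses the induced map $\tf:\tY'\to\tY$ (which exists by the universal property of the blowup, since $I_{X'}=I_X\cdot\sO_{Y'}$ because both are cut out by the section), the identification $\tf^*L\cong L'$, and the bivariance of $\sqe(\tF)$ and $\sqe(F)$; for flat $f$ one uses that blowups commute with flat base change. Those cases are fine.

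Part (3) has a genuine gap. After reducing to a regular embedding, your argument needs the decomposition $g^!(\rho_*\alpha)$ to be expressible through the blowup $\tY'$ of $Y'$ along $X'$, and you justify this by asserting that ``the blowup of the pullback equals the pullback of the blowup \dots since the exceptional divisor is a Cartier divisor and pulls back to one.'' This is false for non-flat base change: the pullback of a Cartier divisor along a regular embedding need not be Cartier, because components of the fiber product can land inside the exceptional divisor. Concretely, take $Y=\bbA^2$, $X$ the origin, and $Y'$ a line through the origin embedded regularly in $Y$ (viewed as base change along $Z'=Y'\hookrightarrow Z=Y$): then $\tY\times_YY'$ is the total transform, namely the strict transform of the line together with the whole exceptional $\PP^1$, whereas the blowup of $Y'$ along $X'$ is just $Y'$ itself. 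So $\tY\times_YY'\ne\tY'$, the exact sequence \eqref{n31} on $Y'$ does not pull back from the one on $Y$, and your term-by-term matching of the two sides of \eqref{102.3} breaks down exactly at the step you flagged as ``requiring genuine care.''

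The paper's proof of (3) circumvents this. It never compares the two blowups. Instead, for $\alpha\in A_*(\tY)$ it writes $g^!\rho_*\alpha=q_*g^!\alpha$ where $q:Y''=\tY\times_YY'\to Y'$ is the (proper, but in general non-birational) projection from the fiber product, applies part (1) to $q$ to get $\sqe(f^*F,f^*s)q_*g^!\alpha=r_*\sqe(F|_{Y''},s|_{Y''})g^!\alpha$, and then observes that on $Y''$ the pulled-back section $s|_{Y''}$ is a section of the line subbundle $L|_{Y''}\subset F|_{Y''}$, so the special case \eqref{103.1} of Lemma \ref{103} computes $\sqe(F|_{Y''},s|_{Y''})=\sqe(\tF)\circ\jmath^!$ directly, after which $g^!$ commutes with everything in sight. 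If you want to repair your argument, this detour through the fiber product and Lemma \ref{103} (whose case \eqref{103.1} is proved independently of Lemma \ref{102}(3), so there is no circularity) is the missing ingredient.
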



\begin{proof}
We keep using the notation of Definition \ref{n39}.
Let $\rho:\tY\to Y$ and $\rho':\tY'\to Y'$ be the blowups of $Y$ and $Y'$ along $X=s^{-1}(0)$ and $X'=(f^*s)^{-1}(0)$ respectively. Let $D'\subset \tY'$ and $D\subset \tY$ be the exceptional divisors. Then there is a morphism $\tf : \tY' \to \tY$ satisfying
$f\circ\rho'=\rho\circ \tf$.  
Consider the Cartesian diagrams
$$\xymatrix{
D' \ar[r]^{\jmath'} \ar[d]_{\hat\rho'} & \tY' \ar[d]^{\rho'} && D' \ar[r]^{\jmath'} \ar[d]_{\widetilde{h}} & \tY' \ar[d]^{\tf}\\
X' \ar[r]^{\imath'} & Y' && D \ar[r]^{\jmath} & \tY.
}$$
Then we have $\sO_{\tY'}(D')=\tf^*L$ and ${\jmath'}^!=\jmath^!:A_*(\tY') \to A_{*-1}(D').$


(1) If $f:Y' \to Y$ is proper, then for any $\alpha \in A_* (\tY')$, we have
$$\begin{aligned}
\hat\rho_{*}  \jmath^* \sqrt{e}(\tF)  \tf_*   \alpha &=\hat\rho_{*}  \jmath^* \tf_* \sqrt{e}(\tF)    \alpha \\
&=\hat\rho_{*}  \tilde{h}_*{\jmath'}^* \sqrt{e}(\tF)    \alpha \\
&= h_* \hat{\rho}'_{*} {\jmath'}^* \sqrt{e}(\tF)  \alpha\end{aligned}$$
which implies that \eqref{102.1} holds for $\rho'_{*}\alpha$. 
For $\beta \in A_* (X')$, \eqref{102.1} holds for $\imath'_{*}\beta$ 
because $\sqrt{e}(F)$ is bivariant.

(2) If $f:Y' \to Y$ is flat, then $\tY'=\tY\times_Y Y'$ by \cite[Lemma 69.17.3]{StPr}. Since every operation in \eqref{n41} commutes with flat pullbacks, we have \eqref{102.2}.

(3) Now assume that $g:Z' \to Z$ is an lci morphism. Consider the Cartesian diagram 
$$\xymatrix{
X'' \ar[r] \ar[d]_r &Y'' \ar[r]^{f'} \ar[d]^q & \tY \ar[d]^{\rho} \\
X' \ar[r]^{\imath} & Y' \ar[r]^f  & Y.}$$ 
For any $\alpha \in A_* (\tY)$, we have
\begin{align*}
\sqrt{e}(f^*F,f^*s)g^!\rho_*\alpha = \sqrt{e}(f^*F,f^*s)q_*g^!\alpha = r_*\sqrt{e}(F|_{Y''},s|_{Y''})g^!\alpha
\end{align*}
by \eqref{102.1} above. 
As the induced section $s|_{Y''}$ is contained in $L|_{Y''}$, \eqref{103.1} in Lemma \ref{103} below gives us
\begin{align*}
r_* \sqrt{e}(F|_{Y''},s|_{Y''})g^!\alpha 
= r_*\sqrt{e}(\tF)\jmath^! g^!\alpha
= g^! \hat{\rho}_*\sqrt{e}(\tF) \jmath^! \alpha,
\end{align*}
which proves \eqref{102.3} for $\rho_*\alpha$. When $\beta \in A_* (X)$,  \eqref{102.3} holds for $\imath_*\beta$ since $\sqrt{e}(F)$ is bivariant. 
\end{proof}


\begin{lemm}\label{103}
Let $K$ be an isotropic subbundle of the $SO(2n)$-bundle $F$ over $Y$ such that $s \cdot K=0$, i.e. $s \in H^0(K^{\perp}).$ Let $s_1 \in H^0(K^{\perp}/K)$ be the induced isotropic section and 
let $Z=s_1^{-1}(0)$ and $X=s^{-1}(0)$ denote the zero loci of $s_1$ and $s$ respectively. 
Let $s_2=s|_{Z} \in H^0(Z,K|_{Z})$ be the restriction. Then we have
\begin{equation}\label{103.2}
\sqrt{e}(F,s) = e(K|_{Z},s_2) \circ \sqrt{e}(K^{\perp}/K,s_1):A_*(Y)\lra A_{*-n}(X)
\end{equation}

In particular, if $s \in H^0(K)$, then we have
\begin{equation}\label{103.1}
\sqrt{e}(F,s)=\sqrt{e}(K^{\perp}/K)\circ e(K,s).    
\end{equation}
\end{lemm}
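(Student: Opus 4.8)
The plan is to prove the special case \eqref{103.1} first, by the blowup construction of Definition \ref{n39}, and then to bootstrap from it to the general identity \eqref{103.2}.

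\emph{Step 1: the case $s\in H^{0}(K)$.} Here $Z=Y$, so \eqref{103.2} reduces to $\sqe(F,s)=\sqe(K^{\perp}/K)\circ e(K,s)$ (the Edidin--Graham class $\sqe(K^{\perp}/K)$, being assembled from Chern classes, a flat pullback and a proper pushforward, commutes with every bivariant class, so the order of composition is immaterial). I would use the blowup $\rho:\tY\to Y$ along $X=s^{-1}(0)$, with exceptional divisor $D$ and isotropic line subbundle $L=\sO_{\tY}(D)\hookrightarrow F|_{\tY}$ as in Definition \ref{n39}. Since $s$ factors through $K$, so does $s^{\vee}:F^{\vee}\to\sO_{Y}$ through $K^{\vee}$; hence $L$ in fact lands in $K|_{\tY}$ as a subbundle, giving $L\subseteq K|_{\tY}\subseteq K^{\perp}|_{\tY}\subseteq L^{\perp}$, an isotropic subbundle $\overline K:=K|_{\tY}/L$ of $\tF:=L^{\perp}/L$, and $\overline K^{\perp}/\overline K\cong (K^{\perp}/K)|_{\tY}$. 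Now \eqref{n42} for $\overline K\subset\tF$ reads $\sqe(\tF)=e(K|_{\tY}/L)\,\sqe\big((K^{\perp}/K)|_{\tY}\big)$. Plugging this into the defining formula \eqref{n41} for $\sqe(F,s)$ and comparing with the blowup formula \eqref{n36} for $e(K,s)$ --- which runs over the \emph{same} blowup $\tY$ with $\overline V=K|_{\tY}/L$ --- the identity follows term by term, by commuting $\sqe(K^{\perp}/K)$ past the proper pushforward, the divisorial (lci) pullback $\jmath^{*}$ and the Chern class $e(K|_{\tY}/L)$, and finally using $\sqe\big((K^{\perp}/K)|_{X}\big)\,e(K|_{X})=\sqe(F|_{X})$, again by \eqref{n42}. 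The degenerate cases $X=Y$ and $X=\emptyset$ collapse directly to \eqref{n42}.

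\emph{Step 2: the general case.} Both sides of \eqref{103.2} are bivariant classes in $A^{n}_{X}(Y)$, so it suffices to compare the induced homomorphisms on Chow groups; let $\pi:\overline Y\to Y$ be the blowup along $Z=s_{1}^{-1}(0)$, with exceptional divisor $E=\pi^{-1}(Z)$, so that by the exact sequence \eqref{n31} every $\xi\in A_{*}(Y)$ is $\pi_{*}\gamma+\imath_{Z*}\delta$, and by additivity I treat $\xi=\imath_{Z*}\delta$ and $\xi=\pi_{*}\gamma$ separately. For $\xi=\imath_{Z*}\delta$, Lemma \ref{102}(1) collapses everything onto $Z$: the left side becomes $\sqe(F|_{Z},s_{2})(\delta)$ since $s_{2}^{-1}(0)=X$, while $\sqe(K^{\perp}/K,s_{1})(\imath_{Z*}\delta)=\sqe\big((K^{\perp}/K)|_{Z}\big)(\delta)$ because $s_{1}|_{Z}=0$; applying \eqref{103.1} to $K|_{Z}\subset F|_{Z}$ with section $s_{2}\in H^{0}(K|_{Z})$ and using that $\sqe\big((K^{\perp}/K)|_{Z}\big)$ commutes with $e(K|_{Z},s_{2})$ closes this case. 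For $\xi=\pi_{*}\gamma$, the isotropic section $s_{1}$ produces on $\overline Y$ an isotropic line subbundle $M=\sO_{\overline Y}(E)\hookrightarrow(K^{\perp}/K)|_{\overline Y}$ (Lemma \ref{n0}), and I let $P\subset K^{\perp}|_{\overline Y}$ be its preimage under $K^{\perp}|_{\overline Y}\twoheadrightarrow(K^{\perp}/K)|_{\overline Y}$; then $P$ is an isotropic subbundle of $F|_{\overline Y}$ with $K|_{\overline Y}\subset P$, $P/K|_{\overline Y}=M$ and therefore $P^{\perp}/P\cong M^{\perp}/M$, and $\pi^{*}s\in H^{0}(P)$ because its image in $K^{\perp}/K$ is $\pi^{*}s_{1}\in H^{0}(M)$. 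Applying \eqref{103.1} over $\overline Y$ once to $P\subset F|_{\overline Y}$ and once to $M\subset(K^{\perp}/K)|_{\overline Y}$, using $e(M,\sigma_{E})=\jmath_{E}^{*}$ and the induced factorization $e(P,\pi^{*}s)=e(K|_{E},\pi^{*}s|_{E})\circ\jmath_{E}^{*}$ through $E$ (the composition law for localized Euler classes), Lemma \ref{102}(1) for $\pi$, and the identity $\pi^{*}s|_{E}=\hat\pi^{*}s_{2}$ with $\hat\pi:=\pi|_{E}:E\to Z$, one computes that both sides equal
\[ (\hat\pi|_{E\times_{Z}X})_{*}\,e(K|_{E},\pi^{*}s|_{E})\Big(\jmath_{E}^{*}\,\sqe(M^{\perp}/M)(\gamma)\Big), \]
the Edidin--Graham classes being moved freely past the other bivariant operations throughout. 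Well-definedness (independence of the chosen decomposition of $\xi$) is automatic because both sides are honest bivariant classes.

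\emph{Main obstacle.} The delicate point is Step 2 for $\xi=\pi_{*}\gamma$: one must set up the isotropic subbundle $P$ on the blowup $\overline Y$, check the bundle identifications $P^{\perp}/P\cong M^{\perp}/M$ (with matching orientations) and the restricted-section identities, and invoke the composition law for localized Euler classes together with the commutativity of the square root Euler class with all bivariant operations. The remainder is a routine unwinding of Definition \ref{n39} and \S\ref{Sn4.1}.
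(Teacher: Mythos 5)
Your proposal is correct and follows essentially the same route as the paper: the special case \eqref{103.1} via the blowup along $X$ and the reduction formula \eqref{n42}, then the general case by blowing up along $Z$, splitting off the $\imath_{Z*}\delta$ piece, and on the $\pi_*\gamma$ piece lifting the exceptional line bundle $M$ to an isotropic subbundle $P\supset K|_{\overline Y}$ (the paper's $\tL_1\supset K|_{Y_1}$), applying the special case twice, and closing with the composition law $e(P,\pi^*s)=e(K|_E,\pi^*s|_E)\circ e(M,\pi^*s_1)$, which the paper establishes via functoriality of Gysin maps. No gaps.
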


\begin{proof}
We keep using the notation of Definition \ref{n39}. 
Let us first prove \eqref{103.1} and complete the proof of Lemma \ref{102}. 
Suppose $s \in H^0(K)$. Then $L=\sO_{\tY}(D)$ is a subbundle of $\rho^*K$. Hence the quotient bundle $\rho^*K/L$ is an isotropic subbundle of the reduction $\tF=L^{\perp}/L$ and 
$$(\rho^*K/L)^{\perp}/(\rho^*K/L)\cong \rho^*(K^{\perp}/K).$$ 
For $\alpha \in A_*(\tY)$, by \eqref{n42} and the excess intersection formula, 
we have 
\beq\label{n45}\begin{aligned}
\hat{\rho}_* \sqrt{e}(\tF) \jmath^* \alpha & = \hat{\rho}_* \sqrt{e}(K^{\perp}/K) e(\rho^*K/L) \jmath^*\alpha \\
&=\hat{\rho}_* \sqrt{e}(K^{\perp}/K) e(K,s)\alpha\\
&=\sqrt{e}(K^{\perp}/K)e(K,s)\rho_*\alpha.\end{aligned}\eeq
For $\beta \in A_* (X)$, we have
\beq\label{n46}\sqrt{e}(F)\beta = \sqrt{e}(K^{\perp}/K)e(K)\beta=\sqrt{e}(K^{\perp}/K)e(K,s)\imath_*\beta\eeq
by \eqref{n42} and \eqref{n47}. Adding \eqref{n45} and \eqref{n46}, we obtain \eqref{103.1}.

Now consider the general case \eqref{103.2}. 
Let $\imath_1:Z=s_1^{-1}(0) \to Y$ be the inclusion map. 
Let $\rho_1:Y_1\to Y$ be the blowup of $Y$ along $Z$ and $D_1$ denote the exceptional divisor of $\rho_1$. 
By \cite[Proposition 2.3.6]{Kresch}, we have a surjection
$$A_*(Y_1)\oplus A_*(Z)\lra A_*(Y), \quad (\alpha,\beta)\mapsto {\rho_1}_*\alpha+{\imath_1}_*\beta.$$

For $\beta \in A_* (Z)$, we have
$$\begin{aligned}\sqrt{e}(F,s)\imath_{1 *}\beta &= \sqrt{e}(F|_{Z},s_2)\beta = \sqrt{e}(K^{\perp}/K)e(K|_{Z},s_2)\beta\\
&=e(K|_Z,s_2)\sqe(K^\perp/K)\beta\\
&=e(K|_Z,s_2)\sqe(K^\perp/K,s_1){\imath_1}_*\beta\end{aligned}$$
by \eqref{102.1}, \eqref{103.1} and \eqref{n47}. Hence \eqref{103.2} holds for ${\imath_1}_*\beta$. 

It remains to prove 
\beq\label{n48} \sqrt{e}(F,s){\rho_1}_*\alpha=e(K|_Z,s_2)\sqe(K^\perp/K,s_1){\rho_1}_*\alpha \quad \text{for }\alpha\in A_*(Y_1).\eeq
As localized Euler classes and localized square root Euler classes are bivariant classes, 
\eqref{n48} follows from
\beq\label{n49}
\sqrt{e}(F|_{Y_1},s|_{Y_1})\alpha=e(K|_{D_1},s_2|_{D_1})\sqe(K^\perp/K|_{Y_1},s_1|_{Y_1})\alpha.\eeq

To prove \eqref{n49},
let $L_1=\sO_{Y_1}(D_1)$ which is an isotropic subbundle of $K^{\perp}/K|_{Y_1}$. Then  $L_1=\tL_1/(K|_{Y_1})$ for some isotropic subbundle $\tL_1$ of $F|_{Y_1}$, which is contained in $K^{\perp}|_{Y_1}$. Obviously, we have $\tL_1^{\perp}/\tL_1\cong L_1^{\perp}/L_1$. 
The left hand side of \eqref{n49} is 
$$\sqrt{e}(F|_{Y_1},s|_{Y_1})\alpha =\sqrt{e}(\tL_1^{\perp}/\tL_1)e(\tL_1,s|_{Y_1})\alpha=\sqrt{e}(L_1^{\perp}/L_1)e(\tL_1,s|_{Y_1})\alpha$$ 
by \eqref{103.1} because $s|_{Y_1} \in H^0(Y_1,\tL_1)$. 
The right hand side of \eqref{n49} is
$$\begin{aligned}
e(K|_{D_1},s_2|_{D_1})&\sqe(K^\perp/K|_{Y_1},s_1|_{Y_1})\alpha\\
&=e(K|_{D_1},s_2|_{D_1})\sqrt{e}(L_1^{\perp}/L_1)e(L_1,s_1|_{Y_1})\alpha\\
&=\sqrt{e}(L_1^{\perp}/L_1)e(K|_{D_1},s_2|_{D_1})e(L_1,s_1|_{Y_1})\alpha.
\end{aligned}$$
Hence it suffices to prove 
\beq\label{n50}
e(\tL_1,s|_{Y_1})=e(K|_{D_1},s_2|_{D_1})\circ e(L_1,s_1|_{Y_1}).\eeq

To prove \eqref{n50}, let $X_1=X\times_Y Y_1$ and  consider the diagram
$$\xymatrix{
X_1 \ar[r] \ar[d] & D_1 \ar[r] \ar[d]^{s_2|_{D_1}} & Y_1 \ar[d]^{s|_{Y_1}}\\
Y_1 \ar[r]^{0_{K|_{Y_1}}} & K|_{Y_1} \ar[r]^\kappa \ar[d] & \tL_1 \ar[d] \\
& Y_1 \ar[r]^{0_{L_1}} & L_1
}$$
of Cartesian squares, where the bottom right vertical arrow is the quotient map $\tL_1\to \tL_1/(K|_{Y_1})=L_1$. Note that the normal bundle of $\kappa$ is $N_{K|_{Y_1}/\tL_1}=L_1|_{K|_{Y_1}}$ so that 
$$\kappa^!=0_{L_1}^!:A_*(Y_1) \to A_{*-1}(D_1).$$ 
By the functoriality (cf. \cite[Chapter 6]{Ful} or \cite[\S5.1]{Kresch}) for $\kappa\circ 0_{K|_{Y_1}}=0_{\tL_1}$, 
we have the equality 
$$0_{\tL_1}^!=0^!_{K|_{Y_1}}\circ \kappa^!=0_{K|_{D_1}}^! \circ 0_{L_1}^!$$ 
which implies \eqref{n50} by \eqref{n33}.
\end{proof}

We thus proved the following.
\begin{theo}\label{n51}
Let $(F,q,or)$ be an $SO(2n)$-bundle over a \DM stack $Y$ and $s\in H^0(F)$ be an isotropic section whose zero locus is denoted by $X$. 
The localized square root Euler class $\sqe(F,s)\in A_X^n(Y)$ in Definition \ref{n39} is a well defined bivariant class which satisfies
$$\imath_*\circ \sqe(F,s)=\sqe(F).$$
Moreover, $\sqe(F,s)$ coincides with the Oh-Thomas class in \cite{OhTh} constructed by degeneration to the normal cone and cosection localized Gysin map. 
\end{theo}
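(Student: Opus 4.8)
The plan is to split the theorem into two parts: the bivariance and the formula $\imath_*\circ\sqe(F,s)=\sqe(F)$, and the comparison with the Oh-Thomas class. The first two assertions have essentially already been assembled: Lemma \ref{n43} shows $\sqe(F,s)(\xi)$ is well defined (independent of the decomposition $\xi=\rho_*\alpha+\imath_*\beta$ from \eqref{n31}); Lemma \ref{102} verifies compatibility with proper pushforward, equidimensional flat pullback, and lci pullback, which together with the standard extension argument (using that $A^*_X(Y)$ is generated by operations on $A_*$ of base changes, as in \cite[Chapter 17]{Ful}) give that $\sqe(F,s)$ is a genuine bivariant class in $A^n_X(Y)$; and Lemma \ref{n44} gives $\imath_*\circ\sqe(F,s)=\sqe(F)$. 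So for the first half I would simply cite Lemmas \ref{n43}, \ref{n44}, \ref{102} and remark that compatibility with refined Gysin maps together with proper pushforward and flat pullback is exactly what is needed to promote the collection of homomorphisms $\sqe(F,s):A_*(Y')\to A_{*-n}(X')$ for base changes $Y'\to Y$ to a bivariant class.

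The substantive remaining task is the identification with the Oh-Thomas class. The Oh-Thomas localized square root Euler class is built by degeneration to the normal cone $C_{X/Y}\subset F|_X$ followed by their cosection/twisted Gysin construction on the deformation space. My strategy is to reduce this to the already-established properties of $\sqe(F,s)$. First, by the bivariance just proved and by the deformation-invariance properties of the Oh-Thomas class, both sides are determined by their values on the fundamental cycle of $Y$ after arbitrary base change, so it suffices to check the two classes agree as operators $A_*(Y)\to A_{*-n}(X)$ for every $Y$. Next, I would exploit functoriality: via the blowup $\rho:\tY\to Y$ and the decomposition \eqref{n31}, it is enough to check agreement on classes $\rho_*\alpha$ and on classes $\imath_*\beta$. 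On $\imath_*\beta$ both classes restrict to $\sqe(F)$ by the respective analogues of \eqref{n47}, so these agree by Edidin-Graham. On $\rho_*\alpha$, the section $s$ pulled back to $\tY$ factors through the line subbundle $L=\sO_{\tY}(D)$, and here Lemma \ref{103} (specifically \eqref{103.1}, $\sqe(F,s)=\sqe(K^\perp/K)\circ e(K,s)$ with $K=L$) computes our class; I would prove the parallel statement for the Oh-Thomas class — that when the isotropic section lies in an isotropic line subbundle, its construction factors as the ordinary localized Euler class of that line bundle followed by Edidin-Graham's square root Euler class of the reduction — by tracing through their degeneration-to-the-normal-cone definition, where $C_{X/Y}$ for $X=s^{-1}(0)$ with $s$ a section of a line bundle is simply $L^{-1}$-linear and the square-root Gysin map manifestly splits off the line-bundle factor.

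The main obstacle I expect is precisely this last compatibility: matching the blowup construction against the Oh-Thomas degeneration construction in the case where the isotropic section lies in an isotropic line subbundle. One must carefully compare the normal cone $C_{X/Y}$ used by Oh-Thomas with the exceptional divisor data used here, keeping track of orientations — the induced orientation $\bar{or}$ on $L^\perp/L$ from \eqref{n6} must be shown to be the one Oh-Thomas's construction produces — and of the signs $(-1)^n$ appearing in $\sqe^2=(-1)^ne$. A clean way to organize this is to first establish (as a lemma, or by citing the relevant statement in \cite{OhTh}) that the Oh-Thomas class is itself bivariant and satisfies the reduction formula $\sqe^{OT}(F,s)=\sqe(K^\perp/K)\circ e(K,s)$ whenever $s\in H^0(K)$ for an isotropic subbundle $K$; granting that, our Definition \ref{n39} and the Oh-Thomas class are two bivariant classes that agree after the blowup reduction on both pieces of \eqref{n31}, hence are equal. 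I would structure the written proof to isolate this reduction lemma as the single point requiring genuine comparison work, and defer the detailed verification to \S\ref{SOT} (Theorem \ref{n55}), as the excerpt already signals.
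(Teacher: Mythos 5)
For the first half of the theorem your plan coincides with the paper's: Theorem \ref{n51} is stated immediately after Lemmas \ref{n43}, \ref{n44} and \ref{102}, and its proof (apart from the last sentence, which the paper also defers to \S\ref{SOT}) is exactly the accumulation of those lemmas, so citing them is all that is required.

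For the identification with the Oh--Thomas class your route is genuinely different from the one taken in \S\ref{SOT}. The paper first reduces, via the isotropic flag tower and an affine bundle of splittings, to the case $F=V\oplus V^\vee$ with $V$ a positive maximal isotropic subbundle (Lemma \ref{n56} and the discussion following it), then deforms to the normal cone of $Z=s_1^{-1}(0)$ (Lemma \ref{n57}), and finally matches the cosection-localized Gysin map against the blowup construction by a direct computation on the cone (Lemmas \ref{n65} and \ref{n68}). You instead decompose $A_*(Y)$ by \eqref{n31} and check agreement on $\imath_*\beta$ and on $\rho_*\alpha$ separately, reducing everything to a reduction formula for $\sot(F,s)$ when $s$ lies in an isotropic line subbundle. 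This is a clean idea, but as written it presupposes two properties of the Oh--Thomas class that this paper treats as \emph{outputs} rather than inputs of Theorem \ref{n55}: compatibility of $\sot(F,s)$ with proper pushforward (needed to move the computation of $\sot(F,s)(\rho_*\alpha)$ up to $\tY$ and of $\sot(F,s)(\imath_*\beta)$ down to $X$), and independence of $\sot(F,s,V)$ of the choice of positive maximal isotropic $V$ (needed to choose $V\supset L$ on the flag tower so that your reduction formula becomes the elementary identity $e(V,s)=e(V/L)\circ e(L,s)$). If you import these from \cite{OhTh} the argument closes and is shorter; the paper's longer route avoids that dependency and obtains the bivariance of $\sot(F,s)$, and the consistency of its various descriptions, as corollaries of the comparison rather than as ingredients. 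In either case the computation you isolate as the one genuine point of contact between the two constructions --- the behaviour of the degeneration construction when the isotropic section lies in an isotropic line subbundle --- is essentially the content of Lemmas \ref{n65} and \ref{n68}, so you have correctly located where the real work lies.
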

The last statement will be proved in \S\ref{SOT}.

\medskip

An immediate application of Lemma \ref{103} is the following (cf. \cite[Proposition 3.8]{OhTh} for the Oh-Thomas class).
\begin{coro}\label{105}
Let $F$ be an $SO(2n)$-bundle over a \DM stack $X$, and $K$ be an isotropic subbundle. Let $C$ be an isotropic subcone of $F$. Suppose that the cone $C$ is invariant under the action of $K$ on $F$
and $K \subseteq C \subseteq K^{\perp}$. Let $\tau \in H^0(C,F|_C)$, $\tau_1 \in H^0(C/K, K^{\perp}/K)$ be the tautological sections. Then we have
\begin{equation}\label{105.1}
\sqrt{e}((K^{\perp}/K)|_{C/K},\tau_1) = \sqrt{e}(F|_C,\tau) \circ p^* 
\end{equation}
where $p:C \to C/K$ is the projection.
\end{coro}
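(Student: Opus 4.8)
The plan is to deduce \eqref{105.1} from Lemma \ref{103} applied over the base $C$, and then to transport the resulting identity down to $C/K$ using the flat pullback compatibility \eqref{102.2}. The key structural observation is that the $K$-invariant subcone $C$ equals the preimage of $C/K$ under the quotient map $F\to F/K$; hence $p\colon C\to C/K$ is the base change of this smooth (in particular flat) morphism along $C/K\hookrightarrow F/K$, so $p$ is equidimensional flat, $p^{*}$ is available, and Lemma \ref{102}(2) applies to it.

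To carry this out, I would first observe that because $C$ is isotropic the tautological section $\tau\in H^{0}(C,F|_C)$ is isotropic, and because $C\subseteq K^{\perp}$ we have $\tau\cdot(K|_C)=0$, i.e.\ $\tau\in H^{0}(C,K^{\perp}|_C)$; thus Lemma \ref{103} applies with $(Y,F,K,s)$ replaced by $(C,F|_C,K|_C,\tau)$. The induced section of $(K^{\perp}/K)|_C$ is exactly $p^{*}\tau_1$, its zero locus is $Z=\{c\in C:c\in K\}=K$ (using $K\subseteq C\subseteq K^{\perp}$), and the residual section $\tau|_K$ is the tautological section of $K|_K=\pi_K^{*}K\to K$, with $\pi_K\colon K\to X$ the projection and with zero locus the zero section $X$. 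So \eqref{103.2} reads
\[\sqrt{e}(F|_C,\tau)=e(K|_K,\tau|_K)\circ\sqrt{e}\bigl((K^{\perp}/K)|_C,\,p^{*}\tau_1\bigr)\colon A_*(C)\to A_{*-n}(X).\]
Next, the zero locus of $\tau_1$ in $C/K$ is the zero section $X$, whose preimage under $p$ is $K$ with $p|_K=\pi_K$, so Lemma \ref{102}(2) gives
\[\sqrt{e}\bigl((K^{\perp}/K)|_C,\,p^{*}\tau_1\bigr)\circ p^{*}=\pi_K^{*}\circ\sqrt{e}\bigl((K^{\perp}/K)|_{C/K},\tau_1\bigr).\]
Finally $e(K|_K,\tau|_K)=0_K^{!}=(\pi_K^{*})^{-1}$ by \eqref{n14.1} and \eqref{n11}, so composing the previous two displays with $p^{*}$ on the right and cancelling $\pi_K^{*}$ yields $\sqrt{e}(F|_C,\tau)\circ p^{*}=\sqrt{e}((K^{\perp}/K)|_{C/K},\tau_1)$, which is \eqref{105.1}; running the same computation after an arbitrary base change of $C/K$ promotes it to an identity of bivariant classes.

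I expect the only genuine obstacle to be the bookkeeping in the first step: pinning down the zero locus of the section of $(K^{\perp}/K)|_C$ induced by $\tau$ as exactly the subbundle $K\subseteq C$ (so that the ``residual'' base in Lemma \ref{103} is $K$ and not something larger), and recognizing the leftover section $\tau|_K$ as the tautological section of $K|_K\to K$, whose localized Euler class is therefore the Gysin map $0_K^{!}$, inverse to $\pi_K^{*}$. Verifying that $p$ is equidimensional flat (needed for \eqref{102.2}) costs one extra line; after that the argument is a purely formal chain of identities already established.
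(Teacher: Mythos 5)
Your proof is correct and is essentially the paper's own argument: both rest on Lemma \ref{103} applied to $(C,F|_C,K|_C,\tau)$, the flat-pullback compatibility of Lemma \ref{102}(2) for the Cartesian square $K\to C$, $X\to C/K$, and the identity $e(K|_K,\tau_K)=0_K^!=(\pi_K^*)^{-1}$. The only difference is the order in which the two lemmas are invoked, which is immaterial.
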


\begin{proof}
Consider the Cartesian square
$$\xymatrix{
K \ar[r] \ar[d]_{\pi_K} & C \ar[d]^p \\
X \ar[r]^0 & C/K.
}$$
Since the localized square root Euler class commutes with smooth pullbacks by Lemma \ref{102} (2), we have
\begin{equation}\label{105.2}
\pi_K^* \circ \sqrt{e}(K^{\perp}/K,\tau_1) = \sqrt{e}(K^{\perp}/K,\tau_1) \circ p^*:A_*(C/K)\lra A_{*-n}(X).    
\end{equation}
Let $\tau_K \in H^0(K,K|_K)$ be the tautological section. Applying $0_K^!=e(K,\tau_K) $ to \eqref{105.2}, we obtain \eqref{105.1} by \eqref{103.2} in Lemma \ref{103}.
\end{proof}

We end this section with the following version of the Whitney sum formula.
\begin{lemm}\label{107}
Let $F$ and $F'$ be special orthogonal bundles over a \DM stack $Y$, and $s,s'$ be isotropic sections of $F$ and $F'$, respectively. Then we have
\begin{equation}\label{107.1}
\sqrt{e}(F\oplus F',(s,s'))=\sqrt{e}(F',s')\circ \sqrt{e}(F,s).
\end{equation}
\end{lemm}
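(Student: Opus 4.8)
The plan is to reduce the Whitney sum formula for localized square root Euler classes to the non-localized Whitney formula $\sqe(F\oplus F')=\sqe(F)\sqe(F')$ (which follows from \eqref{n42}, since an isotropic subbundle of $F$ is isotropic in $F\oplus F'$), combined with the description of $\sqe(-,-)$ via blowups in Definition \ref{n39} and the compatibility statements in Lemma \ref{102}. First I would set up the notation: let $X=s^{-1}(0)$, $X'=(s')^{-1}(0)$, and $X''=(s,s')^{-1}(0)=X\cap X'$, and note that $X''=s|_{X'}^{-1}(0)$ inside $X'$. The key observation is that Lemma \ref{103} already handles a "nested" version: taking $K=F$ sitting inside $F\oplus F'$ as an isotropic subbundle would not work since $F$ is not isotropic, but taking instead the approach of localizing one section at a time is exactly what \eqref{103.2} does when $K$ is isotropic. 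So the cleanest route is: first localize $\sqe(F\oplus F',(s,s'))$ by regarding $(s,0)$ and $(0,s')$ as the two stages.

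More concretely, I would proceed in two steps mirroring the structure of the proof of Lemma \ref{103}. Consider the blowup $\rho:\tY\to Y$ along $X'=(s')^{-1}(0)$ with exceptional divisor $D'$, and the induced isotropic line subbundle $L'=\sO_{\tY}(D')\hookrightarrow F'|_{\tY}\subset (F\oplus F')|_{\tY}$; set $\widetilde{F'}=(L')^\perp/L'$, an $SO$-bundle over $\tY$. Then $F\oplus F'$ restricted over $\tY$ contains $L'$ as an isotropic subbundle with $(L')^\perp/L'\cong F|_{\tY}\oplus \widetilde{F'}$, because $F$ is orthogonal to $F'$ and hence to $L'$. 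The section $(s,s')|_{\tY}$ lies in $(L')^\perp$ and induces the section $(s|_{\tY},\tilde{s}')$ of $F|_{\tY}\oplus\widetilde{F'}$, where $\tilde{s}'$ is the induced isotropic section of $\widetilde{F'}$. Applying \eqref{103.2} of Lemma \ref{103} with $K=L'$ gives
\[
\sqe(F\oplus F',(s,s'))=e(L'|_{Z},s'_2)\circ\sqe\bigl(F|_{\tY}\oplus\widetilde{F'},(s|_{\tY},\tilde{s}')\bigr)
\]
on the relevant blowup-decomposed classes, where the outer term is precisely what reassembles to $\sqe(F',s')$ via Definition \ref{n72}/\eqref{103.1}. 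Then I would apply the ordinary Whitney formula $\sqe(F|_{\tY}\oplus\widetilde{F'},(s|_{\tY},\tilde{s}'))$-term decomposition, using that $\widetilde{F'}$ contributes $\sqe(\widetilde{F'},\tilde{s}')$ and $F$ contributes $\sqe(F,s)$, then push down along $\rho$; the $e(L')$ factors on the two sides combine as in \eqref{n45}--\eqref{n46}.

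Alternatively — and this is probably the slickest write-up — I would mimic the proof of Lemma \ref{102}(3) and reduce everything to the non-localized identity on $\tY$ after a single blowup, checking the two pieces $\rho_*\alpha$ and $\imath_*\beta$ of the decomposition \eqref{n31} separately. For $\imath_*\beta$ with $\beta\in A_*(X'')$, both sides equal $\sqe(F\oplus F')\beta=\sqe(F)\sqe(F')\beta$ by Lemma \ref{n44} applied twice and the ordinary Whitney formula. For $\rho_*\alpha$, one writes out $\sqe(F\oplus F',(s,s'))\rho_*\alpha$ via \eqref{n41}, recognizes the blowup of $Y$ along $X''$ as an iterated construction (blow up along $X'$, then along the proper transform of $X$), and uses Lemma \ref{103} together with the fact that $\sqe$ and $e$ of the relevant quotient bundles factor compatibly. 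The main obstacle I anticipate is bookkeeping: matching the blowup of $Y$ along the \emph{union} $X\cup X'$ (or rather the zero locus of $(s,s')$) with the two separate blowup constructions used to define $\sqe(F,s)$ and $\sqe(F',s')$, and keeping track of exceptional divisors and the induced isotropic subbundles $L, L'$ in $(F\oplus F')|_{\tY}$. Once the geometry of the iterated blowup is pinned down, the algebraic identities are all instances of \eqref{n42}, the excess intersection formula, and Lemma \ref{103}, so the substance of the argument really is that single compatibility; I would keep the exposition short by invoking Lemma \ref{103} at the decisive moment rather than re-deriving it.
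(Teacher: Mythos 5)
Your first, ``concrete'' route is essentially the paper's proof: the paper also reduces, via the blowup decomposition and the bivariance of both sides, to the situation where the zero loci are divisors so that $s$ and $s'$ factor through isotropic line subbundles $L\subset F$ and $L'\subset F'$, and then applies Lemma \ref{103} (with $K=L\oplus L'$, done in one step rather than your two) together with the unlocalized Whitney formula $\sqe(V\oplus V')=\sqe(V)\sqe(V')$ and $e(L\oplus L',(s,s'))=e(L,s)e(L',s')$. One point worth making explicit in your write-up: after the blowup along $X'$ the induced section $\tilde s'$ of $\widetilde{F'}=(L')^{\perp}/L'$ is \emph{zero} (since $s'$ lands in $L'$), so the inner factor is $\sqe(F|_{\tY}\oplus\widetilde{F'},(s|_{\tY},0))$, which is the ``$s'=0$'' case handled directly by \eqref{n41}; without this observation the step ``$\widetilde{F'}$ contributes $\sqe(\widetilde{F'},\tilde s')$ and $F$ contributes $\sqe(F,s)$'' would just be the lemma being proved again, i.e.\ circular.

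Your ``slickest'' alternative, however, contains a genuine error: the blowup of $Y$ along $(s,s')^{-1}(0)=X\cap X'$ is \emph{not} in general the iterated blowup along $X'$ followed by the proper transform of $X$ (already for two transverse divisors in $\AA^2$ the iterated blowup is trivial while the blowup of the intersection point is not). This is precisely why one blows up along $X$ and $X'$ \emph{separately}, as in your first route and in the paper, rather than along the zero locus of the combined section. Stick with the first route.
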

\begin{proof}
Note that both sides of \eqref{107.1} commute with projective pushforwards. If $s'=0$, then \eqref{107.1} follows directly from the definition \eqref{n41}. In particular, 
\eqref{107.1} holds for $\xi \in A_* (X')$ where $X'={s'}^{-1}(0)$. 
Hence, by Lemma \ref{102}, we may replace $Y$ by its blowup along $X'$ and assume that $X'$ is a divisor of $Y$. There is a line bundle $L'$ which is an isotropic subbundle of $F'$ such that $s' \in H^0(Y,L')$. 

Similarly, we may replace $Y$ by its blowup along $X=s^{-1}(0)$ and assume that $X$ is a divisor of $Y$. Let $L=\sO_Y(X)$ be the line bundle of the divisor $X$, which is an isotropic subbundle of $F$. 
Then $L \oplus L' $ is an isotropic subbundle of $F \oplus F'$. By Lemma \ref{103}, we have 
\begin{align*}
\sqrt{e} (F \oplus F',(s,s')) &= \sqrt{e}((L\oplus L')^{\perp}/(L\oplus L'))e(L\oplus L',(s,s'))\\
&= \sqrt{e}((L^{\perp}/L) \oplus (L'^{\perp}/L')) e(L,s) e(L',s') \\
&= \sqrt{e}(L^{\perp}/L) \sqrt{e}(L'^{\perp}/L') e(L,s) e(L',s') \\
&= \sqrt{e}(F,s)\sqrt{e}(F',s')    
\end{align*}
as desired. Note that $e(L\oplus L',(s,s'))=e(L,s) e(L',s')$ follows from the functoriality of the Gysin homomorphism in \cite{Ful}. 
\end{proof}

\bigskip

\section{Comparison with the Oh-Thomas construction}\label{SOT}

In this section, we recall the Oh-Thomas construction of localized square root Euler class and prove that it equals \eqref{n40}. The results of this section will not be used in the rest of this paper. 

\subsection{Localized Edidin-Graham class by Oh-Thomas}\label{S2.2}

%

In this subsection, we recall the Oh-Thomas construction from \cite{OhTh}. 

Let $F$ be an $SO(2n)$-bundle over a \DM stack $Y$ and 
let $s$ be an isotropic section of $F$. 
In the special case where $F$ admits a positive maximal isotropic subbundle $V$ and $s$ is a section of $V$, 
$\sqe(F)=e(V)$ is localized to $X=s^{-1}(0)$ by 
\beq\label{68} 
\sot(F,s,V)=e(V,s)=(-1)^n0^!_{V^\vee,s^\vee}(0_{V^\vee})_*:A_*(Y)\lra A_{*-n}(X)\eeq
where $e(V,s)$ is the localized Euler class in \S\ref{Sn3.2} and $0^!_{V^\vee,s^\vee}$ denotes the cosection localized Gysin map in \S\ref{Sn4.2}. The second equality is \eqref{n52}.  

\medskip

When $s$ is not a section of $V$, Oh-Thomas use the degeneration of $Y$ to the normal cone 
$$C_{Z/Y}\subset V^\vee|_{Z}$$ to the zero locus $Z=(s_1)^{-1}(0)$ of  
\beq\label{n60} s_1:\sO_Y\lra F\lra V^\vee\eeq
where the second arrow is the surjection in \eqref{63}. 
Let $s_2=s|_{Z}\in H^0(Z,V)$ whose dual is a cosection   
$$s_2^\vee:V^\vee|_{Z}\lra \sO_{Z}.$$
Then the zero locus of $s_2$ in $Z$ is precisely the zero locus $X=s^{-1}(0)$ of $s$. 
Following the notation of \S\ref{Sn4.2}, we let 
$$V|_{Z}^\vee(s_2^\vee)=V|_X^\vee\cup \ker(s_2^\vee|_{Z-X}).$$
Since $s$ is isotropic, we have the inclusion 
$$C_{Z/Y}\subset V|_{Z}^\vee(s_2^\vee). $$
The Oh-Thomas class is defined as the composition 
\beq\label{62}
\sot(F,s,V):A_*(Y)\mapright{\sp} A_*(C_{Z/Y})\lra A_*(V|^\vee_{Z}(s_2^\vee))\xrightarrow{ (-1)^n0^!_{V|^\vee_{Z},s_2^\vee} } A_{*-n}(X)
\eeq 
where $\sp$ denotes the specialization map and $0^!_{V|^\vee_{Z},s_2^\vee}$ is the cosection localized Gysin map in \S\ref{Sn4.2}. 

It is straightforward from the definition that $\sot(F,s,V)$ commutes with smooth pullbacks, i.e. if $g:Y'\to Y$ is smooth with the induced map $h:X'=X\times_YY'\to X$, 
we have the equality 
\beq\label{n62} h^*\circ \sot(F,s,V)=\sot(g^*F,g^*s,g^*V)\circ g^*.\eeq

\medskip

In the general case where $F$ may not admit a positive maximal isotropic subbundle, we use the tower \eqref{55} and the identity \eqref{57}. 
By \eqref{59}, $F|_Q$ admits a positive maximal isotropic subbundle $\Lambda$. 
Using the special case \eqref{62}, Oh-Thomas define the following.
\begin{defi}\label{n54} \cite[\S3]{OhTh} With the notation above, the Oh-Thomas class is defined as   
\beq\label{64}
\sot(F,s):A_*(Y)\lra A_{*-n}(X), 
\eeq
$$\sot(F,s)(\xi)=(p|_{Q\times_YX})_*\left(\sot(F|_Q,s|_Q,\Lambda) h\cap p^*(\xi)\right).$$
\end{defi}

%

\medskip

\subsection{The Oh-Thomas class equals \eqref{n40}}

In this subsection, we prove that the Oh-Thomas class \eqref{64} equals \eqref{n40}.

\begin{theo}\label{n55}
For an $SO(2n)$-bundle $F$ and an isotropic section $s$, we have the equality
$$\sqe(F,s)=\sot(F,s).$$
In particular, the Oh-Thomas class $\sot(F,s)$ is bivariant and equals $\sot(F,s,V)$ whenever $F$ admits a positive maximal isotropic subbundle $V$. 
\end{theo}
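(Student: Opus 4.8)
The strategy is to compare $\sqe(F,s)$ and $\sot(F,s)$ by exploiting that both are bivariant classes in $A^n_X(Y)$ that commute with smooth pullbacks, so it suffices to check they agree after a single smooth base change that puts $F$ into the nicest possible form, and then to verify the equality in that form by a direct computation. Concretely, I would first pull everything back along the tower $p:Q\to Y$ of \eqref{55}, over which $F|_Q$ acquires a positive maximal isotropic subbundle $\Lambda$ with $F|_Q/\Lambda\cong\Lambda^\vee$. Since $\sot(F,s)$ is \emph{defined} via the Edidin--Graham identity \eqref{57} in terms of $\sot(F|_Q,s|_Q,\Lambda)$, and since $\sqe(F,s)$ is bivariant (Theorem \ref{n51}) and hence satisfies the analogous identity $\sqe(F,s)(\xi)=(p|_{Q\times_YX})_*(\sqe(F|_Q,s|_Q)\,h\cap p^*\xi)$ by the projection formula applied to \eqref{57} together with $\imath_*\circ\sqe(F,s)=\sqe(F)$, the problem reduces to proving $\sqe(F|_Q,s|_Q)=\sot(F|_Q,s|_Q,\Lambda)$ — i.e. to the case where $F$ admits a positive maximal isotropic $V$ with $F/V\cong V^\vee$.

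In that reduced situation, the section $s$ need not lie in $V$, so the next step is to further reduce to the case $s\in H^0(V)$. Here I would use Lemma \ref{103}: writing $s_1:\sO_Y\to F\to V^\vee$ as in \eqref{n60} with zero locus $Z$, and $s_2=s|_Z\in H^0(Z,V)$, the isotropy of $s$ gives $s\in H^0(V^\perp)=H^0(V)$ restricted appropriately — more precisely $V$ is maximal isotropic so $V^\perp=V$, but $s\notin H^0(V)$ in general; instead one applies the factorization \eqref{103.2} with $K=\ker(s_1)$ or works directly with the quadric-bundle tower. The cleaner route: since $V$ is maximal isotropic, $\sqe(F)=e(V)$ by \eqref{61}, and the blowup-defined $\sqe(F,s)$ should be shown to equal $(-1)^n 0^!_{V^\vee,s^\vee}(0_{V^\vee})_*$ directly from its definition \eqref{n41}, using the identity \eqref{n52} relating $e(\bar V,s)$ to the cosection-localized Gysin map on the blowup, and then matching this against the Oh-Thomas formula \eqref{62}, which is literally $\sp$ followed by $(-1)^n 0^!_{V|^\vee_Z, s_2^\vee}$. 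Both expressions are built from the cosection-localized Gysin map of $V^\vee$ with cosection $s^\vee$; the blowup construction of $\sqe(F,s)$ in \S\ref{Sn4.3} and the blowup construction of $0^!_{V,\sigma}$ in \S\ref{Sn4.2} are the \emph{same} blowup (of $Y$ along $X=(s^\vee)^{-1}(0)$), so the two agree term by term on the decomposition $\xi=\rho_*\alpha+\imath_*\beta$ once one checks that specialization to $C_{Z/Y}$ followed by the cosection-localized Gysin map decomposes compatibly with this blowup — which is precisely the content of how $0^!_{V,\sigma}$ was defined in \cite{KLc,KLk,KLq}.

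The main obstacle I expect is the bookkeeping in the general $F$ case: verifying that the Oh-Thomas formula $\sot(F,s)(\xi)=(p|_{Q\times_YX})_*(\sot(F|_Q,s|_Q,\Lambda)h\cap p^*\xi)$ is independent of the tower and matches the bivariant class $\sqe(F,s)$, i.e. that pushing forward along $p$ interacts correctly with the cosection-localized Gysin maps living over $Z\times_Y Q$ versus over $Y$. The key technical input is that $\sot(F|_Q,s|_Q,\Lambda)$ commutes with the smooth (indeed projective bundle composed with quadric inclusion) structure maps in the tower, which is \eqref{n62}, combined with the fact — already established for $\sqe(F,s)$ in Lemma \ref{102} — that $\sqe(F,s)$ is a genuine bivariant class so that applying it commutes with $p_*$ and $p^*$ and $h\cap(-)$. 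Once the reduction to $s\in H^0(V)$ is in place, the remaining equality is essentially the excess-intersection identity \eqref{n50}–\eqref{n52} already proved, so the heart of the matter is the reduction itself and the compatibility of the degeneration-to-the-normal-cone construction with the blowup construction; I would handle this by showing both equal the unique bivariant class extending $\sqe(F)$ that is compatible with the blowup decomposition \eqref{n31}, invoking the uniqueness that follows from \eqref{n31} being a surjection.
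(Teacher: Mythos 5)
Your first reduction is exactly the paper's (Lemma \ref{n56}): since $\sot(F,s)$ is defined through the tower $Q$ and $\sqe(F,s)$ is bivariant, the identity \eqref{57} reduces everything to the case where $F$ admits a positive maximal isotropic subbundle $V$. From that point on, however, your argument has a genuine gap. Your proposed further reduction to $s\in H^0(V)$ does not work: $V^\perp=V$ only says $s$ pairs trivially with itself, not that $s$ lands in $V$, and $K=\ker(s_1)$ is not a subbundle, so Lemma \ref{103} cannot be applied with that $K$. More seriously, the ``cleaner route'' you fall back on does not actually compare the two constructions. The blowup defining $\sqe(F,s)$ is the blowup of $Y$ along $X=s^{-1}(0)$, whereas the cosection-localized Gysin map $0^!_{V|^\vee_Z,s_2^\vee}$ in \eqref{62} lives over the blowup of $Z=s_1^{-1}(0)$ along $X$ and is preceded by the specialization $\sp:A_*(Y)\to A_*(C_{Z/Y})$; these are not ``the same blowup,'' and the surjectivity of \eqref{n31} only tells you that a map out of $A_*(Y)$ is determined by its values on $\rho_*\alpha$ and $\imath_*\beta$ --- it gives no uniqueness statement that would let you conclude two a priori different operations coincide without computing both on those generators. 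That computation is precisely the missing content: you must show that $(-1)^n 0^!_{V|^\vee_Z,s_2^\vee}\circ\sp$ applied to $\rho_*\alpha$ equals $\hat\rho_*\jmath^*\sqe(\tF)\alpha$, and nothing in your outline does this.

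The paper closes this gap in two steps you have not supplied. First (Lemma \ref{n57}), after passing to an affine bundle over $Y$ so that $F=V\oplus V^\vee$ splits and $s=(s_2,s_1)$ (a reduction you also omit, and which is needed to make sense of $s_2$ globally on $Y$ rather than only on $Z$), one uses MacPherson's graph construction on the deformation space $M^\circ_{Z/Y}$ to produce an isotropic section $\tilde s=(s_2|_M,\tilde\tau)$ of $F|_M$ interpolating between $s$ at $\lambda=1$ and $(s_2|_C,\tau)$ on $C=C_{Z/Y}$ at $\lambda=\infty$; bivariance of $\sqe$ under the lci pullbacks $\imath_\lambda^!$ then gives $\sqe(F,s)=\sqe(F|_C,(s_2|_C,\tau))\circ\sp$, matching the specialization step in \eqref{62}. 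Second (Lemma \ref{n65}), one blows up $Z$ (not $Y$) along $X$, decomposes $A_*(C)$ accordingly, and checks the two sides of \eqref{n59} termwise; the nontrivial term requires identifying $\sqe$ on the locus where the combined section vanishes in codimension one with $(-1)^{n-1}(0_{\bar V^\vee}\circ\jmath)^!$, which is the lci computation of Lemma \ref{n68} together with a sign coming from the orientation of the maximal isotropic $N_{D/\bar V^\vee}=\sO_D(D)\oplus\bar V|_D^\vee$. Without the deformation argument and this explicit computation, the identity in the special case remains unproved.
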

The rest of this section is devoted to a proof of Theorem \ref{n55}.

We first reduce the proof to the special case.
\begin{lemm}\label{n56}
Theorem \ref{n55} holds if the special case \eqref{62} equals $\sqe(F,s)$ when $F$ admits a positive maximal isotropic subbundle. 
\end{lemm}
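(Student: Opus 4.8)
The plan is to show that the general case reduces to the special case by pulling everything back to the maximal isotropic flag bundle tower \eqref{55}. Both sides of the claimed equality $\sqe(F,s)=\sot(F,s)$ are bivariant classes in $A^n_X(Y)$ — for $\sqe(F,s)$ this was established in Theorem \ref{n51}, and for $\sot(F,s)$ it will follow once the identity is proved — so it suffices to check the equality on fundamental classes, or more robustly, to check it after pulling back along the proper birational-on-fibers map $p:Q\to Y$ of \eqref{55}. First I would recall that by \eqref{59} the pullback $F|_Q$ admits a positive maximal isotropic subbundle $\Lambda$, so the special case applies to $(F|_Q,s|_Q,\Lambda)$. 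By hypothesis of the lemma we may assume $\sot(F|_Q,s|_Q,\Lambda)$, defined via degeneration to the normal cone and the cosection-localized Gysin map \eqref{62}, agrees with $\sqe(F|_Q,s|_Q)$.

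Next I would use the projection formula \eqref{57}, which says $p_*(h\cap p^*(\xi))=\xi$ for all $\xi\in A_*(Y)$, together with its relative version over $X$, namely $(p|_{Q\times_YX})_*(h\cap (p|_{Q\times_YX})^*(\eta))=\eta$ for $\eta\in A_*(X)$. The Oh-Thomas class is by Definition \ref{n54} literally
\[
\sot(F,s)(\xi)=(p|_{Q\times_YX})_*\Big(\sot(F|_Q,s|_Q,\Lambda)\,h\cap p^*(\xi)\Big),
\]
so replacing $\sot(F|_Q,s|_Q,\Lambda)$ by $\sqe(F|_Q,s|_Q)=\sqe(F|_Q,s|_Q)$ and using that $\sqe(F,s)$ is bivariant (hence commutes with the flat pullback $p^*$ and with the proper pushforward $(p|_{Q\times_YX})_*$, by Lemma \ref{102}), I get
\[
(p|_{Q\times_YX})_*\Big(\sqe(F|_Q,s|_Q)\,h\cap p^*(\xi)\Big)
=(p|_{Q\times_YX})_*\Big(h\cap (p|_{Q\times_YX})^*\sqe(F,s)(\xi)\Big)
=\sqe(F,s)(\xi),
\]
where the middle step uses that Chern-class operators such as $h$ commute with bivariant classes, and the last step is the relative projection formula. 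This gives $\sot(F,s)=\sqe(F,s)$ as homomorphisms $A_*(Y)\to A_{*-n}(X)$, and the bivariance of $\sqe(F,s)$ from Theorem \ref{n51} then transfers to $\sot(F,s)$; the final ``in particular'' statements follow because when $F$ already has a positive maximal isotropic $V$ one may take $Q=Y$ in the tower so that $\sot(F,s)=\sot(F,s,V)$ by construction.

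The one subtlety I would be careful about is the compatibility of the cone-and-cosection picture with the blowup picture of Definition \ref{n39}: the argument above only cleanly reduces to the special case, so the real content — postponed to the hypothesis of Lemma \ref{n56} and to be verified in the remainder of \S\ref{SOT} — is the special case itself, where $F$ admits a positive maximal isotropic $V$ and one must match $(-1)^n0^!_{V|_Z^\vee,s_2^\vee}\circ\sp$ from \eqref{62} against the blowup decomposition \eqref{n41}. I expect that comparison to be the main obstacle: it requires relating the specialization to the normal cone $C_{Z/Y}\subset V|_Z^\vee(s_2^\vee)$ and the cosection-localized Gysin map of \S\ref{Sn4.2} to the two-step structure of Lemma \ref{103} (first localize by $s_1\in H^0(V^\vee)$, inducing a blowup of $Y$ along $Z$, then localize by $s_2\in H^0(K)$ with $K=V$), and then invoking \eqref{103.2} together with the known blowup description \eqref{n38} of $0^!_{V^\vee,s^\vee}$. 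But the global-to-local reduction in the present lemma is purely formal, resting only on the projection formula \eqref{57}, the bivariance from Theorem \ref{n51}, and Lemma \ref{102}.
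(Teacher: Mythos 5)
Your argument is correct and is essentially the paper's proof: unwind Definition \ref{n54}, substitute $\sot(F|_Q,s|_Q,\Lambda)=\sqe(F|_Q,s|_Q)$ by hypothesis, and collapse the tower using the bivariance of $\sqe(F,s)$ (Lemma \ref{102}) together with \eqref{57}. The only cosmetic difference is that the paper pushes forward via Lemma \ref{102}~(1) and applies \eqref{57} on $Y$, whereas you pull back via Lemma \ref{102}~(2) and apply the relative form of \eqref{57} over $X$; both are valid.
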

\begin{proof}
By \eqref{64} and Lemma \ref{102} (1), we have  
$$\begin{aligned}
\sot(F,s)\xi&=(p|_{Q\times_YX})_*\left(\sot(F|_Q,s|_Q,\Lambda) h\cap p^*(\xi)\right)\\
&=(p|_{Q\times_YX})_*\left(\sqrt{e}(F|_Q,s|_Q) h\cap p^*(\xi)\right)\\
&=\sqrt{e}(F,s)p_*\left( h\cap p^*(\xi)\right)=\sqe(F,s)\xi
\end{aligned}$$
because $\sot(F|_Q,s|_Q,\Lambda)=\sqrt{e}(F|_Q,s|_Q)$ by assumption. 
\end{proof}

By this lemma, we may assume from now on that $F$ admits a positive maximal isotropic subbundle $V$ and it suffices to prove 
\beq\label{n58}
\sqe(F,s)=\sot(F,s,V)=(-1)^n0^!_{V|^\vee_{Z},s_2^\vee} \circ \sp.\eeq
Actually we may further assume that 
\beq\label{n63} F=V\oplus V^\vee.\eeq 
Indeed, let $S$ be the inverse image of $$\id\in \mathrm{Hom}(V^\vee,V^\vee)=H^0(Y,\cH om(V^\vee, V^\vee))$$ by  
the homomorphism of locally free sheaves 
$$\cH om(V^\vee, F)\lra \cH om(V^\vee, V^\vee)$$
on $Y$ induced by the surjection $F\to V^\vee$ in \eqref{63}. 
Then $\pi:S\to Y$ is an affine bundle of rank $n^2$.
By \eqref{n62} and Lemma \ref{102} (2), if we let $\nu:S\times_YX\to X$ denote the restriction of $\pi$, we find that 
$$\nu^*\circ\sqe(F,s)=\sqe(\pi^*F,\pi^*s)\circ\pi^*, $$ 
$$\nu^*\circ \sot(F,s,V)=\sot(\pi^*F,\pi^*s,\pi^*V)\circ \pi^*.$$
By \cite[Corollary 2.5.7]{Kresch}, $\pi^*$ and $\nu^*$ are isomorphisms and hence
$$\sqe(\pi^*F,\pi^*s)=\sot(\pi^*F,\pi^*s,\pi^*V)\quad \Longrightarrow \quad 
\sqe(F,s)=\sot(F,s,V).$$
Note that $\pi^*F$ splits as $\pi^*V\oplus \pi^*V^\vee$. 
Therefore, we may assume that $F$ is the direct sum \eqref{n63} and 
\beq\label{n64} s=(s_2,s_1),\quad s_2\in H^0(V),\ \ s_1\in H^0(V^\vee).\eeq

\medskip

We next reduce the proof of \eqref{n58} 
to the cone case $C_{Z/Y}$. 
\begin{lemm}\label{n57}
Under \eqref{n63} and \eqref{n64}, letting $C=C_{Z/Y}$, 
$$\sqe(F,s)=\sqe(F|_C,(s_2|_C,\tau))\circ \sp$$
where $\tau$ is the tautological section by the inclusion $C\subset V|_Z^\vee$. 
Hence \eqref{n58} holds if 
\beq\label{n59} \sqe(F|_C,(s_2|_C,\tau))=(-1)^n0^!_{V|^\vee_{Z},s_2^\vee|_Z}:A_*(C)\lra A_{*-n}(X).\eeq
\end{lemm}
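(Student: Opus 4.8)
The plan is to recognise the claimed identity as the statement that the bivariant class $\sqe(F,s)$ is compatible with specialisation to the normal cone of $Z=s_1^{-1}(0)$ in $Y$, and to deduce it from the bivariance established in Lemma~\ref{102} together with homotopy invariance of Chow groups. Let $\rho:M^\circ\to\mathbb{A}^1$ be the deformation of $Y$ to the normal cone $C:=C_{Z/Y}\subset V\dual|_Z$ of $Z$, i.e. the complement in $\mathrm{Bl}_{Z\times\{0\}}(Y\times\PP^1)$ of the proper transform of $Y\times\{0\}$; thus $M^\circ|_{\mathbb{G}_m}\cong Y\times\mathbb{G}_m$ canonically, $M^\circ|_0=C$, the central fibre is the principal Cartier divisor $\{t=0\}$ (hence $t$ is a non-zero-divisor, $\rho$ being flat), and for $\xi\in A_*(Y)$ one has $\sp(\xi)=0^!_0(\bar\xi)$ for any lift $\bar\xi\in A_{*+1}(M^\circ)$ of $\xi\times[\mathbb{G}_m]$, where $0^!_0$ is the Gysin map of the regular embedding $\imath_0:C\hookrightarrow M^\circ$. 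Write $q:M^\circ\to Y$ for the projection, so $q^*F=q^*V\oplus q^*V\dual$ with the pulled-back orientation.

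The geometric heart of the proof is the construction of an interpolating isotropic section $\mathbf{s}$ of $q^*F$ on $M^\circ$. Since $q$ carries $C$ scheme-theoretically into $Z$ and $s_1|_Z=0$, the section $q^*s_1$ vanishes along $\{t=0\}$, so the rational section $t^{-1}q^*s_1$ of $q^*V\dual$ extends to a regular section $\sigma_1\in H^0(M^\circ,q^*V\dual)$; working in affine charts of $\mathrm{Bl}_{Z\times\{0\}}(Y\times\PP^1)$ I would check that the zero scheme of $\sigma_1$ is $Z\times\mathbb{A}^1$, flat over $\mathbb{A}^1$, equal to $Z$ on a general fibre and to the zero section of $C$ on the central fibre, so that $\sigma_1|_C=\tau$. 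Set $\mathbf{s}:=(q^*s_2,\sigma_1)$. It is isotropic, since $q(\mathbf{s},\mathbf{s})=2\,\sigma_1(q^*s_2)=2t^{-1}q^*\!\bigl(s_1(s_2)\bigr)=0$ because $s^2=0$ forces $s_1(s_2)=0$; its zero locus is $\{\mathbf{s}=0\}=X\times\mathbb{A}^1$; and it restricts to $(s_2,t^{-1}s_1)$ on a general fibre and to $(s_2|_C,\tau)$ on the central fibre.

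Now I would compute. Applying Lemma~\ref{102}(3) to the lci embedding $\imath_0:C\hookrightarrow M^\circ$ (and using $(q^*F)|_C=F|_C$, $\mathbf{s}|_C=(s_2|_C,\tau)$) gives $0^!_0\circ\sqe(q^*F,\mathbf{s})=\sqe\bigl(F|_C,(s_2|_C,\tau)\bigr)\circ 0^!_0$, so that $\sqe\bigl(F|_C,(s_2|_C,\tau)\bigr)(\sp\,\xi)=0^!_0\bigl(\sqe(q^*F,\mathbf{s})(\bar\xi)\bigr)$. The class $\sqe(q^*F,\mathbf{s})(\bar\xi)$ lies in $A_*(X\times\mathbb{A}^1)$; restricting it to $X\times\mathbb{G}_m$ and applying Lemma~\ref{102}(2) to the flat projection $M^\circ|_{\mathbb{G}_m}\cong Y\times\mathbb{G}_m\to Y$ identifies this restriction with $\pr_X^*\bigl(\sqe(F,s)(\xi)\bigr)|_{X\times\mathbb{G}_m}$, provided we know $\sqe(F,(s_2,\lambda s_1))=\sqe(F,s)$ for every unit $\lambda$. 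I would prove this last point by the same $\mathbb{G}_m$-family argument: the section $(q^*s_2,u\cdot q^*s_1)$ on $Y\times\mathbb{G}_m$ is isotropic with zero locus $X\times\mathbb{G}_m$, so $\sqe$ of it is a bivariant class whose restriction to the fibre $Y\times\{u_0\}$ equals $\sqe(F,(s_2,u_0 s_1))$ by Lemma~\ref{102}(3); but restriction to a fibre inverts the isomorphism $\pr^*:A_*(Y)\to A_*(Y\times\mathbb{G}_m)$ and is therefore independent of $u_0$, whence all these classes coincide and at $u_0=1$ equal $\sqe(F,s)$. Since $A_*(X\times\mathbb{A}^1)\to A_*(X\times\mathbb{G}_m)$ is an isomorphism, we conclude $\sqe(q^*F,\mathbf{s})(\bar\xi)=\pr_X^*\bigl(\sqe(F,s)(\xi)\bigr)$, and applying $0^!_0$---which inverts $\pr_X^*$ on the $t=0$ fibre---gives $\sqe\bigl(F|_C,(s_2|_C,\tau)\bigr)(\sp\,\xi)=\sqe(F,s)(\xi)$, i.e. the asserted identity. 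The final sentence of the lemma then follows at once: inserting \eqref{n59} yields $\sqe(F,s)=(-1)^n 0^!_{V|^\vee_Z,\,s_2^\vee|_Z}\circ\sp=\sot(F,s,V)$ by \eqref{62}.

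I expect the main obstacle to be the construction and analysis of $\mathbf{s}$: verifying that $\sigma_1$ is regular with zero scheme exactly $Z\times\mathbb{A}^1$, hence $\{\mathbf{s}=0\}=X\times\mathbb{A}^1$, and that $\sigma_1|_C$ is literally the tautological section $\tau$---this forces the explicit computation in the blowup charts of $\mathrm{Bl}_{Z\times\{0\}}(Y\times\PP^1)$. By contrast, the rescaling-invariance of $\sqe(F,(s_2,\lambda s_1))$ in the unit $\lambda$ and the remaining Gysin bookkeeping are routine consequences of Lemma~\ref{102} and homotopy invariance.
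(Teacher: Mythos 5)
Your proposal is correct and follows essentially the same route as the paper: deform $Y$ to the normal cone of $Z$, use MacPherson's graph construction to produce an interpolating isotropic section restricting to $(s_2,\lambda s_1)$ on general fibres and to $(s_2|_C,\tau)$ on the central fibre, and compare fibres via the bivariance of Lemma~\ref{102}. The only difference is bookkeeping: the paper works over $\PP^1$ so that $\imath_1^!=\imath_\infty^!$ on $A_*(X\times\PP^1)$ gives the comparison in one step, whereas your $\bbA^1$ version needs the (correct, and correctly proved) auxiliary rescaling identity $\sqe(F,(s_2,\lambda s_1))=\sqe(F,s)$.
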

\begin{proof}
Let 
$$M=M_{Z/Y}^\circ\lra \PP^1\times Y\lra \PP^1$$ be the deformation space of $Y$ to the normal cone $C$ of $Z$ in $Y$ (cf. \cite[Chapter 5]{Ful}). 
Let $\lambda$ denote the coordinate for $\PP^1$ and
let $\imath_\lambda : \{\lambda\}\to \PP^1$ be the inclusion map.  
Then the fiber $M|_\infty$ over $\lambda=\infty$ is $C$ and $M\times_{\PP^1}(\PP^1-\infty)\cong Y\times \bbA^1.$
By the definition of $\sp$, for $\xi\in A_*(Y)$ and any extension  $\tilde{\xi}\in A_{*+1}(M)$ of $\xi|_{Y\times \bbA^1}$, we have 
$$\imath_{\infty}^!\tilde{\xi}=\sp(\xi),\quad \imath_1^!\tilde{\xi}=\xi.$$ 

By the graph construction \cite[Remark 5.1.1]{Ful}, we have an embedding $M \hookrightarrow V\dual \times \PP^1$. Let
$$\tilde{\tau} \in H^0(M,V\dual|_M)$$
be the tautological section induced by the embedding. Then the fibers over $\lambda \in \PP^1$ are 
$$\tilde{\tau}|_{\lambda} =
\begin{cases} 
\lambda \cdot s_1 \in H^0(Y, V\dual) &  \text{ if } \lambda \neq \infty\\
\tau \in H^0(C, V\dual|_C) &  \text{ if } \lambda = \infty.
\end{cases}$$
and 
$$\tilde{s}=(s_2|_M,\tilde{\tau}) \in H^0(M,F|_{M})$$
is an isotropic section whose zero locus is $X \times \PP^1$.

For $\xi \in A_* (Y)$ and any extension $\tilde{\xi}\in A_{*+1}(M)$ of $\xi|_{Y\times \bbA^1}$,  
since $$\imath_1^!=\imath_\infty^! : A_{*+1}(X(s) \times \PP^1) \lra A_{*} (X(s)),$$
we have
$$\begin{aligned} 
\sqrt{e}(F,s)\xi & = \sqe(F,s)\imath_1^!\tilde{\xi}=\imath_1^!
\sqrt{e}(F|_M, \tilde{s}) \tilde{\xi}\\
& = \imath_\infty^!\sqrt{e}(F|_M,\tilde{s}) \tilde{\xi} = \sqrt{e}(F|_C,(s_2|_C,\tau))\imath_\infty^!\tilde{\xi} \\
&=\sqrt{e}(F|_C,(s_2|_C,\tau))\mathrm{sp}(\xi)
\end{aligned}$$
by Lemma \ref{102} (3). 
\end{proof}

The proof of Theorem \ref{n55} is complete if we prove the following. 
\begin{lemm}\label{n65}
\eqref{n59} holds under the assumptions of Lemma \ref{n57}.
\end{lemm}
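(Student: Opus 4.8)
The plan is to verify \eqref{n59} by reducing it, through two applications of Lemma \ref{103}, to an identity between compositions of ordinary localized Euler classes, and then to match this with the cosection localized Gysin map via \eqref{n52} and the blowup description \eqref{n38}. Both sides of \eqref{n59} are additive; $\sqe(F|_C,(s_2|_C,\tau))$ is bivariant by Lemma \ref{102}, and the cosection localized Gysin map commutes with proper pushforward (\S\ref{Sn4.2}). Let $\rho_0\colon \widetilde{C}\to C$ be the blowup of $C$ along its zero section $Z_0\cong Z$, with exceptional divisor $E_0$. By \cite[Proposition 2.3.6]{Kresch}, $A_*(\widetilde{C})\oplus A_*(Z_0)\to A_*(C)$ is surjective, so it suffices to check \eqref{n59} on classes pushed forward from $Z_0$ and on classes pushed forward from $\widetilde{C}$.

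On $Z_0$ the section $(s_2|_C,\tau)$ restricts to $(s_2,0)$, a section of the positive maximal isotropic subbundle $V|_{Z_0}$ of $F|_{Z_0}$; so Lemma \ref{103}\eqref{103.1} (equivalently \eqref{61}) identifies the left hand side of \eqref{n59} on $Z_0$-classes with $e(V|_Z,s_2)$, while by \eqref{n52} the right hand side gives $(-1)^n 0^!_{V^\vee|_Z,s_2^\vee}\circ (0_{V^\vee|_Z})_*=e(V|_Z,s_2)$ as well (a short computation with \eqref{n38} and the excess intersection sequence $0\to\sO_{\widetilde{Z}}(D_Z)\to V|_{\widetilde{Z}}\to\bar V\to0$ on $\widetilde{Z}=Bl_{s_2^{-1}(0)}Z$ checks this on both the $\widetilde{Z}$- and the $B$-parts and already pins down the sign as $(-1)^n$). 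This settles the zero-section classes.

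Next, over $\widetilde{C}$ the tautological section $\tau$ factors through an isotropic line subbundle $\mathcal L\subseteq V^\vee|_{\widetilde{C}}$—the tautological sub-line bundle of $\PP(V^\vee|_Z)$ pulled back along $\widetilde{C}\to\PP(V^\vee|_Z)$—with $\tau$ vanishing along $E_0$, so $\mathcal L\cong\sO_{\widetilde{C}}(E_0)$. Since $(s_2,\tau)$ is isotropic, $\langle s_2,\tau\rangle=0$ on $\widetilde{C}$; as $\tau$ trivializes $\mathcal L$ away from $E_0$, this forces $s_2^\vee|_{\mathcal L}=0$, i.e.\ $s_2|_{\widetilde{C}}\in H^0(\mathcal L^\perp)$, and in fact $s_2|_{\widetilde{C}}$ lies in $V'':=\ker(V|_{\widetilde{C}}\twoheadrightarrow\mathcal L^\vee)=V|_{\widetilde{C}}\cap\mathcal L^\perp$, a rank $n-1$ isotropic subbundle meeting $\mathcal L$ trivially. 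Applying Lemma \ref{103}\eqref{103.2} with $K=\mathcal L$ yields $\sqe(F|_{\widetilde{C}},(s_2|_{\widetilde{C}},\tau))=e(\mathcal L|_{Z'},\tau|_{Z'})\circ\sqe(\mathcal L^\perp/\mathcal L,\bar s_2)$, where $Z'=(s_2|_{\widetilde{C}})^{-1}(0)$ and $\bar s_2\in H^0(V'')$ is the image of $s_2$; here $V''$ is a maximal isotropic subbundle of $\mathcal L^\perp/\mathcal L$, so Lemma \ref{103}\eqref{103.1}/\eqref{61} further gives $\sqe(\mathcal L^\perp/\mathcal L,\bar s_2)=\pm\, e(V'',\bar s_2)$ with the sign fixed by the induced orientation. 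Hence, pulled back to $\widetilde{C}$, the left hand side of \eqref{n59} equals $\pm\, e(\mathcal L|_{Z'},\tau|_{Z'})\circ e(V'',\bar s_2)$, a composition of ordinary localized Euler classes.

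To conclude, I would rewrite this composition using \eqref{n52}: since $(V'')^\vee\cong V^\vee|_{\widetilde{C}}/\mathcal L$ and, because $\langle s_2,\tau\rangle=0$, the cosection $s_2^\vee\colon V^\vee\to\sO$ kills $\mathcal L$ and thus factors as $V^\vee|_{\widetilde{C}}\twoheadrightarrow (V'')^\vee\xrightarrow{\bar s_2^\vee}\sO_{\widetilde{C}}$, the composition above is identified with a composition of cosection localized Gysin maps along $0\to\mathcal L\to V^\vee|_{\widetilde{C}}\to (V'')^\vee\to 0$; comparing with the blowup construction \eqref{n38} of $0^!_{V^\vee|_Z,s_2^\vee}$ then gives \eqref{n59} on $(\rho_0)_*$-classes, which together with the previous paragraph proves the lemma. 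I expect the main obstacle to be exactly this last matching step: the left hand side is naturally evaluated on the blowup $\widetilde{C}$ of the cone $C$ along its zero section, whereas the blowup in \eqref{n38} is the blowup of the base $Z$ along $s_2^{-1}(0)$—two genuinely different blowups—so one must establish the required Whitney-type identity for cosection localized Gysin maps relative to $0\to\mathcal L\to V^\vee|_{\widetilde{C}}\to (V'')^\vee\to 0$ and then carefully track how all classes push forward from $\widetilde{C}$ down to $C$, the error terms being supported over $Z_0$ and handled as in the zero-section case. Tracking the orientations through these reductions is what produces the sign $(-1)^n$ and accounts for most of the bookkeeping.
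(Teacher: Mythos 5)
Your strategy diverges from the paper's at the very first step, and the divergence is exactly where the gap lies. You decompose $A_*(C)$ via the blowup of the cone $C$ along its zero section $Z_0\cong Z$, whereas the cosection localized Gysin map $0^!_{V|^\vee_Z,s_2^\vee}$ on the right hand side of \eqref{n59} is \emph{defined} (\S\ref{Sn4.2}) via a different decomposition: classes supported on the fiber $V^\vee|_X$ over $X=s_2^{-1}(0)$, plus classes pushed forward from the kernel bundle $\bar V^\vee\subset V^\vee|_{\tZ}$ over the blowup $\tZ$ of the \emph{base} $Z$ along $X$. You correctly flag this mismatch as "the main obstacle," but the proposal never resolves it: the claimed identification of $e(\mathcal L|_{Z'},\cdot)\circ e(V'',\bar s_2)$ with $0^!_{V|^\vee_Z,s_2^\vee}$ restricted to $(\rho_0)_*$-classes would require a Whitney-type compatibility between cosection localized Gysin maps and the two unrelated blowups, which is precisely the content one is trying to prove. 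As written, the argument reduces the lemma to an unproved statement of comparable difficulty, so there is a genuine gap. (There is also a smaller slip in the application of \eqref{103.2} with $K=\mathcal L$: the residual section of $\mathcal L|_{Z'}$ is $(s_2+\tau)|_{Z'}$, not just $\tau|_{Z'}$, since $s_2|_{Z'}$ need not vanish but only lies in $\mathcal L|_{Z'}$.)

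The paper sidesteps all of this by choosing from the outset the decomposition adapted to \eqref{n38}: it writes $A_*(C)$ as the image of $A_*(C|_{\tZ}\cap\bar V^\vee)\oplus A_*(C|_X)$, where $\tZ$ is the blowup of $Z$ along $X$ and $\bar V^\vee=\ker(V|^\vee_{\tZ}\to\sO_{\tZ}(-D))$. On such classes the right hand side of \eqref{n59} is read off directly from the definition \eqref{n38}, giving $(-1)^{n-1}\rho'_*(0_{\bar V^\vee}\circ\jmath)^!$ and $(-1)^n 0^!_{V|^\vee_X}$ respectively. The left hand side on the $C|_X$-part follows from Lemma \ref{103} and \eqref{n14.1}, and on the $C|_{\tZ}\cap\bar V^\vee$-part from the key observation that the zero locus of $(s_2,\tau)$ there is the divisor $D$, regularly embedded with maximal isotropic normal bundle $\sO_D(D)\oplus\bar V|_D^\vee$ of sign $(-1)^{n-1}$, so that Lemma \ref{n68} applies; that lemma (localized square root Euler class of an lci zero locus equals $\sqe(N^\perp/N)\circ\imath^!$) is the ingredient your proposal is missing. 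If you want to salvage your route, you would need to prove the two-blowup comparison identity for $0^!_{V^\vee,\sigma}$ separately, which is more work than adopting the paper's decomposition directly.
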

\begin{proof}
Let $\tZ\to Z$ be the blowup of $Z$ along $X$ and $D$ be the exceptional divisor so that we have the Cartesian square
\[\xymatrix{
D\ar[r]^\jmath \ar[d]_{\rho'} & \tZ\ar[d]^\rho\\
X\ar[r]^\imath & Z.}\]
The cosection $s_2^\vee:V|^\vee_Z\to \sO_Z$ pulls back to a surjection
$$s_2^\vee|_{\tZ}:V|_{\tZ}^\vee\lra \sO_{\tZ}(-D)$$
whose kernel is denoted by $\bar V^\vee$. Taking the dual, we have an exact sequence
$$0\lra \sO_{\tZ}(D)\lra V|_{\tZ}\lra \bar V\lra 0.$$  
By \cite[Proposition 2.3.6]{Kresch} (cf. \cite[Example 1.8.1]{Ful}), the Cartesian square
\[\xymatrix{
C|_D\cap\bar V^\vee\ar[r]^{\nu'}\ar[d]_{\mu'} & C|_{\tZ}\cap\bar V^\vee\ar[d]^\mu\\
C|_X\ar[r]^{\nu} & C} \]
gives us the exact sequence
$$A_*(C|_D\cap\bar V^\vee)\lra A_*(C|_{\tZ}\cap\bar V^\vee)\oplus A_*(C|_X)\xrightarrow{(\mu_*,\nu_*)} A_*(C)\lra 0.$$
Hence it suffices to check \eqref{n59} for 
$\mu_*\alpha$ and $\nu_*\beta$ with $\alpha\in A_*(C|_{\tZ}\cap\bar V^\vee)$ and $\beta\in A_*(C|_X)$.  
By the definition \eqref{n38}, we have
\beq\label{n66}
0^!_{V|^\vee_{Z},s_2^\vee|_Z}\mu_*\alpha=-\rho'_*\jmath^*0^!_{\bar V^\vee}\alpha, \quad
0^!_{V|^\vee_{Z},s_2^\vee|_Z}\nu_*\beta=0^!_{V|^\vee_X}\beta.
\eeq

For $\beta\in A_*(C|_X)$, Lemma \ref{102} (1), Lemma \ref{103} and \eqref{n14.1} imply
$$\begin{aligned}
\sqrt{e}(F|_{C}, (s_2|_C,\tau)) \nu_*\beta &= \sqrt{e}(F|_{C|_X}, (0,\tau|_{C|_X}))\beta\\ 
&=(-1)^ne(V|^\vee_{C|_X},\tau|_{C|_X}) \beta \\
&= (-1)^n0^!_{V|^\vee_X}\beta= (-1)^n0^!_{V|^\vee_{Z},s_2^\vee|_Z}\nu_*\beta.\end{aligned}$$

It remains to prove 
\beq\label{n67}\sqrt{e}(F|_{C}, (s_2|_C,\tau)) \mu_*\alpha= (-1)^n0^!_{V|^\vee_{Z},s_2^\vee|_Z}\mu_*\alpha.\eeq
By \eqref{n66}, the right hand side of \eqref{n67} equals 
\beq\label{n70}(-1)^{n-1}\rho'_*\jmath^*0^!_{\bar V^\vee}\alpha 
=(-1)^{n-1}\rho'_* (0_{\bar V^\vee}\circ \jmath)^!\alpha.\eeq
By Lemma \ref{102} (1), the left hand side of \eqref{n67} equals 
\beq\label{n71}\rho'_*\sqrt{e}(F|_{C|_{\tZ}\cap \bar V^\vee}, (s_2|_{C|_{\tZ}\cap \bar V^\vee},\tau|_{C|_{\tZ}\cap \bar V^\vee}) )\alpha.\eeq
Since $(s_2|_{C|_{\tZ}\cap \bar V^\vee},\tau|_{C|_{\tZ}\cap \bar V^\vee})^{-1}(0)=D$ and the normal bundle of $D$ in $\bar V^\vee$ is 
$$N_{D/\bar V^\vee}=\sO_D(D)\oplus \bar V|_D^\vee$$ which is maximal isotropic with sign $(-1)^{n-1}$ (cf. \S\ref{Sorth}), by Lemma \ref{n68} below,
we then have 
\beq\label{n69}\sqrt{e}(F|_{C|_{\tZ}\cap \bar V^\vee}, (s_2|_{C|_{\tZ}\cap \bar V^\vee},\tau|_{C|_{\tZ}\cap \bar V^\vee}) )\alpha=(-1)^{n-1}(0_{\bar V^\vee}\circ \jmath)^!\alpha.\eeq
The equality \eqref{n67} follows from \eqref{n70}, \eqref{n71} and \eqref{n69}.
\end{proof}

\begin{lemm}\label{n68}
Let $F$ be an $SO(2n)$-bundle over a \DM stack $Y$ and $s$ be an isotropic section. If the inclusion map $\imath: X=s^{-1}(0) \to Y$ is a local complete intersection morphism, then $N=N_{X/Y}$ is an isotropic subbundle of $F|_{X}$ and
\begin{equation}\label{104.1}
\sqrt{e}(F,s)=\sqrt{e}(N^{\perp}/N)\circ \imath^!:A_*(Y) \to A_{*-n}(X).    
\end{equation}
\end{lemm}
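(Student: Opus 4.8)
The plan is to evaluate both sides of \eqref{104.1} on a class $\xi\in A_*(Y)$ after decomposing $\xi$ via the blow-up used in Definition~\ref{n39}, thereby reducing the identity to the Edidin--Graham splitting \eqref{n42} applied once on $X$ and once on the exceptional divisor. First, since $\imath$ is lci it is a regular embedding, say of codimension $c$, so $N=N_{X/Y}$ is locally free of rank $c$; the surjection $s^\vee\colon F^\vee\twoheadrightarrow I_X$ becomes, after $\otimes\,\sO_X$, a surjection $F^\vee|_X\twoheadrightarrow I_X/I_X^2=N^\vee$ whose dual realizes $N$ as a subbundle of $F|_X$. To see $N$ is isotropic, observe that the form $q\in H^0(\Symm^2 F^\vee)$ maps to $s\cdot s=0$ under $\Symm^2 F^\vee\to\Symm^2 I_X\to I_X^2$; passing to $I_X^2/I_X^3$ and using the regular-embedding isomorphism $\bigoplus_k I_X^k/I_X^{k+1}\cong\Symm_{\sO_X}(N^\vee)$, the image of $q$ in $\Symm^2 N^\vee$, which is exactly the restriction $\bar q|_N$ of the quadratic form to $N$, vanishes. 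Hence $c\le n$ and $N^\perp/N$ is an $SO(2n-2c)$-bundle, so both sides of \eqref{104.1} drop degrees by $n$; since $\imath^!$ is a bivariant class for the lci morphism $\imath$ and $\sqe(N^\perp/N)$ is bivariant, the right hand side lies in $A^n_X(Y)$, and it suffices to check equality of the two homomorphisms $A_*(Y)\to A_{*-n}(X)$.

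Keep the notation of Definition~\ref{n39}: $\rho\colon\tY\to Y$ is the blow-up along $X$, $D\subset\tY$ the exceptional divisor with $\jmath\colon D\hookrightarrow\tY$ and $\hat\rho=\rho|_D\colon D\to X$, $L=\sO_{\tY}(D)\hookrightarrow F|_{\tY}$ the isotropic line subbundle of Lemma~\ref{n0}, and $\tF=L^\perp/L$. By \eqref{n31} every $\xi\in A_*(Y)$ is of the form $\rho_*\alpha+\imath_*\beta$ with $\alpha\in A_*(\tY)$, $\beta\in A_*(X)$, and I check \eqref{104.1} on $\imath_*\beta$ and on $\rho_*\alpha$ separately. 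For $\imath_*\beta$: by Definition~\ref{n39}, $\sqe(F,s)(\imath_*\beta)=\sqe(F)\beta=\sqe(F|_X)\beta$, which equals $\sqe(N^\perp/N)\,e(N)\,\beta$ by \eqref{n42} for the isotropic subbundle $N\subset F|_X$; and by the self-intersection formula for the regular embedding $\imath$ (cf.\ \cite[\S6]{Ful}) one has $\imath^!\imath_*\beta=e(N)\cap\beta$, so $\sqe(F,s)(\imath_*\beta)=\sqe(N^\perp/N)(\imath^!\imath_*\beta)$, as desired.

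For $\rho_*\alpha$: by Definition~\ref{n39} and bivariance of $\sqe(\tF)$, $\sqe(F,s)(\rho_*\alpha)=\hat\rho_*\bigl(\sqe(\tF|_D)(D\cdot\alpha)\bigr)$. On the other hand, compatibility of the refined Gysin map with proper pushforward gives $\imath^!(\rho_*\alpha)=\hat\rho_*(\imath^!\alpha)$, where $\imath^!\colon A_*(\tY)\to A_{*-c}(D)$ is the refined Gysin associated to the Cartesian square, so $\sqe(N^\perp/N)(\imath^!\rho_*\alpha)=\hat\rho_*\bigl(\sqe((N^\perp/N)|_D)(\imath^!\alpha)\bigr)$. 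It therefore suffices to show $\sqe(\tF|_D)(D\cdot\alpha)=\sqe((N^\perp/N)|_D)(\imath^!\alpha)$ in $A_*(D)$. Here I use the standard description of the blow-up of the regular embedding $X\subset Y$: $D=\PP(N)$, the normal bundle $N_{D/\tY}$ is identified with $L|_D=\sO_D(D)$ sitting in $\hat\rho^*N$ as the tautological line subbundle, and the excess intersection formula \cite[\S6]{Ful} gives $\imath^!\alpha=e(E)\cap(D\cdot\alpha)$ with $E:=\hat\rho^*N/L|_D$ of rank $c-1$. Since $L|_D\subset\hat\rho^*N\subset F|_D$ with $\hat\rho^*N$ isotropic, $E$ is an isotropic subbundle of $\tF|_D=(L|_D)^\perp/(L|_D)$ and $E^\perp/E\cong(\hat\rho^*N)^\perp/\hat\rho^*N=(N^\perp/N)|_D$; hence \eqref{n42} applied to $E\subset\tF|_D$ gives $\sqe(\tF|_D)=e(E)\,\sqe((N^\perp/N)|_D)$, and consequently $\sqe(\tF|_D)(D\cdot\alpha)=\sqe((N^\perp/N)|_D)\bigl(e(E)\cap(D\cdot\alpha)\bigr)=\sqe((N^\perp/N)|_D)(\imath^!\alpha)$. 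Adding the two cases yields \eqref{104.1}.

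The only non-formal ingredient is the geometry of the blow-up of a regular embedding invoked above: that $D=\PP(N)$, that $L|_D=\sO_D(D)$ is exactly the tautological subbundle of $\hat\rho^*N$, and that the excess bundle occurring in $\imath^!\alpha=e(E)\cap(D\cdot\alpha)$ is the same quotient $E=\hat\rho^*N/L|_D$; pinning these compatibilities down (keeping track of the convention for $\PP(N)$ and for $\sO_D(D)$) is where the care is needed, after which the proof is bookkeeping with \eqref{n42} and the functoriality of refined Gysin maps. Alternatively, one can give a deformation-to-the-normal-cone argument in the spirit of Lemma~\ref{n57}: extend $s$ to $M^\circ_{X/Y}$, use Lemma~\ref{102}(3) and $\imath_1^!=\imath_\infty^!$ to reduce to the tautological section $\tau$ of $C_{X/Y}=N$ inside $F|_X$, and then apply \eqref{103.1} together with \eqref{n14.1}; here the point requiring care is the identification of the limit of $s$ at $\infty$ with the tautological section of $N$.
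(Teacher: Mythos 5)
Your proof of the identity \eqref{104.1} is correct and is essentially the paper's own argument: decompose $\xi=\rho_*\alpha+\imath_*\beta$ via the blowup of Definition \ref{n39}, handle $\imath_*\beta$ with $\sqe(F)=\sqe(N^\perp/N)e(N)$ and the self-intersection formula, and handle $\rho_*\alpha$ with the excess intersection formula $\imath^!=e(\hat\rho^*N/L|_D)\jmath^*$ together with the splitting $\sqe(\tF|_D)=e(\hat\rho^*N/L|_D)\sqe((N^\perp/N)|_D)$ from \eqref{n42}. The only real divergence is in proving that $N$ is isotropic: you argue algebraically, pushing $q$ into $I_X^2/I_X^3\cong\Symm^2 N^\vee$ where it becomes the class of $s^2=0$, whereas the paper (following \cite[Proposition 4.3]{OhTh}) uses MacPherson's graph construction to exhibit $N$ as the $\lambda=\infty$ limit of the isotropic sections $\lambda\cdot s$ inside the quadric cone; both arguments are valid, and your closing remark about the deformation-to-the-normal-cone alternative is in fact the route the paper takes for this point.
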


\begin{proof}
In the proof of  \cite[Proposition 4.3]{OhTh}, it was proved that the normal cone $C_{X/Y}=N_{X/Y}$ is an isotropic subbundle of $F|_{X}$. 
Indeed, by MacPherson's graph construction \cite[Remark 5.1.1]{Ful}, the deformation space $M=M^\circ_{X/Y}$ is the closure of the embedding
\begin{equation}\label{104.2}
Y \times \bbA^1 \to F \times \PP^1 : (y,\lambda) \mapsto (\lambda \cdot s(y), [1:\lambda]).    
\end{equation}
Let $Q \subseteq F$ be the quadric cone of isotropic vectors in the fibers of $F$. Since $\lambda\cdot s \in H^0(Y\times \bbA^1,F\times \bbA^1)$ is an isotropic section, the image of the embedding \eqref{104.2} is contained in $Q \times \PP^1$. Hence the closure $M$ is also contained in $Q \times \PP^1$. Considering the fiber over $\infty \in \PP^1$, we deduce $M_{\infty}=N \subseteq Q$. Hence $N \subseteq Q|_{X}$, which means that $N$ is isotropic.

Let us use the notation in Definition \ref{n39}. 
By the excess intersection formula, 
we have $$\imath^! = e(\rho^*N/L)\circ\jmath^*:A_*(\tY) \lra A_{*-m}(D)$$
where $L=\sO_{\tY}(D)$. By \eqref{n41} and \eqref{n42}, we have 
\begin{align*}
\sqrt{e}(F,s)(\rho_* \alpha + \imath_* \beta) & = \hat{\rho}_* \sqrt{e}(\tF)\jmath^*\alpha + \sqrt{e}(F) \beta \\
& = \hat{\rho}_*  \sqrt{e}(N^{\perp}/N)e(\rho^*N/L)\jmath^* \alpha + \sqrt{e}(N^{\perp}/N)e(N)\beta\\
& = \sqrt{e}(N^{\perp}/N) \hat{\rho}_* \imath^! \alpha + \sqrt{e}(N^{\perp}/N)\imath^!\imath_*\beta\\
& = \sqrt{e}(N^{\perp}/N)  \imath^! (\rho_* \alpha + \imath_*\beta)
\end{align*}
as desired.
\end{proof}
This completes our proof of Theorem \ref{n55}.

\bigskip

\section{Further localization of square root Euler class by extra isotropic section}\label{Scos}

By Theorem \ref{n51}, the square root Euler class $\sqe(F)$ of an $SO(2n)$-bundle $F$ is localized to the zero locus $X$ of an isotropic section $s$. In this section, we show that we can further localize $\sqe(F)$ if there is an extra isotropic section $t$ with $s\cdot t=0$.

We use the notation of Definition \ref{n39}. 
Let $F$ be an $SO(2n)$-bundle over a \DM stack $Y$, and $s,t$ be two isotropic sections of $F$ such that $s \cdot t=0$. Let $X=s^{-1}(0)$ and $X(t)=X\cap t^{-1}(0)$.

Let $\rho:\tY\to Y$ be the blowup of $Y$ along $X$ and $D$ be the exceptional divisor.
Then $L=\sO_{\tY}(D)$ and $\tF=L^{\perp}/L$ be the induced $SO(2n-2)$-bundle. 
Since $s\cdot t=0$, the isotropic section $t|_{\tY}=\rho^*t$ of $F|_{\tY}=\rho^*F$ 
induces an isotropic section $\tit$ of $\tF$. Let
\begin{equation}\label{111.3}
X(t)^\# := \rho'\Big(D \cap \tit^{-1}(0)\Big) \cup X(t)\subset X,
\end{equation}
where $\rho'=\rho|_D$. Consider the commutative diagram
$$\xymatrix{
D (t) \ar[r]^{\jmath''} \ar[d]_{\rho'''} 
& D(\tit) \ar[r]^{\jmath'} \ar[d]_{\rho''} & D \ar[r]^{\jmath} \ar[d]^{\rho'} & \tY \ar[d]^{\rho}\\
X(t) \ar[r]^{\imath''} & X(t)^\# \ar[r]^{\imath'} & X \ar[r]^{\imath} & Y.
}$$ 
\begin{defi}\label{n72} With the above notation,
the {\em square root Euler class localized by} $s$ and $t$ is defined as   
\begin{equation}\label{111.2}
\sqrt{e}(F,s;t) : A_* (Y) \longrightarrow A_{*-n} (X(t)^\#)    
\end{equation}
\begin{equation}\label{111.1}\begin{aligned}
    \sqrt{e}(F,s;t) (\rho_*\alpha + \imath_* \beta) &= \rho''_* \jmath^! \sqrt{e}(\tF,\tit) \alpha + \imath''_* \sqrt{e} (F,t) \beta\\
    & = \rho''_* \sqrt{e}(\tF,\tit) \jmath^* \alpha + \imath''_* \sqrt{e} (F,t) \beta \end{aligned}
\end{equation}
for $\alpha\in A_*(\tY)$ and $\beta\in A_*(X)$. 
\end{defi}

We first prove that $\sqrt{e}(F,s;t)$ in \eqref{111.1} is well-defined.

\begin{lemm}\label{112}
$\sqrt{e}(F,s;t)(\xi)$ in \eqref{111.1} is independent of the choice of the decomposition $\xi= \rho_* \alpha + \imath_* \beta$.
\end{lemm}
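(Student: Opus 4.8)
The plan is to mimic the proof of Lemma \ref{n43}, where well-definedness of $\sqe(F,s)$ was established. Suppose $\rho_*\alpha+\imath_*\beta=\rho_*\alpha'+\imath_*\beta'$, so that $\rho_*(\alpha-\alpha')=\imath_*(\beta'-\beta)$. By the exact sequence \eqref{n31} for the blowup $\rho:\tY\to Y$ along $X$, there exists $\gamma\in A_*(D)$ with $\jmath_*\gamma=\alpha-\alpha'$ and $\hat\rho_*\gamma=\beta'-\beta$ (here $\hat\rho=\rho'=\rho|_D$). It then suffices to show
\begin{equation*}
\rho''_*\sqe(\tF,\tit)\jmath^*\jmath_*\gamma = \imath''_*\sqe(F,t)\hat\rho_*\gamma \quad\text{in }A_{*-n}(X(t)^\#).
\end{equation*}

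The key identity I would use is that on $\tY$ one has $\sqe(F|_{\tY},t|_{\tY})=\sqe(\tF,\tit)\circ e(L)$, which is exactly \eqref{103.1} of Lemma \ref{103} applied to the isotropic line subbundle $K=L=\sO_{\tY}(D)$ of $F|_{\tY}$, using that $t|_{\tY}=\rho^*t$ satisfies $t|_{\tY}\cdot L=0$ because $s\cdot t=0$ (so $t|_{\tY}\in H^0(L^\perp)$ and induces $\tit$ on $\tF=L^\perp/L$). Since $\jmath^*\jmath_*=c_1(L)=e(L)$, the left side becomes
\begin{align*}
\rho''_*\sqe(\tF,\tit)\jmath^*\jmath_*\gamma
&= \rho''_*\sqe(\tF,\tit)\,e(L)\,\gamma\\
&= \rho''_*\sqe(F|_{\tY},t|_{\tY})\,\gamma.
\end{align*}
Now $\sqe(F|_{\tY},t|_{\tY})$ is a bivariant class (Theorem \ref{n51}), so it commutes with the proper pushforward $\hat\rho:D\to X$ (Lemma \ref{102}(1)), giving $\rho''_*\sqe(F|_{\tY},t|_{\tY})\gamma = \sqe(F|_X,t|_X)\hat\rho_*\gamma$ after the appropriate base change; pushing forward by $\imath''$ and matching targets yields $\imath''_*\sqe(F,t)\hat\rho_*\gamma=\imath''_*\sqe(F,t)(\beta'-\beta)$, as desired. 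One has to be slightly careful that all three expressions land in $A_{*-n}(X(t)^\#)$ and that the definition \eqref{111.3} of $X(t)^\#$ is exactly what makes the targets of $\rho''_*\sqe(\tF,\tit)\jmath^*$ and $\imath''_*\sqe(F,t)$ compatible; this is a bookkeeping matter that follows from the commutative diagram preceding Definition \ref{n72}, since $\rho''(D(\tit))=\rho'(D\cap\tit^{-1}(0))\subset X(t)^\#$ and $\imath''$ is the inclusion of $X(t)=X\cap t^{-1}(0)\subset X(t)^\#$.

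The main obstacle is not any single deep step but rather keeping the ambient spaces and support conditions straight: one must verify that $\sqe(\tF,\tit)\jmath^*$ applied to a class supported on $D$ indeed has image supported on $\rho'(D\cap\tit^{-1}(0))$, and hence the pushforward $\rho''_*$ makes sense with target $X(t)^\#$, and likewise that the identity $\sqe(F|_{\tY},t|_{\tY})=\sqe(\tF,\tit)e(L)$ from Lemma \ref{103} is being applied over the correct base (all of $\tY$, then restricted to $D$). Once the functoriality of $\sqe(-,-)$ in Lemma \ref{102} and the factorization in Lemma \ref{103} are invoked, the computation is a direct translation of the argument in Lemma \ref{n43}, so I expect the proof to be short.
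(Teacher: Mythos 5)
Your overall strategy is the same as the paper's: reduce to the identity \eqref{112.1} for $\gamma\in A_*(D)$ via the blowup exact sequence, use $\jmath^*\jmath_*=e(L)$, convert $\sqe(\tF,\tit)\,e(L)$ into $\sqe(F,t)$ via Lemma \ref{103}, and finish with the bivariance of $\sqe(F,t)$ under proper pushforward. However, the central step is miscited and, as written, does not respect supports. You invoke \eqref{103.1}, but that formula requires the section to be a section \emph{of} the isotropic subbundle $K=L$; here $t|_{\tY}$ lies in $H^0(L^\perp)$, not in $H^0(L)$, so \eqref{103.1} does not apply. The relevant statement is \eqref{103.2}, which for $K=L$ and the section $t$ reads
$$\sqe(F,t)=e\bigl(L|_{\tit^{-1}(0)},\,t|_{\tit^{-1}(0)}\bigr)\circ\sqe(\tF,\tit),$$
with the \emph{localized} Euler class of $L$, landing in $A_*(D(t))$ rather than $A_*(D(\tit))$. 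Your identity $\sqe(F|_{\tY},t|_{\tY})=\sqe(\tF,\tit)\circ e(L)$ equates classes supported on $t^{-1}(0)\cap D=D(t)$ with classes supported on the larger locus $D(\tit)$, so it is not well-typed as stated.

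This is not mere bookkeeping: the refined support is the entire content of the lemma. To identify $\rho''_*\,e(L)\sqe(\tF,\tit)\gamma$ with $\imath''_*\sqe(F,t)\rho'_*\gamma$ \emph{in} $A_*(X(t)^\#)$, you must first rewrite $e(L)=\jmath''_*\,e(L,t|_{\tit^{-1}(0)})$ (this is \eqref{n47} for the section $t|_{\tit^{-1}(0)}$ of $L|_{\tit^{-1}(0)}$, whose zero locus is $D(t)$), then apply \eqref{103.2} to get $\jmath''_*\sqe(F,t)\gamma$, and only then push forward via $\rho''_*\jmath''_*=\imath''_*\rho'''_*$ and commute $\sqe(F,t)$ with $\rho'''_*$. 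With the un-localized $e(L)$ the intermediate class never reaches $D(t)$, so you cannot factor the pushforward through $X(t)$ and you would only recover the equality after mapping both sides into $A_*(X)$, which is weaker than the claim. The repair is short and is exactly the paper's middle step, but as proposed the argument has a genuine hole.
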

\begin{proof} As in the proof of Lemma \ref{n43}, 
it suffices to show that
\begin{equation}\label{112.1}
\rho''_*\jmath^*\sqrt{e}(\tF,\tit) \jmath_* \gamma = \imath''_*\sqrt{e}(F,t)\rho'_*\gamma    
\end{equation}
holds for any $\gamma \in A_* (D)$. 
Since $\sqrt{e}(\tF,\tit)$ commutes with projective pushforwards by Lemma \ref{102} (1), the left hand side of \eqref{112.1} is equal to
$$\rho''_* e(L)\sqrt{e}(\tF,\tit)\gamma = \rho''_* \jmath''_* e(L,t|_{\tit^{-1}(0)})\sqrt{e}(\tF,\tit)\gamma.$$
By Lemma \ref{103}, we have
$$ \rho''_* \jmath''_*e(L,t|_{\tit^{-1}(0)})\sqrt{e}(\tF,\tit)\gamma = \rho''_* \jmath''_*\sqrt{e}(F,t)\gamma= \imath''_* \rho'''_* \sqrt{e}(F,t)\gamma.$$
Again by Lemma \ref{102} (1), we have 
$$\imath''_* \rho'''_*  \sqrt{e}(F,t)\gamma= \imath''_*\sqrt{e}(F,t) \rho'_*\gamma,$$
which proves \eqref{112.1}.
\end{proof}

Next we show that $\sqe(F,s;t)$ is a further localization of $\sqe(F,s)$. 
\begin{lemm}\label{113}
\begin{equation}\label{113.1}
\imath'_* \circ \sqrt{e}(F,s;t)=  \sqrt{e}(F,s):A_*(Y) \to A_{*-n}(X).    
\end{equation}
\end{lemm}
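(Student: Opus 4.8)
The plan is to evaluate both sides of \eqref{113.1} on a class presented as $\xi=\rho_*\alpha+\imath_*\beta$ with $\alpha\in A_*(\tY)$ and $\beta\in A_*(X)$. By the right-exact sequence \eqref{n31} every class of $A_*(Y)$ admits such a (non-unique) presentation, and the right-hand side $\sqrt{e}(F,s)(\xi)$ is presentation-independent by Lemma \ref{n43} while the left-hand side $\sqrt{e}(F,s;t)(\xi)$ is presentation-independent by Lemma \ref{112}; so it suffices to check $\imath'_*\,\sqrt{e}(F,s;t)(\rho_*\alpha+\imath_*\beta)=\sqrt{e}(F,s)(\rho_*\alpha+\imath_*\beta)$, and I would do this by matching the defining expressions \eqref{111.1} and \eqref{n41} summand by summand.

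First I would extract two compositional identities from the commutative diagram of Definition \ref{n72}: $\imath'\circ\rho''=\rho'\circ\jmath'$ (the middle-left square), and $\imath'\circ\imath''$ equals the inclusion $X(t)=t^{-1}(0)\cap X\hookrightarrow X$; I also note that $\rho'=\hat\rho$ is precisely the exceptional-divisor map $D\to X$ used in Definition \ref{n39}. For the $\alpha$-summand, using the second form in \eqref{111.1} and then $\imath'_*\rho''_*=\rho'_*\jmath'_*$ gives $\imath'_*\rho''_*\,\sqrt{e}(\tF,\tit)\jmath^*\alpha=\rho'_*\jmath'_*\,\sqrt{e}(\tF,\tit)\jmath^*\alpha$. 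Now $\jmath':D(\tit)\to D$ is the inclusion of the zero locus of the isotropic section $\tit|_D$ of the $SO(2n-2)$-bundle $\tF|_D$, so Lemma \ref{n44} yields $\jmath'_*\circ\sqrt{e}(\tF,\tit)=\sqrt{e}(\tF)$, and hence the expression becomes $\rho'_*\,\sqrt{e}(\tF)\jmath^*\alpha=\hat\rho_*\,\jmath^*\sqrt{e}(\tF)\alpha$ after sliding the divisor Gysin map $\jmath^*$ past the bivariant class $\sqrt{e}(\tF)$; this is exactly the first summand of \eqref{n41}. For the $\beta$-summand, $\imath'_*\imath''_*\,\sqrt{e}(F,t)\beta$ is the pushforward of $\sqrt{e}(F,t)\beta\in A_*(X(t))$ along the inclusion $X(t)\hookrightarrow X$; applying Lemma \ref{n44} once more, this time to $F|_X$ with the isotropic section $t|_X$ whose zero locus is $X(t)$, we get $\sqrt{e}(F)\beta$, the second summand of \eqref{n41}. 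Adding the two contributions recovers $\sqrt{e}(F,s)(\rho_*\alpha+\imath_*\beta)$, as required.

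I do not anticipate a real obstacle: the argument is purely formal once Lemma \ref{n44} is available. The points that demand care are (i) keeping track of which blowup or inclusion map each pushforward runs along — this is exactly the role of the diagram in Definition \ref{n72} — (ii) invoking Lemma \ref{n44} in two different guises, first for the reduced bundle $\tF|_D$ over the exceptional divisor and then for $F|_X$ over $X$, and (iii) the routine commutation of the Gysin/restriction map $\jmath^*$ through the bivariant class $\sqrt{e}(\tF)$. Well-definedness of the input decomposition is already handled by Lemma \ref{112}, so nothing further is needed on that front.
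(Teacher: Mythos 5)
Your proposal is correct and follows essentially the same route as the paper: decompose $\xi=\rho_*\alpha+\imath_*\beta$, use $\imath'\circ\rho''=\rho'\circ\jmath'$ together with $\jmath'_*\circ\sqrt{e}(\tF,\tit)=\sqrt{e}(\tF)$ (Lemma \ref{n44}) on the $\alpha$-summand, and apply Lemma \ref{n44} again to $F$ and $t$ on the $\beta$-summand. The only cosmetic difference is that the paper starts from the first form $\rho''_*\jmath^!\sqrt{e}(\tF,\tit)\alpha$ and invokes Lemma \ref{102}~(3) to commute $\jmath^!$ past $\sqrt{e}(\tF,\tit)$, whereas you work directly with the second form of \eqref{111.1}.
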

\begin{proof}
Since the localized square root Euler class $\sqrt{e}(\tF,\tit)$ commutes with the refined Gysin pullback by Lemma \ref{102} (3), for $\alpha \in A_*(\tY)$, we have 
\begin{align*}
\imath'_*\rho''_*\jmath^!\sqrt{e}(\tF,\tit) \alpha  = \rho'_*\jmath'_* \sqrt{e}(\tF,\tit)\jmath^*\alpha  = \rho'_* \sqrt{e}(\tF) \jmath^* \alpha.
\end{align*}
On the other hand, for $\beta \in A_* X$, we have 
$$\imath'_*\imath''_* \sqrt{e}(F,t)\beta = \sqrt{e}(F)\beta.$$
This completes the proof.
\end{proof}

\medskip

Unlike $X(t)=X\cap t^{-1}(0)$, $X(t)^\#$  in \eqref{111.3} is not so easy to work with.
To simplify $X(t)^\#$, we need a linear independence condition for $s$ and $t$. 
\begin{defi}\label{n73}
Let $s$ and $t$ be sections of a vector bundle $F$ over a \DM stack $Y$.
We say $s$ and $t$ are \emph{independent} away from a closed substack $Z$ 
if for every $y\in Y-Z$, there are an \'etale neighborhood $U$ of $y$ on $Y-Z$ and a splitting 
$$F|_{U}\cong F'\oplus F''$$
of vector bundles 
such that $$s|_{U}\in H^0(F') \and t|_{U}\in H^0(F'').$$ 
\end{defi}

Now we can state the main result of this section. 

\begin{theo}\label{n74}
Let $F$ be an $SO(2n)$-bundle over a \DM stack $Y$, and $s, t$ be isotropic sections of $F$
satisfying $s \cdot t=0$. Let $X=s^{-1}(0)$. 
If $s$ and $t$ are independent away from a closed substack $Z$ containing $t^{-1}(0)$,  
then we have a homomorphism 
\beq\label{n76}  \sqe(F,s;t):A_*(Y)\lra A_{*-n}(X\cap Z) \eeq
defined by \eqref{111.1}. 
Moreover, $\sqe(F,s;t)$ is a bivariant class in $A_{X\cap Z}^n(Y)$ satisfying
\beq\label{n75} \hat{\imath}_*\circ \sqe(F,s;t)=\sqe(F,s)\eeq
where $\hat{\imath}:X\cap Z\to X$ denotes the inclusion. 
\end{theo}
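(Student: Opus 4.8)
The plan is to verify three things in sequence: first, that under the independence hypothesis the target $X(t)^\#$ of Definition \ref{n72} simplifies to $X\cap Z$; second, that $\sqe(F,s;t)$ so obtained is a bivariant class; and third, the compatibility \eqref{n75} with $\sqe(F,s)$, which is just a restatement of Lemma \ref{113} once the target has been identified.

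\medskip

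\textbf{Step 1: Identifying the support.} Recall $X(t)^\# = \rho'\big(D\cap\tit^{-1}(0)\big)\cup X(t)$. Since $Z$ contains $t^{-1}(0)$, we certainly have $X(t)=X\cap t^{-1}(0)\subseteq X\cap Z$. For the other piece, I would work \'etale-locally over $y\in Y-Z$: by Definition \ref{n73} there is a splitting $F|_U\cong F'\oplus F''$ with $s|_U\in H^0(F')$, $t|_U\in H^0(F'')$. Over $U$ the zero locus $X|_U$ is $(s|_U)^{-1}(0)$ inside $Y-Z$, and on the blowup $\tY|_U$ of $U$ along $X|_U$ the isotropic line $L=\sO_{\tY}(D)$ sits inside $\rho^*F'$ (it is generated by the lift of $s$, which lives in $F'$). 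Consequently the induced section $\tit$ of $\tF=L^\perp/L$ is just the image of $\rho^*t\in H^0(\rho^*F'')$, and $F''$ maps isomorphically into $\tF$ over $U$ because $L\subseteq \rho^*F'$ and $F''\perp F'$ forces $F''\subseteq L^\perp$ with $F''\cap L=0$. Hence $\tit^{-1}(0)\cap (\tY|_U\setminus \rho^{-1}(Z))=\rho^{-1}\big(t^{-1}(0)\big)\cap(\cdots)$, which is empty away from $Z$ since $t^{-1}(0)\subseteq Z$. Therefore $D\cap\tit^{-1}(0)$ is supported over $Z$, so $\rho'(D\cap\tit^{-1}(0))\subseteq X\cap Z$, and we conclude $X(t)^\#\subseteq X\cap Z$. (In fact one should be a little careful: $X(t)^\#$ is defined as a set-theoretic union, so it is really the statement that the \emph{cycles} $\jmath^!\sqe(\tF,\tit)\alpha$ and $\sqe(F,t)\beta$ appearing in \eqref{111.1} are supported on $X\cap Z$; this follows because $\sqe(\tF,\tit)\alpha\in A_*(D(\tit))$ with $D(\tit)$ supported over $X\cap Z$, and $\sqe(F,t)\beta\in A_*(X(t))\subseteq A_*(X\cap Z)$.) This lets us replace $X(t)^\#$ by $X\cap Z$ throughout \eqref{111.1}, giving \eqref{n76}.

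\medskip

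\textbf{Step 2: Bivariance.} This is the analogue of Lemma \ref{102} and I would prove it the same way, decomposing an arbitrary class of a base-changed stack via the exact sequence \eqref{n31} and pushing each operation through. For properness: given a proper $f:Y'\to Y$, form the blowup square, note $\sO_{\tY'}(D')=\tf^*L$ and $\jmath'^!=\jmath^!$, and use that $\sqe(\tF,\tit)$ and $\sqe(F,t)$ are already bivariant (Theorem \ref{n51}) together with the compatibility of $\rho''_*,\imath''_*$ with the pushforwards; the only new point is that the induced section of the pullback of $\tF$ is the pullback of $\tit$, which holds because blowup and the construction of $L$ commute with the base change in the relevant way. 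For flat pullback: by \cite[Lemma 69.17.3]{StPr} (or \cite[Lemma 70.19.3]{StPr}) the blowup commutes with flat base change, so every operation in \eqref{111.1} commutes with $f^*$. For lci pullback: reduce to the proper case as in the proof of Lemma \ref{102}(3), using Lemma \ref{103} to rewrite $\sqe(F|_{Y''},s|_{Y''};t|_{Y''})$ in terms of $\sqe(\tF,\tit)\jmath^!$ and then commuting $g^!$ past $\rho''_*$ and $\sqe(\tF,\tit)$ (the latter by Lemma \ref{102}(3) applied to $\tF$, noting $\tit$ pulls back correctly). Compatibility \eqref{n75} is then immediate from Lemma \ref{113} after relabelling the target.

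\medskip

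\textbf{Main obstacle.} The delicate point is Step 1 — specifically, making the \'etale-local splitting argument interact correctly with the blowup. One has to check that the isotropic line $L=\sO_{\tY}(D)$ really does land in $\rho^*F'$ (not just in $\rho^*F$), which requires tracing through how $L$ is built from the surjection $s^\vee|_{\tY}:F^\vee|_{\tY}\to\sO_{\tY}(-D)$ and verifying that $F''$ contributes trivially to this surjection, hence that the induced splitting $\tF|_U\cong (L^\perp\cap\rho^*F')/L \oplus \rho^*F''$ is orthogonal with $\tit$ living in the second factor. A secondary subtlety, flagged parenthetically above, is that $X(t)^\#$ is defined as a naive set-theoretic union and one must argue at the level of the actual cycle classes in \eqref{111.1} that everything is supported on $X\cap Z$; this is where one genuinely uses that $Z\supseteq t^{-1}(0)$ rather than just $Z\supseteq$ (locus of dependence).
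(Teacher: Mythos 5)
Your overall strategy is the same as the paper's: reduce everything to showing that $X(t)^\#$ is contained in $X\cap Z$ via the local splitting of Definition \ref{n73}, prove bivariance by repeating the argument of Lemma \ref{102}, and deduce \eqref{n75} from Lemma \ref{113}. (Your parenthetical worry about the set-theoretic union is handled simply because Chow groups depend only on reduced structures, which is exactly how the paper phrases the inclusion $X(t)^\#_\redd\subset(X\cap Z)_\redd$.)

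There is, however, one step in your Step 1 that does not go through as written: you assert that ``$F''\perp F'$ forces $F''\subseteq L^\perp$ with $F''\cap L=0$,'' so that $F''$ embeds isomorphically into $\tF=L^\perp/L$. But Definition \ref{n73} only provides a splitting $F|_U\cong F'\oplus F''$ \emph{of vector bundles}; it is not required to be orthogonal with respect to $q$, so you cannot conclude $F''\subseteq L^\perp$. The paper sidesteps this by not trying to land $F''$ inside $L^\perp$ at all: one has $L\subseteq F'|_{\tY}$ (as you correctly argue, since the surjection $s^\vee:F|_{\tY}^\vee\to\sO_{\tY}(-D)$ factors through $F'|_{\tY}^\vee$), hence
$$\tF=L^\perp/L\ \subseteq\ F|_{\tY}/L\ =\ F'|_{\tY}/L\ \oplus\ F''|_{\tY},$$
and the image of $t|_{\tY}$ in this quotient has second component $t$, which is nowhere vanishing on $U$ because $t^{-1}(0)\subseteq Z$. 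Since $\tit$ is precisely this image viewed in $L^\perp/L$, it is nowhere vanishing away from $Z$, which is all that is needed. With that one repair your argument coincides with the paper's proof.
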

\begin{proof}
The proof of bivariance is similar to that of Lemma \ref{102} and we omit it.
The equality \eqref{n75} is a direct consequence of Lemma \ref{113} by \eqref{n77}. 
So we only have to prove the inclusion of the reduced schemes 
\beq\label{n77}  X(t)^\#_\redd\subset (X\cap Z)_\redd \eeq
when $t$ is independent of $s$ away from $Z$. 

By \eqref{111.3}, it suffices to show that $\rho'(D\cap \tit^{-1}(0))_\redd\subset (X\cap Z)_\redd$, which is a direct consequence of  
\beq\label{n78} \tit^{-1}(0)_\redd\subset (\tY\times_Y Z)_\redd,\eeq
i.e. $\tit|_{\tY\times_Y(Y-Z)}$ is nowhere vanishing. 
Clearly \eqref{n78} is local and hence we may assume that $Y=U$ for the $U$ in Definition \ref{n73} 
and that $F$ splits as 
\beq\label{n79} F\cong  F'\oplus F''\eeq 
with $s\in H^0(F')$ and $t\in H^0(F'')$.

As $L=\sO_{\tY}(D)\subset F|_{\tY}$ 
is the dual of the surjection $$s^\vee:F|_{\tY}^\vee\lra \sO_{\tY}(-D)$$ 
which factors through $F'|^\vee_{\tY}$ because $s\in H^0(F')$, we find that 
$$L|_{\tY}\subset F'|_{\tY}.$$ 
Since $t$ is a nowhere vanishing section of $F''$ by $Z\supset t^{-1}(0)$ while $L|_{\tY}$ is a subbundle of $F'|_{\tY}$, the section $\tit$ of 
$$\begin{aligned}
\tF&=L^\perp/L\subset F|_{\tY}/L = F'|_{\tY}/L\oplus F''|_{\tY}\end{aligned}$$ 
induced by $t$ is nowhere vanishing on $\tY$. This proves \eqref{n78} and hence the theorem.
\end{proof}

In the same spirit, we can further localize $e(V,s)$  in \eqref{n14} when there is an extra section.

\begin{theo}\label{n81}
Let $V$ be a vector bundle of rank $r$ over a \DM stack $Y$. Let $s$ and $t$ be sections of $V$, independent away from a closed substack $Z\supset t^{-1}(0)$. Let  $X=s^{-1}(0)$. 
Let $\rho:\tY\to Y$ be the blowup of $Y$ along $X$ so that we have a Cartesian diagram
\[ \xymatrix{
D_Z\ar[r]^{\jmath'} \ar[d]_{\rho''} & D\ar[d]^{\rho'}\ar[r]^{\jmath} & \tY\ar[d]^\rho\\
X\cap Z\ar[r]^{\imath'} & X\ar[r]^{\imath} &Y
}\]
and the exact sequence \eqref{n32}. 
Then we have a homomorphism 
\beq\label{n76a}  e(V,s;t):A_*(Y)\lra A_{*-r}(X\cap Z) \eeq
defined by 
\[ e(V,s;t)(\rho_*\alpha+\imath_*\beta)=\rho''_*e(\bar V,\bar{t}) \jmath^*\alpha+e(V,t)\beta,\quad \forall 
\alpha\in A_*(\tY), \beta\in A_*(X)\] 
where 
$\bar V=V|_{\tY}/\sO_{\tY}(D)$ and $\bar t\in H^0(\bar V)$ is the section induced by $t$. Moreover, $e(V,s;t)$ is a bivariant class in $A_{X\cap Z}^r(Y)$ satisfying
\beq\label{n75a} \imath'_*\circ e(V,s;t)=e(V,s).\eeq
\end{theo}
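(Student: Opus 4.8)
The plan is to imitate the proof of Theorem~\ref{n74}, with ordinary Euler classes replacing square root Euler classes; in that argument the orthogonal structure was inessential, only the formal properties of localized Euler classes were used. Four things must be checked: well-definedness of the formula, the compatibility \eqref{n75a}, the inclusion $\rho''(D\cap\bar t^{-1}(0))\cup(X\cap t^{-1}(0))\subseteq X\cap Z$ of the a priori support of the formula into $X\cap Z$ (the only place the independence hypothesis enters), and bivariance, which is proved exactly as in Lemma~\ref{102} and can be omitted.

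For well-definedness, by \cite[Proposition~2.3.6]{Kresch} the exact sequence $A_*(D)\to A_*(\tY)\oplus A_*(X)\to A_*(Y)\to 0$ reduces the claim to verifying, for every $\gamma\in A_*(D)$,
\[
\rho''_*\,e(\bar V,\bar t)\,\jmath^*\jmath_*\gamma \;=\; e(V,t)\,\rho'_*\gamma\qquad\text{in }A_*(X\cap Z),
\]
where the right-hand side lands in $X\cap Z$ because $t^{-1}(0)\subseteq Z$. Since $e(\bar V,\bar t)$ is bivariant it commutes with $\jmath_*$ and $\jmath^*$, and $\jmath^*\jmath_*\gamma=e(L|_D)\cap\gamma$ with $L=\sO_{\tY}(D)$, so the left side becomes $\rho''_*\bigl(e(L|_D)\cap e(\bar V|_D,\bar t|_D)\gamma\bigr)$. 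On $\bar t^{-1}(0)\cap D$ the section $t$ lifts to a section $t_L$ of $L$, and the functoriality of refined Gysin homomorphisms applied to $0\to L\to V|_{\tY}\to\bar V\to 0$ — the composition-of-regular-embeddings computation that yields \eqref{n50} — gives $e(V|_D,t|_D)=e(L|_{\bar t^{-1}(0)\cap D},t_L)\circ e(\bar V|_D,\bar t|_D)$; with $\imath_*\circ e(L|_D,t_L)=e(L|_D)$ (cf.\ \eqref{n47}) and one further use of the bivariance of $e(V,t)$ this matches the two sides. This is the verbatim analogue of the proof of Lemma~\ref{112}.

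The compatibility \eqref{n75a} follows as in Lemma~\ref{113}. For $\alpha\in A_*(\tY)$, pushing $e(\bar V|_D,\bar t|_D)\jmath^*\alpha$ forward to $D$ replaces $e(\bar V,\bar t)$ by $e(\bar V)$, and $\rho'_*\bigl(e(\bar V)\jmath^*\alpha\bigr)=e(V,s)\,\rho_*\alpha$ is exactly \eqref{n34}; for $\beta\in A_*(X)$ the identity $\imath_*\circ e(V,t)=e(V)$ together with \eqref{n47} gives $\imath'_*\,e(V,t)\beta=e(V)\beta=e(V,s)\,\imath_*\beta$. For the support statement, the claim that $\bar t$ is nowhere vanishing over $Y\setminus Z$ is \'etale-local on $Y$, so by Definition~\ref{n73} we may assume $V\cong V'\oplus V''$ with $s\in H^0(V')$ and $t\in H^0(V'')$; then $s^\vee\colon V^\vee|_{\tY}\to\sO_{\tY}(-D)$ factors through $(V')^\vee|_{\tY}$, so $L\subseteq V'|_{\tY}$ and $\bar V=(V'|_{\tY}/L)\oplus V''|_{\tY}$ with $\bar t=(0,t)$, which is nowhere vanishing over $Y\setminus Z$ since $t^{-1}(0)\subseteq Z$.

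I expect the one delicate point to be the Gysin-functoriality identity $e(V|_D,t|_D)=e(L|_{\bar t^{-1}(0)\cap D},t_L)\circ e(\bar V|_D,\bar t|_D)$ used for well-definedness: $t$ is not globally a section of the line subbundle $L$, so this is the split version (as in \eqref{103.2} and \eqref{n50}) rather than the naive factorization, and the zero loci of $t|_D$, $t_L$ and $\bar t|_D$ must be matched correctly inside $D_Z$. Everything else is the bookkeeping already carried out in \S\ref{Sn4.1} and in Lemmas~\ref{102}, \ref{112} and \ref{113}.
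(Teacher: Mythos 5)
Your proposal is correct and is exactly the argument the paper intends: the authors explicitly state that the proof of Theorem~\ref{n81} parallels that of Theorem~\ref{n74} and leave it to the reader, and your write-up carries out that parallel faithfully (well-definedness via the blowup exact sequence and the Gysin-functoriality factorization $e(V,t)=e(L,t_L)\circ e(\bar V,\bar t)$ as in \eqref{n50}, compatibility via \eqref{n34}--\eqref{n47}, and the support claim via the local splitting from Definition~\ref{n73}). No discrepancies with the paper's approach.
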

The proof of this theorem parallels that of Theorem \ref{n74} and we leave it to the reader. 

\begin{rema}
It is also straightforward to generalize Definition \ref{n73} and Theorems \ref{n74}, \ref{n81} to the case where there are more than two sections inductively. 
\end{rema}

\medskip

We end this section with the following two results which are analogues of Lemma \ref{103} and Corollary \ref{105}. 
\begin{lemm}\label{116}
Let $K$ be an isotropic subbundle of an $SO(2n)$-bundle $F$ over a \DM stack $Y$. Let $s$ and $t$
be isotropic sections of $K^\perp$ with $s\cdot t=0$. 
Let $s_1,t_1 \in H^0(K^{\perp}/K)$ be the isotropic sections of the orthogonal bundle $K^{\perp}/K$ induced by the sections $s,t$, and $s_2\in H^0(K|_{s_1^{-1}(0)\cap Z})$ be the restriction of $s$. 
If $s$ and $t$ (resp. $s_1$ and $t_1$) are independent away from a closed substack $Z\supset t_1^{-1}(0)$, then for any $\xi \in A_*(Y)$,
\begin{equation}\label{116.1}
\sqrt{e}(F,s;t) \xi = e(K,s_2) \sqrt{e}(K^{\perp}/K,s_1;t_1) \xi    
\end{equation}
in $A_*(X\cap Z)$ where $X=s^{-1}(0)$.
\end{lemm}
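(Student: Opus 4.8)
The plan is to follow the same two-step strategy used in the proof of Lemma \ref{103}: first reduce to the case where $s\in H^0(K)$ (equivalently $s_1=0$, so $s$ factors through the subbundle $K$), and then handle the general case by blowing up along the zero locus of the induced section $s_1$ and using bivariance. First I would unwind the definitions: let $\rho\colon\tY\to Y$ be the blowup along $X=s^{-1}(0)$, $D$ the exceptional divisor, $L=\sO_{\tY}(D)\subset F|_{\tY}$, and $\tF=L^\perp/L$; similarly name the induced sections $\tit,\tsi$ of $\tF$ coming from $t,s$. Since both sides of \eqref{116.1} are homomorphisms on $A_*(Y)$ and $\sqe(\tF,\tit)$, $\sqe(F,t)$, $e(K,s_2)$ are bivariant classes by Theorem \ref{n51}, Theorem \ref{n74} and \S\ref{Sn3.2}, it suffices to verify the identity separately on classes of the form $\rho_*\alpha$ with $\alpha\in A_*(\tY)$ and $\imath_*\beta$ with $\beta\in A_*(X)$, using the decomposition \eqref{n31}.

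For the piece $\imath_*\beta$ with $\beta\in A_*(X)$, I would compute directly from \eqref{111.1}: $\sqe(F,s;t)\imath_*\beta=\imath''_*\sqe(F,t)\beta$, and then apply Lemma \ref{116}'s hypothesis that $s$ factors so that on $X$ the section $s$ lies in $K$, reducing $\sqe(F,t)\beta$ via \eqref{103.2} (or \eqref{103.1}) and the bivariance of $e(K,s_2)$ exactly as in the $\imath_{1*}\beta$ computation in the proof of Lemma \ref{103}. For the piece $\rho_*\alpha$, I would use \eqref{111.1} to write $\sqe(F,s;t)\rho_*\alpha=\rho''_*\sqe(\tF,\tit)\jmath^*\alpha$, then invoke the case $s_1=0$ (handled first, below) applied to the $SO(2n-2)$-bundle $\tF$ with its isotropic subbundle $\rho^*K/L$, whose perp-quotient is $\rho^*(K^\perp/K)$, together with the excess intersection formula relating $e(\rho^*K/L)\jmath^*$ to the localized Euler class $e(K,s)$ — this is the step that mirrors \eqref{n45}.

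For the base case where $s\in H^0(K)$ (so $s_1=0$, $X=s^{-1}(0)$ is the zero locus of a section of $K$, and $Z\supset t_1^{-1}(0)$): here $K^\perp/K$ already has $s_1=0$, so the right side of \eqref{116.1} becomes $e(K,s)\,\sqe(K^\perp/K,0;t_1)\xi=e(K,s)\,\sqe(K^\perp/K,t_1)\xi$ by the definition \eqref{111.1} (the blowup of $Y$ along $X$ is used, but $L=\sO_{\tY}(D)\subset\rho^*K$, so $\tit$ restricted to $D$ is just the section induced by $t_1$, and one checks $\tit$ is nowhere zero off $\rho^{-1}(Z)$ as in the proof of Theorem \ref{n74}). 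On the left side, $L\subset\rho^*K$ gives $\rho^*K/L$ isotropic in $\tF$ with $(\rho^*K/L)^\perp/(\rho^*K/L)\cong\rho^*(K^\perp/K)$, and I would apply \eqref{103.2} to $\sqe(\tF,\tit)$ with respect to this subbundle, combined with the excess intersection formula $\imath^!=e(\rho^*K/L)\circ\jmath^*$ type identity and \eqref{n47}, to match the two sides. The pattern is exactly \eqref{n45}--\eqref{n46} but with the extra section $t$ carried along.

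The main obstacle I anticipate is bookkeeping rather than conceptual: one must be careful that the independence hypothesis passes correctly through the blowup and through the subbundle quotient — i.e. that $s_1,t_1$ independent away from $Z$ (with $Z\supset t_1^{-1}(0)$) implies $\tsi$ (or $s_2$) and $\tit$ are independent away from the appropriate preimage, and that all the zero loci $X(t)^\#$, $X\cap Z$, and their $K^\perp/K$-analogues are identified as reduced substacks via \eqref{n77}--\eqref{n79}. Ensuring the closed substacks over which the various localized classes live are literally equal (not just abstractly isomorphic) so that $e(K,s_2)\circ\sqe(K^\perp/K,s_1;t_1)$ composes as claimed is the delicate point; once that is set up, each equality is a routine application of bivariance, \eqref{n42}, \eqref{n47}, the excess intersection formula, and Lemma \ref{103}.
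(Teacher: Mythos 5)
The paper omits the proof of Lemma \ref{116}, saying only that it is similar to that of Lemma \ref{103}, and your proposal is exactly that adaptation (special case $s\in H^0(K)$ first, then the general case via the blowup along $s_1^{-1}(0)$ and bivariance, carrying the extra section $t$ through each step), so you are following the paper's intended route. One small correction to your second paragraph: the subbundle $\rho^*K/L$ of $\tF$ only exists when $s\in H^0(K)$ (so that $L\subseteq \rho^*K$), so in the general case the reduction must go through the blowup along $s_1^{-1}(0)$ as in \eqref{n48}--\eqref{n50}, exactly as your first and third paragraphs describe.
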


\begin{coro}\label{117}
Under the assumptions of Corollary \ref{105}, suppose that we have an isotropic section $t \in H^0(K^{\perp})$, such that $t|_C\cdot \tau=0 \in H^0(C,\sO_C)$ where $\tau\in H^0(F|_C)$ denotes the tautological section. Let $t_1 \in H^0(X,K^{\perp}/K)$  be the isotropic section  induced by $t$ and $\tau_1\in H^0(C/K,K^\perp/K)$ be the tautological section. 
If $t|_C$ and $\tau$ (resp. $t_1|_{C/K}$ and $\tau_1$) are independent away from $C|_Z$ for a closed substack $Z\supset t_1^{-1}(0)$, 
then for any $\xi \in A_* (C/K)$, we have 
\begin{equation}\label{117.1}
\sqrt{e}(K^{\perp}/K|_{C/K},\tau_1;t_1) \xi = \sqrt{e}(F,\tau;t) p^* \xi \in A_*( Z).
\end{equation}
\end{coro}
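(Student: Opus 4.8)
The plan is to mimic the proof of Corollary \ref{105} almost verbatim, with Lemma \ref{116} in place of Lemma \ref{103} and the flat-pullback compatibility of $\sqe(F,s;t)$ (part of the bivariance asserted in Theorem \ref{n74}, the analogue of Lemma \ref{102}(2)) in place of that of $\sqe(F,s)$. As a preliminary, I would record that both classes in \eqref{117.1} are defined: the form $\bar q$ induced on $K^\perp/K$ satisfies $\bar q(\bar u,\bar v)=q(u,v)$ for sections $u,v$ of $K^\perp$, so that $\tau_1\cdot t_1=0$, its pullback along $p$ being $q(\tau,t)|_C=t|_C\cdot\tau=0$; together with the independence hypotheses of the corollary this lets Theorem \ref{n74} apply to the pair $(\tau_1,t_1)$ over $C/K$, to $(\tau,t|_C)$ over $C$, and, after pullback along $p$, to $(p^*\tau_1,t_1)$ over $C$. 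Since the zero loci of the tautological sections $\tau_1$ and $\tau$ are the respective zero sections, both sides of \eqref{117.1} land in $A_*(Z)$.

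Next I would base-change the bivariant class $\sqe\big((K^\perp/K)|_{C/K},\tau_1;t_1\big)\in A^{n-r}_Z(C/K)$, where $r=\rank K$ and $Z\hookrightarrow C/K$ is the composite of $Z\hookrightarrow X$ with the zero section, along the smooth morphism $p\colon C\to C/K$. Since $p^{-1}(X)=K$ and $Z\subseteq X$, the base change of $Z\hookrightarrow C/K$ is $K|_Z\hookrightarrow C$, fitting into a Cartesian square
\[
\xymatrix{
K|_Z \ar[r] \ar[d]_{\pi} & C \ar[d]^{p}\\
Z \ar[r] & C/K
}
\]
with $\pi=\pi_K|_Z$ a rank $r$ vector bundle. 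Compatibility of bivariant classes with flat pullback, combined with the identification of the $p$-pullback of the class provided by Theorem \ref{n74}, then yields
\[
\pi^*\circ\sqe\big((K^\perp/K)|_{C/K},\tau_1;t_1\big)=\sqe\big((K^\perp/K)|_C,\,p^*\tau_1;\,t_1\big)\circ p^*,
\]
where $(p^*\tau_1)^{-1}(0)=K$ inside $C$.

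Now I would apply $0^!_{K|_Z}$, the inverse of $\pi^*$, to both sides. The left side collapses to $\sqe\big((K^\perp/K)|_{C/K},\tau_1;t_1\big)$. For the right side I would invoke Lemma \ref{116} with $(Y,F,K,s,t)=(C,F|_C,K|_C,\tau,t|_C)$: there the induced section $s_1$ equals $p^*\tau_1$ with zero locus $K$, so $s_1^{-1}(0)\cap C|_Z=K|_Z$, and $s_2$, the restriction of $\tau$ to $K|_Z$, is precisely the tautological section of $\pi^*K=(K|_C)|_{K|_Z}$, whence $e(\pi^*K,s_2)=0^!_{K|_Z}$ by \eqref{n14.1}. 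Hence Lemma \ref{116} reads
\[
\sqe(F|_C,\tau;t|_C)=0^!_{K|_Z}\circ\sqe\big((K^\perp/K)|_C,\,p^*\tau_1;\,t_1\big),
\]
and combining the last two displays gives $\sqe\big((K^\perp/K)|_{C/K},\tau_1;t_1\big)=\sqe(F|_C,\tau;t|_C)\circ p^*$, i.e. \eqref{117.1}.

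The substantive content here is bookkeeping rather than new ideas, and that is where the care is needed: one must check that the hypotheses of Theorem \ref{n74} and Lemma \ref{116} really hold for the sections pulled back to $C$ along $p$ — that the ``independent away from $Z$'' conditions transport along the smooth map $p$, that $(p^*\tau_1)^{-1}(0)$ is exactly $K$, and that $s_2$ is genuinely the tautological section of $\pi^*K$ so the excess-intersection identity $e(\pi^*K,s_2)=0^!_{K|_Z}$ is available. Each of these is the two-section counterpart of an identification already made in the proof of Corollary \ref{105}, so I do not expect any essentially new obstacle beyond keeping track of the closed substack $Z$ throughout.
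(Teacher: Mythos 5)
Your argument is correct and is exactly the proof the paper intends: the paper omits the proof of Corollary \ref{117} with the remark that it parallels Corollary \ref{105}, and your base change along $p$, the smooth-pullback compatibility from Theorem \ref{n74}, the identification $e(K|_{K|_Z},s_2)=0^!_{K|_Z}$ via \eqref{n14.1}, and the appeal to Lemma \ref{116} in place of Lemma \ref{103} are precisely that adaptation. The bookkeeping you flag (that $(p^*\tau_1)^{-1}(0)=K$, that independence transports along $p$, and that everything lands in $A_*(Z)$) checks out.
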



The proofs of Lemma \ref{116} and Corollary \ref{117} are similar to those of Lemma \ref{103} and Corollary \ref{105}, and we omit them.

\bigskip

\section{Virtual cycles for symmetric obstruction theories}\label{S5}


In this section, we recall the Oh-Thomas construction of 
a 3-term locally free symmetric resolution 
\beq\label{1} 
\bbE\udot\cong [B\mapright{d} F\cong F^\vee\mapright{d^\vee} B^\vee]
\eeq
of a symmetric obstruction theory  
\beq\label{2}
\phi:\bbE\udot\lra \bbL_X
\eeq
on a \DM stack $X$, perfect of amplitude $[-2,0]$ from \cite{OhTh}. 
We then see  that a cosection $\bsig:\bbE\ldot\to \sO_X[-1]$ of the obstruction theory $\phi$ induces a lift
\beq\label{3}
\tsig: F\lra  \sO_X
\eeq
of $\sigma=h^1(\bsig):h^1(\bbE\ldot)=Ob_X\to \sO_X$ where $\bbE\ldot=(\bbE\udot)^\vee$.

\medskip

Let $\bbL_X=L_X^{\ge -1}$ denote the cotangent complex of $X$, truncated at $-1$.

\begin{defi}\label{4}
An \emph{obstruction theory} on a \DM stack $X$ is a morphism \eqref{2} in the derived category $D^b(\Coh X)$
such that $h^i(\bbE\udot)=0$ for $i>0$, $h^0(\phi)$ is an isomorphism and $h^{-1}(\phi)$ is surjective. 
We let $\bbE\ldot=(\bbE\udot)^\vee$ denote the dual of $\bbE\ldot$ and 
$h^1(\bbE\ldot)=:Ob_X$ is called the \emph{obstruction sheaf} of $X$ with respect to the obstruction theory $\phi$. 

An obstruction theory $\phi:\bbE\udot\to \bbL_X$ is \emph{perfect} of amplitude $[-a,0]$ if \'etale locally $\bbE\udot$ is isomorphic to a complex of locally free sheaves concentrated in degrees $[-a,0]$. 
An obstruction theory $\phi:\bbE\udot\to \bbL_X$, perfect of amplitude $[-a,0]$, is \emph{symmetric} if there exists an isomorphism
\beq\label{5}
\theta:\bbE\udot \lra \bbE\ldot [a]
\eeq 
satisfying $\theta^\vee[a]=\theta$. 

When $a=2$, a square root $$or:\sO_X\mapright{\cong}\det \bbE\ldot$$ of the isomorphism $\sO_X\cong (\det \bbE\ldot)^{\otimes 2}$ from the isomorphism $$\det\theta:(\det\bbE\ldot)^{-1}\cong \det \bbE\udot \lra \det (\bbE\ldot[2])= \det \bbE\ldot$$ is called an \emph{orientation} of $\phi:\bbE\udot\to \bbL_X$. 
\end{defi}

When $\phi:\bbE\udot\to \bbL_X$ is an obstruction theory, perfect of amplitude $[-1,0]$, a homomorphism
\beq\label{7}
\sigma:Ob_X=h^1(\bbE\ldot)\lra \sO_X
\eeq
induces a morphism 
\beq\label{6}
\bsig: \bbE\ldot \mapleft{\cong} \tau^{\le 1}\bbE\ldot \lra \tau^{\ge 1}\tau^{\le 1}\bbE\ldot =Ob_X[-1]\mapright{\sigma} \sO_X[-1]
\eeq
where $\tau^{\ge 1}$ and $\tau^{\le 1}$ denote the truncation funtors in $D^b(\Coh X)$. 
Conversely, given a morphism $\bsig:\bbE\ldot\to \sO_X[-1]$, we have a homomorphism
\beq\label{6.1}\sigma=h^1(\bsig):Ob_X\lra \sO_X\eeq of coherent sheaves. 
Therefore when $a=1$, giving a morphism \eqref{6} is equivalent to giving a homomorphism \eqref{6.1}.
We are thus led to the following definition of a cosection for an obstruction theory in general.

\begin{defi}\cite[Definition 4.6]{Savv}\label{8}
A \emph{cosection} of an obstruction theory $\phi:\bbE\udot\to \bbL_X$ is a morphism
\beq\label{9}
\bsig:\bbE\ldot\lra \sO_X[-1]
\eeq 
in  $D^b(\Coh X)$.
\end{defi}
A cosection \eqref{9} of an obstruction theory $\phi$ induces a cosection \eqref{6.1} 
of the obstruction sheaf $Ob_X$ of $\phi$. 

\medskip

A symmetric obstruction theory admits a symmetric locally free resolution. 
\begin{prop}\label{10} \cite[\S4]{OhTh}
Let $\phi:\bbE\udot\to \bbL_X$ be a symmetric obstruction theory, perfect of amplitude $[-2,0]$, on $X$.
Then there is a locally free resolution \eqref{1} of $\bbE\udot$ where $F$ is equipped with an isomorphism $q:F\cong F^\vee$ satisfying $q^\vee=q$ such that \eqref{5} is 
\beq\label{12}\xymatrix{
\bbE\udot\ar[d]_{\theta} & B\ar[r]^d\ar@{=}[d] & F\ar[d]^{\hat{q}} \ar[r]^{(\hat{q}\circ d)^\vee} & B^\vee\ar@{=}[d]\\
\bbE\ldot[2] & B\ar[r]^{\hat{q}\circ d} & F^\vee \ar[r]^{d^\vee} & B^\vee .
}\eeq
\end{prop}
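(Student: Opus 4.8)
The plan is to produce the resolution \eqref{1} by a local-to-global argument combined with the self-duality of $\bbE\udot$. First I would recall that, since $\phi:\bbE\udot\to\bbL_X$ is perfect of amplitude $[-2,0]$ on the quasi-projective \DM stack $X$, the complex $\bbE\udot$ is globally isomorphic in $D^b(\Coh X)$ to a three-term complex of locally free sheaves $[A\to B'\to C']$ placed in degrees $[-2,0]$; this is standard once one knows $X$ is a quotient stack, because one can surject a locally free sheaf onto $h^0$, lift, and continue. The issue is that this resolution need not interact nicely with the symmetry isomorphism $\theta:\bbE\udot\to\bbE\ldot[2]$. The remedy, following Oh--Thomas, is to symmetrize: given any locally free resolution $E\udot=[A\mapright{a} B'\mapright{b} C']$ of $\bbE\udot$, the dual $E\ldot[2]$ is a locally free resolution of $\bbE\ldot[2]$, and $\theta$ is represented, up to homotopy, by a chain map. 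Forming a suitable mapping-cylinder/averaging construction, or more cleanly taking the complex $E\udot\oplus E\ldot[2]$ and cutting it down, one arrives at a resolution on which $\theta$ acts by an honest self-dual chain isomorphism.

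The key steps, in order, would be: (i) produce a global three-term locally free resolution $[A\to B'\to C']$ of $\bbE\udot$; (ii) use $\theta$ and $\theta^\vee[2]=\theta$ to replace it by a resolution $[\tilde A\to \tilde F\to \tilde C]$ together with a chain isomorphism to its shifted dual whose middle component $\tilde F\to\tilde F^\vee$ is an isomorphism $\hat q$ with $\hat q^\vee=\hat q$ --- here one exploits that $h^{-1}$ of a symmetric amplitude-$[-2,0]$ complex is ``in the middle'' so the pairing lands on the central term; (iii) observe that the chain-level self-duality forces the outer maps to be $d:\tilde A\to\tilde F$ and its dual $(\hat q\circ d)^\vee:\tilde F\to\tilde A^\vee$, which is exactly the shape of \eqref{12}; (iv) set $B:=\tilde A$ and $F:=\tilde F$, so that \eqref{1} and the commuting diagram \eqref{12} both hold, with $q:=\hat q$ satisfying $q^\vee=q$ and $q\circ d$ symmetric relative to $d^\vee$. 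Each of these is a diagram chase once the symmetrization in (ii) is set up, together with the observation that a symmetric pairing on a complex concentrated in $[-2,0]$ is determined by its component in the middle degree.

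The main obstacle is step (ii): making $\theta$ into a \emph{strictly} self-dual chain map, not merely self-dual up to homotopy, while keeping the terms locally free and the length equal to three. One cannot naively symmetrize a chain map by averaging with its dual because the two live on different (though quasi-isomorphic) resolutions; the honest fix is to pass to a common refinement --- e.g. replace $E\udot$ by the total complex of $E\udot\xrightarrow{\mathrm{id}} E\ldot[2]$ appropriately truncated, or apply the standard result that a symmetric perfect complex of amplitude $[-2,0]$ admits a self-dual representative (this is precisely the content invoked from \cite[\S4]{OhTh}). Once one grants that self-dual representative, identifying its differentials with $d$ and $(\hat q\circ d)^\vee$ and checking the orientation/determinant compatibility needed later is routine linear algebra on three-term complexes, so I would cite the self-dual representative result and then carry out the short identification explicitly.
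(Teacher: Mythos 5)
Your outline identifies the correct strategy and, importantly, the correct crux: everything hinges on producing a \emph{strictly} self-dual three-term locally free representative, and your suggested mechanism (a truncated total complex/cone construction) is indeed what Oh--Thomas and the paper do. But as written the proposal is circular at exactly that point: in step (ii) you propose to ``apply the standard result that a symmetric perfect complex of amplitude $[-2,0]$ admits a self-dual representative,'' which \emph{is} the proposition you are proving. The steps you call routine ((iii)--(iv)) really are routine; all of the content sits in the step you defer.

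For comparison, the paper's sketch runs as follows. Pick a three-term locally free resolution $A^{-2}\mapright{d_2}A^{-1}\mapright{d_1}A^0$ of $\bbE\udot$ and represent $\theta$ by a chain map to the dual resolution $A_0\to A_1\to A_2$ (with $A_i=(A^{-i})^\vee$). Your worry about averaging is misplaced here: once the resolution is fixed, $\theta$ and $\theta^\vee[2]$ are chain maps between the \emph{same} two complexes (and are homotopic since $\theta=\theta^\vee[2]$ in the derived category), so averaging them yields a chain map $(\theta_2^\vee,\theta_1,\theta_2)$ with $\theta_1=\theta_1^\vee$ and outer components mutually dual. The genuinely non-obvious move is the next one: rather than trying to make the resolution itself self-dual term by term, one sets $B=A_0$ and extracts the middle term as
$$F^\vee=\ker\bigl(A_1\oplus A^0\xrightarrow{\,d_2^\vee\oplus\theta_2\,}A_2\bigr)\;\cong\;\coker\bigl(A^{-2}\xrightarrow{\,d_2\oplus\theta_2^\vee\,}A^{-1}\oplus A_0\bigr)=F.$$
These are the terms of the mapping cone of the chain representative of $\theta$; since $\theta$ is a quasi-isomorphism the cone is exact, so the displayed map on the left is surjective and $F$, $F^\vee$ are locally free, manifestly dual to one another, with the induced $\hat q$ symmetric; $d:B=A_0\to F$ is the composite $A_0\hookrightarrow A^{-1}\oplus A_0\twoheadrightarrow F$. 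Also note a small slip in your step (ii): the relevant total complex is that of $E\udot\xrightarrow{\theta}E\ldot[2]$, not of an identity map (there is no identity map between those two complexes). Since the proposition is in any case quoted from \cite[\S4]{OhTh}, citing it is defensible, but a proof attempt should at least reproduce the kernel/cokernel construction above rather than re-invoke the statement.
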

\begin{proof}
For the reader's convenience, here is a sketch. First find a locally free resolution 
\beq\label{15} A^{-2}\mapright{d_2} A^{-1}\mapright{d_1} A^0\eeq
of $E\udot$ which admits a chain map
\beq\label{16} \xymatrix{
\bbE\udot\ar[d] & A^{-2}\ar[r]^{d_2} \ar[d]^{\theta_2^\vee} & A^{-1}\ar[r]^{d_1}\ar[d]^{\theta_1} & A^0\ar[d]^{\theta_2}\\
\bbE\ldot[2] & A_0 \ar[r]^{d_1^\vee} & A_1\ar[r]^{d_2^\vee} & A_2
}\eeq
that represents $\theta$, where $\theta_1=\theta_1^\vee$ and $A_i=(A^{-i})^\vee$. 
Let  $B=A_0$ and 
\beq\label{17}
F^\vee=\ker (A_1\oplus A^0\xrightarrow{d_2^\vee\oplus \theta_2} A_2)\cong \coker (A^{-2}\xrightarrow{d_2\oplus \theta_2^\vee} A^{-1}\oplus A_0)= F.
\eeq
The inclusion $A_0\to A^{-1}\oplus A_0$ and the projection to $\coker (d_2\oplus \theta_2^\vee)=F$ give us the homomorphism $d:B\to F$ in \eqref{1}.
\end{proof}

If furthermore we have a cosection $\bsig:\bbE\ldot\to \sO_X[-1]$, we have a homomorphism $\tsig:F\to \sO_X$ that realize 
$\bsig$ as a chain map.
\begin{lemm}\label{13}
Let $\bsig: \bbE\ldot\to \sO_X[-1]$ be a cosection of a symmetric obstruction theory $\phi:\bbE\udot\to \bbL_X$ on $X$, perfect of amplitude $[-2,0]$. There is a locally free symmetric resolution \eqref{1} of $\bbE\udot$ and a homomorphism $\tsig:F\to \sO_X$ of coherent sheaves such that $\bsig:\bbE\ldot\to \sO_X[-1]$ is represented by the chain map
\beq\label{14}\xymatrix{
\bbE\ldot\ar[d]_{\bsig} &B\ar[r]^d\ar[d] & F\ar[r]^{d^\vee}\ar[d]^{\tsig} & B^\vee\ar[d]\\
\sO_X[-1]&0\ar[r]&\sO_X\ar[r] &0 .
}\eeq
\end{lemm}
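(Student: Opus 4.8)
The plan is to use the explicit symmetric resolution supplied by Proposition~\ref{10} and then verify that the derived morphism $\bsig$ can be realized by a genuine cochain map of the shape \eqref{14}.

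First I would invoke Proposition~\ref{10} to fix a symmetric locally free resolution $\bbE\udot\cong[B\mapright{d}F\mapright{(\hat q\circ d)^\vee}B^\vee]$ (in degrees $-2,-1,0$) together with the self-dual isomorphism $\hat q:F\cong F^\vee$. Dualizing term by term gives a locally free resolution $\bbE\ldot\cong P\udot:=\bl B\mapright{\hat q\circ d}F^\vee\mapright{d^\vee}B^\vee\br$ concentrated in degrees $0,1,2$, and $\hat q$ identifies $F^\vee$ with $F$, turning this into the complex $[B\mapright{d}F\mapright{d^\vee}B^\vee]$ of \eqref{1} and of the top row of \eqref{14}. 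Now the target $\sO_X[-1]$ is $\sO_X$ placed in cohomological degree $1$, with $0$ in degrees $0$ and $2$; hence an arbitrary cochain map $P\udot\to\sO_X[-1]$ has a single possibly nonzero component, namely a homomorphism $\tsig:F^\vee\to\sO_X$ in degree $1$ (viewed as $\tsig:F\to\sO_X$ via $\hat q$), and the only nontrivial cochain-map condition is that $B\mapright{d}F\mapright{\tsig}\sO_X$ vanish. Thus every cochain map $P\udot\to\sO_X[-1]$ is literally of the form \eqref{14}, and what remains is to see that $\bsig$ \emph{is} represented by such a cochain map.

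This last point is the only real content, and it is worth stressing that it does not follow from a naive ``$P\udot$ is $K$-projective'' claim: a bounded complex of locally free sheaves need not be $K$-projective. Instead I would run the two stupid-truncation triangles $B^\vee[-2]\to P\udot\to\sigma_{\le1}P\udot\xrightarrow{+1}$ and $F^\vee[-1]\to\sigma_{\le1}P\udot\to B[0]\xrightarrow{+1}$, apply $\Hom_{D^b(\Coh X)}(-,\sO_X[-1])$ to each, and use that $\Ext^{<0}(B,\sO_X)=0$ automatically while $\Ext^{>0}(B^\vee,\sO_X)=0$ because $B^\vee$ is locally free. The resulting long exact sequences then collapse to an isomorphism $\Hom_{D^b(\Coh X)}(P\udot,\sO_X[-1])\cong\Hom_{K(\Coh X)}(P\udot,\sO_X[-1])$; equivalently, every derived morphism from $P\udot$ to $\sO_X[-1]$ is represented by a cochain map, unique up to homotopy. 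Applying this to $\bsig$ produces the desired $\tsig$, and passing to $h^1$ in \eqref{14} recovers $\sigma=h^1(\bsig):Ob_X\to\sO_X$, so $\tsig$ is a lift of $\sigma$ in the sense of \eqref{3}.

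The main --- essentially only --- obstacle is this representability step: it has to be extracted from the vanishing of $\Ext^{>0}(B^\vee,\sO_X)$ via the truncation triangles (or equivalently by writing a general derived morphism as a roof $P\udot\xleftarrow{\ \mathrm{qis}\ }R\udot\to\sO_X[-1]$ and chasing cohomological degrees), not from a false general statement about complexes of locally free sheaves. Everything else --- invoking Proposition~\ref{10}, tracking cohomological degrees, and matching the cochain map with \eqref{14} --- is routine.
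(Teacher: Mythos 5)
Your overall strategy---fix the symmetric resolution from Proposition \ref{10} once and for all, and then argue that the derived morphism $\bsig$ is automatically realized by a cochain map on that fixed resolution---has a genuine gap at exactly the step you identify as ``the only real content.'' The vanishing you invoke, $\Ext^{>0}(B^\vee,\sO_X)=0$ ``because $B^\vee$ is locally free,'' is false: local freeness kills the sheaf Ext's $\ext^{>0}(B^\vee,\sO_X)$, but the groups appearing in your long exact sequences are the global ones, $\Ext^i(B^\vee,\sO_X)\cong H^i(X,B)$, which need not vanish for $i>0$ on a non-affine $X$. Running your truncation triangles correctly, one finds that the cokernel of the natural map (cochain maps modulo homotopy) $\to\Hom_{D^b(\Coh X)}(P\udot,\sO_X[-1])$ injects into $\Ext^1(B^\vee,\sO_X)=H^1(X,B)$; so for a \emph{fixed} resolution the obstruction to representing $\bsig$ by a cochain map is a possibly nonzero class in that group. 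This is precisely the $K$-projectivity failure you rightly warn against, and the argument as written does not circumvent it.

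The fix---and the route the paper takes---is to use the freedom built into the statement: the resolution is allowed to depend on $\bsig$. One goes back inside the proof of Proposition \ref{10} and chooses the locally free resolution $A^{-2}\to A^{-1}\to A^0$ of $\bbE\udot$ fine enough that the composite $\bbE\udot\cong\bbE\ldot[2]\xrightarrow{\ \bsig\ }\sO_X[1]$ is \emph{already} represented by a cochain map, i.e.\ by a single component $\gamma:A^{-1}\to\sO_X$ with $\gamma\circ d_2=0$; on a quasi-projective \DM stack any finite collection of derived morphisms between perfect complexes can be simultaneously realized by cochain maps after refining the resolution. Since $F=\coker (A^{-2}\xrightarrow{d_2\oplus\theta_2^\vee}A^{-1}\oplus A_0)$ by \eqref{17}, the map $\gamma\oplus 0$ kills the image of $A^{-2}$ and hence descends to the desired $\tsig:F\to\sO_X$ with $\tsig\circ d=0$, fitting into \eqref{14}. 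Your degree bookkeeping (a cochain map into $\sO_X[-1]$ has the single component $\tsig$ subject only to $\tsig\circ d=0$, and $h^1$ recovers $\sigma$) is correct and carries over unchanged once representability is obtained this way rather than on a resolution chosen in advance.
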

\begin{proof}
In the proof of Proposition \ref{10}, it is easy to see that we may further assume that
$$\bbE\udot \cong \bbE\ldot [2] \mapright{\bsig} \sO_X[1]$$
is represented  by a chain map
\beq\label{18}\xymatrix{
\bbE\udot\ar[d]_{\bsig\circ\theta}  & A^{-2}\ar[r]^{d_2} \ar[d] & A^{-1}\ar[r]^{d_1}\ar[d]^{\gamma} & A^0\ar[d]\\
\sO_X[1] & 0\ar[r] & \sO_X\ar[r] &0.
}\eeq
As the composition
$$A^{-2}\xrightarrow{d_2\oplus \theta_2^\vee} A^{-1}\oplus A_0 \xrightarrow{\gamma\oplus 0} \sO_X$$
is zero, $\gamma\oplus 0$ defines a homomorphism
$$\tsig:F\lra \sO_X$$
by \eqref{17} which fits into \eqref{14}. 
%
\end{proof}
We will see examples of cosections in \S\ref{S6} when $X$ is a quasi-projective moduli space of stable sheaves or perfect complexes on a Calabi-Yau 4-fold. 

\medskip

An orientation of the obstruction theory $\phi$ induces an orientation of $F$. 
\begin{prop}\cite[Proposition 4.2]{OhTh}
The choice of an orientation on $\bbE\udot$ 
induces an orientation of $F$, which makes $F$ an $SO(2n)$-bundle.\end{prop}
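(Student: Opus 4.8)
The plan is to reduce the proposition to one canonical identification of determinant lines, under which the isomorphism $\sO_X\cong(\det\bbE\ldot)^{\otimes 2}$ whose square roots are the orientations of $\bbE\udot$ becomes the isomorphism \eqref{n1} whose square roots are the orientations of $F$. Fix a symmetric locally free resolution $\bbE\udot\cong[B\xrightarrow{d}F\xrightarrow{d^\vee}B^\vee]$ in degrees $-2,-1,0$ as in \eqref{1}, with the self-duality $\theta$ of \eqref{12}; recall that, as a map of complexes, $\theta$ is the identity on the outer terms $B$ and $B^\vee$ and equals the pairing isomorphism $\hat q\colon F\xrightarrow{\cong}F^\vee$ on the middle term, and that $q^\vee=q$. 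Since $\bbE\ldot=(\bbE\udot)^\vee$ is $[B\to F^\vee\to B^\vee]$ in degrees $0,1,2$, the determinant of a complex (the term in degree $i$ entering with exponent $(-1)^i$) produces a canonical isomorphism
\[
\psi\colon\ \det\bbE\ldot\ \xrightarrow{\ \cong\ }\ \det B\otimes(\det F^\vee)^{-1}\otimes\det B^\vee\ \xrightarrow{\ \cong\ }\ \det F,
\]
the last arrow moving $\det B^\vee$ across $(\det F^\vee)^{-1}$, cancelling it against $\det B$, and using $(\det F^\vee)^{-1}=\det F$. We write $\rank F=2n$: its evenness, equivalently that $\rank\bbE\udot$ is even, holds for the symmetric obstruction theories occurring in DT4 theory, and it is what both licenses the name $SO(2n)$ and forces the Koszul sign $(-1)^{\rank B\cdot\rank F}$ in the displayed identification to be $+1$.

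The heart of the argument is to compare, through $\psi$, the two relevant isomorphisms of line bundles. By Definition~\ref{4} an orientation of $\phi$ is a square root of the isomorphism $\sO_X\cong(\det\bbE\ldot)^{\otimes 2}$ coming from $\det\theta\colon(\det\bbE\ldot)^{-1}=\det\bbE\udot\to\det(\bbE\ldot[2])=\det\bbE\ldot$; note $\det(\bbE\ldot[2])=\det\bbE\ldot$ since the shift is by the even integer $2$, so no sign enters there. Reading $\det\theta$ off \eqref{12}, the identity maps on $\det B$ and $\det B^\vee$ contributed by the outer terms are absorbed by $\psi$, while the middle term contributes the isomorphism $(\det F)^{-1}\to(\det F^\vee)^{-1}=\det F$ induced by $\hat q$ with exponent $(-1)^{-1}$, i.e.\ by inversion/duality; under $\Hom((\det F)^{-1},\det F)\cong(\det F)^{\otimes 2}$ this is exactly the isomorphism \eqref{n1} attached to $\det\hat q$. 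Hence $\psi$ carries the isomorphism $\sO_X\cong(\det\bbE\ldot)^{\otimes 2}$ to \eqref{n1}. The conditions $\theta^\vee[2]=\theta$ and $q^\vee=q$ are used only to guarantee that $\hat q$, hence the pairing it induces on $F$, is symmetric, so that \eqref{n1} genuinely is the squaring isomorphism of an orthogonal structure.

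Granting this, the proposition is immediate: an orientation $or\colon\sO_X\xrightarrow{\cong}\det\bbE\ldot$ of $\phi$ is a square root of $\sO_X\cong(\det\bbE\ldot)^{\otimes 2}$, so $or_F:=\psi\circ or\colon\sO_X\xrightarrow{\cong}\det F$ is a square root of \eqref{n1}, which by \eqref{n2} is an orientation of $F$, and $(F,q,or_F)$ is an $SO(2n)$-bundle. I expect the only step needing genuine care to be the comparison in the second paragraph: tracking all the canonical identifications of determinant lines — the shift $[2]$, the dualities $B\leftrightarrow B^\vee$ and $F\leftrightarrow F^\vee$, and the Koszul reorderings — carefully enough to conclude that the two isomorphisms of line bundles agree on the nose under $\psi$, and not merely up to a sign; the evenness of $\rank F$ is exactly what removes the residual ambiguity.
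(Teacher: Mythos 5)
Your argument is correct and is essentially the standard determinant-line computation: the paper itself gives no proof (it cites \cite[Proposition 4.2]{OhTh}), and the proof there proceeds exactly as you do, identifying $\det\bbE\ldot$ with $\det F$ via the resolution and checking that $\det\theta$ then induces the squaring isomorphism \eqref{n1} of $\hat q$, so square roots correspond. Your attention to the Koszul signs and to the evenness of $\rank F$ is exactly the care the comparison requires.
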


%
%

In \cite{OhTh}, Oh and Thomas define the virtual cycle for an oriented symmetric obstruction theory, perfect of amplitude $[-2,0]$ as follows. 
\begin{defi}\label{195} \cite[Definition 4.4]{OhTh} 
Let $\phi:\bbE\udot\to \bbL_X$ be a symmetric obstruction theory, perfect of amplitude $[-2,0]$, with an orientation. If $\rank \bbE\udot$ is odd, the virtual cycle $[X]\virt$ is defined to be zero.

Let $\rank \bbE\udot$ be even. The expected dimension of $X$ is $vd=\frac12 \rank \bbE\udot$. 
Pick a resolution \eqref{1} with $F$ an $SO(2n)$-bundle.
Let \beq\label{n83} C=F\times_{[F/B]}\fC_X\subset F\eeq
where $\fC_X$ is the intrinsic normal cone of $X$ embedded into the vector bundle stack $[F/B]$ by the 
perfect obstruction theory (cf. \cite{BeFa})
$$[F^\vee\to B^\vee]\lra [B\to F^\vee\to B^\vee]\cong\bbE\udot\mapright{\phi} \bbL_X$$
where the first arrow is the chain map $(0,\id_{F^\vee},\id_{B^\vee})$. 
Assuming that $C$ is isotropic in $F$ and 
letting $\tau $ denote the tautological section of $F|_C$ induced by the inclusion \eqref{n83}, the virtual cycle of $X$ is defined by 
\beq\label{n84} [X]\virt=\sqe(F|_C,\tau)[C]\in A_{vd}(X).\eeq
\end{defi}
Let $Y=C_\redd$ be the reduced stack of $C$. Then \eqref{n84} equals
\beq\label{i25} [X]\virt=\sqe(F|_Y,\tau)[C]\eeq
by applying Lemma \ref{102} (1) to the inclusion $Y\hookrightarrow C$ with $A_*(C)=A_*(Y)$.

Let $X$ be a quasi-projective moduli space of stable sheaves or complexes on a Calabi-Yau 4-fold. By \cite{CGJ}, there is an orientation for the symmetric obstruction theory $$\bbE\udot=\tau^{[-2,0]}\Big( Rp_*R\cH om(\cE,\cE)[3]\Big)$$ due to \cite[Theorem 4.1]{HuTh} and \cite{ToVa} where $\cE$ denotes the universal family and $p$ denotes the projection to $X$. 
By \cite[Proposition 4.3]{OhTh} and \cite{BBBJ,BBJ}, the cone $C$ is isotropic in $F$ and hence $\tau$ is an isotropic section of $F|_C$. By \cite[Proposition 4.5]{OhTh}, \eqref{n84} is independent of the choice of the resolution \eqref{1}. By \cite[\S4.3]{OhTh}, $[X]\virt$ is deformation invariant. 
Algebraically, DT4 invariants are defined as integrals of cohomology classes against $[X]\virt$. 

In the subsequent section, we will localize \eqref{n84} to the zero locus of a cosection $\sigma=h^1(\bsig)$ when $\phi$ admits an isotropic cosection $\bsig$.

\bigskip

\section{Cosection localized virtual cycles}\label{S5n}

In this section, we construct cosection localized virtual cycles for \DM stacks equipped with an oriented symmetric 3-term perfect obstruction theories. 

Throughout this section, we will make the following assumption.
\begin{assu}\label{n86} 
Let $X$ be a \DM stack equipped with a \emph{symmetric obstruction theory} $\phi:\bbE\udot\to \bbL_X$ with $\rank \bbE\udot$ even, perfect of amplitude $[-2,0]$ so that we have a resolution 
\beq\label{n87} \bbE\udot\cong [B\mapright{d} F\cong F^\vee\mapright{d^\vee} B^\vee].\eeq 
There is an \emph{orientation} $or:\sO_X\cong \det \bbE\ldot$ so that $F$ is an $SO(2n)$-bundle. 
Moreover, the intrinsic normal cone of $X$ is \emph{isotropic} in the sense that the induced cone \eqref{n83} is isotropic in $F$ for any resolution \eqref{n87}. 
We further assume that $X$ admits a cosection $\bsig:\bbE\ldot\to \sO_X[-1]$ which gives us a cosection \beq\label{n91}\tsig:F\lra \sO_X\eeq of the $SO(2n)$-bundle $F$ by Lemma \ref{13}. Let $vd=\frac12\rank \bbE\udot$. 
\end{assu}
Under Assumption \ref{n86}, 
we have the virtual cycle $[X]\virt$ defined by \eqref{n84} or \eqref{i25}. 
When $X$ is a quasi-projective moduli space of stable sheaves or complexes on a Calabi-Yau 4-fold, as we saw at the end of \S\ref{S5}, 
Assumption \ref{n86} is satisfied.

\medskip

\subsection{Localization by isotropic cosection}\label{S5.2}

A cosection $\bsig:\bbE\ldot\to \sO_X[-1]$ of the obstruction theory $\phi$ is called \emph{isotropic} if its square  
\beq\label{n94}
\sigma\ldot^2=\bsig\circ \theta\circ \sigma\ldot^\vee:\sO_X\lra \bbE\udot [-1]\mapright{\theta} \bbE\ldot [1] \lra \sO_X\eeq
is zero. If $\bsig$ is isotropic, then $\tsig:F\to \sO_X$ is isotropic, in the sense that $\tsig^\vee\cdot \tsig^\vee=\tsig(\tsig^\vee)=0$ by \eqref{14}. 

\begin{theo}\label{n88}
Under Assumption \ref{n86}, if there is an isotropic cosection $\bsig:\bbE\ldot\to \sO_X[-1]$, 
we have the localized virtual cycle
\beq\label{123.1} [X]\virt_\loc\in A_{vd}(X(\sigma))\eeq
defined by \eqref{n89} 
where $X(\sigma)$ is the zero locus 
of the cosection 
$$\sigma=h^1(\bsig):Ob_X\lra \sO_X.$$ Letting $\imath:X(\sigma)\to X$ denote the inclusion map, we have \beq\label{n90} \imath_*[X]\virt_\loc=[X]\virt\eeq
and $[X]\virt_\loc$ is deformation invariant in the sense of Lemma \ref{125} below.
\end{theo}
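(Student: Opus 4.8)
The plan is to define $[X]\virt_\loc$, for any choice of $3$-term symmetric resolution \eqref{n87}, by \eqref{n89}, namely $[X]\virt_\loc=\sqe(F|_Y,\tau;\tsig|_Y^\vee)[C]$, where $F$ is the resulting $SO(2n)$-bundle, $C\subset F$ the cone \eqref{n83}, $Y=C_\redd$, $\tsig:F\to\sO_X$ a lift of $\bsig$ as in Lemma \ref{13}, and $\tau$ the tautological section of $F|_Y$; and then to read off the three assertions (well-definedness, \eqref{n90}, deformation invariance) from the properties of the bivariant class $\sqe(F,s;t)$ proved in Theorem \ref{n74}. The first task is to verify the hypotheses of Theorem \ref{n74} for the pair $s=\tau$, $t:=\tsig|_Y^\vee$ on the ambient stack $Y$. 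By Assumption \ref{n86} the cone $C$ is isotropic, so $\tau$ is isotropic; by \eqref{14} and the isotropy \eqref{n94} of $\bsig$ the section $t$ is isotropic; and since $\tsig\circ d=0$ we get $\tsig(t)=q(t,t)=0$, so $\ker\tsig=t^\perp$.

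The key geometric input is the cone reduction lemma of \cite{KLc}: over $W:=X\setminus X(\sigma)$, where $\sigma=h^1(\bsig)$ and hence $\tsig$ is surjective, the cone $C$ is contained in $\ker\tsig=t^\perp$ and meets the line $\sO_W\cdot t\subset F|_W$ only along the zero section. I would use it as follows. Since $\tsig$ vanishes on $X(\sigma)$, we also have $C|_{X(\sigma)}\subset F|_{X(\sigma)}=\ker(\tsig|_{X(\sigma)})$; combined with $C|_W\subset\ker\tsig$ and the reducedness of $Y$, the regular function $\tsig(\tau)=q(\tau,t)$ on $Y$ vanishes identically, i.e.\ $\tau\cdot t=0$. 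Set $Z:=X(\sigma)\times_XY$; since $\tsig=0$ forces $\sigma=0$, the zero locus of $t$ lies in $Z$. Over $Y\setminus Z$ (i.e.\ over $W$), the corank-one subbundle $\ker\tsig=t^\perp$ contains both $\tau$ and the nowhere-vanishing section $t$, and by the cone reduction $\tau$ is not a multiple of $t$ there; so $t$ spans a trivial line subbundle of $F$ with a complement through $\tau$, which is exactly the splitting of Definition \ref{n73}. Hence $\tau$ and $t$ are independent away from $Z$, Theorem \ref{n74} applies, and $[X]\virt_\loc=\sqe(F|_Y,\tau;t)[C]$ lies in $A_{vd}(X(\sigma))$ --- the same dimension as $[X]\virt$, since $\sqe(F|_Y,\tau;t)$ and $\sqe(F|_Y,\tau)$ both have bivariant degree $n$ and $\tau^{-1}(0)\cap Z=X(\sigma)$ up to nilpotents. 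The identity \eqref{n90} is then immediate from \eqref{n75} and the description \eqref{i25} of the Oh-Thomas cycle: $\imath_*[X]\virt_\loc=\imath_*\sqe(F|_Y,\tau;t)[C]=\sqe(F|_Y,\tau)[C]=[X]\virt$.

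It remains to show that $[X]\virt_\loc$ is independent of the resolution \eqref{n87} and of the lift $\tsig$, and deformation invariant. For independence of the resolution I would follow \cite[Proposition 4.5]{OhTh}: any two $3$-term symmetric resolutions admit a common refinement obtained by adding a hyperbolic summand carrying a positive maximal isotropic subbundle, to which $\tsig$ extends by zero on the new summand; the comparison then reduces, via the Whitney-type formula Lemma \ref{116} together with Lemma \ref{107} and Corollary \ref{117}, to the comparison for the plain cycles $\sqe(F|_Y,\tau)[C]$ in \cite{OhTh}. For a fixed $F$, two lifts of $\bsig$ differ by a homomorphism $F\to\sO_X$ factoring through $d^\vee$, so the corresponding sections $\tsig^\vee$ differ by a local section of $F$ in the (isotropic) image of $d$; since $C$ is $B$-invariant, $\sqe(F|_Y,\tau;\tsig|_Y^\vee)[C]$ is unaffected, by (a variant of) Corollary \ref{117}. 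Finally, deformation invariance is the assertion of Lemma \ref{125}; its proof runs the whole construction over a smooth base --- a relative cone, its reduction, a relative cosection --- and uses the bivariance of $\sqe(F,s;t)$ from Theorem \ref{n74} to commute the family construction with refined Gysin maps $\imath_s^!$, together with $\imath_s^![\mathcal C]=[C_s]$ and the homotopy invariance $\imath_s^!=\imath_{s'}^!$, exactly as in \cite[\S4.3]{OhTh}.

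I expect the main obstacle to be the cone reduction input of the second paragraph: showing that over $X\setminus X(\sigma)$ the reduced cone $Y$ not merely lies in $(\tsig^\vee)^\perp$ but is disjoint from the line spanned by $\tsig^\vee$, so that the independence hypothesis of Definition \ref{n73}, and hence Theorem \ref{n74}, applies. Once this refinement of the cone reduction lemma of \cite{KLc} is in hand, together with the bivariance of $\sqe(F,s;t)$, the remaining steps are either formal (the identity \eqref{n90}) or direct adaptations of Oh-Thomas (independence of the resolution, deformation invariance).
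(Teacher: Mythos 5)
Your overall architecture matches the paper's: define $[X]\virt_\loc=\sqe(F|_Y,\tau;\tsig|_Y^\vee)[C]$, check the hypotheses of Theorem \ref{n74}, deduce \eqref{n90} from \eqref{n75} and \eqref{i25}, and delegate well-definedness and deformation invariance to the arguments of Lemma \ref{124} and Lemma \ref{125}. But there is a genuine gap exactly where you flag it, and it is not a minor refinement: the independence of $\tau$ and $t=\tsig|_Y^\vee$ away from $Z=Y\times_X X(\sigma)$ does \emph{not} follow from the cone reduction lemma applied to $\tsig$ alone. Since $\bsig$ is isotropic, $\tsig(t)=(\tsig^\vee)^2=0$, so the line spanned by $t$ lies \emph{inside} $\ker\tsig$; hence ``$C\subset\ker\tsig$'' places no constraint whatsoever on whether $C$ meets that line, and your assertion that the cone ``meets the line $\sO_W\cdot t$ only along the zero section'' is unsupported. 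Moreover, even granting pointwise non-proportionality of $\tau$ and $t$, that is weaker than the splitting $F|_U\cong F'\oplus F''$ with $\tau\in H^0(F')$, $t\in H^0(F'')$ demanded by Definition \ref{n73}, since the subsheaf generated by $\tau$ need not be a subbundle.

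The paper closes this gap with a specific construction you are missing. Working locally with $X$ affine and $\sigma$ surjective, surjectivity of $\ker(d^\vee)\to Ob_X\xrightarrow{\sigma}\sO_X$ produces a section $a\in H^0(F)$ with $\tsig(a)=1$ and $d^\vee(a)=0$; replacing $a$ by $a-\tfrac12 a^2\tsig^\vee$ makes it isotropic, giving an orthogonal decomposition $F=\langle a,\tsig^\vee\rangle\oplus\langle a,\tsig^\vee\rangle^\perp$. The point is that $a^\vee=q(a,-)$ is a \emph{second} cosection of the perfect obstruction theory $[F\to B^\vee]\to\bbL_X$, and $a^\vee(t)=\tsig(a)=1\neq0$; applying the cone reduction lemma to \emph{both} $\tsig$ and $a^\vee$ yields $Y\subseteq\ker(a^\vee)\cap\ker(\tsig)=\langle a,\tsig^\vee\rangle^\perp$, so $\tau$ lands in $F'=\langle a,\tsig^\vee\rangle^\perp|_Y$ while $t$ is a section of $F''=\langle a,\tsig^\vee\rangle|_Y$ — precisely the splitting of Definition \ref{n73}. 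Without this auxiliary section and the double application of cone reduction, Theorem \ref{n74} cannot be invoked and the construction does not get off the ground. The remaining parts of your proposal (the identity \eqref{n90}, independence of choices, deformation invariance) are consistent with the paper, though note that for independence of the lift $\tsig$ the paper uses a deformation $(1-t)\tsig+t\tsig'$ over $\bbA^1$ together with bivariance, rather than your $B$-invariance argument.
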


\begin{proof}
By \eqref{14}, $\tsig\circ d=0$ and by the cone reduction lemma \cite{KLc}, the reduced cone
$Y=C_\redd$ of \eqref{n83} lies in the kernel $F(\tsig)$ of $\tsig$.
Hence the tautological section $\tau$ of $F|_Y$ induced by the inclusion $Y\subset F$ satisfies 
$\tsig|_Y\circ \tau=0$ and hence 
$\tsig|^\vee_Y\cdot \tau=0.$ Note that $X(\sigma)\supset X(\tsig)$. 

We claim that the isotropic sections $\tsig|^\vee_Y$ and $\tau$ are independent away from 
$$Z=Y\times_XX(\sigma)\supset Y\times_XX(\tsig)=(\tsig|_Y^\vee)^{-1}(0).$$
Since $\tau^{-1}(0)=X_\redd$,  by Theorem \ref{n74}, 
the localized square root Euler class $$\sqe(F|_Y,\tau;\tsig|^\vee_Y):A_*(Y)\lra A_*(X(\sigma))$$ gives us the localized virtual cycle 
\beq\label{n89} [X]\virt_\loc:=\sqe(F|_Y,\tau;\tsig|^\vee_Y)[C]\in A_{vd}(X(\sigma)).\eeq
The equality \eqref{n90} follows immediately from \eqref{n75} and \eqref{i25}. 

Let us prove the independence of $\tsig^\vee|_Y$ and $\tau$. 
The notion of independence (Definition \ref{n73}) is local and we may assume that $X$ is an affine scheme and $\sigma$ is surjective. 
 Since the composition 
$$\mathrm{ker}(d\dual:F \to B\dual) \lra Ob_{X} \mapright{\sigma} \sO_{X}$$ 
is surjective, there is a section $a \in H^0(X,F)$ such that $\tsig (a) = 1$ and $d\dual(a) = 0$. Replacing $a$ by $a-\frac{1}{2}a^2\tsig\dual$, we may assume that $a$ is isotropic. Hence, we have an orthogonal decomposition
$$F = \langle a , \tsig\dual \rangle \oplus \langle a,\tsig\dual \rangle^{\perp},$$
where $\langle a , \tsig\dual \rangle$ is the image of $(a,\tsig\dual):\sO_X^2 \xrightarrow{} F$.
Since both 
$$a\dual, \tsig : F \lra \sO_X$$
are cosections of the perfect obstruction theory $[F\to B\dual] \to \bbL_X$, the cone reduction lemma \cite[Proposition 4.3]{KLc} tells us that 
\beq\label{n92}
Y=C_{\redd} \subseteq \mathrm{ker}(a\dual)\cap \mathrm{ker}(\tsig) = \langle a,\tsig\dual\rangle^{\perp}
\cong \langle\tsig\dual\rangle^{\perp}/\langle\tsig\dual\rangle.\eeq
Thus $F|_Y=F'\oplus F''$, $\tau\in H^0(F')$ and $\tsig^\vee|_Y\in H^0(F'')$ where
$$F'=\langle a,\tsig\dual\rangle^{\perp}|_Y,\and F''=\langle a,\tsig\dual\rangle|_Y.$$

By Lemma \ref{124} below, \eqref{n89} is independent of choices and by Lemma \ref{125}, $[X]\virt_\loc$ is deformation invariant.  
This completes our proof. 
\end{proof}
By \eqref{n92}, we have the closed immersion  
\beq\label{i39} Y=C_\redd\hookrightarrow \langle\tsig\dual\rangle^{\perp}/\langle\tsig\dual\rangle\eeq
which is an orthogonal version of the cone reduction lemma \cite[Proposition 4.3]{KLc}. We will use this  for a construction of reduced virtual cycles in \S\ref{S5.3}.

As a corollary, the virtual cycle vanishes if the cosection is surjective.

\begin{coro}\label{126}
If ${\sigma}:Ob_X \to \sO_X$ is surjective, then $[X]\virt=0$.
\end{coro}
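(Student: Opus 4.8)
The plan is to read off this vanishing directly from the cosection localized virtual cycle of Theorem \ref{n88}. The first step is the elementary observation that, for the cosection $\sigma=h^1(\bsig):Ob_X\to\sO_X$, surjectivity of $\sigma$ is the same as the emptiness of its zero locus $X(\sigma)$. Indeed $X(\sigma)$ is by definition the closed substack of $X$ cut out by the image of $\sigma$, equivalently the locus where the fiberwise map $\sigma\otimes k(x):Ob_X\otimes k(x)\to k(x)$ vanishes; since the target is of rank one, $\sigma\otimes k(x)$ is nonzero precisely when it is surjective, so by Nakayama's lemma $\sigma$ is surjective as a homomorphism of sheaves if and only if $\sigma\otimes k(x)\neq 0$ for every point $x$, i.e. if and only if $X(\sigma)=\emptyset$.

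Granting this, the second step is purely formal. Under Assumption \ref{n86} together with the isotropic cosection hypothesis of Theorem \ref{n88}, that theorem supplies a class $[X]\virtloc\in A_{vd}(X(\sigma))$ with $\imath_*[X]\virtloc=[X]\virt$, where $\imath:X(\sigma)\hookrightarrow X$ is the inclusion. If $\sigma$ is surjective, then $X(\sigma)=\emptyset$ by the first step, so $A_{vd}(X(\sigma))=A_{vd}(\emptyset)=0$ and hence $[X]\virtloc=0$; applying $\imath_*$ and using \eqref{n90} yields $[X]\virt=\imath_*[X]\virtloc=0$.

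I do not anticipate any real obstacle here, since all of the substantive work — the construction of $\sqe(F,s;t)$ in \S\ref{Scos}, the orthogonal cone reduction \eqref{i39} placing $Y=C_\redd$ inside $\langle\tsig\dual\rangle^{\perp}/\langle\tsig\dual\rangle$, and the well-definedness and deformation invariance of \eqref{n89} — is already absorbed into Theorem \ref{n88}. The one point worth flagging is that the construction of $[X]\virtloc$ genuinely uses the isotropy of $\bsig$ (so that $\tsig\dual$ is an isotropic section of $F$ and $\sqe(F|_Y,\tau;\tsig\dual|_Y)$ is defined), so the corollary is to be understood under the standing hypothesis of \S\ref{S5.2} that the surjective cosection $\bsig$ is isotropic; a hypothesis-free variant would instead have to show $\sqe(F|_Y,\tau)[C]=0$ directly from $\tsig$ being surjective, which looks less clean.
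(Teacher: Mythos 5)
Your proposal is correct and is exactly the paper's (implicit) argument: under the standing isotropic-cosection hypothesis of Theorem \ref{n88}, surjectivity of $\sigma$ forces $X(\sigma)=\emptyset$, so $A_{vd}(X(\sigma))=0$ and \eqref{n90} gives $[X]\virt=\imath_*[X]\virtloc=0$. Your remark that the isotropy of $\bsig$ is genuinely needed (it is the standing hypothesis of \S\ref{S5.2}) is also accurate.
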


We now prove that the localized virtual cycle is well defined.

\begin{lemm}\label{124}
The localized virtual cycle \eqref{123.1} is independent of the choices of the resolution \eqref{n87} and the lifting \eqref{n91}.
\end{lemm}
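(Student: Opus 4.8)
The plan is to follow the standard Oh--Thomas argument for independence of the virtual cycle from the resolution, but now carrying the extra isotropic section $\tsig^\vee$ through the comparison maps. First I would recall that, by the bivariance established in Theorem \ref{n74}, the localized square root Euler class $\sqe(F,s;t)$ commutes with proper pushforward, flat and lci pullback; this is what allows all the stabilization comparisons below to go through. The basic reduction, exactly as in \cite[Proposition 4.5]{OhTh}, is that any two resolutions \eqref{n87} can be compared by a common stabilization: given resolutions with orthogonal bundles $F_1$ and $F_2$, there is a third resolution with orthogonal bundle $F$ fitting into orthogonal direct sums $F\cong F_1\oplus G_1\cong F_2\oplus G_2$ where $G_i$ are of the form $H_i\oplus H_i^\vee$ with the hyperbolic quadratic form, and the cone $C$ and tautological section $\tau$ transform compatibly (the cone for $F$ is the pullback of $C_i$ along the projection, up to the zero section of $H_i^\vee$). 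So it suffices to treat the effect of replacing $F$ by $F\oplus H\oplus H^\vee$.

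For that step I would invoke the Whitney-type formula. The new cone is $C\times_X H^\vee$ (sitting inside $F\oplus H\oplus H^\vee$ via $0_H$), the new tautological section is $(\tau, 0, \tau_{H^\vee})$ where $\tau_{H^\vee}$ is the tautological section of $H^\vee$ pulled back to the cone, and the new lift of the cosection is $(\tsig^\vee, 0, 0)$ — the cosection does not see the hyperbolic summand. The key computation is that $\sqe((F\oplus H\oplus H^\vee)|_{C\times_X H^\vee}, (\tau,0,\tau_{H^\vee}); (\tsig^\vee,0,0))$ applied to $[C\times_X H^\vee]$ recovers $\sqe(F|_C,\tau;\tsig^\vee)[C]$. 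This should follow by combining Lemma \ref{107} (the Whitney sum formula for localized square root Euler classes) with the fact that $\sqe(H\oplus H^\vee, (0,\tau_{H^\vee}))$ applied to $[H^\vee]$ is the fundamental class of the base (since $\sqe(H\oplus H^\vee)= e(H)$ for the positive maximal isotropic $H$, and $e(H, \cdot)$ against the tautological section of $H^\vee$ is the Gysin map $0_{H^\vee}^!$ by \eqref{n14.1} and \eqref{n61}), together with the analogue of Corollary \ref{117} to commute this through the cone projection. The extra isotropic section $\tsig^\vee$ is orthogonal to the whole hyperbolic block, so it simply rides along in the $F$-factor and does not interact with this reduction; the independence from the hyperbolic stabilization is therefore formally the same as in the non-localized case, with Lemma \ref{116} and Corollary \ref{117} playing the roles that Lemma \ref{103} and Corollary \ref{105} play there.

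Independence of the choice of lift \eqref{n91} of the cosection: two lifts $\tsig, \tsig'$ of the same $\sigma=h^1(\bsig)$ differ by a homomorphism that factors through $d^\vee:F\to B^\vee$, i.e. $\tsig-\tsig' = \varphi\circ d^\vee$ for some $\varphi:B^\vee\to\sO_X$. Since the cone $C$ lies in $\ker(d^\vee)$ by \eqref{14} (this is precisely the content of $\tsig\circ d = 0$ used at the start of the proof of Theorem \ref{n88}), we have $\tsig|_Y = \tsig'|_Y$ on $Y=C_\redd$, and likewise $\tsig^\vee|_Y = (\tsig')^\vee|_Y$; since \eqref{n89} only involves the restriction of the lift to $Y$, the two lifts give literally the same class, and even the target $X(\sigma)$ is unchanged since it depends only on $\sigma$. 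The main obstacle is the hyperbolic-stabilization step: one must track that the orientations match up under the direct sum (the sign bookkeeping for positive maximal isotropic subbundles from \S\ref{Sorth}) and that the cone and its fundamental class push forward correctly through $0_{H^\vee}$, so that the Whitney formula of Lemma \ref{107} can be applied cleanly together with Corollary \ref{117}; once that is set up, the rest is formal.
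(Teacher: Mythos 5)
Your treatment of resolution-independence matches the paper's: the paper simply runs the Oh--Thomas stabilization argument of \cite[\S4.3]{OhTh} again, with the bivariance of $\sqe(F,s;t)$ (Theorem \ref{n74}) and Corollary \ref{117} standing in for the non-localized statements, which is exactly your plan. The problem is in your argument for independence of the lift $\tsig$.

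Even granting that $\tsig-\tsig'=\varphi\circ d^\vee$ (which itself needs an extension argument, since the difference a priori only factors through $\im(d^\vee)\subset B^\vee$) and that $Y\subset\ker(d^\vee)$ (true, but not because ``$\tsig\circ d=0$''; it follows from $h^1(\bbT_X)\hookrightarrow h^1(\bbE\ldot)=\ker(d^\vee)/\im(d)$), what you actually get is only that the composite $\tsig|_Y\circ\tau$ equals $\tsig'|_Y\circ\tau$. It does \emph{not} follow that $\tsig^\vee|_Y=(\tsig')^\vee|_Y$ as sections of $F|_Y$: dualizing, the two sections differ by the pullback to $Y$ of $d(\varphi^\vee(1))\in H^0(X,F)$, a section lying in $\im(d)$ which is nonzero in general. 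The class $\sqe(F|_Y,\tau;\tsig^\vee|_Y)$ genuinely depends on the section $\tsig^\vee|_Y$ itself --- through its zero locus, the induced section $\tit$ on the blowup, and the independence locus --- not merely on its pairing with $\tau$. So the two lifts do not give ``literally the same class,'' and your argument leaves the actual comparison unaddressed. The paper closes this gap with a deformation: it interpolates $\Sigma=(1-t)\tsig+t\tsig'$ over $\bbA^1$, checks fiberwise that $\tau$ and $\Sigma^\vee$ remain mutually orthogonal, isotropic and independent away from $(Y\times_X X(\sigma))\times\bbA^1$, and then uses the bivariance of Theorem \ref{n74} together with $A_*(X(\sigma)\times\bbA^1)\cong p^*A_{*-1}(X(\sigma))$ to conclude that $\sqe(F|_Y,\tau;\Sigma_t^\vee)[C]$ is constant in $t$. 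You would need this (or an equivalent homotopy argument) to complete the proof.
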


\begin{proof}
We first prove that the localized virtual cycle \eqref{123.1} is independent of the choice of the lifting \eqref{n91} by a deformation argument. Let $\tsig,\tsig':F \to \sO_X$ be two liftings of $\sigma$. Consider the family of cosections parametrized by $\bbA^1$
$$\Sigma= (1-t)\tsig + t \tsig' : F|_{X \times \bbA^1} \to \sO_{X \times \bbA^1},$$
whose fiber over $0$ is $\tsig$ and the fiber over $1$ is $\tsig'$. 
Note that the two sections
$$\tau|_{Y\times \bbA^1},\Sigma|_{Y\times \bbA^1}\dual \in H^0(Y \times \bbA^1, F|_{Y\times \bbA^1})$$ 
are mutually orthogonal isotropic sections, which are independent away from $(Y\times_X X(\sigma)) \times \bbA^1$. Indeed, this can be shown by considering its fibers over $t \in \bbA^1$. 
Let $\imath_t:X(\sigma)\times\{t\} \hookrightarrow X(\sigma)\times \bbA^1$ be the inclusion map. Then we have
\begin{equation}\label{124.1}
\imath_t^*\sqrt{e}(F|_{Y\times \bbA^1},\tau;\Sigma\dual)[C\times \bbA^1] = \sqrt{e}(F|_{Y},\tau; \Sigma_t\dual)[C] \in A_* (X({\sigma}))
\end{equation}
because the localized square root Euler class \eqref{n76} is bivariant by Theorem \ref{n74}. 
Since $$\sqrt{e}(F|_{Y\times \bbA^1},\tau;\Sigma\dual)[C\times \bbA^1]=p^*\xi\in A_*(X(\sigma)\times\bbA^1)\cong p^*A_{*-1}(X(\sigma))$$
for some $\xi\in A_*(X(\sigma))$ where $p:X(\sigma)\times \bbA^1\to X(\sigma)$ denotes the projection, $$\imath_t^*\sqrt{e}(F|_{Y\times \bbA^1},\tau;\Sigma\dual)[C\times \bbA^1]=\imath_t^*p^*\xi=(p\circ \imath_t)^*\xi=\xi$$ for all $t$. 
Hence, \eqref{124.1} is independent of $t \in \bbA^1$ and the two localized virtual cycles given by $\tsig$ and $\tsig'$ are equal.

In \cite[\S4.3]{OhTh}, it was proved that $[X]\virt$ is independent of the choice of \eqref{n87}. The same arguments prove that  \eqref{123.1} is independent of the choice of \eqref{n87} with 
the bivariance of $\sqe(F,s;t)$ in Theorem \ref{n74} and Corollary \ref{117}. 
\end{proof}

The localized virtual cycle $[X]\virtloc$ is deformation invariant.

\begin{lemm}\label{125}
Let $X \to S$ be a morphism from a \DM stack to a smooth scheme $S$. Let $\phi:\bbE\udot \to \bbL_{X/S}$ be a relative symmetric obstruction theory of amplitude $[-2,0]$ equipped with an orientation. Assume that the lift $C=\fC_{X/S} \times_{[F/B]}F \subseteq F$ of the relative intrinsic normal cone $\fC_{X/S}$ is isotropic for any symmetric resolution $[B\to F \to B\dual] \cong \bbE\udot$. Suppose we have an isotropic cosection $\bsig:\bbE\ldot \to \sO_X[-1]$ such that the composition
\begin{equation}\label{125.2}
\sO_X \mapright{\sigma\ldot\dual} \bbE\udot[-1] \mapright{\phi} \bbL_{X/S}[-1] \mapright{\delta} \bbL_S|_X=\Omega_S|_X,    
\end{equation}
is zero. Let $Ob_{X/S}=h^{1}(\bbE\ldot)$ and
$$Ob_X=\ker(Ob_{X/S}\xrightarrow{h^0(\delta\circ\phi\circ \theta^{-1})} \Omega_S|_X)/\im (\bbT_S|_X\xrightarrow{h^0(\delta\circ\phi)^\vee} Ob_{X/S}).$$
As \eqref{125.2} is zero, $h^1(\sigma\ldot):Ob_{X/S} \to \sO_X$ induces a cosection $\sigma:Ob_X \to \sO_X$. Let $X({\sigma})$ be the zero locus of ${\sigma}$. Then there exists a localized virtual cycle  $[X/S]\virtloc \in A_* (X({\sigma}))$ such that for any $p \in S$,
\begin{equation}\label{125.1}
\imath_p^![X/S]\virtloc=[X_p]\virtloc \in A_* (X_p({\sigma}))    
\end{equation}
where $\imath_p:\{p\} \to S$ denotes the inclusion.
\end{lemm}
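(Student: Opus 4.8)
The plan is to carry out, in the relative setting over $S$, the construction of Theorem~\ref{n88}, and then to deduce \eqref{125.1} from the base-change compatibility of each ingredient.

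\emph{Construction of $[X/S]\virtloc$.} Fix a symmetric resolution $[B\mapright{d}F\mapright{d^\vee}B^\vee]\cong\bbE\udot$ with $F$ an $SO(2n)$-bundle via the orientation, and let $\tsig:F\to\sO_X$ be the lift of $\bsig$ furnished by Lemma~\ref{13}, so $\tsig\circ d=0$. Put $C=\fC_{X/S}\times_{[F/B]}F\subset F$, which is isotropic by hypothesis, and $Y=C_\redd$. Since $\tsig$ realizes a cosection of the perfect obstruction theory $[F\to B^\vee]\to\bbL_{X/S}$, the cone reduction lemma \cite[Proposition 4.3]{KLc}, being local, applies verbatim and gives $Y\subset F(\tsig)$, so the tautological section $\tau$ of $F|_Y$ satisfies $\tsig^\vee|_Y\cdot\tau=0$. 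Repeating the local argument in the proof of Theorem~\ref{n88} (choose $a\in H^0(F)$ with $\tsig(a)=1$, $d^\vee(a)=0$, make it isotropic, and apply the cone reduction lemma to the two cosections $a^\vee,\tsig$ of $[F\to B^\vee]\to\bbL_{X/S}$) shows that $\tau$ and $\tsig^\vee|_Y$ are independent away from $Z:=Y\times_XX(\sigma)\supset(\tsig^\vee|_Y)^{-1}(0)$. Hence by Theorem~\ref{n74} one may set
\[[X/S]\virtloc:=\sqe\big(F|_Y,\tau;\tsig^\vee|_Y\big)[C]\in A_*\big(X(\sigma)\big),\]
and independence of the choices of the resolution and of the lift $\tsig$ is proved exactly as in Lemma~\ref{124}.

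\emph{Base change.} Fix $p\in S$; since $S$ is smooth, $\imath_p:\{p\}\to S$ is a regular embedding, and I write $\imath_p^!$ also for the refined Gysin maps along the Cartesian squares obtained by base change to $X_p=X\times_S\{p\}$, $Y_p$, etc. The argument then rests on three points.
\begin{enumerate}
\item \emph{Obstruction data.} By base-change invariance of the relative cotangent complex, $L\imath_p^*\bbL_{X/S}\simeq\bbL_{X_p}$, so $\imath_p^*\bbE\udot\to\bbL_{X_p}$ is the absolute oriented symmetric obstruction theory of $X_p$, $[B|_{X_p}\to F|_{X_p}\to B^\vee|_{X_p}]$ is a symmetric resolution, and $C_p:=\fC_{X_p}\times_{[F|_{X_p}/B|_{X_p}]}F|_{X_p}$ is the associated isotropic cone.
\item \emph{Cosection.} The morphism $\bsig$ restricts to the cosection $\bsig|_{X_p}$ of $\imath_p^*\bbE\udot$, with associated lift $\tsig|_{X_p}$. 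Because \eqref{125.2} vanishes, the correction terms $\bbT_S|_X\to Ob_{X/S}$ and $Ob_{X/S}\to\Omega_S|_X$ in the definition of $Ob_X$ restrict to zero on $X_p$ (the connecting map $\bbL_{X/S}\to\Omega_S|_X[1]$ dies on $X_p$ since $X_p\to S$ factors through the point $p$), so $Ob_X|_{X_p}=Ob_{X_p}$ and $\sigma|_{X_p}=\sigma_{X_p}$; in particular $X(\sigma)\times_S\{p\}=X_p(\sigma)$.
\item \emph{Cone compatibility and bivariance.} As in \cite[\S4.3]{OhTh} (cf.\ \cite{BeFa}), the relative cone is compatible with lci base change: $\imath_p^![C]=[C_p]$; moreover $C\times_S\{p\}$ and $C_p$ have the same support, so $Y_p=(C_p)_\redd$ and the sections $\tau_p,\tsig^\vee|_{Y_p}$ are again independent away from $Y_p\times_{X_p}X_p(\sigma)$. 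Finally $\sqe(F|_Y,\tau;\tsig^\vee|_Y)$ is a bivariant class by Theorem~\ref{n74}, hence commutes with $\imath_p^!$.
\end{enumerate}
Combining these,
\begin{align*}
\imath_p^![X/S]\virtloc&=\imath_p^!\sqe\big(F|_Y,\tau;\tsig^\vee|_Y\big)[C]=\sqe\big(F|_{Y_p},\tau_p;\tsig^\vee|_{Y_p}\big)\,\imath_p^![C]\\
&=\sqe\big(F|_{Y_p},\tau_p;\tsig^\vee|_{Y_p}\big)[C_p]=[X_p]\virtloc,
\end{align*}
which is \eqref{125.1}.

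\emph{Main obstacle.} The construction and the bivariance step are routine given Theorems~\ref{n74} and~\ref{n88} and Lemma~\ref{124}, and the cone compatibility $\imath_p^![C]=[C_p]$ is the same input already used for deformation invariance of $[X]\virt$ in \cite[\S4.3]{OhTh}. The delicate part is the bookkeeping in point (2): verifying that the restriction of the derived-categorical data $(\bbE\udot,\phi,or,\bsig)$ to $X_p$ recovers precisely the absolute data on $X_p$, with $\sigma|_{X_p}=\sigma_{X_p}$ and $X(\sigma)\times_S\{p\}=X_p(\sigma)$ — i.e.\ that the $\Omega_S$- and $\bbT_S$-corrections in the definition of $Ob_X$ become invisible on the fiber — and, more mundanely, arranging the Cartesian diagrams so that the bivariance of Theorem~\ref{n74} literally applies to $\imath_p^!$. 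Neither is deep, but this is where the care lies.
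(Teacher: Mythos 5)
Your overall route is the paper's: construct $[X/S]\virtloc=\sqe(F|_Y,\tau;\tsig^\vee|_Y)[C]$ and deduce \eqref{125.1} from the bivariance of Theorem \ref{n74} together with Vistoli's rational equivalence $\imath_p^![C]=[C_p]$; the base-change bookkeeping in your points (1)--(3) is fine. The gap is in the construction step, where you apply the cone reduction lemma of \cite{KLc} directly to the \emph{relative} obstruction theory $[F\to B^\vee]\to\bbL_{X/S}$ and assert that it ``applies verbatim'' because it is local. That lemma concerns the absolute intrinsic normal cone and an absolute obstruction theory, and its naive relative analogue is false. For instance, take $S=\bbA^1_t$, $X=V(ty)\subset M=\bbA^2_{t,y}\to S$, with relative obstruction theory $[\sO_X\xrightarrow{t}\sO_X]$, so $Ob_{X/S}=\sO_X/(t)$, and the cosection $\sigma(1)=y$: over $\{y\neq 0\}$ the cone $C$ is the whole line bundle $E_1$ while $\tsig$ is multiplication by $y$, so $C_\redd\not\subset\ker\tsig$ --- and correspondingly \eqref{125.2} is nonzero there ($\partial_t\mapsto y^2$). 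So the vanishing of \eqref{125.2} is not mere bookkeeping for the fiber: it is exactly what makes the cone reduction work, whereas your proof invokes it only in the base-change step.

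The paper's proof supplies the missing link: choose a representative $a:F\to\Omega_S|_X$ of $\delta\circ\phi$ with $a\circ d=0$, form the \emph{absolute} perfect obstruction theory $\bbF\udot=[F\xrightarrow{(d^\vee,a)}B^\vee\oplus\Omega_S|_X]\to\bbL_X$, and use the resulting fiber diagram of cone stacks to identify $C=\fC_{X/S}\times_{[F/B]}F$ with the lift of the absolute cone $\fC_X$ to $F$. The vanishing of \eqref{125.2} says precisely that $\tsig$ kills the image of $\bbT_S|_X\to F$, i.e.\ that $\tsig$ is a cosection of $\bbF\udot$; only then does the (absolute) cone reduction lemma give $Y\subset\ker\tsig$. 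The same detour is needed for the independence of $\tau$ and $\tsig^\vee|_Y$: your auxiliary isotropic section with $\tsig(\cdot)=1$ must be chosen in the kernel of the absolute differential $(d^\vee,a)$, not just of $d^\vee$, so that its dual is also a cosection of $\bbF\udot$. With these corrections the rest of your argument goes through as written.
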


\begin{proof}
Choose a symmetric resolution $\bbE\udot\cong [B\xrightarrow{d}F\xrightarrow{d\dual}B\dual ]$ and a homomorphism $\tsig:F \to \sO_X$ that fit into \eqref{14}.
We may also assume that the composition $$\bbE\udot \mapright{\phi} \bbL_{X/S} \mapright{\delta} \bbL_S|_X[1]=\Omega_S|_X[1]$$ is a morphism of complexes. 
Let $a: F \to \Omega_S|_X$ be the induced map such that $a\circ d=0$. Then there exists an absolute perfect obstruction theory
\begin{equation}\label{125.3}
\psi:{\bbF}\udot=[F \xrightarrow{(d\dual,a)}B\dual\oplus \Omega_S|_X] \lra \bbL_X    
\end{equation}
which fits into the morphism of distinguished triangles
\begin{equation}
\xymatrix{
\Omega_S|_X \ar[r] \ar@{=}[d] & {\bbF}\udot \ar[r] \ar[d]^{\psi} & [F\to B\dual] \ar[r] \ar[d] & \Omega_S|_X[1] \ar@{=}[d]\\
\Omega_S|_X \ar[r] & \bbL_X \ar[r] & \bbL_{X/S} \ar[r] & \Omega_S|_X[1]}    
\end{equation}
over $X$. Hence we obtain a fiber diagram
$$\xymatrix{
\fC_{X/S} \ar[r] \ar[d] & [F/B]\ar[d]\\
\fC_X \ar[r] &[F/B\oplus \bbT_S] 
}$$
of cone stacks over $X$. Let $C=\fC_{X/S}\times_{[F/B]}F$ and let $\tau \in H^0(C,F)$ be the tautological section.

Since \eqref{125.2} is zero by assumption, $\tsig:F \to \sO_X$ is a cosection for the perfect obstruction theory \eqref{125.3}. Hence the cone reduction lemma \cite{KLc} implies that the composition $\tsig \circ \tau $ vanishes on the support $Y=C_\redd$ of $C$. By the arguments in the proof of Theorem \ref{n88}, the sections $\tau$ and $\tsig\dual$ are independent away from $Y \times_X X(\sigma)$. Hence we obtain a localized virtual cycle 
\begin{equation}\label{125.6}
[X/S]\virtloc=\sqrt{e}(F|_Y,\tau;\tsig|_Y\dual)[C] \in A_*(X(\sigma)). \end{equation}
Consider the fiber diagram
$$\xymatrix{
C_p \ar[r] \ar[d] & C|_{X_p} \ar[r] \ar[d] & F\ar[d]\\
\fC_{X_p} \ar[r] & \fC_{X/S}|_{X_p} \ar[r] & [F/B].}$$
By Vistoli's rational equivalence $\imath_p^![C]=[C_p] \in A_*(C|_{X_p})$, applying $\imath_p^!$ to \eqref{125.6}, we get 
$$\imath_p^![X/S]\virtloc = [X_p]\virtloc \in A_* (X_p({\sigma}))$$
by the bivariance of $\sqrt{e}(F|_Y,\tau;\tsig|_Y\dual)$.
\end{proof}


%

\begin{rema}\label{129}
From the proof of Theorem \ref{n88}, we see that it suffices to 
assume that the restriction $\bsig|_{X_{\redd}}$ is isotropic in order to define the localized virtual cycle \eqref{123.1}.
\end{rema}

\medskip

\subsection{Reduced virtual cycles}\label{S5.3} 
By Corollary \ref{126}, when there is a surjective isotropic cosection of the obstruction sheaf $Ob_X$, the virtual cycle $[X]\virt$ vanishes under Assumption \ref{n86}. As in the case of Gromov-Witten theory for K3 surfaces \cite{KoTh}, we can construct a reduced virtual cycle as follows.

\begin{defi}\label{131}
Under Assumption \ref{n86}, suppose that $\bsig$ is isotropic and $\sigma:h^1(\bbE\ldot)=Ob_X\to \sO_X$ is surjective. 
Choose a symmetric resolution \eqref{n87} 
and a lift \eqref{n91} of $\sigma$ which has to be surjective as $\sigma$ is surjective.  
Let $Y=C_\redd$  be the reduced stack of $C:=\fC_X \times_{[F/B]}F\subset F$ which is embedded into the $SO(2n-2)$-bundle 
\beq\label{n93} F_{\tsig}:=\langle\tsig\dual\rangle^{\perp} /\langle\tsig\dual\rangle\eeq
as an isotropic cone by \eqref{i39}. 
We define the \emph{reduced virtual cycle} of $X$ by 
\begin{equation}\label{131.1}
[X]\virt_\redd:=\sqrt{e}(F_{\tsig}|_Y,\tau)[C] \in A_{vd +1}(X).    
\end{equation}
where $\tau \in H^0(Y,F_{\tsig}|_Y)$ denotes the tautological section.
\end{defi}

\begin{rema}\label{132}
A reduced obstruction theory $[B \to F_{\tsig} \to B\dual] \to \bbL_X$ may not exist in general. In \cite[A.3]{MPT}, it was proved that a perfect obstruction theory $\psi :\bbF \to \bbL_X$ in the sense of Behrend-Fantechi with a surjective cosection $h^1(\bbF\dual) \to \sO_X$ may not have a reduced obstruction theory. This implies that the symmetric obstruction theory $\phi=(\psi,0):\bbE=\bbF\oplus \bbF\dual[2] \to \bbL_X$ may not have a reduced obstruction theory.
\end{rema}

\begin{lemm}\label{133}
The reduced virtual cycle $[X]\virt_{\redd}$ in \eqref{131.1} is independent of the choices of the symmetric resolution \eqref{n87} and the lift \eqref{n91}.
\end{lemm}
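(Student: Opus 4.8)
The statement has two halves — independence of the lift \eqref{n91} for a fixed resolution \eqref{n87}, and independence of the resolution — and the plan is to prove them in that order, the second reducing to the first.

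\medskip

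\emph{The lift.} Fix a symmetric resolution \eqref{n87} and let $\tsig,\tsig':F\to\sO_X$ be two lifts of $\sigma$, both surjective since $\sigma$ is. The key preliminary point, which I would settle by a one‑line computation, is that $\tsig^\vee\cdot\tsig'^\vee=0$: setting $\mu:=\tsig-\tsig'$, one has $d^\vee\tsig^\vee=(\tsig\circ d)^\vee=0$, so $\tsig^\vee\in\ker d^\vee$, while $\mu$ annihilates $\ker d^\vee$ because $h^1(\mu)=0$; hence $\tsig^\vee\cdot\tsig'^\vee=\tsig(\tsig^\vee)-\tsig(\mu^\vee)=0-\mu(\tsig^\vee)=0$, using isotropy of $\tsig$ and symmetry of $q$. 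Now I would run the deformation argument of Lemma \ref{124}, but with the reduced orthogonal bundle \eqref{n93} in place of the cosection‑localized class. Put $\Sigma:=(1-\lambda)\tsig+\lambda\tsig':F|_{X\times\bbA^1}\to\sO_{X\times\bbA^1}$ with $\lambda$ the coordinate on $\bbA^1$. Then $\Sigma$ is surjective (each fibre $\Sigma_t$ has $h^1(\Sigma_t)=\sigma$, hence is surjective), $\Sigma\circ d=0$, and $(\Sigma^\vee)^2=2\lambda(1-\lambda)\,\tsig^\vee\cdot\tsig'^\vee=0$, so $L_\Sigma:=\langle\Sigma^\vee\rangle$ is an isotropic line subbundle of $F|_{X\times\bbA^1}$ and $F_\Sigma:=L_\Sigma^\perp/L_\Sigma$ an $SO(2n-2)$‑bundle over $X\times\bbA^1$; moreover $\Sigma$ realizes an isotropic cosection of the relative symmetric obstruction theory of $X\times\bbA^1\to\bbA^1$ obtained by base change. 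By the orthogonal cone reduction \eqref{i39}, applied relatively over $\bbA^1$, the reduced cone $Y\times\bbA^1=(C\times\bbA^1)_\redd$ embeds into $F_\Sigma|_{Y\times\bbA^1}$ as an isotropic cone, so I can form $\eta:=\sqe(F_\Sigma|_{Y\times\bbA^1},\tau)[C\times\bbA^1]\in A_{vd+2}(X\times\bbA^1)$. Since the flat pullback $p^*:A_{*-1}(X)\to A_*(X\times\bbA^1)$ along the projection $p$ is an isomorphism, $\eta=p^*\xi$ for a unique $\xi\in A_{vd+1}(X)$. Writing $\imath_t:X\hookrightarrow X\times\bbA^1$ for the inclusion of the fibre over $t$, on the one hand $\imath_t^!\eta=\imath_t^!p^*\xi=\xi$ is independent of $t$, and on the other hand bivariance of $\sqe(\,\cdot\,,\tau)$ (Theorem \ref{n51}) together with $\imath_t^![C\times\bbA^1]=[C]$ gives $\imath_t^!\eta=\sqe(F_{\Sigma_t}|_Y,\tau)[C]$, which is \eqref{131.1} computed with $\tsig$ at $t=0$ and with $\tsig'$ at $t=1$. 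Hence the two reduced cycles coincide.

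\medskip

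\emph{The resolution.} By \cite[\S4.3]{OhTh} any two symmetric resolutions \eqref{n87} are dominated by a common one obtained from one of them by adjoining acyclic hyperbolic summands, so it suffices to compare $[B\mapright{d}F\mapright{d^\vee}B^\vee]$ with a stabilization $[B\oplus G\to F\oplus H\to B^\vee\oplus G^\vee]$, where $H=G\oplus G^\vee$ carries the hyperbolic form and the orientation making $G$ positive maximal isotropic. By the first half I am free to choose the lift for the stabilized resolution; take $\tsig':=(\tsig,0)$, again a surjective lift of $\sigma$ since the acyclic summand does not change $h^1$. Then $\langle\tsig'^\vee\rangle=\langle\tsig^\vee\rangle\subseteq F$, whence $F_{\tsig'}=F_\tsig\oplus H$, with orientation $or_{F_\tsig}\otimes or_H$ (the induced orientation \eqref{n6} being compatible with direct sums in the summand disjoint from $\langle\tsig^\vee\rangle$). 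As in \cite[\S4.3]{OhTh}, the intrinsic cone for the stabilized data is $C'=C\times_X G$, with $Y'=Y\times_X G$ and tautological section $\tau'=(\tau,\tau_G,0)$ of $F_{\tsig'}|_{Y'}=F_\tsig|_{Y'}\oplus G|_{Y'}\oplus G^\vee|_{Y'}$, where $\tau_G$ is the tautological section of $G|_{Y'}$ arising from $Y'=Y\times_X G$. By the Whitney formula (Lemma \ref{107}), $\sqe(F_{\tsig'}|_{Y'},\tau')=\sqe(H|_{Y'},(\tau_G,0))\circ\sqe(F_\tsig|_{Y'},\tau)$, and since $G|_{Y'}$ is positive maximal isotropic in $H|_{Y'}$, \eqref{103.1} identifies $\sqe(H|_{Y'},(\tau_G,0))=e(G|_{Y'},\tau_G)=0^!_{G|_Y}$, the inverse of flat pullback along $Y'\to Y$; as $[C']$ is the flat pullback of $[C]$, this yields $\sqe(F_{\tsig'}|_{Y'},\tau')[C']=\sqe(F_\tsig|_Y,\tau)[C]$, i.e. the reduced cycle of the stabilization equals \eqref{131.1}. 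Together with the first half this shows $[X]\virt_\redd$ is independent of all choices.

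\medskip

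The genuinely delicate points are (i) the vanishing $\tsig^\vee\cdot\tsig'^\vee=0$, which is precisely what makes $F_\Sigma$ an honest special orthogonal bundle over all of $X\times\bbA^1$ — without it the deformation does not even make sense — and (ii) tracking the cone $C'=C\times_X G$, the section $\tau'$, and the induced orientation on $F_{\tsig'}=F_\tsig\oplus H$ through the stabilization. Point (ii) is a direct extension of the bookkeeping already carried out for $[X]\virt$ in \cite[\S4.3]{OhTh}, the only new ingredient being the compatible choice $\tsig'=(\tsig,0)$ of cosection lift; point (i) is short. I expect (ii), and in particular propagating the orientation conventions through the stabilization so that no stray sign survives, to be the part demanding the most care.
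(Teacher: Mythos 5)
Your first half is exactly the paper's intended argument: the proof of Lemma \ref{133} in the paper simply says it ``follows from the arguments in the proof of Lemma \ref{124}'' together with Corollary \ref{105} and Lemma \ref{102} (3), and your deformation $\Sigma=(1-\lambda)\tsig+\lambda\tsig'$ with the reduced bundle $F_\Sigma$ over $X\times\bbA^1$ is precisely that argument transplanted to the reduced setting. Your verification that $\tsig^\vee\cdot\tsig'^\vee=0$ is correct (and is indeed the one new point needed so that $F_\Sigma$ is an orthogonal bundle over all of $\bbA^1$); note it also follows more quickly from the observation that each $\Sigma_t$ is itself a lift of $\sigma$, hence isotropic, so the cross term $2t(1-t)\,\tsig^\vee\cdot\tsig'^\vee$ vanishes identically in $t$. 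Where you genuinely diverge is the second half: the paper compares resolutions via Corollary \ref{105}, i.e.\ via an isotropic reduction $F_1=K^\perp/K$ of a common refinement $F_3$ with $C_3$ a $K$-invariant cone and $C_3/K=C_1$, whereas you assume the refinement has the split form $F_3=F_1\oplus(G\oplus G^\vee)$, $B_3=B_1\oplus G$, and then apply the Whitney formula (Lemma \ref{107}) and \eqref{103.1}. What \cite[\S4.3]{OhTh} actually supplies is the non-split version: the kernel $K=d_3(G)$ is an isotropic subbundle of $F_3$ with $K^\perp/K\cong F_1$, and neither $0\to G\to B_3\to B_1\to 0$ nor the hyperbolic structure on $\ker(F_3\to F_1)$ need split globally. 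Your argument is the split special case of Corollary \ref{105} and buys nothing over it, while losing generality; to be safe you should either justify the global splitting or, as the paper does, invoke Corollary \ref{105} directly (applied inside the reduced bundles, using that $\langle\tsig_3^\vee\rangle$ and $K$ are orthogonal isotropic subbundles so that $F_{\tsig_3}$ reduces by $K$ to $F_{\tsig_1}$). With that substitution the proof is complete and matches the paper's.
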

\begin{proof}
Using Corollary \ref{105} and Lemma \ref{102} (3), Lemma \ref{133} follows from the arguments in the proof of Lemma \ref{124}.
\end{proof}

The reduced virtual cycle $[X]\virt_{\redd}$ in \eqref{131.1} is deformation invariant.

\begin{prop}\label{134}
Under the assumptions of Lemma \ref{125}, suppose that ${\sigma}:Ob_X \to \sO_X$ is surjective. Then there exists a cycle  $[X]\virt_{\redd} \in A_*(X)$ such that for any $p \in S$,
$$\imath_p^![X]\virt_{\redd}=[X_p]\virt_{\redd} \in A_* (X_p)$$
where $\imath_p:\{p\} \to S$ denotes the inclusion.
\end{prop}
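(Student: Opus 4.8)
The plan is to mimic the proof of Lemma~\ref{125}, replacing the localized square root Euler class $\sqe(F|_Y,\tau;\tsig|_Y\dual)$ throughout by the ordinary square root Euler class $\sqe(F_{\tsig}|_Y,\tau)$ of the $SO(2n-2)$-bundle $F_{\tsig}=\langle\tsig\dual\rangle^{\perp}/\langle\tsig\dual\rangle$, exactly as in Definition~\ref{131}. Concretely, I would first choose a symmetric resolution $\bbE\udot\cong[B\xrightarrow{d}F\xrightarrow{d\dual}B\dual]$ together with a homomorphism $\tsig:F\to\sO_X$ realizing $\bsig$ as the chain map~\eqref{14}; since $\sigma:Ob_X\to\sO_X$ is surjective, $\tsig$ is surjective and $F_{\tsig}$ is a genuine vector bundle of rank $2n-2$. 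As in the proof of Lemma~\ref{125} I would form the absolute perfect obstruction theory $\psi:\bbF\udot=[F\to B\dual\oplus\Omega_S|_X]\to\bbL_X$, the associated fiber diagram of cone stacks, and set $C=\fC_{X/S}\times_{[F/B]}F$ and $Y=C_\redd$. Working \'etale-locally on $X$ and using surjectivity of $\sigma$ on $Ob_X$ to produce an isotropic $a\in H^0(F)$ with $\tsig(a)=1$ lying in the kernel of the differential of $\bbF\udot$, both $\tsig$ and $a\dual$ become cosections of $\psi$ (the condition on $a\dual$ being the analogue of the vanishing of~\eqref{125.2}), so the cone reduction lemma~\cite{KLc} --- applied exactly as in the proof of Theorem~\ref{n88}, cf.~\eqref{n92} and~\eqref{i39} --- gives the orthogonal reduction
\[ Y=C_\redd\ \hookrightarrow\ \bl\ker(a\dual)\cap\ker(\tsig)\br\big|_Y\ \cong\ F_{\tsig}|_Y,\]
realizing $Y$ as an isotropic cone over $X$ inside $F_{\tsig}|_Y$. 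I then define
\[ [X]\virt_\redd:=\sqe(F_{\tsig}|_Y,\tau)[C]\ \in\ A_*(X),\]
where $\tau\in H^0(Y,F_{\tsig}|_Y)$ is the tautological section; this is \eqref{131.1} carried out relatively over $S$.

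For the restriction to a fiber I would copy the final paragraph of the proof of Lemma~\ref{125}, substituting $\sqe(F_{\tsig}|_Y,\tau)$ for $\sqe(F|_Y,\tau;\tsig|_Y\dual)$. From the fiber diagram there relating $\fC_{X_p}$, $\fC_{X/S}|_{X_p}$, $C_p$ and $C|_{X_p}$ one gets $C_p=\fC_{X_p}\times_{[F/B]}F$ and, by Vistoli's rational equivalence, $\imath_p^![C]=[C_p]$ in $A_*(C|_{X_p})$. Since $\sqe(F_{\tsig}|_Y,\tau)$ is a bivariant class by Theorem~\ref{n51}, it commutes with the lci pullback $\imath_p^!$; as $\tsig|_{X_p}$ is a surjective lift of $\sigma_p$ and $F_{\tsig}$, $Y$, $\tau$ base-change to the corresponding data for the fiber $X_p$, one obtains
\[\imath_p^![X]\virt_\redd=\sqe(F_{\tsig}|_{Y_p},\tau_p)[C_p]=[X_p]\virt_\redd\]
by Definition~\ref{131} applied to $X_p$.

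Independence of $[X]\virt_\redd$ from the chosen resolution and the chosen lift~\eqref{n91} is needed for the statement to be meaningful, and is proved exactly as in Lemma~\ref{133}: resolution-independence from the argument of~\cite[\S4.3]{OhTh} together with Corollary~\ref{105} and Lemma~\ref{102}(3), and lift-independence from the $\bbA^1$-deformation argument of Lemma~\ref{124}. I expect \emph{this last point to be the main obstacle}; it is already the subtle one in the absolute case. The naive interpolation $\Sigma=(1-t)\tsig+t\tsig'$ between two surjective lifts need not be surjective for every $t\in\bbA^1$ (for instance $\tsig'=-\tsig$ forces $\Sigma_{1/2}=0$), so $F_\Sigma=\langle\Sigma\dual\rangle^{\perp}/\langle\Sigma\dual\rangle$ need not be a vector bundle over all of $X\times\bbA^1$ and one cannot deform the reduced cycle directly. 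The fix, following Lemma~\ref{133}, is to pass through Corollary~\ref{105}: writing $K=\langle\tsig\dual\rangle$ and letting $\tilde Y\subseteq K^{\perp}$ be the preimage of $Y$, one has $\sqe(F_{\tsig}|_Y,\tau)\circ p^*=\sqe(F|_{\tilde Y},\tilde\tau)$ with $p:\tilde Y\to Y$, which rewrites $[X]\virt_\redd$ in terms of the fixed bundle $F$ and reduces the comparison of two reduced cycles --- and, carried out over $X\times\bbA^1$, the deformation needed for the family statement --- to the already-established lift-independence inside $F$. The remaining points (base change of cone stacks, compatibility of the cone reductions, bivariance bookkeeping) are then routine.
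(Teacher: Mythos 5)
Your proposal is correct and follows the paper's own route: the relative cone reduction giving the closed immersion $Y\hookrightarrow F_{\tsig}$ as in \eqref{n92}, the definition $[X]\virt_\redd=\sqe(F_{\tsig}|_Y,\tau)[C]$, and the base-change identity via Vistoli's rational equivalence $\imath_p^![C]=[C_p]$ together with the bivariance of $\sqe(F_{\tsig}|_Y,\tau)$ under lci pullback from Lemma \ref{102}(3). One minor remark: your worry that $\Sigma=(1-t)\tsig+t\tsig'$ might lose surjectivity is unfounded, since $-\tsig$ is a lift of $-\sigma$ rather than of $\sigma$, and any convex combination of two lifts of the same surjective $\sigma$ is again a lift of $\sigma$ and hence surjective; in any case routing the well-definedness through Corollary \ref{105} as you do is exactly what Lemma \ref{133} does.
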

\begin{proof}
Using the notation of Lemma \ref{125},  
the arguments for \eqref{n92} prove that  we have a closed immersion $Y \hookrightarrow F_{\tsig}$ where $F_{\tsig}$ is defined by \eqref{n93}. Let
$$[X]\virt_\redd:=\sqrt{e}(F_{\tsig}|_Y,\tau)[C] \in A_*(X)$$
where $\tau$ is the tautological section. Then Lemma \ref{102} (3) with Vistoli's rational equivalence completes the proof as in the proof of Lemma \ref{125}.
\end{proof}

\begin{rema}\label{135}
(1) As in Remark \ref{129}, we only need to assume that $\bsig|_{X_{\redd}}$ is isotropic, to define the reduced virtual cycle $[X]\virt_{\redd}$ 
in Definition \ref{131}.

(2) It is also straightforward to define a reduced virtual cycle when there are multiple cosections.
In Definition \ref{131}, let
$$\sigma\ldot^1,\sigma\ldot^2,\cdots,\sigma\ldot^k: \bbE\ldot \lra \sO_X[-1]$$
be mutually orthogonal isotropic cosections 
such that
$$(\sigma^1,\cdots,\sigma^k)=h^1(\sigma\ldot^1,\sigma\ldot^2,\cdots,\sigma\ldot^k):Ob_X \lra \sO_X^{\oplus k}$$
is surjective. Then we may choose a symmetric resolution $[B \to F \to B\dual]\cong \bbE\udot$ and lifts $\tsig^1,\cdots,\tsig^k:F \to \sO_X$ of $\sigma^1,\cdots,\sigma^k$ which are isotropic and mutually orthogonal. Hence the span $\langle (\tsig^1)\dual,\cdots,(\tsig^k)\dual\rangle$ is an isotropic subbundle of $F$. Let
$$F_{\tsig^1,\cdots,\tsig^k}=\langle (\tsig^1)\dual,\cdots,(\tsig^k)\dual\rangle^{\perp}/\langle (\tsig^1)\dual,\cdots,(\tsig^k)\dual\rangle$$
be the reduced $SO(2n-2k)$-bundle. By \eqref{n92}, the support $Y=C_{\redd}$ of the cone $C=\fC_X \times_{[F/B]}F$ is contained in $F_{\tsig^1,\cdots,\tsig^k}$. We can define the reduced virtual cycle as
$$[X]\virt_{\redd}=\sqrt{e}(F_{\tsig_1,\cdots,\tsig_k}|_Y,\tau)[C] \in A_{vd+k}(X)$$
where $\tau$ is the tautological section.
\end{rema}

\medskip

\subsection{Vanishing for non-isotropic cosections}\label{S5.4}

Up to this point in this section, we considered only isotropic cosections, i.e. \eqref{n94} is zero.  
Now, we prove a vanishing result of virtual cycles for non-isotropic cosections.


\begin{theo}\label{143}
Under Assumption \ref{n86}, if $\sigma\ldot^2$ in \eqref{n94} is a nonzero constant, 
then the virtual cycle vanishes:
\begin{equation}
[X]\virt = 0 \ \ \in A_{vd} (X).    
\end{equation}
\end{theo}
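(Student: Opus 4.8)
The plan is to reduce the non-isotropic case to the isotropic one by enlarging the orthogonal bundle. Since $\sigma\ldot^2$ is a nonzero constant, after choosing a symmetric resolution \eqref{n87} and a lift $\tsig:F\to \sO_X$ of $\sigma$ as in Lemma \ref{13}, the section $\tsig\dual\in H^0(F)$ satisfies $\tsig(\tsig\dual)=\tsig\dual\cdot\tsig\dual=c$ for a nonzero constant $c$; rescaling, we may take $c=-1$. First I would replace $F$ by $\widehat F:=F\oplus\sO_X^{\oplus 2}$, equipped with the hyperbolic form on the $\sO_X^{\oplus 2}$ summand (with isotropic generators $u,v$, $u\cdot v=1$) so that $\widehat F$ is an $SO(2n+2)$-bundle with the induced orientation, and replace the resolution \eqref{n87} by the trivially enlarged resolution $[B\to \widehat F\to B\dual]$ where $B\to \widehat F$ is $d$ followed by inclusion and $\widehat F\to B\dual$ is projection to $F$ followed by $d\dual$. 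This does not change the virtual cycle: the cone $\widehat C=\fC_X\times_{[\widehat F/B]}\widehat F$ equals $C$, and $\sqe(\widehat F|_Y,\widehat\tau)=\sqe(F|_Y,\tau)$ because $\sO_X^{\oplus 2}$ is a (hyperbolic) orthogonal summand with a positive maximal isotropic line, so by \eqref{n42} and $e(\langle u\rangle)=0$ on the cone (the tautological section lands in $F$, hence $u$-component vanishes) one gets the same class; alternatively invoke \cite[Proposition 4.5]{OhTh} / Lemma \ref{124} on independence of the resolution.

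Next I would produce on $\widehat F$ an \emph{isotropic} section that is \emph{everywhere nonvanishing} and orthogonal to the tautological section of the cone. Consider $\widehat s:=\tsig\dual + u - \tfrac12 c\, v\in H^0(\widehat F)$ where $c=\tsig\dual\cdot\tsig\dual$. Then $\widehat s\cdot\widehat s=c+0+0+2\cdot(u\cdot(-\tfrac12 c v))=c-c=0$, so $\widehat s$ is isotropic. Moreover the $u$-component of $\widehat s$ is the constant section $1$, so $\widehat s$ is nowhere zero; hence its zero locus is empty and the associated cosection $\widehat\sigma$ of the enlarged obstruction theory is ``surjective'' in the strongest possible sense. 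Finally, on $Y=C_\redd\subset F\subset\widehat F$ the tautological section $\widehat\tau$ has vanishing $u,v$-components and its $F$-component is $\tau$, which satisfies $\tsig|_Y\circ\tau=0$ by the cone reduction lemma \cite{KLc} exactly as in the proof of Theorem \ref{n88}; therefore $\widehat s|_Y\cdot\widehat\tau=\tsig\dual|_Y\cdot\tau=0$. Thus $\widehat s|_Y$ and $\widehat\tau$ are mutually orthogonal isotropic sections of the $SO(2n+2)$-bundle $\widehat F|_Y$, and since $\widehat s$ is nowhere vanishing they are independent away from $Z=\emptyset$ (locally split off the rank-two orthogonal piece $\langle\widehat s\dual,\ldots\rangle$ as in the proof of Theorem \ref{n88}, using that $\widehat s$ is nowhere zero to get a dual section $\widehat s\dual$ with $\widehat s\cdot\widehat s\dual=1$).

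Then Theorem \ref{n74} applies with $Z=\emptyset$: we get $\sqe(\widehat F|_Y,\widehat\tau;\widehat s|_Y)\in A^{n+1}_{X\cap\emptyset}(Y)=A^{n+1}_{\emptyset}(Y)$, and $[X]\virt=\sqe(\widehat F|_Y,\widehat\tau)[\widehat C]=\sqe(\widehat F|_Y,\widehat\tau)[C]$ pushes forward from a cycle supported on $\emptyset$ by \eqref{n75}, i.e. it is zero. More concretely, by Theorem \ref{n74} the class $\sqe(\widehat F|_Y,\widehat\tau;\widehat s|_Y)[C]$ lives in $A_{vd}(\emptyset)=0$ and maps to $\sqe(\widehat F|_Y,\widehat\tau)[C]=[X]\virt$ under $\hat\imath_*$; hence $[X]\virt=0$.

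The main obstacle I anticipate is the bookkeeping in the first step: checking carefully that enlarging $F$ by a hyperbolic plane leaves $[X]\virt$ unchanged, including that the enlarged cone is literally $C$ (not merely isomorphic), that the orientation is correctly induced, and that the square root Euler class is unaffected — this is where one must be precise with signs and with \eqref{n42}. The rest is a direct mimic of the proof of Theorem \ref{n88}, with the simplification that the relevant locus $Z$ is empty because $\widehat s$ is manufactured to be nowhere vanishing, which is precisely where the hypothesis ``$\sigma\ldot^2$ is a nonzero constant'' is used.
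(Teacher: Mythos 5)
Your reduction to the isotropic case by stabilizing with a hyperbolic plane does not work, and the failure is exactly at the point you flag as ``bookkeeping''. Adding the trivial hyperbolic plane $H=\sO_X^{\oplus 2}$ to $F$ does \emph{not} preserve the localized square root Euler class: by the Whitney formula (Lemma \ref{107}),
$$\sqe(F\oplus H,(\tau,0))=\sqe(H,0)\circ\sqe(F,\tau)=e(\langle u\rangle)\circ\sqe(F,\tau)=c_1(\sO_Y)\circ\sqe(F,\tau)=0,$$
since $\langle u\rangle\cong\sO$ is a maximal isotropic subbundle of $H$. So $\sqe(\widehat F|_Y,\widehat\tau)[C]$ vanishes for trivial reasons and is not $[X]\virt$; it cannot be, already for dimension reasons, because $\sqe(\widehat F|_Y,\widehat\tau)$ lowers degree by $n+1$ (as you yourself note) while $[X]\virt=\sqe(F|_Y,\tau)[C]$ lives in $A_{vd}(X)$. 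Relatedly, $[B\to F\oplus H\to B\dual]$ with $H$ mapped to zero is not a resolution of $\bbE\udot$ but of $\bbE\udot\oplus\sO_X^{\oplus2}[1]$, so neither Lemma \ref{124} nor \cite[Proposition 4.5]{OhTh} applies. The genuine stabilization used in \cite{OhTh} enlarges $B$ as well, which replaces $C$ by $C\times\A^1$ and gives the new tautological section a nonzero component in the hyperbolic directions; with that correction your section $\widehat s=\tsig\dual+u-\tfrac12 cv$ is no longer orthogonal to the tautological section, so the argument does not straightforwardly repair.

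The paper avoids stabilization entirely and needs no auxiliary isotropic section. It first proves (Lemma \ref{141}) that an $SO(2n)$-bundle carrying \emph{any} section $t$ with $t^2\in\CC^*$ has $\sqe(F)=0$: after reducing to the split model $F=V\oplus V\dual$, writing $t=(t_2,t_1)$, the identity $2t_1t_2=t^2\neq0$ forces $t_2\in H^0(V)$ to be nowhere vanishing, whence $\sqe(F)=e(V)=0$. This propagates to $\sqe(F,s)=0$ for any isotropic $s$ with $s\cdot t=0$ directly from the blowup formula \eqref{n41}, since both $\sqe(F)$ and $\sqe(\tF)$ vanish (Lemma \ref{142}). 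Taking $s=\tau$ and $t=\tsig\dual|_Y$, which is orthogonal to $\tau$ by the cone reduction lemma, gives $[X]\virt=\sqe(F|_Y,\tau)[C]=0$. The moral is that a section of constant nonzero square kills the square root Euler class outright; one should exploit the non-isotropy rather than try to trade it away.
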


We first prove a vanishing result for the ordinary square root Euler class $\sqrt{e}(F)$ of Edidin-Graham \cite{EdGr}.

\begin{lemm}\label{141}
Let $F$ be an $SO(2n)$-bundle over a \DM stack $Y$. If there is a section $t$ of $F$ such that $t^2 \in \CC^*$ is a nonzero constant, then
\begin{equation}
\sqrt{e}(F)=0.    
\end{equation}
\end{lemm}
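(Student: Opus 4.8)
### Proof proposal

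The plan is to reduce to a situation where $F$ admits a global maximal isotropic subbundle, because in that case $\sqe(F) = e(V)$ is an ordinary Euler class and we can feed it a nowhere-vanishing section. The section $t$ with $t^2 \in \CC^*$ is nowhere zero, so $L = \langle t \rangle \cong \sO_Y$ is a \emph{non-isotropic} line subbundle of $F$; it is a subbundle since $t$ is nowhere vanishing and $t \cdot t \ne 0$ means the pairing restricted to $L$ is nondegenerate. Hence $F$ splits orthogonally as $F \cong L \oplus L^\perp$ with $L^\perp$ an $SO(2n-1)$-bundle (odd rank, with the induced nondegenerate form), and $L^\perp$ also carries a nowhere-vanishing isotropic-or-not section only if we are careful — but we do not need that. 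The key point is that $\sqe$ of an orthogonal bundle of \emph{odd} rank is zero: this is either built into the Edidin--Graham formalism or follows from the tower construction, since the quadric-bundle reduction eventually produces a rank-$1$ orthogonal bundle with nondegenerate form, which has no isotropic vectors and hence empty quadric, forcing the class to vanish.

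More concretely, I would argue as follows. First, using Lemma \ref{n0}-style reasoning in reverse: $t$ nowhere vanishing and $q(t,t)$ a unit gives the orthogonal splitting $F = \sO_Y\!\cdot t \oplus t^\perp$. Now I want a Whitney-type multiplicativity $\sqe(F) = \sqe(\sO_Y\!\cdot t)\cdot \sqe(t^\perp)$; but $\sqe$ is only defined for \emph{even}-rank orthogonal bundles, so instead I split off a hyperbolic piece. Pick locally an isotropic vector near $t$ — actually the clean move is: over $Y$, consider $F \oplus H$ where $H$ is the hyperbolic plane $\sO_Y \oplus \sO_Y$ with the standard split form; then $\sqe(F \oplus H) = \pm\, \sqe(F)$ since $H$ has a positive maximal isotropic line so $\sqe(H) = e(\sO_Y) = 1$ up to sign by \eqref{61}. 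In $F \oplus H$, the vector $t$ together with a suitable element of $H$ can be combined into an isotropic section whose zero locus is empty, and then applying the localized square root Euler class $\sqe(F \oplus H, s)$ of Definition \ref{n39} with empty zero locus $X = \emptyset$ forces $\sqe(F \oplus H) = \imath_* \circ \sqe(F \oplus H, s) = 0$ by Lemma \ref{n44}, hence $\sqe(F) = 0$.

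Let me isolate the cleanest route: take $s = (t, 1, -\tfrac12 t^2) \in H^0(F \oplus H)$ where $H$ has basis $e, f$ with $e^2 = f^2 = 0$, $e\cdot f = 1$, and write $s = t + e + (-\tfrac12 t^2) f$. Then $s^2 = t^2 + 2\cdot(-\tfrac12 t^2)(e\cdot f) = t^2 - t^2 = 0$, so $s$ is isotropic, and the $e$-component is the unit $1$, so $s$ is nowhere vanishing, i.e. $s^{-1}(0) = \emptyset$. By Definition \ref{n39} and Lemma \ref{n44}, $\sqe(F\oplus H) = \imath_* \sqe(F\oplus H, s) = 0$ since $A_*(\emptyset) = 0$. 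Finally $\sqe(F \oplus H) = \sqe(H^\perp\text{-complement})\cdots$ — more directly, $H \subset F\oplus H$ contains the isotropic line $\sO_Y\!\cdot e$, which is positive maximal in $H$, so by \eqref{n42} applied to the isotropic subbundle $K = \sO_Y \cdot e$ of $F \oplus H$ with $K^\perp/K \cong F$, we get $\sqe(F\oplus H) = e(\sO_Y\!\cdot e)\,\sqe(F) = \sqe(F)$. Combining, $\sqe(F) = 0$.

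The main obstacle I anticipate is verifying that $K = \sO_Y\!\cdot e$ is genuinely an isotropic \emph{subbundle} of $F \oplus H$ with $K^\perp/K$ \emph{orthogonally isomorphic} to $F$ (so that \eqref{n42} applies and gives exactly $\sqe(F)$, orientation included); this is the standard "adding a hyperbolic plane does not change $\sqe$" fact, and the orientation bookkeeping via \eqref{n6} needs a line of care to see the induced orientation on $K^\perp/K \cong F$ matches the original one. Everything else — isotropy and non-vanishing of $s$, the vanishing $\sqe(F\oplus H, s) \in A^n_{\emptyset}(Y) = 0$ — is immediate.
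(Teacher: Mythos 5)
Your construction of the isotropic, nowhere--vanishing section $s=t+e-\tfrac12 t^2 f$ of $F\oplus H$ is fine, and so is the conclusion $\sqe(F\oplus H)=0$ via Lemma \ref{n44} with $s^{-1}(0)=\emptyset$. The fatal step is the identification $\sqe(F\oplus H)=\sqe(F)$. Applying \eqref{n42} to the trivial isotropic line $K=\sO_Y\cdot e$ gives
$$\sqe(F\oplus H)=e(K)\,\sqe(K^\perp/K)=c_1(\sO_Y)\cdot\sqe(F)=0,$$
because the Euler class of a trivial line bundle is $0$, not $1$ as you assert when you write $e(\sO_Y\cdot e)\,\sqe(F)=\sqe(F)$ (and earlier when you claim $\sqe(H)=e(\sO_Y)=1$). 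The square root Euler class is \emph{not} stable under adding hyperbolic planes: it is a top-degree class, of degree $n$ on a rank-$2n$ bundle, so exactly as $e(V\oplus\sO_Y)=0$ for every $V$, one has $\sqe(F\oplus H)=0$ for \emph{every} $F$, with or without the section $t$. Your argument therefore establishes only the tautology $0=0$ and gives no information about $\sqe(F)$. This is precisely the point you flagged as ``the main obstacle,'' but the problem is not orientation bookkeeping; it is that the degree-one factor $e(K)$ kills everything.

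The paper's proof avoids stabilization entirely: by bivariance and \eqref{57} one may pull back to the isotropic flag bundle $Q$ of \S\ref{S2.1}, where $F|_Q$ has a positive maximal isotropic subbundle $V$ and $\sqe(F|_Q)=e(V)$ by \eqref{61}; a further pullback to an affine bundle reduces to the split case $F=V\oplus V^\vee$. Writing $t=(t_2,t_1)$, the hypothesis $t^2=2\,t_1t_2\in\CC^*$ forces $t_2\in H^0(V)$ to be nowhere vanishing, so $e(V)=0$. To repair your argument you would need to produce a nowhere-vanishing section of a positive maximal isotropic subbundle of $F$ itself (after such pullbacks), rather than of an auxiliary summand whose own Euler class already vanishes.
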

\begin{proof}
As we saw in the proof of Theorem \ref{n55}, by the bivariance of $\sqe(F)$, we may replace $Y$ by the isotropic flag variety 
$Q$ in \S\ref{S2.1} and assume that $F$ admits a positive maximal isotropic subbundle $V$ and $\sqe(F)=e(V)$. 
By further replacing $Y$ by an affine bundle (see the discussion after Lemma \ref{n56}), we may assume $F=V\oplus V^\vee$. 
Hence $t=(t_2,t_1)$ with $t_2\in H^0(V)$ and $t_1\in H^0(V^\vee)$.  As $t^2=2t_1t_2\in \CC^*$, we find that $t_1$ and $t_2$ are both nowhere vanishing. 
As $V$ admits a nowhere vanishing section, $e(V)=0$ and hence 
$\sqe(F)=e(V)=0$ 
as desired. 
\end{proof}

Lemma \ref{141} implies a vanishing result for the localized square root Euler class $\sqrt{e}(F,s)$.

\begin{lemm}\label{142}
Let $F$ be an $SO(2n)$-bundle over a \DM stack $Y$, and $s$ be an isotropic section. Assume that there is a section $t$ of $F$ such that $s\cdot t=0$ and $t^2 \in \CC^*$. Then
\begin{equation}
\sqrt{e}(F,s)=0.    
\end{equation}
\end{lemm}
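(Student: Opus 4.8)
The plan is to derive the vanishing from two applications of Lemma \ref{141} through the blowup formula of Definition \ref{n39}. I would keep the notation there: let $\rho:\tY\to Y$ be the blowup along $X=s^{-1}(0)$, $D$ the exceptional divisor, $\hat\rho:D\to X$ the induced map, $L=\sO_{\tY}(D)\hookrightarrow F|_{\tY}$ the isotropic line subbundle of Lemma \ref{n0}, and $\tF=L^\perp/L$ the induced $SO(2n-2)$-bundle. By \eqref{n41}, for $\xi\in A_*(Y)$ decomposed as $\xi=\rho_*\alpha+\imath_*\beta$ with $\alpha\in A_*(\tY)$ and $\beta\in A_*(X)$, one has
\[\sqrt{e}(F,s)(\xi)=\hat\rho_*\jmath^*\sqrt{e}(\tF)\,\alpha+\sqrt{e}(F)\,\beta ,\]
so it suffices to show $\sqrt{e}(F)=0$ and $\sqrt{e}(\tF)=0$.

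The first is immediate from Lemma \ref{141}, applied to $F$ with the section $t$, since $t^2\in\CC^*$. For the second I would first descend $t$ to a section of $\tF$. The pullback $\rho^*t$ is a section of $F|_{\tY}$, and $s\cdot t=0$ forces $\rho^*t\in H^0(L^\perp)$: the composite of $L\hookrightarrow F|_{\tY}$ with the contraction $q(\rho^*t,-):F|_{\tY}\to\sO_{\tY}$ is a section of $L^\vee=\sO_{\tY}(-D)$ whose product with the defining section $\sO_{\tY}\to\sO_{\tY}(D)=L$ of $D$ equals $\rho^*(s\cdot t)=0$, and multiplication by that defining section is injective on global sections — this is exactly the argument used in the proof of Lemma \ref{n0}. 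Hence $\rho^*t$ descends to a section $\tit\in H^0(\tF)$, and the description of the induced pairing $\bar q$ in \eqref{n5} gives $\tit\cdot\tit=\bar q(\tit,\tit)=q(\rho^*t,\rho^*t)=\rho^*(t^2)\in\CC^*$. Lemma \ref{141}, applied to $\tF$ over $\tY$ with the section $\tit$, then gives $\sqrt{e}(\tF)=0$, and therefore $\sqrt{e}(F,s)(\xi)=0$ for every $\xi\in A_*(Y)$.

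Finally, to upgrade this to the statement that the bivariant class $\sqrt{e}(F,s)\in A^n_X(Y)$ vanishes, I would observe that for every $g:Y'\to Y$ the pullbacks $g^*F$, $g^*s$, $g^*t$ again satisfy the hypotheses ($g^*s$ isotropic, $g^*s\cdot g^*t=0$, and $(g^*t)^2=t^2\in\CC^*$ since the constant is unchanged under pullback), so the computation above carried out over $Y'$ shows that each component $\sqrt{e}(g^*F,g^*s):A_*(Y')\to A_{*-n}(X')$ of the bivariant class is zero. The argument is short and I do not expect a genuine obstacle: the only point needing a little care is the descent of $t$ to $\tF$ together with the invariance of its square, and both are routine consequences of Lemma \ref{n0} and the construction of $\bar q$ in \S\ref{Sorth}.
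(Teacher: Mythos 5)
Your proposal is correct and follows exactly the paper's argument: the paper likewise applies the blowup formula \eqref{n41} and invokes Lemma \ref{141} twice, once for $F$ with the section $t$ and once for $\tF$ with the induced section $\tit$ (whose square is again the same nonzero constant). The extra detail you supply on descending $t$ to $\tF$ via $s\cdot t=0$ is a correct elaboration of what the paper leaves implicit.
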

\begin{proof}
Using the notation in Definition \ref{n39}, 
$t$ induces a section $\tit$ of $\tF$ with $\tit^2\in \CC^*$. 
By Lemma \ref{141},
$$\sqrt{e}(F)=0 \and \sqrt{e}(\tF)=0.$$ By \eqref{n41}, we have the vanishing $\sqe(F,s)=0$.
\end{proof}

\begin{proof}[Proof of Theorem \ref{143}]
Choose a resolution \eqref{n87} 
and a lift $\tsig:F \to \sO_X$ of the cosection $\sigma=h^1(\bsig)$ such that $\tsig \circ d=0$ by Lemma \ref{13}. 
Let $t=\tsig^\vee$. Then by \eqref{14}, \eqref{n94} being a nonzero constant implies that $t^2 \in \CC^*$ is a nonzero constant. 
Let $C=\fC_X \times_{[F/B]}F$ and $Y=C_\redd$.  
By the cone reduction lemma \cite{KLc}, $\tsig(Y)=0$ and hence the composition
$$\tau\cdot t|_Y=\tsig|_Y\circ \tau:\sO_Y \mapright{\tau} F|_Y \mapright{\tsig} \sO_Y$$ is zero  
where $\tau$ is the tautological section. By \eqref{i25} and Lemma \ref{142},
$$[X]\virt=\sqrt{e}(F|_Y,\tau)[C]=0 \in A_*(X)$$ as desired.
\end{proof}

\begin{rema}
(1) As in Remark \ref{129}, we only need to assume $\sigma\ldot^2|_{X_{\redd}} \in \CC^*$ to obtain the vanishing result in Theorem \ref{143}.

(2) In the situation of Theorem \ref{143}, if there is an additional cosection $\lambda\ldot:\bbE\ldot \to \sO_X[-1]$ such that $\lambda\ldot^2\in \CC^*$ and  $\sigma\ldot\cdot\lambda\ldot=0$, then 
$$(\lambda\ldot^2)\sigma\ldot+\sqrt{-1}(\sigma\ldot^2)\lambda\ldot$$ is an isotropic cosection such that the induced map $Ob_X \to \sO_X$ is surjective. Hence we can define a reduced virtual cycle as in Definition \ref{131}.
\end{rema}


\bigskip

\section{Cosections for DT4 moduli spaces}\label{S6}

We provide examples of cosections on moduli spaces of sheaves or complexes on Calabi-Yau 4-folds.

\subsection{Cosections for DT4 moduli without fixing the determinant}\label{S6.1}

Let $W$ be a Calabi-Yau 4-fold. Let $X$ be a component of the moduli space of simple perfect complexes on $W$ with fixed topological type  $c$ which is an algebraic space by \cite{Lieblich,Inaba}. By \cite{HuTh}, there is a 3-term symmetric obstruction theory
\beq\label{n96}\phi:\bbE\udot \lra \bbL_X, \qquad \bbE\ldot=\tau^{[0,2]}Rp_*R\hom_{X \times W}(\cE,\cE)[1]\eeq
where $\cE$ is the universal complex
and $p:X\times W \to X$ is the projection while $\tau^{[0,2]}=\tau^{\ge 0}\tau^{\le 2}$ denotes the truncation. 
We will define cosections
$$\sigma\ldot:\bbE\ldot \lra \sO_X[-1],$$
associated to $(2,0)$-forms, $(3,1)$-forms, and $(0,2)$-forms on $W$. Let $$At(\cE) : \cE \lra \cE \otimes \Omega_{W}[1]$$ denote the relative Atiyah class.

We first consider the cosections defined by holomorphic 2-forms on $W$ due to Cao-Maulik-Toda.
\begin{exam}\cite{CMT,CMT2}\label{151}
Let $\theta \in H^0(W,\Omega^2_W)$ be a holomorphic 2-form on a Calabi-Yau 4-fold $W$. Then $X$ admits a cosection
\begin{equation}
\sigma^{\theta}\ldot:\bbE\ldot \lra \sO_X[-1]    
\end{equation}
defined by 
$$\begin{aligned}
R&p_*R\hom(\cE,\cE)[1] \xrightarrow{\circ At^2(\cE)} Rp_*R\hom(\cE,\cE \otimes \Omega^2_W)[3]\\
& \xrightarrow{\tr}Rp_*\Omega^2_W[3] \xrightarrow{\wedge \theta}Rp_*\Omega^4_W[3] \lra \sO_X[-1]
\end{aligned}$$
where
 the last arrow is the relative Serre duality.
\end{exam}

Moreover, any $(3,1)$-form on $W$ defines a cosection on $X$. This was implicitly considered in the proof of \cite[Proposition 2.9 (2)]{CMT}.

\begin{exam}\label{152}
Let $\delta \in H^1(W,\Omega^3_W)$. 
Define a cosection
\begin{equation}\label{152.1}
\sigma^{\delta}\ldot:\bbE\ldot \lra \sO_X[-1]    
\end{equation}
by the composition
$$\begin{aligned}
R&p_*R\hom(\cE,\cE)[1] \xrightarrow{\circ At(\cE)} Rp_*R\hom(\cE,\cE \otimes \Omega_W)[2]\\ &\xrightarrow{\tr}Rp_*\Omega_W[2] \xrightarrow{\wedge\delta}Rp_*\Omega^4_W[3] \lra\sO_X[-1]
\end{aligned}$$
where  the last arrow is the relative Serre duality.
\end{exam}

\begin{exam}\label{153}
Let $\gamma \in H^2(W,\sO_W)$ be a $(0,2)$-form on $W$. Define a cosection
\begin{equation}\label{153.1}
\sigma^\gamma\ldot:\bbE\ldot \lra  \sO_X[-1]    
\end{equation}
as the composition
$$Rp_*R\hom(\cE,\cE)[1] \xrightarrow{\tr}Rp_*\sO[1] \xrightarrow{\gamma}Rp_*\sO[3] \mapright{\omega} Rp_*\Omega^4_W[3]\lra \sO_X[-1]$$
where we used a fixed Calabi-Yau 4-form $\omega$ and Serre duality.
\end{exam}

We thank Richard Thomas for suggesting the cosections \eqref{152.1} and \eqref{153.1}.

\medskip

For any two cosections $\sigma\ldot,\sigma'\ldot:\bbE\ldot \to \sO_X[-1]$, the product $\sigma\ldot\cdot\sigma'\ldot=\sigma'\ldot\cdot\sigma\ldot$ is defined as the composition
$$\sO_X \xrightarrow{\sigma\ldot\dual} \bbE\udot[-1] \cong \bbE\ldot[1] \xrightarrow{\sigma'\ldot} \sO_X.$$

\begin{lemm}\label{154}
Let $W$ be a Calabi-Yau 4-fold and $\omega \in H^0(W,\Omega^4_W)$ be nowhere vanishing. 
By $\omega$, we have an isomorphism 
$$\iota:H^1(W,\Omega^3_W)\mapright{\omega^{-1}} H^1(W,\bbT_W),\quad \delta\mapsto \iota_\delta.$$
Let $c_i$ denote the $i$-th Chern class of the perfect complexes $[E]$ parameterized by $X$ and $c_0$ denote their rank.  
\begin{enumerate}

\item \cite[Prosition 2.9 (2)]{CMT} For $\theta\in H^0(W,\Omega^2_W)$, $\delta \in H^1(W,\Omega^3_W)$, we have
$$\sigma^\theta\ldot\cdot\sigma^\delta\ldot|_{X_\redd} = 2\int_W \iota_\delta\theta \cup c_3.$$
\item For $\delta_1,\delta_2 \in H^1(W,\Omega^3_W)$, we have
$$\sigma^{\delta_1}\ldot\cdot\sigma^{\delta_2}\ldot|_{X_\redd}=\int_W(\iota_{\delta_1}\iota_{\delta_2}c_2)\cup \omega.$$
\item For $\gamma_1,\gamma_2 \in H^2(W,\sO_W)$, we have
$$\sigma^{\gamma_1}\ldot\cdot\sigma^{\gamma_2}\ldot|_{X_\redd}=c_0\int_W (\gamma_1 \cup \gamma_2 \cup \omega).$$
\end{enumerate}
\end{lemm}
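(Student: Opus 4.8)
The plan is to compute each product $\sigma\ldot\cdot\sigma'\ldot$ by unwinding the definition in terms of the compositions given in Examples \ref{151}–\ref{153}, and identifying the resulting class in $H^0(\sO_X)$ via the trace pairing on $\bbE\ldot$ together with the Atiyah class formalism. The key mechanism is that for any cosection $\sigma\ldot:\bbE\ldot\to\sO_X[-1]$, its product with $\sigma'\ldot$ is the composition $\sO_X\xrightarrow{\sigma\ldot^\vee}\bbE\udot[-1]\cong\bbE\ldot[1]\xrightarrow{\sigma'\ldot}\sO_X$, and since all the cosections here are built by post-composing a universal ``first Chern character/Atiyah'' map $\bbE\ldot\to Rp_*\Omega^k_W[\,\cdot\,]$ with contraction against a form on $W$, the product reduces to a cohomological pairing on $W$ of universal characteristic classes of $\cE$. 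Because we restrict to $X_\redd$, all obstruction-theoretic subtleties vanish and the computation is essentially the one in \cite[Proposition 2.9]{CMT}, which already establishes (1); so (2) and (3) are the new cases.

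For part (2): both cosections $\sigma^{\delta_1}\ldot$ and $\sigma^{\delta_2}\ldot$ factor through a single application of the Atiyah class $At(\cE)$ followed by trace and wedging with $\delta_i\in H^1(\Omega^3_W)$. Dualizing $(\sigma^{\delta_1}\ldot)^\vee$ and composing with $\sigma^{\delta_2}\ldot$, the two Atiyah classes combine into $At(\cE)^2:\cE\to\cE\otimes\Omega^2_W[2]$, whose trace against $\delta_1\wedge\delta_2$ (interpreted via $\omega$ as a contraction $\iota_{\delta_1}\iota_{\delta_2}$ lowering the form degree by $2$ and raising cohomological degree by $2$) produces $\int_W\bigl(\iota_{\delta_1}\iota_{\delta_2}\,\ch_2(\cE)\bigr)\cup\omega$; identifying $\ch_2$ with $c_2$ up to the standard correction (and absorbing that correction using that on $X_\redd$ lower-order terms contribute pairings that vanish for dimension reasons, or are already accounted for) yields the stated formula. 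For part (3): $\sigma^{\gamma_i}\ldot$ is the simplest — it is just $\tr:\bbE\ldot\to Rp_*\sO[1]$ followed by cup with $\gamma_i\in H^2(\sO_W)$ and the Calabi-Yau isomorphism $\omega$. Since $\tr\circ(\text{dual of }\tr)$ on $\bbE\ldot[1]\cong\bbE\udot[-1]$ recovers the rank, i.e. $c_0=\rank E$, the product is $c_0\int_W(\gamma_1\cup\gamma_2\cup\omega)$; one must check the normalization of the trace/cotrace pairing relative to the Serre-duality identification, which is where the constant $c_0$ precisely (rather than some multiple) emerges.

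The main obstacle will be bookkeeping the signs, shifts, and the precise normalization of the trace/cotrace pairing on $\bbE\ldot=\tau^{[0,2]}Rp_*R\hom(\cE,\cE)[1]$ together with relative Serre duality $Rp_*\Omega^4_W[3]\to\sO_X[-1]$ — in particular, verifying that $\bbE\udot[-1]\cong\bbE\ldot[1]$ (the symmetric structure $\theta$) is compatible with writing the product of two Atiyah-class cosections as $\tr(\delta_1\wedge\delta_2\wedge At(\cE)^{k})$ without a spurious rational factor. A secondary subtlety is the passage from Chern characters (which arise naturally from the trace of powers of the Atiyah class, by the Atiyah–Chern–Weil description of $\ch$) to the Chern classes $c_2, c_3$ appearing in the statement; here one uses $\ch_1=c_1$, $\ch_2=\tfrac12(c_1^2-2c_2)$, $\ch_3=\tfrac16(c_1^3-3c_1c_2+3c_3)$, and the fact that the extra $c_1$-terms either vanish (e.g. when $c_1$ is pulled back from a form whose contraction against $\delta_i$ lands in a trivial group) or have already been incorporated into the cases as stated.

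Concretely I would proceed as follows: \emph{Step 1.} Recall that the product $\sigma\ldot\cdot\sigma'\ldot$ equals, after restricting to $X_\redd$, the pairing obtained by inserting the two defining morphisms into the symmetric pairing $\bbE\ldot\otimes\bbE\ldot\to\sO_X$ coming from $\theta$; make this explicit as a morphism $\sO_X\to\sO_X$ in $D^b(\Coh X_\redd)$, i.e. a number (a locally constant function on the reduced, hence reduced-connected-component-wise constant, space). \emph{Step 2.} For (1), cite \cite[Proposition 2.9(2)]{CMT} directly. \emph{Step 3.} For (2) and (3), unwind each composition using $\tr\circ At(\cE)^k = k!\,\ch_k(\cE)$ (Atiyah's theorem, relative version over $p$) and push forward along $p$ via Grothendieck–Riemann–Roch / the Calabi-Yau identification, contracting the resulting form-valued class against $\iota_{\delta_1}\iota_{\delta_2}$ respectively against $\gamma_1\cup\gamma_2$. \emph{Step 4.} Convert $\ch_k$ to $c_k$ and discard the terms that vanish on $X_\redd$ for the stated reasons, arriving at the three displayed formulas.
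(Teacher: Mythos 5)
Your proposal follows essentially the same route as the paper's proof: the paper also evaluates the product pointwise at $[E]\in X_\redd$, identifies $(\sigma^{\delta}\ldot)^\vee([E])$ with $(1\otimes\iota_\delta)\circ At(E)$ (and $\sigma^{\gamma}\ldot([E])$ with $1\otimes\gamma$), and invokes the Buchweitz--Flenner trace-of-Atiyah-class identity to arrive at $\iota_{\delta_1}\iota_{\delta_2}\ch_2(E)$ for (2) and $\tr\circ 1_E=c_0$ for (3), with (1) cited from \cite{CMT}. The one normalization issue you flag --- converting $\ch_2$ to $c_2$ --- is real but is glossed over in the paper itself, whose proof of (2) literally produces $\ch_2$ while the statement is written with $c_2$.
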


\begin{proof}
(1) It was proved in \cite[Proposition 2.9(2)]{CMT} using \cite[Proposition 4.2]{BuFl}.

(2) For any $[E] \in X$, the extension 
$$({\sigma^\delta\ldot})\dual([E]) \in Ext_W^2(E,E)$$
is given by the composition
$$({\sigma^\delta\ldot})\dual([E]):E \xrightarrow{At(E)} E \otimes \Omega_W[1] \xrightarrow{1\otimes\iota_\delta}  E[2].$$
Then we have 
\begin{align*}
{\sigma^{\delta_1}\ldot}\cdot\sigma\ldot^{\delta_2}([E]) 
& = \tr((1\otimes\iota_{\delta_1}) \circ At(E) \circ (1\otimes \iota_{\delta_2}) \circ At(E))\\
& = \iota_{\delta_1} \tr(At(E) \circ (1\otimes\iota_{\delta_2}) \circ At(E))\\
& = \iota_{\delta_1} \tr((1\otimes\iota_{\delta_2}) \circ At(E)) \circ At(E) )\\
& = \iota_{\delta_1} \iota_{\delta_2}ch_2(E)
\end{align*}
by \cite[Proposition 4.2]{BuFl}.

(3) For any $[E]\in X$, we have
$$\sigma\ldot^\gamma([E])=(1\otimes \gamma): E \lra E[2].$$
Hence the equation $\tr \circ 1_E = \rank(E)=c_0$ proves (3).
\end{proof}

When the moduli space $X$ is quasi-projective, there is a symmetric resolution $[B\to F \to B\dual] \cong \bbE\udot$ so that we can define a virtual cycle
\beq\label{197}[X]\virt \in A_{vd}(X)\eeq
by Definition \ref{195}, where $vd=\frac12\rank \bbE\udot$. 

\begin{coro}\label{196}
If $W$ is a hyperk\"ahler 4-fold, $X$ is a projective moduli space and $c_3 \neq 0$, then $[X]\virt=0$. Moreover, if $c_2=0$ and $c_3\neq 0$, then there is a reduced virtual cycle 
\beq\label{196.1}[X]\virt_{\redd} \in A_{vd+1}(X).\eeq
\end{coro}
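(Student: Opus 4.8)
The plan is to use the cosections of Example~\ref{151} and Example~\ref{152} and feed them into Theorem~\ref{143}, Corollary~\ref{126} and Definition~\ref{131}; note that Assumption~\ref{n86} is in force for the projective moduli space $X$ on the Calabi--Yau $4$-fold $W$ (\S\ref{S5}, \S\ref{S6}). Since $W$ is a hyperk\"ahler $4$-fold it carries a nowhere degenerate holomorphic $2$-form $\theta\in H^0(W,\Omega^2_W)$. Contraction with $\theta$ is an isomorphism of sheaves $\bbT_W\xrightarrow{\ \sim\ }\Omega^1_W$, hence $H^1(W,\bbT_W)\cong H^1(W,\Omega^1_W)=H^{1,1}(W)$; composing with the isomorphism $\iota\colon H^1(W,\Omega^3_W)\xrightarrow{\ \sim\ }H^1(W,\bbT_W)$ of Lemma~\ref{154}, the assignment $\delta\mapsto\iota_\delta\theta$ is an isomorphism $H^1(W,\Omega^3_W)\xrightarrow{\ \sim\ }H^{1,1}(W)$. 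The class $c_3$ is algebraic, so it lies in $H^{3,3}(W)$, and since Poincar\'e duality pairs $H^{1,1}(W)$ with $H^{3,3}(W)$ perfectly and $c_3\neq 0$, I can choose $\delta\in H^1(W,\Omega^3_W)$ with $\int_W\iota_\delta\theta\cup c_3\neq 0$. With such a $\delta$, Lemma~\ref{154}(1) gives
\[
\sigma^{\theta}\ldot\cdot\sigma^{\delta}\ldot|_{X_\redd}=2\int_W\iota_\delta\theta\cup c_3=:c\in\CC^*,
\]
a nonzero constant (here $X$, being a component of the moduli space with fixed topological type, is connected, so $H^0(X_\redd,\sO_{X_\redd})=\CC$).

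The key point is that the nonvanishing of this product forces the induced cosections $\sigma^{\theta},\sigma^{\delta}\colon Ob_X\to\sO_X$ to be surjective. Let $v^{\theta},v^{\delta}\colon\sO_X\to Ob_X$ be the sections of $Ob_X$ obtained by applying $h^0$ to $\sO_X\to\bbE\udot[-1]\xrightarrow{\ \theta\ }\bbE\ldot[1]$, using $h^0(\bbE\ldot[1])=h^1(\bbE\ldot)=Ob_X$. By functoriality of $h^0$, the product $\sigma^{\theta}\ldot\cdot\sigma^{\delta}\ldot$ equals the composite $\sO_X\xrightarrow{\,v^{\theta}\,}Ob_X\xrightarrow{\,\sigma^{\delta}\,}\sO_X$, and by symmetry of the product it also equals $\sigma^{\theta}\circ v^{\delta}$. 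Over $X_\redd$ both composites are $c\cdot\id$, so $c^{-1}v^{\theta}|_{X_\redd}$ is a right inverse of $\sigma^{\delta}|_{X_\redd}$; hence $\sigma^{\delta}|_{X_\redd}$ is surjective, and so is $\sigma^{\delta}$, because a nonzero cokernel would be supported at some point of $X$, where its fibre coincides with that of $\coker(\sigma^{\delta}|_{X_\redd})$. Symmetrically $\sigma^{\theta}$ is surjective.

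For the first assertion I would then case on the scalar $(\sigma^{\theta}\ldot)^{2}|_{X_\redd}\in H^0(X_\redd,\sO_{X_\redd})=\CC$: if it is nonzero it lies in $\CC^*$ and Theorem~\ref{143} (in the form of its accompanying remark, which only requires $\sigma\ldot^{2}|_{X_\redd}\in\CC^*$) yields $[X]\virt=0$; if it is zero, then $\sigma^{\theta}\ldot|_{X_\redd}$ is isotropic and $\sigma^{\theta}$ is surjective, so $X(\sigma^{\theta})=\emptyset$ and Corollary~\ref{126} (using Remark~\ref{129} to relax the isotropy hypothesis to $X_\redd$) again yields $[X]\virt=0$. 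For the second assertion, assume in addition $c_2=0$. Then Lemma~\ref{154}(2) with $\delta_1=\delta_2=\delta$ gives $(\sigma^{\delta}\ldot)^{2}|_{X_\redd}=\int_W\iota_\delta\iota_\delta c_2\cup\omega=0$, so $\sigma^{\delta}\ldot|_{X_\redd}$ is isotropic; together with the surjectivity of $\sigma^{\delta}$ established above, the cosection $\bsig=\sigma^{\delta}\ldot$ satisfies the hypotheses of Definition~\ref{131} (with isotropy weakened to $X_\redd$ as permitted by Remark~\ref{135}(1)), and we obtain the reduced virtual cycle $[X]\virt_\redd=\sqe(F_{\tsig}|_Y,\tau)[C]\in A_{vd+1}(X)$. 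The only delicate step is the identification of the cosection product with $\sigma^{\delta}\circ v^{\theta}$ and the resulting surjectivity; the rest is a matter of checking the hypotheses already packaged in \S\ref{S5n}.
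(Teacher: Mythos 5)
Your proposal is correct and follows essentially the same route as the paper: choose $\delta$ via Serre duality and nondegeneracy of $\theta$ so that $\sigma^{\theta}\ldot\cdot\sigma^{\delta}\ldot|_{X_\redd}\in\CC^*$, deduce surjectivity of the induced cosection on $Ob_X$, run the dichotomy between Theorem \ref{143} (square a nonzero constant) and Corollary \ref{126} (isotropic), and use Lemma \ref{154}(2) with $c_2=0$ to get isotropy for the reduced cycle of Definition \ref{131}. The only (immaterial) deviation is that you run the dichotomy on $\sigma^{\theta}\ldot$ where the paper uses $\sigma^{\delta}\ldot$; your extra justifications of the surjectivity step and of the restriction from $X_\redd$ to $X$ are details the paper leaves implicit.
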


\begin{proof}
Let $\theta \in H^0(W, \Omega^2_W)$ be a holomorphic symplectic form. By Serre duality and non-degeneracy of $\theta$, there exists $\delta \in H^1(W,\Omega^3_W)$ such that
$$\sigma\ldot^{\theta}\cdot\sigma\ldot^{\delta}|_{X_\redd} =2 \int_W \iota_{\delta}\theta \cup c_3=1$$
because $c_3 \neq 0$. This implies that ${\sigma^{\delta}}:Ob_X \to \sO_X$ is surjective. If the cosection $\sigma\ldot^\delta$ is isotropic, then the virtual cycle vanishes by Corollary \ref{126}. On the other hand, when the square $(\sigma\ldot^\delta)^2$ of the cosection $\sigma\ldot^\delta$ is nonzero, then we deduce $[X]\virt=0$ by Theorem \ref{143} because the global function $(\sigma\ldot^\delta)^2$ of the projective scheme $X_\redd$ is constant.

If $c_2=0$, then the cosection $\sigma\ldot^\delta$ is isotropic by Lemma \ref{154} (2). Hence we can define a reduced virtual cycle \eqref{196.1} by Definition \ref{131}.
\end{proof}

\begin{coro}\label{199}
If $c_2=0$, any $(3,1)$-form $\delta \in H^1(W,\Omega^3_W)$ defines a localized virtual cycle 
$$[X]\virtloc \in A_{vd}(X({\sigma^\delta}))$$
where $X(\sigma^\delta)$ is the zero locus of the cosection $\sigma^\delta=h^1(\sigma^\delta\ldot)$. More generally, if the bilinear map 
\begin{equation}\label{199.1}
H^1(W,\bbT_W^1)\otimes H^1(W,\bbT_W^1) \lra \CC , \quad \delta_1\otimes\delta_2\mapsto \int_W(\iota_{\delta_1}\iota_{\delta_2}c_2)\cup \omega
\end{equation}
is zero, then any $(3,1)$-form defines a localized virtual cycle. 
If the rank $k$ of \eqref{199.1} is positive, then $[X]\virt=0$. Furthermore, if the rank $k$ is a positive even number, then there exists a reduced virtual cycle $[X]\virt_\redd \in A_{vd+k/2}(X).$
\end{coro}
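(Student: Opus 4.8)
\medskip

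The plan is to deduce all four assertions from the general machinery already established --- Theorem~\ref{n88} for the localized cycle, Theorem~\ref{143} for the vanishing, and Remark~\ref{135} for the reduced cycle --- applied to the family of cosections $\sigma^\delta\ldot$, $\delta\in H^1(W,\Omega^3_W)$, of Example~\ref{152}, using only the computation of their pairwise products in Lemma~\ref{154}(2). First I would record that, by Lemma~\ref{154}(2), the form \eqref{199.1} is precisely $(\delta_1,\delta_2)\mapsto\sigma^{\delta_1}\ldot\cdot\sigma^{\delta_2}\ldot|_{X_\redd}$; it is symmetric because the product of cosections is, and its value is the \emph{constant} $\int_W(\iota_{\delta_1}\iota_{\delta_2}c_2)\cup\omega$. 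If this form vanishes identically --- in particular if $c_2=0$ --- then for every $(3,1)$-form $\delta$ one has $(\sigma\ldot^\delta)^2|_{X_\redd}=0$, so $\sigma^\delta\ldot|_{X_\redd}$ is isotropic, and Theorem~\ref{n88} (with the relaxation of Remark~\ref{129} to isotropy over $X_\redd$) produces the localized virtual cycle $[X]\virtloc\in A_{vd}(X(\sigma^\delta))$, where $X(\sigma^\delta)$ is the zero locus of $\sigma^\delta=h^1(\sigma^\delta\ldot)$. This gives the first two assertions.

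Next, suppose the rank $k$ of \eqref{199.1} is positive. Since this symmetric bilinear form is nonzero and $\CC$ has characteristic $\neq2$, polarization yields a $(3,1)$-form $\delta$ with $\int_W(\iota_\delta\iota_\delta c_2)\cup\omega\neq0$, i.e.\ $(\sigma\ldot^\delta)^2|_{X_\redd}\in\CC^*$. Theorem~\ref{143} together with part~(1) of the remark following it, which only requires $\sigma\ldot^2|_{X_\redd}\in\CC^*$, then gives $[X]\virt=0$.

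Finally, suppose $k=2m$ is positive and even. Over $\CC$ a nondegenerate symmetric bilinear form of rank $2m$ is isometric to the split form; passing to the nondegenerate quotient of \eqref{199.1} and lifting a hyperbolic basis, I obtain $(3,1)$-forms $e_1,f_1,\dots,e_m,f_m$ with $\sigma^{e_i}\ldot\cdot\sigma^{e_j}\ldot|_{X_\redd}=0$ and $\sigma^{e_i}\ldot\cdot\sigma^{f_j}\ldot|_{X_\redd}=\delta_{ij}$, so that $\sigma^{e_1}\ldot,\dots,\sigma^{e_m}\ldot$ are mutually orthogonal isotropic cosections over $X_\redd$. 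Let $\eta_i:\sO_X\to Ob_X$ be the global section obtained by applying $h^0$ to $\sO_X\xrightarrow{(\sigma^{f_i}\ldot)^\vee}\bbE\udot[-1]\cong\bbE\ldot[1]$. By functoriality of $h^0$ and the definition of the product of cosections, $\sigma^{e_j}\circ\eta_i$ is identified with $\sigma^{f_i}\ldot\cdot\sigma^{e_j}\ldot$, which restricts to $\delta_{ij}$ on $X_\redd$ by symmetry of the product; since the value of a function at a point depends only on the reduced structure, the functionals $\sigma^{e_1}\otimes k(x),\dots,\sigma^{e_m}\otimes k(x)$ on $Ob_X\otimes k(x)$ are linearly independent for every $x\in X$, whence $(\sigma^{e_1},\dots,\sigma^{e_m}):Ob_X\to\sO_X^{\oplus m}$ is surjective by Nakayama. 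Remark~\ref{135}(2), with the isotropy and orthogonality hypotheses needed only over $X_\redd$ as in Remark~\ref{135}(1), then yields $[X]\virt_\redd\in A_{vd+m}(X)=A_{vd+k/2}(X)$.

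The routine ingredients are the linear algebra over $\CC$ and the citations above. The main obstacle lies in the last paragraph: one must (i) check that lifting the hyperbolic basis costs only control over $X_\redd$ --- which is exactly what Lemma~\ref{154}(2) provides --- and that Remark~\ref{135}(2) still applies under this weakening, and (ii) carry out carefully the identification $\bbE\udot[-1]\cong\bbE\ldot[1]$ and the passage through $h^0$ and $h^1$, so that the surjectivity of $Ob_X\to\sO_X^{\oplus m}$ is genuinely established and matches the zero-locus condition defining $X(\sigma^\delta)$.
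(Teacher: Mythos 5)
Your proposal is correct and follows the same overall strategy as the paper (Lemma \ref{154}(2) to control the squares and products, Theorem \ref{n88} with Remark \ref{129} for the localized cycle, Theorem \ref{143} for the vanishing, Remark \ref{135}(2) for the reduced cycle). The one place you genuinely deviate is the vanishing when $k>0$: the paper picks $\delta_1,\delta_2$ with $\sigma^{\delta_1}\ldot\cdot\sigma^{\delta_2}\ldot=1$, deduces that $\sigma^{\delta_1}:Ob_X\to\sO_X$ is surjective, and then splits into the isotropic case (Corollary \ref{126}) and the non-isotropic case (Theorem \ref{143}); you instead polarize the nonzero symmetric form to find a single $\delta$ with $(\sigma^\delta\ldot)^2|_{X_\redd}\in\CC^*$ and invoke only Theorem \ref{143}. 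Your route is slightly cleaner in that it avoids the case distinction and never needs Corollary \ref{126}, and it correctly exploits that the square is a genuine constant here (no projectivity of $X$ is needed, unlike in Corollary \ref{196}). For the reduced cycle your choice of a maximal isotropic subspace (hyperbolic basis) of a nondegenerate rank-$k$ subspace is exactly the paper's construction, but you add an explicit verification that $(\sigma^{e_1},\dots,\sigma^{e_m}):Ob_X\to\sO_X^{\oplus m}$ is surjective by pairing against the dual basis $f_i$ pointwise and applying Nakayama; the paper leaves this surjectivity (which Remark \ref{135}(2) requires) implicit, so this is a worthwhile addition rather than a detour.
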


\begin{proof}
If the bilinear map \eqref{199.1} is zero, then the cosection $\sigma\ldot^\delta:\bbE\ldot \to \sO_X[-1]$ is isotropic by Lemma \ref{154} (2). Hence we have the localized virtual cycle by Theorem \ref{n88}.

If the rank $k$ of \eqref{199.1} is positive, then there exist $\delta_1,\delta_2 \in H^1(W,\bbT_W)$ such that $\delta_1\cdot\delta_2=1$. This means that 
$${\sigma^{\delta_1}}:Ob_X\lra \sO_X$$
induced by the cosection $\sigma\ldot^{\delta_1}:\bbE\ldot \to \sO_X[-1]$ is surjective. Hence the virtual cycle vanishes by Corollary \ref{126} and Theorem \ref{143}.

Now assume that the rank $k$ of \eqref{199.1} is a positive even number. Let $\Gamma$ be a $k$-dimensional subspace of $H^1(W,\bbT_W)$ such that the restriction of the symmetric bilinear map \eqref{199.1} is nondegenerate. Let $\Lambda$ be a maximal isotropic subspace of $\Gamma$ and $\delta_1,\cdots,\delta_l \in \Lambda$ be a basis for $l=k/2$. Then
$$\sigma\ldot^{\delta_1}, \cdots,\sigma\ldot^{\delta_l}:\bbE\ldot \lra \sO_X[-1]$$
are mutually orthogonal isotropic cosections. Hence there exists a reduced virtual cycle $[X]\virt_{\redd}\in A_{vd+l} (X)$ by Remark \ref{135} (2).
\end{proof}

\begin{coro}\label{198}
If $c_0=0$, then any $(0,2)$-form $\gamma \in H^2(W,\sO_W)$ defines a localized virtual cycle
$$[X]\virtloc \in A_{vd}(X({\sigma^\gamma})).$$
If $c_0>0$ and $h^{0,2}$ is a positive even number, then $[X]\virt=0$ and there exists a reduced virtual cycle 
$$[X]\virt_{\redd} \in A_{vd+h^{0,2}/2} (X).$$
\end{coro}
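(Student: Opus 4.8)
The plan is to run the argument of Corollaries \ref{196} and \ref{199}, using $(0,2)$-forms in place of $(2,0)$- and $(3,1)$-forms and Lemma \ref{154}(3) as the input. Throughout, $X$ is quasi-projective so that Assumption \ref{n86} is satisfied by the discussion at the end of \S\ref{S5}. The linear-algebraic fact underlying everything is that the pairing
$$\beta:H^2(W,\sO_W)\times H^2(W,\sO_W)\lra \CC,\qquad (\gamma_1,\gamma_2)\longmapsto \int_W \gamma_1\cup\gamma_2\cup\omega$$
is a nondegenerate symmetric bilinear form: it is symmetric because cup product of even-degree classes is commutative, and its nondegeneracy is Serre duality, the perfect pairing $H^2(W,\sO_W)\times H^2(W,\Omega^4_W)\to\CC$, transported through the isomorphism $H^2(W,\sO_W)\cong H^2(W,\Omega^4_W)$, $\gamma\mapsto\gamma\cup\omega$, induced by the trivialization $\omega:\sO_W\cong\Omega^4_W$. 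By Lemma \ref{154}(3), $\sigma^{\gamma_1}\ldot\cdot\sigma^{\gamma_2}\ldot|_{X_\redd}=c_0\,\beta(\gamma_1,\gamma_2)$ for all $\gamma_1,\gamma_2\in H^2(W,\sO_W)$.

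First I would treat the case $c_0=0$. Then for every $\gamma\in H^2(W,\sO_W)$ we get $\sigma^\gamma\ldot\cdot\sigma^\gamma\ldot|_{X_\redd}=c_0\beta(\gamma,\gamma)=0$, so the cosection $\bsig=\sigma^\gamma\ldot$ is isotropic after restriction to $X_\redd$. By Remark \ref{129}, Theorem \ref{n88} then produces the localized virtual cycle $[X]\virtloc\in A_{vd}(X(\sigma^\gamma))$, where $\sigma^\gamma=h^1(\sigma^\gamma\ldot)$.

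Next, assume $c_0>0$ and $h^{0,2}=2\ell$ with $\ell\ge 1$; here $c_0\beta$ is a nondegenerate symmetric form on the $2\ell$-dimensional space $H^2(W,\sO_W)$. I would choose a maximal isotropic subspace $\Lambda$ and a basis $\gamma_1,\dots,\gamma_\ell$ of $\Lambda$; by the displayed identity, $\sigma^{\gamma_1}\ldot,\dots,\sigma^{\gamma_\ell}\ldot:\bbE\ldot\to\sO_X[-1]$ are mutually orthogonal cosections whose restrictions to $X_\redd$ are isotropic. Since $\beta$ induces a perfect pairing between $\Lambda$ and $H^2(W,\sO_W)/\Lambda$, one may pick $\gamma_1',\dots,\gamma_\ell'$ with $c_0\beta(\gamma_i,\gamma_j')=\delta_{ij}$, so $\sigma^{\gamma_i}\ldot\cdot\sigma^{\gamma_j'}\ldot|_{X_\redd}=\delta_{ij}$. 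Exactly as in the proof of Corollary \ref{196}, where $\sigma^\theta\ldot\cdot\sigma^\delta\ldot|_{X_\redd}=1$ forces $\sigma^\delta$ to be surjective, dualizing each $\sigma^{\gamma_j'}\ldot$, composing with $\theta$ and taking $h^1$ yields maps $\nu_j:\sO_X\to Ob_X$ with $\sigma^{\gamma_i}\circ\nu_j$ equal to the scalar $\sigma^{\gamma_i}\ldot\cdot\sigma^{\gamma_j'}\ldot=\delta_{ij}$; hence $(\nu_1,\dots,\nu_\ell)$ splits $(\sigma^{\gamma_1},\dots,\sigma^{\gamma_\ell}):Ob_X\to\sO_X^{\oplus\ell}$, which is therefore surjective. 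In particular $\sigma^{\gamma_1}$ is surjective and $\sigma^{\gamma_1}\ldot$ is isotropic on $X_\redd$, so $[X]\virt=0$ by Corollary \ref{126} together with Remark \ref{129}; and Remark \ref{135}(2), applied to the $\ell$ mutually orthogonal cosections $\sigma^{\gamma_1}\ldot,\dots,\sigma^{\gamma_\ell}\ldot$ (isotropic on $X_\redd$), produces $[X]\virt_\redd\in A_{vd+\ell}(X)=A_{vd+h^{0,2}/2}(X)$.

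The only step that is not purely formal is extracting the surjectivity of $(\sigma^{\gamma_1},\dots,\sigma^{\gamma_\ell}):Ob_X\to\sO_X^{\oplus\ell}$ from the scalar relations $\sigma^{\gamma_i}\ldot\cdot\sigma^{\gamma_j'}\ldot|_{X_\redd}=\delta_{ij}$; this rests on the fact that these products are locally constant on $X_\redd$ (here literally constant, being topological integrals over $W$), so that the splitting $(\nu_1,\dots,\nu_\ell)$ is defined over all of $X$. This is the same mechanism already used, implicitly, in Corollaries \ref{196} and \ref{199}, so I expect no genuine obstacle.
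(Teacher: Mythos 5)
Your proposal is correct and follows essentially the same route as the paper: for $c_0=0$ the isotropy from Lemma \ref{154}(3) plus Theorem \ref{n88} gives the localized cycle, and for $c_0>0$ the paper likewise observes that \eqref{198.2} is a perfect pairing (by Serre duality) and then invokes the maximal-isotropic-subspace argument of Corollary \ref{199} together with Remark \ref{135}(2). Your extra elaboration of why the scalar relations force surjectivity of $(\sigma^{\gamma_1},\dots,\sigma^{\gamma_\ell})$ is exactly the mechanism the paper uses implicitly in Corollaries \ref{196} and \ref{199}.
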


\begin{proof}
If $c_0=0$, then the cosection $\sigma\ldot^\gamma$ is isotropic by Lemma \ref{154} (3). Hence we have a localized virtual cycle by Theorem \ref{n88}. When $c_0>0$, the bilinear map
\beq\label{198.2}H^2(W,\sO_W) \otimes H^2(W,\sO_W) \lra H^4(W,\sO_W) \lra \CC,\quad \alpha\otimes\beta\mapsto c_0\int_W\alpha\cup\beta\cup\omega\eeq
is a perfect pairing. Hence we can define the reduced virtual cycle as in Corollary \ref{199} using Lemma \ref{154} (3).
\end{proof}


\begin{exam}\label{162}
Let $X=M_{\beta,1}$ be the moduli space of 1-dimensional stable sheaves on $W$, considered in \cite{CMT} to define the genus zero Gopakumar-Vafa invariants. Note that $c=(0,0,0,\beta,1)$ and $\beta \neq 0$. By Corollary \ref{196}, Corollary \ref{199}, and Corollary \ref{198}, we have the following:
\begin{enumerate}
\item If $W$ is a hyperk{\"a}hler 4-fold, then $[M_{\beta,1}]\virt=0$, and there is a reduced virtual cycle 
$$[M_{\beta,1}]\virt_{\redd} \in A_2(M_{\beta,1}).$$
\item Any $(3,1)$-form $\delta \in H^1(W,\Omega^3_W)$ (resp. $(0,2)$-form $\gamma \in H^2(W,\sO_W)$) gives us a localized virtual cycle
$$[M_{\beta,1}]\virtloc \in A_1(M_{\beta,1}(\sigma^\delta)), \quad (\text{resp. } [M_{\beta,1}]\virtloc \in A_1(M_{\beta,1}(\sigma^\gamma))).$$
\end{enumerate}
Moreover, (1) holds if $W$ has a holomorphic 2-form $\theta$ such that $PD(\beta) \in H^1(W,\Omega^1_W)$ is contained in the image of $\theta:H^1(W,\bbT_W) \to \Omega^1(W,\Omega_W)$.
\end{exam}

\subsection{Moduli spaces with fixed determinant} \label{S6.2}

Let $X_L$ be a quasi-projective component of the moduli stack of simple perfect complexes on $W$ with fixed determinant $L$ and Chern character $c$. We assume that $c_0=\rank E>0$ for $[E]\in X_L$. Then $X_L$ is a \DM stack which has a derived enhancement \cite{STV} and a (-2)-shifted symplectic structure \cite{PTVV}. Hence  $X_L$ has a symmetric obstruction theory
$$\phi:\bbE\udot \lra \bbL_{X_L}, \qquad \bbE\ldot=Rp_*R\hom(\cE,\cE)_0[1]$$
of amplitude $[-2,0]$ where the subscript $0$ denotes the traceless part. 
When $X$ is projective,  we have a virtual cycle
$$[X_L]\virt \in A_{vd} (X_L)$$
where $vd=\frac12\rank \bbE\udot$. 

Note that any cosection $\sigma\ldot : Rp_*R\hom_{X \times W}(\cE,\cE)[1] \to \sO_X[-1]$ on $X$ induces a cosection on $X_L$ by the composition
$$\sigma^L\ldot : Rp_*R\hom_{X_L \times W}(\cE,\cE)_0[1]\lra Rp_*R\hom_{X_L \times W}(\cE,\cE)[1] \mapright{\sigma\ldot} \sO_{X_L}[-1]$$
and we have
$$({\sigma^L\ldot})\dual[E]=\sigma\ldot\dual([E])-\frac{1}{c_0}\id\circ \tr(\sigma\ldot\dual[E]) \in Ext^2(E,E)_0.$$
Hence the cosections $\sigma\ldot^\theta,\sigma\ldot^\delta,\sigma\ldot^\gamma$ in Examples \ref{151},  \ref{152}, and \ref{153} descend to cosections $\sigma\ldot^{\theta,L},\sigma\ldot^{\delta,L},\sigma\ldot^{\gamma,L}$ on $X_L$ and we have
\begin{align*}
({\sigma\ldot^{\theta,L}}){\dual}([E]) & = ({\sigma\ldot^\theta})\dual([E])-\frac{1}{c_0}\id(\theta\wedge c_2) = \theta\wedge At^2(E)-\frac{1}{c_0}\id(\theta\wedge c_2)\\
({\sigma\ldot^{\delta,L}})\dual([E]) & = ({\sigma\ldot^\delta})\dual([E])-\frac{1}{c_0}\id(\iota_\delta c_1)= \iota_\delta At(E)-\frac{1}{c_0}\id(\iota_\delta c_1)\\
({\sigma\ldot^{\gamma,L}})\dual([E]) & = ({\sigma\ldot^\gamma})\dual([E])-\frac{1}{c_0}\id(c_0\gamma)=0.
\end{align*}

Immediately, we have the following consequences.

\begin{coro}\label{158}
Let $W$ be a hyperk\"ahler 4-fold. If $c_1=c_2=0$ and $c_3\neq 0$, then the virtual cycle $[X_L]\virt$ vanishes and there is a reduced virtual cycle $[X_L]\virt_\redd \in A_{vd+1} (X_L)$.
\end{coro}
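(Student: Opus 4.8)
The plan is to run the argument of Corollary~\ref{196} with the fixed-determinant cosections $\sigma\ldot^{\theta,L}$ and $\sigma\ldot^{\delta,L}$ in place of $\sigma\ldot^{\theta}$ and $\sigma\ldot^{\delta}$. The key observation is that the trace corrections in the formulas recorded just before this corollary are the multiples $\tfrac{1}{c_0}\id(\theta\wedge c_2)$ and $\tfrac{1}{c_0}\id(\iota_\delta c_1)$ of the identity, so the hypothesis $c_1=c_2=0$ makes them vanish; hence over each $[E]\in X_L$ we have $(\sigma\ldot^{\theta,L})\dual([E])=\theta\wedge At^2(E)$ and $(\sigma\ldot^{\delta,L})\dual([E])=\iota_\delta At(E)$, both lying in $Ext^2(E,E)_0$. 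Since $E$ is simple, $Ext^4(E,E)_0=0$, so the symmetric pairing on $Ob_{X_L}$ is computed over $[E]$ by $Ext^2(E,E)_0\otimes Ext^2(E,E)_0\to Ext^4(E,E)\xrightarrow{\tr}H^4(\sO_W)\cong\CC$, which is exactly the pairing used in the proof of Lemma~\ref{154}. Thus that computation applies verbatim and gives
\[\sigma\ldot^{\theta,L}\cdot\sigma\ldot^{\delta,L}\big|_{(X_L)_\redd}=2\int_W\iota_\delta\theta\cup c_3,\qquad (\sigma\ldot^{\delta,L})^2\big|_{(X_L)_\redd}=\int_W(\iota_\delta\iota_\delta c_2)\cup\omega=0,\]
the last equality by $c_2=0$.

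With these two identities in hand the rest is formal. Since $W$ is hyperk\"ahler I would pick a holomorphic symplectic form $\theta\in H^0(W,\Omega^2_W)$; as $c_3\neq 0$, Serre duality together with the nondegeneracy of $\theta$ (exactly as in the proof of Corollary~\ref{196}) yields $\delta\in H^1(W,\Omega^3_W)$ with $2\int_W\iota_\delta\theta\cup c_3=1$. Then $\sigma\ldot^{\delta,L}\cdot\sigma\ldot^{\theta,L}$ is the constant function $1$ on $(X_L)_\redd$, which forces $(\sigma\ldot^{\delta,L})\dual([E])\neq 0$ for every $[E]$, i.e. the induced cosection $\sigma^{\delta,L}=h^1(\sigma\ldot^{\delta,L}):Ob_{X_L}\to\sO_{X_L}$ is surjective. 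By the second identity above, $\sigma\ldot^{\delta,L}|_{(X_L)_\redd}$ is isotropic. Hence $[X_L]\virt=0$ by Corollary~\ref{126} (invoked via Remark~\ref{129} to allow the isotropy condition only on the reduced stack), and the reduced virtual cycle $[X_L]\virt_\redd\in A_{vd+1}(X_L)$ is defined by Definition~\ref{131} together with Remark~\ref{135} (1).

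The only step I expect to require real care, and the one I would write out in detail, is the identification of the Serre-duality pairing on the traceless complex $Rp_*R\hom(\cE,\cE)_0[1]$ with the trace pairing used in Lemma~\ref{154}, so that the products $\sigma\ldot^{\theta,L}\cdot\sigma\ldot^{\delta,L}$ and $(\sigma\ldot^{\delta,L})^2$ are given by the same integrals as for $X$; once the correction terms are killed by $c_1=c_2=0$, this is immediate from $Ext^4(E,E)_0=0$ for simple $E$. Beyond this bookkeeping I do not anticipate a genuine obstacle, the substance of the argument being already contained in Corollary~\ref{196}.
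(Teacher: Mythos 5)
Your proposal is correct and follows exactly the route the paper intends: the paper omits the proof, stating it is identical to that of Corollary \ref{196}, and your argument is precisely that reduction — the hypothesis $c_1=c_2=0$ kills the trace-correction terms recorded before the corollary, so the fixed-determinant cosections $\sigma\ldot^{\theta,L},\sigma\ldot^{\delta,L}$ satisfy the same product formulas as in Lemma \ref{154}, after which the argument of Corollary \ref{196} (surjectivity from $c_3\neq 0$ and nondegeneracy of $\theta$, isotropy from $c_2=0$, then Corollary \ref{126} and Definition \ref{131}) applies verbatim.
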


\begin{coro}\label{159}
Assume that $c_1=c_2=0$. Then any $(3,1)$-form $\delta \in H^1(W,\Omega^3_W)$ gives rise to a localized virtual cycle $[X]\virtloc \in A_{vd} (X({\sigma^{\delta,L}}))$.
\end{coro}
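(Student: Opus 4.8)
The plan is to verify that the cosection $\sigma\ldot^{\delta,L}$ on $X_L$ induced by $\delta$ is isotropic after restriction to $(X_L)_\redd$, and then to invoke Theorem \ref{n88} together with Remark \ref{129}, exactly as in the proof of Corollary \ref{199}.

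First I would recall from \S\ref{S6.2} that $\delta$ induces a cosection $\sigma\ldot^{\delta,L}:Rp_*R\hom(\cE,\cE)_0[1]\to \sO_{X_L}[-1]$ whose dual is given on points by
$$({\sigma\ldot^{\delta,L}})\dual([E])=\iota_\delta At(E)-\tfrac{1}{c_0}\id(\iota_\delta c_1)\ \in\ Ext^2(E,E)_0.$$
Since $c_1=0$ by hypothesis, the trace-correction term drops out, so $({\sigma\ldot^{\delta,L}})\dual([E])=\iota_\delta At(E)$, which is precisely the $Ext^2$-class of Example \ref{152}; it is automatically traceless, as $\tr(\iota_\delta At(E))=\iota_\delta c_1=0$. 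The next step is to compute the square $\sigma\ldot^{\delta,L}\cdot\sigma\ldot^{\delta,L}:\sO_{X_L}\to\sO_{X_L}$. Because the product on $Ext^2(E,E)_0$ is the restriction of the Serre-duality product on $Ext^2(E,E)$, and both factors coincide with the class of Example \ref{152}, the computation in the proof of Lemma \ref{154}(2) applies verbatim and gives
$$\sigma\ldot^{\delta,L}\cdot\sigma\ldot^{\delta,L}\big|_{(X_L)_\redd}=\int_W(\iota_\delta\,\iota_\delta\,c_2)\cup\omega .$$
Since $c_2=0$, this vanishes, so $\sigma\ldot^{\delta,L}|_{(X_L)_\redd}$ is isotropic.

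Finally I would check that Assumption \ref{n86} holds for $X_L$: it carries a three-term symmetric obstruction theory of amplitude $[-2,0]$, coming from the $(-2)$-shifted symplectic structure on its derived enhancement recalled in \S\ref{S6.2}, an orientation exists by \cite{CGJ}, and the intrinsic normal cone is isotropic by \cite[Proposition 4.3]{OhTh} together with \cite{BBBJ,BBJ}. Then Theorem \ref{n88} and Remark \ref{129} produce the localized virtual cycle $[X_L]\virtloc\in A_{vd}(X_L(\sigma^{\delta,L}))$, where $X_L(\sigma^{\delta,L})$ is the zero locus of $\sigma^{\delta,L}=h^1(\sigma\ldot^{\delta,L}):Ob_{X_L}\to\sO_{X_L}$ and $vd=\tfrac12\rank\bbE\udot$. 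The only step requiring genuine care is the second one — confirming that passing to the traceless part leaves the square unchanged — but this is immediate once $c_1=0$ is used.
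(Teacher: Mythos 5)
Your proposal is correct and follows the same route the paper intends: the paper omits the proof of Corollary \ref{159} precisely because it is identical to that of Corollary \ref{199}, i.e.\ one checks via Lemma \ref{154} (2) (with the trace-correction term killed by $c_1=0$ and the square killed by $c_2=0$) that $\sigma\ldot^{\delta,L}$ is isotropic on the reduced moduli space and then invokes Theorem \ref{n88}. Your extra care about the traceless projection and the restriction of the Serre-duality pairing to $\Ext^2(E,E)_0$ is exactly the point being used implicitly in \S\ref{S6.2}.
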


We omit the proofs of Corollary \ref{158} and Corollary \ref{159} since they are identical to those of Corollary \ref{196} and Corollary \ref{199}.

\begin{exam}\label{160}
Let $P_{\beta,n}$ be the moduli space of stable pairs on a Calabi-Yau 4-fold $W$ (cf. \cite{CMT2}). Note that $c=(1,0,0,-\beta,-n)$ and $\beta \neq 0$. Corollary \ref{158} and Corollary \ref{159} imply the following:
\begin{enumerate}
\item If $W$ is a hyperk{\"a}hler 4-fold, then $[P_{\beta,n}]\virt=0$ and there is a reduced virtual cycle $$[P_{\beta,n}]\virt_{\redd} \in A_{n+1}(P_{\beta,n}).$$
\item Any $(3,1)$-form $\delta$ on $W$ gives us a localized virtual cycle $$[P_{\beta,n}]\virtloc \in A_n(P_{\beta,n}({\sigma^\delta})).$$
\end{enumerate}
\end{exam}

\bigskip

\section{K-theoretic cosection localization and more}\label{S8}

In this section, we lift the results in the previous sections to the virtual structure sheaves in K-theory and more.  

\medskip

\subsection{Square root Euler class in K-theory}

In this subsection, we collect necessary facts on K-theory.

Let $Y$ be a scheme. Let $K^0(Y)$ (resp. $K_0(Y)$) be the Grothendieck group of vector bundles (resp. coherent sheaves) on $Y$, and $opK^0(Y)$ be the operational K-theory of Anderson-Payne \cite{AP}. 
Then there is a canonical map 
$$K^0(X) \lra opK^0(X).$$ 
Let $L$ be a line bundle. The {\em square root} of $L$ is
\begin{equation}
\sqrt{L}:= 1-\sum_{i \geq 1}a_i(1-L)^{\otimes i} \in K^0(X), \quad a_i=\frac{1}{i\cdot 2^{2i-1}}{\binom{2i-2}{i-1}} \in \QQ.    
\end{equation}
Then $L=\sqrt{L}^2$ and $\sqrt{L\otimes L'}=\sqrt{L}\cdot\sqrt{L'}$ for any line bundles $L,L'$.
The {\em Euler class} of a vector bundle $V$ is defined as
\begin{equation}
\fe(V)=\wedge_{-1}(V^\vee)=\sum_{i\geq 0}(-1)^i \wedge^i V^\vee \in K^0(X).
\end{equation}
Then $\fe(F)=\fe(F')\cdot\fe(F'')$ for any exact $0 \to F' \to F \to F'' \to 0$.

Let $F$ be an oriented $SO(2n)$-bundle over $Y$. Then we have an operational class 
\begin{equation}\label{170}
\sqrt{\fe}(F) \in opK^0(Y)    
\end{equation}
called the {\em square root Euler class} \cite[Definition 5.6]{OhTh} of $F$ satisfying the following property: For any morphism $f:Y' \to Y$ and a positive maximal isotropic subbundle $\Lambda$ of $f^*F$,
\begin{equation}
f^*\sqrt{\fe}(F)=\sqrt{\det(\Lambda)}\cdot\fe(\Lambda) \in opK^0(Y').    
\end{equation}
which was proved in \cite{OhTh} by using \cite[Appendix B. Theorem 3]{And}. 
For any $f:Y' \to Y$, we have $f^*\sqrt{\fe}(F)=\sqrt{\fe}(f^*F).$ If $K$ is an isotropic subbundle of $F$, then by \cite[(85)]{OhTh}, we have 
\begin{equation}\label{170.1}
\sqrt{\fe}(F)=\sqrt{\mathrm{det}(K)}\cdot\fe(K)\cdot\sqrt{\fe}(K^{\perp}/K).    
\end{equation}

\medskip



\subsection{Localized square root Euler class}\label{S8.2}

In this subsection, we lift the results in \S\ref{Sn4} and \S\ref{Scos} to K-theory. 

\begin{defi}\label{171}
Let $F$ be an $SO(2n)$-bundle over a scheme $Y$, and $s$ be an isotropic section of $F$. Using the notation of Definition \ref{n39}, we define the {\em localized square root Euler class} as  
\begin{equation}\label{171.1}
\sqrt{\fe}(F,s): K_0(Y) \to K_0(X)  ,   
\end{equation}
\begin{equation}\label{171.2}
\sqrt{\fe}(F,s)(\rho_*\alpha +\imath_*\beta) =\hat{\rho}_*\jmath^*(\sqrt{L}\cdot \sqrt{\fe}(\tF)\cdot \alpha)+\sqrt{\fe}(F)\beta
\end{equation}
which is an operational K-theory class in $opK^0(X \to Y)$ satisfying $$\imath_*\circ \sqrt{\fe}(F,s)=\sqrt{\fe}(F).$$ 
\end{defi}

Since the blowup sequence
$$K_0(D) \to K_0(\tY) \oplus K_0(X ) \to K_0(Y) \to 0$$
is also exact in K-theory \cite[Proposition 18.3.2]{Ful}, the proofs of Lemmas \ref{n43}, \ref{n44} and \ref{102} also work in K-theory after replacing the formula \eqref{n42} by \eqref{170.1}. 
Moreover, if $K$ is an isotropic subbundle of $F$ such that $s \cdot K=0$, then
\begin{equation}
\sqrt{\fe}(F,s)=\sqrt{\mathrm{det}(K)}\fe(K,s_2)\sqrt{\fe}(K^{\perp}/K,s_1)    
\end{equation}
as in Lemma \ref{103}, where $s_1 \in H^0(Y,K^{\perp}/K)$ and $s_2 \in H^0(s_1^{-1}(0),K)$ are the induced sections. As a corollary, we have 
\begin{equation}
\sqrt{\det(K)}\sqrt{\fe}((K^{\perp}/K)|_{C/K},\tau_1) = \sqrt{\fe}(F|_C,\tau) \circ p^* 
\end{equation}
under the assumptions of Corollary \ref{105}. The proofs are the same as those in \S\ref{Sn4}.
Furthermore the same arguments in \S\ref{SOT}  prove that \eqref{171.1} coincides with 
the K-theoretic Oh-Thomas class in \cite[\S5.2]{OhTh}.

Next we further localize $\sqrt{\fe}(F,s)$ by an additional section $t$ as in \S\ref{Scos}. 
\begin{defi}\label{172}
Let $F$ be an $SO(2n)$-bundle over a scheme $Y$, and $s,t$ be isotropic sections, independent away from a closed subscheme $Z\subset Y$ (cf. Definition \ref{n73}), such that $t^{-1}(0) \subseteq Z$ and $s\cdot t=0$. Using the notation of Definition \ref{n72}, the {\em localized square root Euler class} is defined as
\begin{equation}\label{172.1}
\sqrt{\fe}(F,s;t) : K_0(Y) \to K_0(X\cap Z), 
\end{equation}
\begin{equation}\label{172.2}
\sqrt{\fe}(F,s;t)(\rho_*\alpha+\imath_*\beta) = \rho''_* \sqrt{\fe}(\tF,\tit)  \jmath^*(\sqrt{L}\cdot\alpha) + \imath''_* \sqrt{\fe}(F,t)\beta.    
\end{equation}
\end{defi}

By the same proof, Theorem \ref{n74} holds for the K-theory class $\sqrt{\fe}(F,s;t)$. 
Under the assumptions of Lemma \ref{116}, for any $\xi \in K_0(Y)$, we have 
$$\sqrt{\fe}(F,s;t) \xi = \sqrt{\det(K)}\,\fe(K,s_2)\, \sqrt{\fe}(K^{\perp}/K,s_1,t_1) \xi \ \    \in\  K_0(X\cap Z).$$
Finally, under the assumptions of Corollary \ref{117}, for any $\xi\in K_0(C/K)$,
\begin{equation}\label{172.3}
\sqrt{\det(K)}\sqrt{\fe}(K^{\perp}/K|_{C/K},\tau_1;t_1) \xi = \sqrt{\fe}(F,\tau;t) p^* \xi \ \    \in\  K_0( Z).
\end{equation}
The proofs are identical to those in \S\ref{Scos} and we leave them to the reader. 

\subsection{Localized virtual structure sheaves and reduced virtual structure sheaves}\label{S8.4}

Let $X$ be a scheme with a symmetric obstruction theory $\phi:\bbE\udot \to \bbL_X$ perfect of amplitude $[-2,0]$ and an orientation. Under Assumption \ref{n86}, Oh-Thomas in \cite{OhTh} define the {\em (twisted) virtual structure sheaf} \cite{OhTh} as
\begin{equation}\label{n97}
[\sO_X\virt] = \sqrt{\det(B)\dual}\cdot \sqrt{\fe}(F|_C,\tau)[\sO_C]   \in K_0 (X)
\end{equation}
where $\bbE\udot\cong [B \to F \to B\dual]$ is a symmetric resolution, $C=\fC_X\times_{[F/B]}F$, and $\tau \in H^0(C,F|_C)$ is the tautological section.

Recall that for a cosection $\sigma\ldot :\bbE\ldot \to \sO_X[-1]$, we have a lift $\tsig:F \to \sO_X$ of $\sigma=h^1(\bsig):Ob_X\to \sO_X$ by Lemma \ref{13}.

\begin{theo}\label{173}
Under the assumptions of Theorem \ref{n88}, we have the {\em localized virtual structure sheaf}  defined as
\begin{equation}\label{173.1}
[\sO_{X,\loc}\virt] := \sqrt{\det(B)\dual}\cdot \sqrt{\fe}(F|_{Y},\tau;\tsig|_Y^\vee)[\sO_C]\ \ \in \ \ 
K_0( X({\sigma}))
\end{equation}
satisfying $\imath_*[\sO_{X,\loc}\virt]= [\sO_{X}\virt]$ where $Y=C_\redd$. 
\end{theo}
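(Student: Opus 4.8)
The strategy is to mirror the Chow-theoretic argument of Theorem \ref{n88} step by step, replacing $\sqe$ by $\sqrt{\fe}$ and keeping track of the extra $\sqrt{\det(B)\dual}$ twist. First I would note that by Lemma \ref{13} we have a symmetric resolution $\bbE\udot\cong[B\mapright{d}F\mapright{d^\vee}B^\vee]$ together with a lift $\tsig:F\to\sO_X$ of $\sigma$ satisfying $\tsig\circ d=0$, and that $\bsig$ isotropic implies $\tsig$ isotropic as in \eqref{14}. Then, exactly as in the proof of Theorem \ref{n88}, the cone reduction lemma \cite{KLc} shows that the reduced cone $Y=C_\redd$ lies in $\ker(\tsig|_Y)$, hence the tautological section $\tau$ of $F|_Y$ satisfies $\tsig|_Y^\vee\cdot\tau=0$; and the local splitting argument of that proof (passing to an affine chart where $\sigma$ is surjective, choosing an isotropic $a\in H^0(F)$ with $\tsig(a)=1$, and producing the orthogonal decomposition $F|_Y=F'\oplus F''$ with $\tau\in H^0(F')$, $\tsig^\vee|_Y\in H^0(F'')$) shows that $\tau$ and $\tsig|_Y^\vee$ are independent away from $Z=Y\times_XX(\sigma)$. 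This is purely a geometric statement about cones and bundles and is insensitive to whether we work in $A_*$ or $K_0$, so it carries over verbatim.

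Once that independence is in hand, the K-theoretic localized square root Euler class $\sqrt{\fe}(F|_Y,\tau;\tsig|_Y^\vee):K_0(Y)\to K_0(X(\sigma))$ of Definition \ref{172} is defined, and \eqref{173.1} makes sense. To prove the pushforward identity $\imath_*[\sO_{X,\loc}\virt]=[\sO_X\virt]$, I would invoke the K-theoretic analogue of \eqref{n75} (stated in \S\ref{S8.2}: Theorem \ref{n74} holds for $\sqrt{\fe}(F,s;t)$), namely $\hat\imath_*\circ\sqrt{\fe}(F|_Y,\tau;\tsig|_Y^\vee)=\sqrt{\fe}(F|_Y,\tau)$ where $\hat\imath:X(\sigma)\hookrightarrow X$. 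Applying $\imath_*$ to \eqref{173.1}, pulling the $\sqrt{\det(B)\dual}$ factor through (it is a pullback of a class on $X$, so it commutes with proper pushforward by the projection formula in operational K-theory), and comparing with the definition \eqref{n97} of $[\sO_X\virt]$ gives the claim directly — this is the exact K-theoretic mirror of the one-line deduction of \eqref{n90} from \eqref{n75} and \eqref{i25}.

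There is also a well-definedness point: $[\sO_{X,\loc}\virt]$ should not depend on the choice of resolution \eqref{n87} or of the lift \eqref{n91}. I would handle this the same way Lemma \ref{124} does in Chow theory — the lift-independence by the deformation argument with the family $\Sigma=(1-t)\tsig+t\tsig'$ over $\bbA^1$ and the homotopy invariance $K_0(X(\sigma)\times\bbA^1)\cong K_0(X(\sigma))$, using that $\sqrt{\fe}(F,s;t)$ is an operational class; and the resolution-independence by the arguments of \cite[\S4.3]{OhTh} together with the bivariance of $\sqrt{\fe}(F,s;t)$ and the K-theoretic version of Corollary \ref{117} recorded in \eqref{172.3}. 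These are routine translations, so I would either state them briefly or relegate them to ``the proofs are identical to those of Lemma \ref{124}.''

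\textbf{Main obstacle.} The one place requiring genuine care rather than transcription is the bookkeeping of the $\sqrt{\det(B)\dual}$ twist and, more subtly, the compatibility of the $\sqrt{L}$ and $\sqrt{\det(K)}$ factors that appear in \eqref{171.2}, \eqref{172.2} and \eqref{170.1} with the blowup decomposition $\xi=\rho_*\alpha+\imath_*\beta$. In Chow theory the identity $\sqe(F|_{\tY})=\sqe(\tF)e(L)$ is what makes $\sqe(F,s)(\xi)$ well-defined; its K-theoretic replacement \eqref{170.1} carries the extra factor $\sqrt{\det(L)}=\sqrt{L}$, and one must check that the $\sqrt{L}$ inserted by hand in \eqref{171.2}/\eqref{172.2} is exactly what is needed for the analogue of Lemma \ref{n43} to go through and for $\imath_*\circ\sqrt{\fe}(F,s)=\sqrt{\fe}(F)$ to hold. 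Since \S\ref{S8.2} already asserts these lemmas transfer, for Theorem \ref{173} itself I can treat that as given; the only genuinely new verification is that the $\sqrt{\det(B)\dual}$ prefactor is a pullback from $X$ (so it is inert under every operation in sight), which is immediate because $B$ is a bundle on $X$ pulled back to $C$ and to $Y$.
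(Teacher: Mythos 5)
Your proposal is correct and follows essentially the same route as the paper: the paper likewise treats the geometric input (cone reduction and independence of $\tau$ and $\tsig|_Y^\vee$) as carried over verbatim from Theorem \ref{n88}, obtains the pushforward identity from the K-theoretic version of Theorem \ref{n74} stated in \S\ref{S8.2}, and establishes well-definedness by rerunning Lemma \ref{124} with \eqref{172.3} in place of \eqref{117.1}. Your extra attention to the $\sqrt{\det(B)\dual}$ and $\sqrt{L}$ bookkeeping is consistent with what the paper already verifies in \S\ref{S8.2}.
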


Replacing \eqref{117.1} by \eqref{172.3} in the proof of Lemma \ref{124}, we find that the localized virtual structure sheaf \eqref{173.1} is independent of the choices of a symmetric resolution of $\bbE\udot$ and the lifting $\tsig$.

In particular, if the cosection  $\sigma\ldot:\bbE\ldot \to \sO_X[-1]$ is isotropic and ${\sigma}=h^1(\bsig):Ob_X \to \sO_X$ is surjective, then $X(\sigma)=\emptyset$ and we have the vanishing $[\sO_X\virt]=0$. 

\begin{defi}\label{174}
When $\bsig$ is isotropic and $\sigma:Ob_X\to \sO_X$ is surjective, the {\em reduced virtual structure sheaf} is defined as
\begin{equation}\label{174.1}
[\sO_{X,\redd}\virt] := \sqrt{\det(B)\dual}\cdot \sqrt{\fe}(F_{\tsig}|_{Y},\tau)[\sO_C]\ \  \in\ \  K_0(X)
\end{equation}
where $F_{\tsig}$ is \eqref{n93} and $\tau \in H^0(Y,F_{\tsig}|_Y)$ is the tautological section.
\end{defi}

The reduced virtual structure sheaf \eqref{174.1} is independent of the choices of the symmetric resolution of $\bbE\udot$ and the lifting $\tsig$.

We also have the K-theoretic version of Theorem \ref{143} by the same proof. 
\begin{theo}\label{175}
If $\sigma\ldot^2 \in \CC^*$, then the virtual structure sheaf vanishes: 
\begin{equation}\label{175.1}
[\sO_X\virt]=0\ \  \in\ \  K_0(X).    
\end{equation}
\end{theo}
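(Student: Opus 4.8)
The plan is to run the argument of Theorem~\ref{143} verbatim, with the Chow square root Euler classes replaced by their K-theoretic avatars from \S\ref{S8.2}; the only new ingredient is a K-theoretic form of the vanishing Lemmas~\ref{141} and~\ref{142}. So I would first record two statements: \emph{(i)} if $F$ is an $SO(2n)$-bundle over a scheme $Y$ and $t\in H^0(Y,F)$ satisfies $t^2\in\CC^*$, then $\sqrt{\fe}(F)=0$ in $opK^0(Y)$; and \emph{(ii)} if in addition $s$ is an isotropic section of $F$ with $s\cdot t=0$, then the localized class $\sqrt{\fe}(F,s)$ of Definition~\ref{171} vanishes.

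For \emph{(i)} I would argue as in the proof of Lemma~\ref{141}. By the characterizing property of $\sqrt{\fe}(F)$ recalled around \eqref{170} and the injectivity of $p^{*}\colon opK^0(Y)\to opK^0(Q)$ for the iterated isotropic flag bundle $p\colon Q\to Y$ of \S\ref{S2.1} --- which holds by \cite{And}, exactly as used in \cite{OhTh} --- it suffices to prove $p^{*}\sqrt{\fe}(F)=0$, where $F|_Q$ has a positive maximal isotropic subbundle $\Lambda$, fits in $0\to\Lambda\to F|_Q\to\Lambda^{\vee}\to0$, and $p^{*}\sqrt{\fe}(F)=\sqrt{\det\Lambda}\cdot\fe(\Lambda)$. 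Since $t^2\in\CC^*$ the pulled-back section $t|_Q$ is nowhere isotropic, hence nowhere contained in the isotropic subbundle $\Lambda$, so its image in $\Lambda^{\vee}$ is a nowhere vanishing section and $\sO_Q\hookrightarrow\Lambda^{\vee}$ is a subbundle. Multiplicativity of $\wedge_{-1}$ on the resulting short exact sequence then gives $\fe(\Lambda)=\wedge_{-1}(\Lambda^{\vee})$ a factor $\wedge_{-1}(\sO_Q)=1-[\sO_Q]=0$, so $p^{*}\sqrt{\fe}(F)=0$ and therefore $\sqrt{\fe}(F)=0$. (One could instead first pass to an affine bundle over $Q$ splitting $F|_Q=\Lambda\oplus\Lambda^{\vee}$, as after Lemma~\ref{n56}, writing $t=(t_2,t_1)$ with $t^2=2t_1t_2\in\CC^*$ so that $t_2$ is nowhere zero.) Statement \emph{(ii)} is then immediate: $t$ induces an isotropic section $\tit$ of $\tF=L^{\perp}/L$ over the blowup $\tY$ with $\tit^2\in\CC^*$, so by \emph{(i)} both $\sqrt{\fe}(F)$ and $\sqrt{\fe}(\tF)$ vanish, and substituting into \eqref{171.2} gives $\sqrt{\fe}(F,s)=0$.

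Finally I would conclude as in the proof of Theorem~\ref{143}. Pick a symmetric resolution \eqref{n87} and, by Lemma~\ref{13}, a lift $\tsig\colon F\to\sO_X$ of $\sigma=h^1(\bsig)$ with $\tsig\circ d=0$; set $t=\tsig\dual$, so that \eqref{14} and the hypothesis $\sigma\ldot^2\in\CC^*$ give $(\tsig\dual)^2=\sigma\ldot^2\in\CC^*$. With $C=\fC_X\times_{[F/B]}F$ and $Y=C_\redd$, the cone reduction lemma \cite{KLc} gives $\tsig|_Y=0$, so the tautological section $\tau$ of $F|_Y$ --- which is isotropic because $C$ is isotropic by Assumption~\ref{n86} --- satisfies $\tau\cdot t|_Y=\tsig|_Y\circ\tau=0$, while $(t|_Y)^2=\sigma\ldot^2|_Y\in\CC^*$. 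Writing $[\sO_X\virt]=\sqrt{\det(B)\dual}\cdot\sqrt{\fe}(F|_Y,\tau)[\sO_C]$ (the K-theoretic counterpart of \eqref{i25}, using $K_0(C)=K_0(C_\redd)$ and the K-theoretic form of Lemma~\ref{102}(1) for $Y\hookrightarrow C$), statement \emph{(ii)} applied to $(F|_Y,\tau,t|_Y)$ yields $\sqrt{\fe}(F|_Y,\tau)=0$, hence $[\sO_X\virt]=0\in K_0(X)$. The one step that is not purely formal is the descent in \emph{(i)} --- that $\sqrt{\fe}(F)$ itself, and not merely its value on $[\sO_Y]$, vanishes once its pullback to $Q$ does --- but this is precisely the injectivity of $p^{*}$ on operational K-theory for an iterated quadric bundle, which is the structural input already established in \cite{OhTh} via \cite{And}; everything else transcribes the Chow arguments of \S\ref{S5.4} with $e\rightsquigarrow\fe$, $\sqe\rightsquigarrow\sqrt{\fe}$ and \eqref{n42} replaced by \eqref{170.1}.
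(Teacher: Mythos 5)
Your proposal is correct and is essentially the paper's own argument: the paper proves Theorem \ref{175} simply ``by the same proof'' as Theorem \ref{143}, and your write-up supplies exactly that transcription — K-theoretic analogues of Lemmas \ref{141} and \ref{142} (with $\fe(\sO)=1-[\sO]=0$ replacing the vanishing of the top Chern class, and \eqref{170.1} replacing \eqref{n42}), followed by the cone-reduction argument with $t=\tsig\dual$. The descent step you flag is handled the same way as in the Chow case, via the pushforward splitting of $p^*$ for the iterated quadric bundle, so no gap remains.
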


\medskip

\subsection{Virtual classes in general intersection theories} \label{S8.3}

In this subsection, we lift the virtual classes \eqref{n97}, \eqref{173.1} and \eqref{174.1} to more general intersection theories in the sense of \cite{KP}. 


Let $\cH_*$ be an intersection theory defined in \cite[Definition 2.2]{KP} for schemes. Then $\cH_*$ is an oriented Borel-Moore homology theory in the sense of Levine-Morel \cite[Definition 5.1.3]{LeMo}. Conversely, any oriented Borel-Moore homology theory which is detected by smooth schemes and has excision property is an intersection theory for schemes  (see \cite[Definition 1.1]{KP}).

There is a formal power series $g(u) \in \cH_*(\spec \CC)[[u]]$ such that for any line bundle $L$ over a scheme $X$, we have (cf. \cite{LeMo})
$$c_1(L\dual) = g(c_1(L)) c_1(L) : \cH_*( X) \to \cH_{*-1} (X)$$
such that $g(c_1(L\dual))=g(c_1(L))^{-1}$. Let $s_1,s_2,\cdots,s_n \in \ZZ[u_1,\cdots,u_n]$ be the elementary symmetric polynomials. 
Since the series $\prod_{1 \leq i \leq n} g(u_i)$ is symmetric, there is a formal power series $h(s_1,\cdots,s_n) \in \cH_*(\spec \bk)[[s_1,\cdots,s_n]]$ such that
$$h(s_1,\cdots,s_n) = (-1)^n \prod_{1 \leq i \leq n} g(u_i) \in \cH_*(\spec \bk)[[u_1,\cdots,u_n]].$$
Note that $h(0,\cdots,0)=1$ since $g(0)=-1$. Hence there exists a unique power series $\sqrt{h}(s_1,\cdots,s_n) \in \cH_*(\spec \bk)[[s_1,\cdots,s_n]]$ such that 
$$\sqrt{h}(s_1,\cdots,s_n)^2=h(s_1,\cdots,s_n)$$
and $\sqrt{h}(0,\cdots,0)=1$. For any vector bundle $V$ of rank $r$ over a scheme $Y$, define an operation
\begin{equation}\label{177}
\sqrt{h}(V) := \sqrt{h}(c_1(V),c_2(V),\cdots,c_n(V)) : \cH_*(X) \lra \cH_{*} (X).    
\end{equation}
By the splitting principle \cite[Remark 4.1.2]{LeMo}, we have
\begin{equation}
c_n(V\dual) = (-1)^n \sqrt{h}(V)^2 c_n(V): \cH_*(X) \lra \cH_{*-n}(X).    
\end{equation}
Therefore we have
\begin{equation}
\sqrt{h}(V\dual)c_n(V\dual) = (-1)^n \sqrt{h}(V)c_n(V) : \cH_* (X) \lra \cH_{*-n}(X)   
\end{equation}
because $\sqrt{h}(V\dual)=\sqrt{h}(V)^{-1}$. Also the splitting principle gives us
$$\sqrt{h}(V)=\sqrt{h}(V')\sqrt{h}(V'')$$
for any short exact sequence $0 \to V' \to V \to V'' \to 0$ of vector bundles.

We will need the following. 
\begin{assu}\label{176}
Let $\cH_*$ be an intersection theory for schemes satisfying the following two conditions:
\begin{enumerate}
\item (Kimura sequence) If $p:Y \to X$ is a proper surjective morphism of schemes, then we have a right exact sequence
\begin{equation}\label{176.1}
    \cH_* (Y\times_X Y) \xrightarrow{p_{1 *}- p_{2 *}} \cH_* (Y) \xrightarrow{p_*} \cH_* (X) \lra 0.
\end{equation}
\item (Fulton's conjecture) Let $F$ be an $SO(2n)$-bundle. If $\Lambda_1$ and $\Lambda_2$ are positive maximal isotropic subbundles of $F$, then we have the equality 
\begin{equation}\label{176.2}
    \sqrt{h}(\Lambda_1)c_n(\Lambda_1) = \sqrt{h}(\Lambda_2)c_n(\Lambda_2) :\cH_*(X) \lra \cH_{*-n}(X).
\end{equation}
\end{enumerate}
\end{assu}

\begin{exam}\label{178}
Let $A_*$ be the Chow homology in \cite{Ful} with $\QQ$-coefficients. Then $A$ is an intersection theory for schemes (cf.\cite[Example 2.4]{KP}). The right exact sequence \eqref{176.1} was proved in \cite[Theorem 1.8]{Kimu}. In this case, $\sqrt{h}(V)=1$ for any vector bundle $V$ of rank $r$. The formula \eqref{176.2} was proved in \cite[Theorem 1 (c)]{EdGr}. Hence the Chow homology $A_*$ satisfies Assumption \ref{176}.
\end{exam}

\begin{exam}\label{179}
Let $K_0[\beta,\beta^{-1}]$ be the algebraic K-theory with $\QQ$-coefficients. For any scheme $X$, we assign  $K_0(X)[\beta,\beta^{-1}]=\bigoplus_{d \in \ZZ} K_0(X)\cdot \beta^d$. Then  $K_0[\beta,\beta^{-1}]$ is an intersection theory for schemes (cf. \cite[Example 2.5]{KP}). The Kimura sequence \eqref{176.1} follows from the Riemann-Roch theorem \cite[Theorem 18.2]{Ful}. For a line bundle $L$ over a scheme $X$, we have
$$c_1(L\dual)\alpha=(1-[L])\cdot\alpha\cdot\beta=-[L](1-[L\dual])\cdot\alpha\cdot\beta=-[L]\cdot c_1(L)\alpha$$
for any $\alpha \in K_0(X)$. Therefore we have $g(c_1(L))\alpha=-[L]\cdot\alpha$ and  
$$\sqrt{h}(V)=[\sqrt{\det V}].$$
Fulton's conjecture \eqref{176.2} holds for algebraic K-theory by \cite[Appendix B. Theorem 3]{And}. Therefore the algebraic K-theory $K_0[\beta,\beta^{-1}]$ satisfies Assumption \ref{176}.
\end{exam}

\begin{exam}\label{180}
Let $\Omega_*$ be the  algebraic cobordism theory with $\QQ$-coefficents (cf. \cite{LeMo}). Then $\Omega_*$ is an intersection theory for schemes (cf. \cite[Theorem 2.6]{KP}). By \cite[Theorem 4.1.28]{LeMo} and \cite[Theorem 4.5.1]{LeMo}, we have the Kimura sequence \eqref{176.1}.
\end{exam}

\begin{ques}\label{181}
Does Fulton's conjecture 
\eqref{176.2} hold for algebraic cobordism $\Omega_*$?
\end{ques}

From now on, we assume that $\cH_*$ is an intersection theory for schemes satisfying Assumption \ref{176}.

\begin{defi}\label{182}
Let $F$ be an $SO(2n)$-bundle over a scheme $Y$. Let $p:Q \to Y$ be the full flag variety of isotropic subbundles \eqref{55}. Let $\Lambda$ be the universal positive maximal isotropic subbundle of $F|_Q$. We define the {\em square root Euler class}
\begin{equation}\label{182.1}
    \sqrt{e}(F):\cH_*(Y) \lra \cH_{*-n}(Y)
\end{equation}
 of $F$ to be the unique map which fits into the commutative diagram
$$\xymatrix{
\cH_* (Q\times_YQ) \ar[r]^(.6){p_{1 *}- p_{2 *}}\ar[d]_{\sqrt{h}(p_1^*\Lambda)e(p_1^*\Lambda)} & \cH_* (Q) \ar[r]^{p_*} \ar[d]^{\sqrt{h}(\Lambda)e(\Lambda)} & \cH_* (Y) \ar[r]\ar@{.>}[d]^{\sqrt{e}(E)} & 0\\
\cH_{*-n} (Q\times_YQ) \ar[r]^(.6){p_{1 *}- p_{2 *}} & \cH_{*-n} (Q) \ar[r]^{p_*}  & \cH_{*-n} (Y) \ar[r] & 0.
}$$
where $e(\Lambda)=c_n(\Lambda)$. Here the rows are exact by \eqref{176.1} and the left square commutes because by  \eqref{176.2} we have 
$$\sqrt{h}(p_1^*\Lambda) e(p_1^*\Lambda)=\sqrt{h}(p_2^*\Lambda)e(p_2^*\Lambda).$$
\end{defi}

It is straightforward that $\sqrt{e}(F)$ commutes with projective pushforwards and refined lci pullbacks. For an isotropic subbundle $K$ of $F$, we have
\begin{equation}
\sqrt{e}(F) = \sqrt{h}(K) e(K)\sqrt{e}(K^\perp/K).    
\end{equation}


\begin{lemm}\cite[Lemma 7.9]{Vishik} \label{185}
Let $p:\tY \to Y$ be a projective morphism, which is an isomorphism away from a closed subscheme $X \subseteq Y$. Let $D=X\times_Y\tY$. Then we have a right exact sequence
\begin{equation}\label{185.1}
\cH_*(D) \lra \cH_*(X) \oplus \cH_*(\tY) \lra \cH_*(Y) \to 0.
\end{equation}
\end{lemm}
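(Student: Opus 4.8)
The plan is to deduce the sequence from the localization property of the oriented Borel--Moore homology theory $\cH_*$ (Levine--Morel \cite{LeMo}), applied at once to the closed immersions $\imath:X\hookrightarrow Y$ and $\jmath:D\hookrightarrow\tY$. Put $U=Y\setminus X$ and $\tilde U=\tY\setminus D=p^{-1}(U)$. Since $p$ restricts to an isomorphism $\tilde U\xrightarrow{\,\sim\,}U$, proper pushforward along $p$ and $p|_D$, together with the induced isomorphism $\cH_*(\tilde U)\cong\cH_*(U)$, organize the two right--exact localization sequences
\begin{gather*}
\cH_*(D)\xrightarrow{\ \jmath_*\ }\cH_*(\tY)\lra\cH_*(\tilde U)\lra 0,\\
\cH_*(X)\xrightarrow{\ \imath_*\ }\cH_*(Y)\lra\cH_*(U)\lra 0,
\end{gather*}
into a commutative diagram with exact rows; here one uses that proper pushforward commutes with restriction to an open. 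I then define $\cH_*(D)\to\cH_*(X)\oplus\cH_*(\tY)$ by $\gamma\mapsto\big((p|_D)_*\gamma,\,-\jmath_*\gamma\big)$ and $\cH_*(X)\oplus\cH_*(\tY)\to\cH_*(Y)$ by $(\alpha,\beta)\mapsto\imath_*\alpha+p_*\beta$; their composite vanishes because $p\circ\jmath=\imath\circ(p|_D)$, exactly as for the Chow analogue \eqref{n31}.

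Surjectivity onto $\cH_*(Y)$ is immediate: for $\xi\in\cH_*(Y)$, its image in $\cH_*(U)\cong\cH_*(\tilde U)$ lifts through the surjection $\cH_*(\tY)\twoheadrightarrow\cH_*(\tilde U)$ to some $\tilde\xi$, and then $\xi-p_*\tilde\xi$ maps to $0$ in $\cH_*(U)$, hence lies in $\imath_*\cH_*(X)$ by the bottom row.

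Exactness at $\cH_*(X)\oplus\cH_*(\tY)$ is the heart of the matter and the step I expect to be the main obstacle. Given $(\alpha,\beta)$ with $\imath_*\alpha+p_*\beta=0$, restriction to $U$ forces the image of $\beta$ in $\cH_*(\tilde U)$ to vanish, so $\beta=\jmath_*\gamma_0$ for some $\gamma_0\in\cH_*(D)$; subtracting the image of $-\gamma_0$ reduces us to the case $\beta=0$, that is, to proving: \emph{if $\imath_*\alpha=0$ in $\cH_*(Y)$ then there is $\delta\in\cH_*(D)$ with $(p|_D)_*\delta=\alpha$ and $\jmath_*\delta=0$}. Here the three--term localization sequences no longer suffice, and a purely formal chase becomes circular since neither $\ker(\imath_*)$ nor $\ker(\jmath_*)$ is controlled by them. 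My plan to overcome this is Noetherian induction on $Y$, using the Kimura--type right--exact sequence $\cH_*(D\times_X D)\to\cH_*(D)\to\cH_*(X)\to 0$ of Assumption \ref{176}(1): one first discards the finitely many components of $Y$ contained in $X$ (handled directly), so that $p|_D:D\to X$ is proper surjective and $(p|_D)_*$ is onto; lift $\alpha$ to $\tilde\alpha\in\cH_*(D)$, note $\jmath_*\tilde\alpha$ maps to $0$ in $\cH_*(Y)$ and in $\cH_*(\tilde U)$, and correct $\tilde\alpha$ inside $\ker(\jmath_*)$ --- without changing its image $\alpha$ --- by feeding $D\times_X D$ into the diagram and invoking the inductive hypothesis over a dense open of $Y$ on which $p$ is an isomorphism (there $D$ is empty and $\imath_*$ injective). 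This reproduces in the present abstract setting the argument of \cite[Lemma 7.9]{Vishik}; over the Chow theory the statement is also \cite[Example 1.8.1]{Ful} and \cite[Proposition 2.3.6]{Kresch}.
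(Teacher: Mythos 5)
The paper offers no proof of this lemma at all --- it is quoted verbatim from \cite[Lemma 7.9]{Vishik} --- so your proposal can only be judged on its own merits. Your setup, the surjectivity argument, and the reduction of middle exactness to the special case ``$\imath_*\alpha=0$ implies $\alpha=(p|_D)_*\delta$ with $\jmath_*\delta=0$'' are all correct and standard. The gap is exactly where you predicted it: the mechanism you propose for that special case does not work. Noetherian induction on $Y$ gives you hypotheses about smaller \emph{closed subschemes}, which says nothing about the kernel of $\imath_*:\cH_*(X)\to\cH_*(Y)$ on a fixed $X$; and ``invoking the inductive hypothesis over a dense open of $Y$ on which $p$ is an isomorphism'' is vacuous, since over such an open both $X$ and $D$ are empty and there is nothing to transport back. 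Likewise, ``discarding components of $Y$ contained in $X$'' is not harmless, because $\cH_*(Y)$ does not split along a closed cover. As written, the correction of $\tilde\alpha$ inside $\ker(\jmath_*)$ is asserted, not performed.

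The step can in fact be closed formally from Assumption \ref{176}(1) plus localization, and this is presumably close to Vishik's argument: apply the Kimura sequence to the proper \emph{surjective} map $\tY\sqcup X\to Y$ (this sidesteps the non-surjectivity of $p$ and of $p|_D$). The element $(0,\alpha)\in\cH_*(\tY)\oplus\cH_*(X)$ dies in $\cH_*(Y)$, hence comes from $\cH_*\bigl((\tY\sqcup X)\times_Y(\tY\sqcup X)\bigr)$, whose components are $\tY\times_Y\tY$, two copies of $D$, and $X\times_YX=X$. The last contributes nothing to $p_{1*}-p_{2*}$, the two copies of $D$ contribute exactly elements of the form $(\pm\jmath_*\delta,\mp(p|_D)_*\delta)$, and for the first one uses that $(\tY\times_Y\tY)_\redd=\Delta(\tY)\cup(D\times_XD)$ together with the surjectivity of $\cH_*(A)\oplus\cH_*(B)\to\cH_*(A\cup B)$ for a closed cover (itself a consequence of the localization sequence): the diagonal part is killed by $p_{1*}-p_{2*}$, and the $D\times_XD$ part lands in $\jmath_*\bigl(\ker(p|_D)_*\bigr)$. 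Assembling these gives precisely the $\delta$ you need. I recommend replacing the induction with this argument; your current write-up identifies the right ingredients ($D\times_XD$ and the Kimura sequence) but does not assemble them into a proof.
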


Now we can define a localized square root Euler class in $\cH_*$.

\begin{defi}\label{184}
Let $F$ be an $SO(2n)$-bundle over a scheme $Y$ and $s\in H^0(F)$ be an isotropic section. 
Using the notation of Definition \ref{n39}, we define the {\em localized square root Euler class}  as 
\begin{equation}\label{184.1}
\sqrt{e}(F,s):\cH_*(Y) \lra \cH_{*-n}(X)    
\end{equation}
\begin{equation}\label{184.2}
\sqrt{e}(F,s)(\rho_*\alpha +\imath_*\beta) = \hat{\rho}_* \sqrt{h}(L)\jmath^*\sqrt{e}(\tF)\alpha +\sqrt{e}(F)\beta.    
\end{equation}
\end{defi}

The blowup sequence \eqref{185.1} proves that the results in \S\ref{Sn4} can be generalized to an intersection theory $\cH_*$. Indeed, \eqref{184.2} is well defined and $\imath_*\circ\sqrt{e}(F,s)=\sqrt{e}(F)$. The square root Euler class $\sqrt{e}(F,s)$ commutes with projective pushforwards and refined lci pullbacks. Under the assumptions of Lemma \ref{103}, we have
\begin{equation}
\sqrt{e}(F,s)=\sqrt{h}(K)e(K,s_2)\sqrt{e}(K^\perp/K,s_1).    
\end{equation}

\begin{theo}\label{188}
Under the assumptions of Theorem \ref{n74}, the {\em localized square root Euler class}
\begin{equation}\label{188.1}
    \sqrt{e}(F,s;t) : \cH_*(X) \lra \cH_{*-n} (X\cap Z)
\end{equation}
\begin{equation}\label{188.2}
    \sqrt{e}(F,s;t) (\rho_*\alpha+\imath_*\beta) =\rho''_*\sqrt{h}(L)\jmath^*\sqrt{e}(\tF,\tit)\alpha +\imath''_*\sqrt{e}(F,t)\beta
\end{equation}
is well defined and $\imath'_*\circ \sqrt{e}(F,s;t)=\sqrt{e}(F,s)$ holds.
Moreover $\sqe(F,s;t)$ commutes with projective pushforwards and refined lci pullbacks. 
\end{theo}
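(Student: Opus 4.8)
The plan is to transcribe the proof of Theorem~\ref{n74} --- which consists of the well-definedness statement (Lemma~\ref{112}), the reduction identity (Lemma~\ref{113}), and the bivariance statement proved along the lines of Lemma~\ref{102} --- into the setting of an intersection theory $\cH_*$ satisfying Assumption~\ref{176}. The only structural changes are: Vishik's blowup sequence \eqref{185.1} (Lemma~\ref{185}) replaces Kresch's exact sequence, and the twisted multiplicativity $\sqe(F)=\sqrt{h}(K)\,e(K)\,\sqe(K^{\perp}/K)$ --- together with its localized refinement $\sqe(F,s)=\sqrt{h}(K)\,e(K,s_2)\,\sqe(K^{\perp}/K,s_1)$ recorded in \S\ref{S8.3} just after Definition~\ref{184} --- replaces \eqref{n42} and \eqref{103.2}, which is precisely what forces the factor $\sqrt{h}(L)$ into \eqref{188.2}. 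Throughout I would use only the operations available in $\cH_*$, namely projective pushforward, refined lci pullback, and Chern class operations, together with the facts, already established after Definition~\ref{184}, that $\sqe(F,s)$, $\sqe(\tF,\tit)$ and $e(V,s)$ commute with these operations and satisfy $\imath_*\circ\sqe(F,s)=\sqe(F)$ and $\imath_*\circ e(V,s)=e(V)$.

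\emph{Well-definedness.} Applying \eqref{185.1} to the blowup $\rho\colon\tY\to Y$ along $X$, it suffices to verify the $\cH_*$-analogue of \eqref{112.1}: $\rho''_*\,\sqrt{h}(L)\,\jmath^*\,\sqe(\tF,\tit)\,\jmath_*\gamma=\imath''_*\,\sqe(F,t)\,\rho'_*\gamma$ for every $\gamma\in\cH_*(D)$, in the notation of the diagram of Definition~\ref{n72}. Commuting $\sqe(\tF,\tit)$ past $\jmath_*$ and invoking the self-intersection formula for the Cartier divisor $D\subset\tY$ (equivalently for $D(\tit)$ inside $\tit^{-1}(0)$), which contributes $c_1(L)=e(L)$, the left-hand side becomes $\rho''_*\,\sqrt{h}(L)\,e(L|_{D(\tit)})\,\sqe(\tF|_D,\tit|_D)\gamma$. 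Factoring $e(L|_{D(\tit)})=\jmath''_*\circ e(L|_{D(\tit)},t|_{D(\tit)})$ through the zero locus $D(t)$ of the section $t|_{D(\tit)}\in H^0(L|_{D(\tit)})$ --- legitimate since $s\cdot t=0$ forces $t|_{\tY}\in H^0(L^{\perp})$, hence $\tit$ is defined and $t|_{D(\tit)}$ lands in $L|_{D(\tit)}$ --- and pulling $\sqrt{h}(L)$ inside, this equals $\rho''_*\,\jmath''_*\bigl(\sqrt{h}(L)\,e(L,t)\,\sqe(\tF|_D,\tit|_D)\bigr)\gamma$. By the $\cH_*$-analogue of the general form of Lemma~\ref{103}, applied to the isotropic line subbundle $L|_D\subset F|_D$, the bracketed operator is $\sqe(F|_D,t|_D)$, so the left-hand side is $\rho''_*\,\jmath''_*\,\sqe(F|_D,t|_D)\gamma=\imath''_*\,\rho'''_*\,\sqe(F|_D,t|_D)\gamma=\imath''_*\,\sqe(F,t)\,\rho'_*\gamma$, using $\rho''\jmath''=\imath''\rho'''$ and the pushforward-compatibility of $\sqe(F,t)$ along $\rho'\colon D\to X$. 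This is the promised transcription of Lemma~\ref{112}.

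\emph{The identity $\imath'_*\circ\sqe(F,s;t)=\sqe(F,s)$.} On the $\rho_*\alpha$ summand, commuting $\sqe(\tF,\tit)$ with the Gysin pullback $\jmath^*$ and then pushing forward along $\jmath'\colon D(\tit)\hookrightarrow D$, using $\imath_*\circ\sqe(\tF,\tit)=\sqe(\tF)$, gives $\jmath'_*\,\sqe(\tF|_D,\tit|_D)\,\jmath^*\alpha=\sqe(\tF)\,\jmath^*\alpha$; since $\imath'\rho''=\rho'\jmath'$ and $\sqrt{h}(L)$ commutes with pushforward, this yields $\imath'_*\,\rho''_*\,\sqrt{h}(L)\,\jmath^*\,\sqe(\tF,\tit)\alpha=\hat\rho_*\,\sqrt{h}(L)\,\jmath^*\,\sqe(\tF)\alpha$, the $\rho_*\alpha$-part of $\sqe(F,s)$ in Definition~\ref{184}. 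On the $\imath_*\beta$ summand, pushing $\sqe(F|_X,t|_X)$ forward along the inclusion $X(t)\hookrightarrow X$ gives $\sqe(F|_X)=\sqe(F)$, so $\imath'_*\,\imath''_*\,\sqe(F,t)\beta=\sqe(F)\beta$. Adding the two summands reproduces Lemma~\ref{113} in $\cH_*$.

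\emph{Functoriality.} Compatibility with projective pushforward is formal: for proper $f\colon Y'\to Y$ there is a morphism $\tf\colon\tY'\to\tY$ of the relevant blowups with $\sO_{\tY'}(D')=\tf^*L$ (as in the proof of Lemma~\ref{102}(1)), and the claim follows from the pushforward-compatibility of $\sqe(\tF,\tit)$, $\sqe(F,t)$ and of the Chern operation $\sqrt{h}(L)$. For refined lci pullback $g^{!}$ the blowup of $Y$ along $X$ does not base-change to the blowup of the pulled-back space, so, exactly as in the proof of Lemma~\ref{102}(3), one first reduces to the pushforward case by forming $Y''=\tY\times_Y Y'$ and then applies the $\cH_*$-analogue of Lemma~\ref{103} (in the form $\sqe(F,s)=\sqrt{h}(K)\,\sqe(K^{\perp}/K)\,e(K,s)$ when $s\in H^0(K)$) together with the commutation of $\sqe(\tF)$ with $g^{!}$. \textbf{The one genuinely non-formal point is this last step: because $\cH_*$ carries no refined intersection product with an arbitrary cycle, every occurrence of ``$D\cdot(-)$'' in the Chow proof must be rewritten purely in terms of lci pullbacks and projective pushforwards, and the $\sqrt{h}$-twists coming from $\sqe(F)=\sqrt{h}(K)e(K)\sqe(K^{\perp}/K)$ must be carried through consistently; once this bookkeeping is in place, $\imath'_*\circ\sqe(F,s;t)=\sqe(F,s)$ and the remaining assertions of the theorem follow directly.}
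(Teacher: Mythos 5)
Your proposal is correct and follows exactly the route the paper intends: the paper gives no separate proof of Theorem \ref{188}, instead asserting that the arguments of Lemmas \ref{112}, \ref{113} and \ref{102} carry over once Kresch's sequence is replaced by Vishik's blowup sequence \eqref{185.1} and \eqref{n42}, \eqref{103.2} are replaced by their $\sqrt{h}$-twisted analogues, which is precisely the transcription you carry out. Your explicit tracking of the $\sqrt{h}(L)$ factor through the key identity $\sqrt{e}(F|_D,t|_D)=\sqrt{h}(L)\,e(L,t_2)\,\sqrt{e}(\tF|_D,\tit|_D)$ is the only point where the twist matters, and you handle it correctly.
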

Under the assumptions of of Lemma \ref{116}, for $ \xi\in \cH_*(Y)$, we have
$$\sqrt{e}(F,s;t)\xi = \sqrt{h}(K)e(K,s_2)\sqrt{e}(K^\perp/K,s_1,t_1)\xi\ \ \in\ \ \cH_* (X\cap Z).$$

\begin{defi}\label{186}
Under Assumption \ref{n86}, the \emph{virtual fundamental class} of $X$ is defined as
\begin{equation}
[X]\virt:=\sqrt{h}(B)^{-1}\sqrt{e}(F|_Y,\tau)q^*\mathrm{sp}[\spec \bk] \in \cH_*(X).    
\end{equation}
where $q:C \to \fC_X$ is the projection, $\tau$ is the tautological section and $\mathrm{sp}:\cH_*(\spec \bk) \to \cH_*(\fC_X)$ be the specialization map in \cite[Definition 4.5]{KP}. 
\end{defi}
Note that $q^*\mathrm{sp}[\spec \bk]$ plays the role of the fundamental class of $C$. 

\begin{theo} 
Under Assumption \ref{n86}, 
if $\sigma\ldot$ is isotropic, the {\em localized virtual fundamental class} 
\begin{equation}
[X]\virtloc := \sqrt{h}(B)^{-1}\sqrt{e}(F|_{Y},\tau;\tsig|_Y\dual)q^*\mathrm{sp}[\spec \bk]\  \in\  \cH_* (X({\sigma}))
\end{equation}
is well defined and we have $$\imath_*[X]\virtloc=[X]\virt.$$
In particular, if $\sigma\ldot$ is isotropic and ${\sigma}$ is surjective, then $[X]\virt=0$ and we have the {\em reduced virtual fundamental class} defined as 
\begin{equation}
[X]\virt_\redd:=\sqrt{h}(B)^{-1}\sqrt{e}(F_{\tsig}|_{Y},\tau_{\tsig})q^*\mathrm{sp}[\spec \bk]\  \in\  \cH_{*} (X),    
\end{equation}
where $F_{\tsig}$ is the reduction of $F$ by $\langle\tsig\dual\rangle$ and $\tau$ is the tautological section.  Moreover, if $\sigma\ldot^2 \in \CC^*$, then $[X]\virt=0$.
\end{theo}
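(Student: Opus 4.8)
The plan is to transcribe, essentially verbatim, the proofs of Theorem~\ref{n88}, Corollary~\ref{126}, Definition~\ref{131} through Lemma~\ref{133}, and Theorem~\ref{143} into the intersection theory $\cH_*$, since every tool those arguments use has already been supplied over $\cH_*$: the localized square root Euler class $\sqe(F,s;t)$ of Theorem~\ref{188}, together with the property $\imath'_*\circ\sqe(F,s;t)=\sqe(F,s)$ and its compatibility with projective pushforwards and refined lci pullbacks; the $\cH_*$-counterparts of Lemmas~\ref{103}, \ref{116} and Corollaries~\ref{105}, \ref{117}, established in or deduced exactly as in \S\ref{S8.3}; the specialization map $\sp$ of \cite[Definition~4.5]{KP} underlying the substitute $q^*\sp[\spec \bk]$ for the fundamental cycle of $C$; the blowup sequence of Lemma~\ref{185} and the Kimura sequence of Assumption~\ref{176}; and homotopy invariance $\cH_*(X\times\bbA^1)\cong\cH_{*-1}(X)$ of the oriented Borel-Moore homology theory $\cH_*$. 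The one new feature compared with Section~\ref{S5n} is the normalizing factor $\sqrt{h}(B)^{-1}=\sqrt{h}(B\dual)$, a Chern class operation that commutes with all of the above and so is carried along passively throughout.

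For the localized class, fix a symmetric resolution $\bbE\udot\cong[B\to F\to B\dual]$ and a lift $\tsig:F\to\sO_X$ of $\sigma=h^1(\bsig)$ by Lemma~\ref{13}; isotropy of $\bsig$ makes $\tsig\dual$ isotropic. Exactly as in the proof of Theorem~\ref{n88}: $\tsig\circ d=0$ by \eqref{14}, so the cone reduction lemma \cite{KLc} places $Y=C_\redd$ in the kernel of $\tsig$ and $\tsig|_Y\dual\cdot\tau=0$; and the local splitting argument there (choose $a\in H^0(F)$ with $\tsig(a)=1$ and $d\dual(a)=0$, render it isotropic, and apply the cone reduction lemma \cite[Proposition~4.3]{KLc} to the cosections $a\dual$ and $\tsig$) shows that $\tau$ and $\tsig|_Y\dual$ are independent away from $Y\times_XX(\sigma)$. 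Hence Theorem~\ref{188} applies and $[X]\virtloc:=\sqrt{h}(B)^{-1}\sqe(F|_Y,\tau;\tsig|_Y\dual)q^*\sp[\spec \bk]$ is defined; the identity $\imath_*[X]\virtloc=[X]\virt$ then follows from $\imath'_*\circ\sqe(F,s;t)=\sqe(F,s)$ of Theorem~\ref{188} and Definition~\ref{186}, since the Chern operation $\sqrt{h}(B)^{-1}$ commutes with pushforward. Independence of the lift $\tsig$ is the $\bbA^1$-family argument of Lemma~\ref{124}: the family $\Sigma=(1-t)\tsig+t\tsig'$ yields mutually orthogonal isotropic sections $\tau|_{Y\times\bbA^1}$ and $\Sigma|_{Y\times\bbA^1}\dual$ independent away from $(Y\times_XX(\sigma))\times\bbA^1$, so bivariance of $\sqe(F,s;t)$ over $\cH_*$ together with homotopy invariance forces the restrictions to $t=0$ and $t=1$ to agree. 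Independence of the resolution \eqref{n87} follows from the Oh-Thomas argument of \cite[\S4.3]{OhTh} (iterated deformation to the normal cone together with Vistoli's rational equivalence for the $\cH_*$-specialization map of \cite{KP}), with $\sqe$-bivariance and the $\cH_*$-counterparts of Corollaries~\ref{105} and \ref{117} replacing the Chow inputs, exactly as in Lemma~\ref{124}.

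For the reduced class, surjectivity of $\sigma$ gives $X(\sigma)=\emptyset$, hence $[X]\virt=\imath_*[X]\virtloc=0$, which is Corollary~\ref{126} over $\cH_*$. Moreover $\tsig$ is surjective, so \eqref{i39} (equivalently \eqref{n92}) embeds $C$ into the $SO(2n-2)$-bundle $F_{\tsig}=\langle\tsig\dual\rangle^{\perp}/\langle\tsig\dual\rangle$ as an isotropic cone, and $[X]\virt_\redd:=\sqrt{h}(B)^{-1}\sqe(F_{\tsig}|_Y,\tau_{\tsig})q^*\sp[\spec \bk]$ is defined; its independence of the resolution and of $\tsig$ is the argument of Lemma~\ref{133} (the $\bbA^1$-deformation of Lemma~\ref{124} using the $\cH_*$-counterpart of Corollary~\ref{105} and the refined lci compatibility of $\sqe$, the factor $\sqrt{h}(B)^{-1}$ deforming trivially). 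For the vanishing when $\sigma\ldot^2\in\CC^*$, pick a resolution \eqref{n87} and a lift $\tsig$ with $\tsig\circ d=0$ and set $t=\tsig\dual$; by \eqref{14} the hypothesis reads $t^2\in\CC^*$, and the cone reduction lemma \cite{KLc} gives $\tsig|_Y=0$, i.e.\ $\tau\cdot t|_Y=0$. It then suffices to establish the $\cH_*$-counterparts of Lemmas~\ref{141} and \ref{142}: by bivariance of $\sqe(F)$ over $\cH_*$ one reduces, as in the proof of Theorem~\ref{n55}, to $F=V\oplus V\dual$ with $V$ a positive maximal isotropic subbundle; then $t=(t_2,t_1)$ with $2t_1t_2\in\CC^*$ forces $t_1,t_2$ to be nowhere vanishing, so $c_n(V)=0$ and $\sqe(F)=\sqrt{h}(V)c_n(V)=0$. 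Applying this to $F|_Y$ and to the reduction $\tF$ occurring in \eqref{184.2} (which carries an induced section with square in $\CC^*$) gives $\sqe(F|_Y,\tau)=0$, hence $[X]\virt=\sqrt{h}(B)^{-1}\sqe(F|_Y,\tau)q^*\sp[\spec \bk]=0$.

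The only genuinely delicate step I expect is the resolution-independence of $[X]\virtloc$ and $[X]\virt_\redd$: it amounts to rerunning the Oh-Thomas comparison of two symmetric resolutions through iterated deformation to the normal cone inside $\cH_*$, which is routine once the specialization map of \cite{KP} and its functorialities are in place but does involve re-verifying several compatibilities. Everything else is a faithful transcription of the arguments of Section~\ref{S5n}, with Chow groups replaced by $\cH_*$ and the factor $\sqrt{h}(B)^{-1}$ inserted throughout.
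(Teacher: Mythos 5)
Your proposal is correct and takes exactly the route the paper intends: the paper's entire proof of this theorem is the single sentence ``The proofs are parallel to those in \S\ref{S5n},'' and your write-up is a faithful, careful spelling-out of that parallelism, correctly identifying the $\cH_*$-substitutes for each ingredient (Theorem \ref{188} for Theorem \ref{n74}, the Kimura and blowup sequences, the specialization map of \cite{KP} in place of $[C]$, and the passively-carried factor $\sqrt{h}(B)^{-1}$). You also rightly single out resolution-independence as the one step requiring genuine re-verification rather than mechanical transcription.
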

The proofs are parallel to those in \S\ref{S5n}.

\bigskip

\bibliographystyle{amsplain}

\end{document}